\theoremstyle{plain} 
\newtheorem{thm}{Theorem}[section] 
\newtheorem{corollario}[thm]{Corollary} 
\newtheorem{lem}[thm]{Lemma} 
\newtheorem{prop}[thm]{Proposition} 
\theoremstyle{definition} 
\newtheorem{defn}{Definition}[section]
\newtheorem*{es}{Example}
\newtheorem*{costr}{Construction}
\newtheorem{oss}[thm]{Remark} 
\theoremstyle{remark}
\newcommand{\blind}{0}
\newcommand{\M}{\mathcal{M}}
\newcommand{\numberset}{\mathbb}
\newcommand{\N}{\numberset{N}}
\newcommand{\F}{\numberset{F}}
\newcommand{\R}{\numberset{R}}
\newcommand{\C}{\numberset{C}}
\newcommand{\Z}{\numberset{Z}}
\newcommand{\Q}{\numberset{Q}}
\DeclarePairedDelimiter{\abs}{\lvert}{\rvert}
\DeclarePairedDelimiter{\norma}{\lVert}{\rVert}
\date {}
\begin{document}
	\author[T. Rossi]{Tommaso Rossi}
	\address{Universit\`a di Roma Tor Vergata}
	\email{tommaso.rossi118@gmail.com}
	\author[P. Salvatore]{Paolo Salvatore}
	\address{Universit\`a di Roma Tor Vergata}
	\email{salvator@mat.uniroma2.it}
		
	

	\title[Chain level Koszul duality beteen Grav and Hycom]{Chain level Koszul duality between the Gravity and Hypercommutative operads}
	\maketitle
	
	\begin{abstract}
		Let $\overline{\M}_{0,n+1}$ be the moduli space of genus zero stable curves with $(n+1)$-marked points. The collection $\overline{\M}\coloneqq\{\overline{\M}_{0,n+1}\}_{n\geq 2}$ forms an operad in the category of complex projective varieties; its homology $Hycom\coloneqq H_*(\overline{\M})$ is called the Hypercommutative operad. In this paper we construct a chain model for the hypercommutative operad, i.e. an operad of chain complexes $C_*^{dual}(\overline{\M})$ which is weakly equivalent to the operad of singular chains $C_*(\overline{\M})$. We prove that $C_*^{dual}(\overline{\M})$ is the linear dual of the bar construction $B(grav)$, where $grav$ is a chain model of the gravity operad based on cacti without basepoint. This shows that the Gravity and Hypercommutative operad are Koszul dual also at the chain level, refining a previous result of Getzler \cite{Getzler}. The construction is topological, since $C_*^{dual}(\overline{\M})(n)$ is the cellular complex associated to a regular CW-decomposition of $\overline{\M}_{0,n+1}$.
	\end{abstract}
	
	\tableofcontents
	
	\section{Introduction}
	Let $\M_{0,n+1}$ be the moduli space of genus zero Riemann surfaces with $n+1$ marked points and consider its Deligne-Mumford compactification $\overline{\M}_{0,n+1}$. There are two operads which are related to these moduli spaces: the Hypercommutative and the Gravity operad. The Hypercommutative operad $Hycom$ was studied by Barannikov-Kontsevich \cite{Barannikov-Kontsevich} and Kontsevich-Manin \cite{Kontsevich-Manin}, and its space of $n$-ary operations is given by $H_*(\overline{\M}_{0,n+1})$. The operad stucture actually exists at the topological level: indeed for any $i=1,\dots,n+1$ there are gluing maps
	\[
	\circ_i: \overline{\M}_{0,n+1}\times \overline{\M}_{0,m+1}\to \overline{\M}_{0,n+m}
	\]
	which endow the collection of complex algebraic varieties $\overline{\M}\coloneqq\{\overline{\M}_{0,n+1}\}_{n\geq 2}$ with a (cyclic) operad structure. Important examples of Hypercommutative algebras are the following:
	\begin{itemize}
		\item The cohomology of any compact Calabi-Yau manifold, see Barannikov-Kontsevich \cite{Barannikov-Kontsevich}.
		\item  Kontsevich and Manin have shown in \cite{Kontsevich-Manin} through Gromov-Witten invariants that the rational homology of a smooth projective algebraic variety is a hypercommutative algebra.		
	\end{itemize}
	Informally we can think of an Hypercommutative algebra as a generalization of a commutative algebra, because in addition to the commutative product $(x_1,x_2)$ we also have higher symmetric operations $(x_1,\dots,x_n)$, $n\in\N$, which satisfy a higher version of the associativity relation.
	
	On the other hand the Gravity operad $Grav$ was introduced by Getzler in \cite{Getzler94} as a suboperad of $H_*(\mathcal{D}_2)$, where $\mathcal{D}_2$ is the little two disks operad. The space of arity $n$ operations is given by $sH_*(\M_{0,n+1})$, the (shifted) homology of the moduli space of genus zero marked curves. In this case the operad structure is more subtle since it involves some homological transfers. In contrast to $Hycom$ the Gravity operad is not the homology of a topological operad, however there are models for $Grav$ at the level of spectra (Westerland \cite{Westerland}) and at the level of chain complexes (see Getzler-Kapranov \cite{Getzler-Kapranov} and Ward \cite{Ward}). Remarkable examples of gravity algebras are:
	\begin{itemize}
		\item $H_*^{S^1}(X)$, where $X$ is an algebra over the framed little two disk operad $f\mathcal{D}_2$ (see \cite{Westerland}).
		\item If $M$ is a closed oriented manifold, then the string topology operations on $H_*^{S^1}(LM)$ assemble to a gravity algebra structure (see \cite{Chas} and \cite{Westerland}).
		\item If $A$ is a Frobenius algebra, then $HC^*(A)$ is a gravity algebra. Further examples along these lines can be found in \cite{Ward}.
	\end{itemize}
	Informally we can think of a Gravity algebra as a generalization of a (shifted) Lie algebra, because in addition to a Lie bracket $\{x_1,x_2\}$ (of degree one) we also have higher antisymmetric operations $\{x_1,\dots,x_n\}$, $n\in\N$, which satisfy a higher version of the Jacobi relation.
	The analogy between  Hypercommutative/Gravity algebras and commutative/Lie algebras does not stop here: the Commutative and Lie operad are Koszul dual to each other, and similarly
	the Hypercommutative and the Gravity operad are Koszul dual (Getzler \cite{Getzler}). 
	\medskip	
	
	In this paper we construct a chain model for $Hycom$, i.e an operad of chain complexes $C_*^{dual}(\overline{\M})$ which is weakly equivalent to the operad of singular chains $C_*(\overline{\M})$. The upshot is that $C_*^{dual}(\overline{\M})$ is much smaller than  $C_*(\overline{\M})$ (in the sense that $C_*^{dual}(\overline{\M})(n)$ is finite dimensional in each degree for any $n\in \N$) and therefore it is more suitable for computations.
	Moreover it turns out that $C_*^{dual}(\overline{\M})$ can be identified (up to a shift in degrees) with the linear dual of the operadic bar costruction $B(grav)$, where $grav$ is a chain model for the Gravity operad which is very close to that of Ward \cite{Ward}. This proves Koszul duality between the Hypercommutative and the Gravity operad at the level of chains (Theorem \ref{thm: chain level Koszul duality}), refining the result of Getzler \cite{Getzler}.
	
	\medskip
	The ideas behind the construction of $C_*^{dual}(\overline{\M})$ are very close to that of \cite{Salvatore}: for each choice of positive numbers $a_1,\dots,a_n>0$ the second author constructed (open) cell decompositions of $F_n(\C)/\C\rtimes \R^{>0}$, the quotient of the ordered configuration space by the action of $\C\rtimes \R^{>0}$ by translations and dilations. Then he proved that these decompositions can be extended to regular CW-decompositions of $FM(n)$, the Fulton-MacPherson compactification of $F_n(\C)/\C\rtimes \R^{>0}$ (see Axelrod-Singer \cite{Axelrod-Singer}). The combinatorics arising from this construction is the following: a cell is represented by a tree with edges of two colors and vertices labelled by cells of the cacti operad (see \cite{Salvatore}, McClure-Smith \cite{McClure-Smith1} and Kontsevich-Soibelman \cite{Kontsevich-Soibelman}). A subtle point of this construction is that the CW-decomposition obtained in this way depends on the initial choice of the weights $a_1,\dots,a_n$ and there is not any coherent choice of weights that makes all the operad compositions
	\[
	\circ_i:FM(n)\times FM(m)\to FM(n+m-1)
	\]
	send a product of cells into a cell. However it is possible to get rid of the weights, obtaining regular CW-decompositions of $FM(n)$, $n\in \N$, such that each $\circ_i$ sends a product of cells into a cell. Therefore one obtains a cell decomposition of the Fulton-MacPherson operad $FM$, such that the operad of cellular chains $C_*^{cell}(FM)$ is precisely the cobar-bar construction $\Omega B (Cact)$ of the cacti operad (see Section \ref{sec: cactus models} for an overview of cacti and related operads).  
	
	Similarly, for any choice of weights $a_1,\dots,a_n>0$, we construct an open cell decomposition of $\M_{0,n+1}\cong F_n(\C)/\C\rtimes \C^*$ (see Section \ref{sec: combinatorial models for the open moduli space}), where $\C\rtimes \C^*$ acts by translations, rotations and dilations on the ordered configuration space $F_n(\C)$. The intuition is the following and goes back to Nakamura \cite{Nakamura} and the paper \cite{Salvatore} by the second author: given a configuration of $n$ points $(z_1,\dots,z_n)\in F_n(\C)$, think of each $z_i$ as a negative electric charge of value $-a_i$. Each charge generates a radial electric field and by superposition we obtain a total electric field
	\[
	E(z)\coloneqq \sum_{i=1}^n-a_i\frac{z-z_i}{\abs{z-z_i}^2}
	\]
	The electric field is conservative, and it is not difficult to find an explicit potential for $E(z)$: indeed it turns out that 
	\[
	E(z)=-\nabla U(z)
	\]
	where $U(z)\coloneqq \log \abs{h(z)}$ and $h(z)\coloneqq\prod_{i=1}^n(z-z_i)^{a_i}$. Now look at the flow lines of $E(z)$, i.e. curves $\gamma(t)$ such that $\gamma'(t)=E(\gamma(t))$: most of them start at one point $z_i$ of the configuration $(z_1,\dots,z_n)$ and go to infinity; conversely, there are some flow lines which are of finite length, namely those that connects two zeros of $E(z)$ or one zero to a point $z_i$ of the configuration. Looking at these special flow lines we can associate to each point $(z_1,\dots,z_n)\in F_n(\C)$ a tree with black and white vertices (correponding respectively to zeroes of $E(z)$ and points of the configuration) with labels (encoding the value of $U(z)$ at each black vertex and the angles between two consecutive flow lines incident to a white vertex). See Figure \ref{fig:associazione albero bw to a configuration} for a picture.
	\begin{figure}
		\centering

		\tikzset{every picture/.style={line width=0.75pt}} 
		
		\begin{tikzpicture}[x=0.75pt,y=0.75pt,yscale=-1,xscale=1]
			
			\draw [color={rgb, 255:red, 155; green, 155; blue, 155 }  ,draw opacity=0.5 ]   (155.93,212.36) .. controls (177.33,200.68) and (222.21,167.49) .. (239.73,130.52) ;
			\draw [color={rgb, 255:red, 155; green, 155; blue, 155 }  ,draw opacity=0.5 ]   (206.65,348.47) .. controls (192.32,330.2) and (198.86,286.2) .. (206.52,251.28) ;
			\draw [color={rgb, 255:red, 155; green, 155; blue, 155 }  ,draw opacity=0.5 ]   (206.52,251.28) .. controls (218.32,214.2) and (270.86,192.79) .. (300.05,185.01) ;
			\draw    (206.52,251.28) -- (246.38,259.06) ;
			\draw    (179.28,259.06) -- (206.52,251.28) ;
			\draw    (179.28,259.06) -- (155.93,212.36) ;
			\draw    (134,176.51) -- (155.93,212.36) ;
			\draw [color={rgb, 255:red, 155; green, 155; blue, 155 }  ,draw opacity=0.5 ]   (74.32,241.44) .. controls (95.73,239.49) and (121.03,233.66) .. (155.93,212.36) ;
			\draw  [fill={rgb, 255:red, 0; green, 0; blue, 0 }  ,fill opacity=1 ] (151.19,212.36) .. controls (151.19,209.74) and (153.31,207.62) .. (155.93,207.62) .. controls (158.54,207.62) and (160.67,209.74) .. (160.67,212.36) .. controls (160.67,214.98) and (158.54,217.1) .. (155.93,217.1) .. controls (153.31,217.1) and (151.19,214.98) .. (151.19,212.36) -- cycle ;
			\draw  [fill={rgb, 255:red, 0; green, 0; blue, 0 }  ,fill opacity=1 ] (201.78,251.28) .. controls (201.78,248.66) and (203.9,246.54) .. (206.52,246.54) .. controls (209.14,246.54) and (211.26,248.66) .. (211.26,251.28) .. controls (211.26,253.89) and (209.14,256.01) .. (206.52,256.01) .. controls (203.9,256.01) and (201.78,253.89) .. (201.78,251.28) -- cycle ;
			\draw [color={rgb, 255:red, 155; green, 155; blue, 155 }  ,draw opacity=0.5 ]   (134,176.51) .. controls (171.62,216.14) and (204.7,159.71) .. (218.32,116.9) ;
			\draw [color={rgb, 255:red, 155; green, 155; blue, 155 }  ,draw opacity=0.5 ]   (134,176.51) .. controls (175.51,188.9) and (176.21,152.14) .. (189.84,109.33) ;
			\draw [color={rgb, 255:red, 155; green, 155; blue, 155 }  ,draw opacity=0.5 ]   (76.27,218.09) .. controls (113.24,221.98) and (134.65,216.14) .. (134,176.51) ;
			\draw [color={rgb, 255:red, 155; green, 155; blue, 155 }  ,draw opacity=0.5 ]   (86,181.12) .. controls (103.51,190.85) and (107.4,194.74) .. (134,176.51) ;
			\draw [color={rgb, 255:red, 155; green, 155; blue, 155 }  ,draw opacity=0.5 ]   (134,176.51) .. controls (152.16,165.55) and (154.11,138.31) .. (152.16,120.79) ;
			\draw [color={rgb, 255:red, 155; green, 155; blue, 155 }  ,draw opacity=0.5 ]   (134,176.51) .. controls (136.59,151.93) and (130.75,153.87) .. (121.03,136.36) ;
			\draw [color={rgb, 255:red, 155; green, 155; blue, 155 }  ,draw opacity=0.5 ]   (134,176.51) .. controls (111.3,167.49) and (113.24,173.33) .. (97.67,155.82) ;
			\draw [color={rgb, 255:red, 155; green, 155; blue, 155 }  ,draw opacity=0.5 ]   (179.28,259.06) .. controls (194.97,212.25) and (241.67,183.06) .. (284.48,163.6) ;
			\draw [color={rgb, 255:red, 155; green, 155; blue, 155 }  ,draw opacity=0.5 ]   (179.28,259.06) .. controls (161.89,206.41) and (233.89,173.33) .. (259.19,136.36) ;
			\draw [color={rgb, 255:red, 155; green, 155; blue, 155 }  ,draw opacity=0.5 ]   (80.16,260.9) .. controls (111.3,255.06) and (148.27,221.98) .. (179.28,259.06) ;
			\draw [color={rgb, 255:red, 155; green, 155; blue, 155 }  ,draw opacity=0.5 ]   (93.78,290.09) .. controls (124.92,284.25) and (148.27,251.17) .. (179.28,259.06) ;
			\draw [color={rgb, 255:red, 155; green, 155; blue, 155 }  ,draw opacity=0.5 ]   (189.84,349.22) .. controls (167.73,317.33) and (204.7,272.57) .. (179.28,259.06) ;
			\draw [color={rgb, 255:red, 155; green, 155; blue, 155 }  ,draw opacity=0.5 ]   (156.05,337.89) .. controls (152.16,302.86) and (179.4,285.35) .. (179.28,259.06) ;
			\draw [color={rgb, 255:red, 155; green, 155; blue, 155 }  ,draw opacity=0.5 ]   (121.03,319.28) .. controls (142.43,288.14) and (159.94,288.14) .. (179.28,259.06) ;
			\draw [color={rgb, 255:red, 155; green, 155; blue, 155 }  ,draw opacity=0.5 ]   (246.38,259.06) .. controls (204.7,227.82) and (278.64,218.09) .. (307.83,210.31) ;
			\draw [color={rgb, 255:red, 155; green, 155; blue, 155 }  ,draw opacity=0.5 ]   (230,336.79) .. controls (198.86,297.87) and (218.32,257.01) .. (246.38,259.06) ;
			\draw [color={rgb, 255:red, 155; green, 155; blue, 155 }  ,draw opacity=0.5 ]   (266.97,317.33) .. controls (243.62,301.76) and (220.27,274.52) .. (246.38,259.06) ;
			\draw [color={rgb, 255:red, 155; green, 155; blue, 155 }  ,draw opacity=0.5 ]   (246.38,259.06) .. controls (255.29,229.76) and (282.54,229.12) .. (309.78,229.28) ;
			\draw [color={rgb, 255:red, 155; green, 155; blue, 155 }  ,draw opacity=0.5 ]   (246.38,259.06) .. controls (272.81,256.36) and (284.48,260.25) .. (302,260.25) ;
			\draw [color={rgb, 255:red, 155; green, 155; blue, 155 }  ,draw opacity=0.5 ]   (246.38,259.06) .. controls (257.24,274.52) and (268.92,288.14) .. (282.54,297.87) ;
			\draw  [fill={rgb, 255:red, 255; green, 255; blue, 255 }  ,fill opacity=1 ] (241.64,259.06) .. controls (241.64,256.44) and (243.76,254.32) .. (246.38,254.32) .. controls (249,254.32) and (251.12,256.44) .. (251.12,259.06) .. controls (251.12,261.68) and (249,263.8) .. (246.38,263.8) .. controls (243.76,263.8) and (241.64,261.68) .. (241.64,259.06) -- cycle ;
			\draw  [fill={rgb, 255:red, 255; green, 255; blue, 255 }  ,fill opacity=1 ] (129.27,176.51) .. controls (129.27,173.9) and (131.39,171.78) .. (134,171.78) .. controls (136.62,171.78) and (138.74,173.9) .. (138.74,176.51) .. controls (138.74,179.13) and (136.62,181.25) .. (134,181.25) .. controls (131.39,181.25) and (129.27,179.13) .. (129.27,176.51) -- cycle ;
			\draw  [fill={rgb, 255:red, 255; green, 255; blue, 255 }  ,fill opacity=1 ] (174.54,259.06) .. controls (174.54,256.44) and (176.66,254.32) .. (179.28,254.32) .. controls (181.9,254.32) and (184.02,256.44) .. (184.02,259.06) .. controls (184.02,261.68) and (181.9,263.8) .. (179.28,263.8) .. controls (176.66,263.8) and (174.54,261.68) .. (174.54,259.06) -- cycle ;
			\draw    (497.3,251.28) -- (537.16,259.06) ;
			\draw    (470.06,259.06) -- (497.3,251.28) ;
			\draw    (470.06,259.06) -- (446.71,212.36) ;
			\draw    (424.78,176.51) -- (446.71,212.36) ;
			\draw  [fill={rgb, 255:red, 0; green, 0; blue, 0 }  ,fill opacity=1 ] (441.97,212.36) .. controls (441.97,209.74) and (444.09,207.62) .. (446.71,207.62) .. controls (449.32,207.62) and (451.44,209.74) .. (451.44,212.36) .. controls (451.44,214.98) and (449.32,217.1) .. (446.71,217.1) .. controls (444.09,217.1) and (441.97,214.98) .. (441.97,212.36) -- cycle ;
			\draw  [fill={rgb, 255:red, 0; green, 0; blue, 0 }  ,fill opacity=1 ] (492.56,251.28) .. controls (492.56,248.66) and (494.68,246.54) .. (497.3,246.54) .. controls (499.92,246.54) and (502.04,248.66) .. (502.04,251.28) .. controls (502.04,253.89) and (499.92,256.01) .. (497.3,256.01) .. controls (494.68,256.01) and (492.56,253.89) .. (492.56,251.28) -- cycle ;
			\draw  [fill={rgb, 255:red, 255; green, 255; blue, 255 }  ,fill opacity=1 ] (532.42,259.06) .. controls (532.42,256.44) and (534.54,254.32) .. (537.16,254.32) .. controls (539.77,254.32) and (541.9,256.44) .. (541.9,259.06) .. controls (541.9,261.68) and (539.77,263.8) .. (537.16,263.8) .. controls (534.54,263.8) and (532.42,261.68) .. (532.42,259.06) -- cycle ;
			\draw  [fill={rgb, 255:red, 255; green, 255; blue, 255 }  ,fill opacity=1 ] (420.04,176.51) .. controls (420.04,173.9) and (422.17,171.78) .. (424.78,171.78) .. controls (427.4,171.78) and (429.52,173.9) .. (429.52,176.51) .. controls (429.52,179.13) and (427.4,181.25) .. (424.78,181.25) .. controls (422.17,181.25) and (420.04,179.13) .. (420.04,176.51) -- cycle ;
			\draw  [fill={rgb, 255:red, 255; green, 255; blue, 255 }  ,fill opacity=1 ] (465.32,259.06) .. controls (465.32,256.44) and (467.44,254.32) .. (470.06,254.32) .. controls (472.67,254.32) and (474.8,256.44) .. (474.8,259.06) .. controls (474.8,261.68) and (472.67,263.8) .. (470.06,263.8) .. controls (467.44,263.8) and (465.32,261.68) .. (465.32,259.06) -- cycle ;
			\draw [line width=0.75]    (327.17,235.71) -- (400.18,235.71) ;
			\draw [shift={(403.18,235.71)}, rotate = 180] [fill={rgb, 255:red, 0; green, 0; blue, 0 }  ][line width=0.08]  [draw opacity=0] (10.72,-5.15) -- (0,0) -- (10.72,5.15) -- (7.12,0) -- cycle    ;
			\draw [shift={(327.17,235.71)}, rotate = 180] [color={rgb, 255:red, 0; green, 0; blue, 0 }  ][line width=0.75]    (0,5.59) -- (0,-5.59)   ;
			
			\draw (107.66,172.12) node [anchor=north west][inner sep=0.75pt]   [align=left] {$\displaystyle z_{1}$};
			\draw (156.26,262.25) node [anchor=north west][inner sep=0.75pt]   [align=left] {$\displaystyle z_{2}$};
			\draw (264.29,237.01) node [anchor=north west][inner sep=0.75pt]   [align=left] {$\displaystyle z_{3}$};
			\draw (403.96,158.62) node [anchor=north west][inner sep=0.75pt]   [align=left] {$\displaystyle 1$};
			\draw (453.98,264.2) node [anchor=north west][inner sep=0.75pt]   [align=left] {$\displaystyle 2$};
			\draw (548.47,256.47) node [anchor=north west][inner sep=0.75pt]   [align=left] {$\displaystyle 3$};
			\draw (159.79,188.13) node [anchor=north west][inner sep=0.75pt]   [align=left] {$\displaystyle b_{1}$};
			\draw (203.41,224.05) node [anchor=north west][inner sep=0.75pt]   [align=left] {$\displaystyle b_{2}$};

		\end{tikzpicture}
		
		\caption{This picture shows how to associate a labelled tree with black and white vertices to a point $(z_1,z_2,z_3)\in F_3(\C)$. On the left we see the flow lines of the electric field $E(z)$: some of them are of finite length (the black ones) and connect points of the configuration (white vertices) to zeroes of $E(z)$ (black vertices).}
		\label{fig:associazione albero bw to a configuration}
	\end{figure}
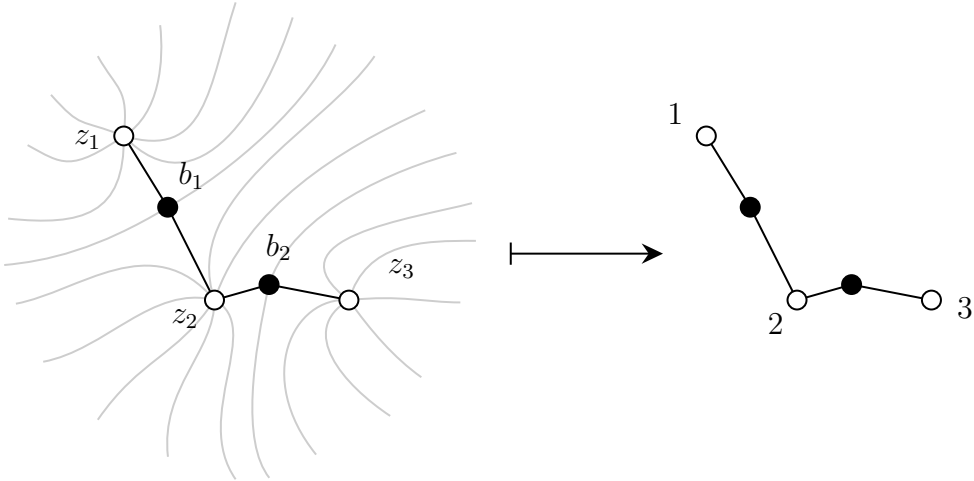
	This association factors through the action of $\C\rtimes \C^*$ and gives a homeomorphism (Theorem \ref{thm:omeo tra spazio di moduli e labelled trees}) 
	\[
	\phi_{a_1,\dots,a_n}:\M_{0,n+1}\cong \frac{F_n(\C)}{\C\rtimes \C^*}\to Tr_n
	\]
	where $Tr_n$ is the space of labelled trees (Definition \ref{defn: space of labelled trees}). Following the path of \cite{Salvatore} we define a regular CW-complex $N_n^{\sigma}(\mathcal{C}/S^1)$, called the \emph{space of nested cacti} (Paragraph \ref{subsec: space of nested cacti}) and we prove that $Tr_n$ is homeomorphic to a subspace of $N_n^{\sigma}(\mathcal{C}/S^1)$. The combinatorics of the cellular structure is similar to the case of $FM$, indeed the cells of $N_n^{\sigma}(\mathcal{C}/S^1)$ are indexed by trees with $n$ leaves and vertices labelled by cacti without basepoint. It is important to note that the cellular complexes of the spaces of unbased cacti form an operad $grav$ which is a chain model for the gravity operad (see Paragraph \ref{subsec: gravity operad}). To sum up we have an embedding
	\[
	\phi_{a_1,\dots,a_n}:\M_{0,n+1}\to Tr_n\subseteq N_n^{\sigma}(\mathcal{C}/S^1)
	\]
	and in Paragraph \ref{subsec: CW decomposition of Deligne mumford} we show that it extends to a homeomorphism
	\[
	\overline{\phi}_{a_1,\dots,a_n}:\overline{\M}_{0,n+1}\to N_n^{\sigma}(\mathcal{C}/S^1)
	\]
	giving a regular CW-decomposition of $\overline{\M}_{0,n+1}$ (Theorem \ref{thm:omeo tra la compattificazione e lo spazio dei nested cactus}). It is easy to see that these CW-decomposition are not compatible with the operad structure, in the sense that the composition maps $\circ_i:\overline{\M}_{0,n+1}\times \overline{\M}_{0,m+1}\to\overline{\M}_{0,n+m}$ are not even cellular maps. However there is some nice combinatorics going on: the Gravity operad has a chain model $grav$ based on cacti without base point (see Section \ref{sec: cactus models} or \cite{Ward}). It turns out that the collection of cellular complexes $C_*^{cell}(\overline{\M})\coloneqq\{C_*^{cell}(\overline{\M}_{0,n+1})\}_{n\geq 2}$ can be identified, up to a shift in degrees, with the operadic bar costruction $B(grav)$. Therefore it has a natural cooperad structure. Passing to cochains, i.e. to the dual cell decomposition by Poincarè duality, we get an operad in chain complexes which we call 
	\[
	C_*^{dual}(\overline{\M})\coloneqq\{C_*^{dual}(\overline{\M}_{0,n+1})\}_{n\geq 2}
	\]
	These chain complexes do not depends anymore on the choice of weights, and finally we prove that $C_*^{dual}(\overline{\M})$ is a chain model for the Hypercommutative operad, i.e. there is a zig-zag of quasi isomorphisms between $C_*^{dual}(\overline{\M})$ and the operad of singular chains $C_*(\overline{\M})$ (Theorem \ref{thm: chain model for hycomm}).
	
	\subsection{Two open problems:} 
	\begin{enumerate}
		\item The original purpose of this paper was to prove that $\overline{\M}$ is a cellular operad. This means that there are CW-decompositions of $\overline{\M}_{0,n+1}$, $n\in\N$, such that the operadic compositions are cellular maps. The dual cells described in Section \ref{sec:operad of dual cells} are good candidates for being such decompositions, but there are some issues due to the fact that they depend on the initial choice of some weights $a_1,\dots,a_n>0$. See Paragraph \ref{subsec:an open problem} for a wider overview on this problem.
		\item In \cite{Getzler} Getzler proved that the Hypercommutative and the Gravity operad are Koszul dual as operad of graded vector spaces. In this paper we lift this result to the category of dg-operads (Theorem \ref{thm: chain level Koszul duality}). The ultimate generalization would be to prove Koszul duality in the category of spectra: indeed both the Hypercommutative and Gravity operad have models in this category. In the first case one just take $\Sigma_+^{\infty}\overline{\M}$; in the second case we do not have a topological operad to suspend, but it possible to construct the gravity operad by taking the homotopy fixed points $(\Sigma_+^{\infty}\mathcal{D}_2)^{hS^1}$ (Westerland, \cite{Westerland}). If $\mathcal{P}$ is an operad in spectra, the Koszul dual in this context is usally denoted by $\mathcal{K}$ and it is obtained as $\mathcal{K}(\mathcal{P})\coloneqq F_{\mathbb{S}}(B(\mathcal{P}),\mathbb{S})$. Here $\mathbb{S}$ is the sphere spectrum,  $F_{\mathbb{S}}(-,-)$ is the internal hom functor and $B(-)$ is the operadic bar construction of Ching \cite{Ching}. In \cite[Section 4]{Ward3} Ward conjectures that $\mathcal{K}(\Sigma_+^{\infty}\overline{\M})$ is equivalent to  $\Lambda^{-2}(\Sigma_+^{\infty}\mathcal{D}_2)^{hS^1}$, where $\Lambda$ is the operadic suspension. In this paper we verify the analogue of this conjecture in the category of chain complexes (Theorem \ref{thm: chain level Koszul duality}). Observe that in the case of the little $n$ disks operad $\mathcal{D}_n$ the analogues of these results are known. At the level of chain complexes Fresse \cite{Fresse} proved that the Koszul dual of $\mathcal{D}_n$ is equivalent to its $n$-fold desuspension. The corresponding statement at the level of spectra is due to Ching and Salvatore \cite{Salvatore-Ching} who proved that $\mathcal{K}(\Sigma^{\infty}_+\mathcal{D}_n)\simeq \Lambda^{-n} \Sigma^{\infty}_+\mathcal{D}_n$, where $\Lambda$ is the operadic suspension. Ward suggests that combining this result with the main theorem of \cite{Ward3} one should get  that $\mathcal{K}(\Sigma_+^{\infty}\overline{\M})$ is equivalent to an operadic suspension of $(\Sigma_+^{\infty}\mathcal{D}_2)^{hS^1}$. 
	\end{enumerate}
		\subsection{Summary of the paper:} we start by reviewing the main facts about the Gravity and Hypercommutative operads (Section \ref{sec: operads and moduli spaces}). Then we recall the construction of an open cell decomposition of $\M_{0,n+1}$  which goes back to \cite{Salvatore} and \cite{Nakamura} (Section \ref{sec: combinatorial models for the open moduli space}). Section \ref{sec: cactus models} is an overview about cacti and their role in operad theory. In Section \ref{sec: combinatorial models for Deligne-Mumford} we extend the open cell decomposition of Section \ref{sec: combinatorial models for the open moduli space} to a regular CW-decomposition of $\overline{\M}_{0,n+1}$. Then we discuss the compatibility of these cells with the operad structure of $\overline{\M}$: in Section \ref{sec:operad of dual cells} we construct explicitly the dual cell decomposition of $\overline{\M}_{0,n+1}$ and we define an operad structure on the collection of cellular chain complexes $C_*^{dual}(\overline{\M})$. We prove in Section \ref{sec: chain model for hycomm} that $C_*^{dual}(\overline{\M})$ is weakly equivalent to the operad of singular chains $C_*(\overline{\M})$. Using the previous results we get that the Hypercommutative and Gravity operad are Koszul dual at the level of chains (Section \ref{sec: Koszul duality}, Theorem \ref{thm: chain level Koszul duality}).

	\subsection{Acknowledgments:} this paper is part of the first author's PhD thesis, written under the supervision of the second author. This was funded by the PhD program of the University of Roma Tor Vergata and by the MIUR Excellence Project MatMod@TOV awarded to the
	Department of Mathematics, University of Roma Tor Vergata,  CUP E83C18000100006.

	\section{Operads and moduli spaces}\label{sec: operads and moduli spaces}
	In this section we just recall the definition of the Hypercommutative and Gravity operads, stating the main results already present in the literature.
	
	\subsection{The Hypercommutative operad}\label{subsec: the hypercommutative operad}
	
	\subsubsection{Pointed stable curves} \label{subsec: pointed stable curves}
	Let $\M_{0,n+1}$ be the moduli space of genus zero Riemann surfaces with $n+1$ marked points. Grothendieck and Knudsen \cite{Deligne}, \cite{Knudsen} defined a canonical compactification $\overline{\M}_{0,n+1}$ of $\mathcal{M}_{0,n+1}$: $\overline{\M}_{0,n+1}$ is the moduli space of stable $(n + 1)$-pointed curves of genus $0$, i.e. the data $(C,p_0,\dots,p_n)$ of a (possibly reducible) algebraic curve $C$ with at most nodal singularities and smooth points $p_0,\dots,p_n\in C$ (all distinct) such that:
	\begin{itemize}
		\item Each irreducible component of $C$ is isomorphic to $\C P^1$.
		\item  The dual graph is a tree. Recall that the dual graph has one vertex for each irreducible components of $C$, edges corresponding to  intersection points and half edges for each marked point.
		\item \textbf{Stability condition:} 
		each irreducible component of $C$ has at least three special points, where a special point means either one of the $p_i$, $i=0,\dots,n$ or a singular point.
	\end{itemize}
	Figure \ref{fig:esempi di curve stabili} shows some examples of stable and not stable curves.
	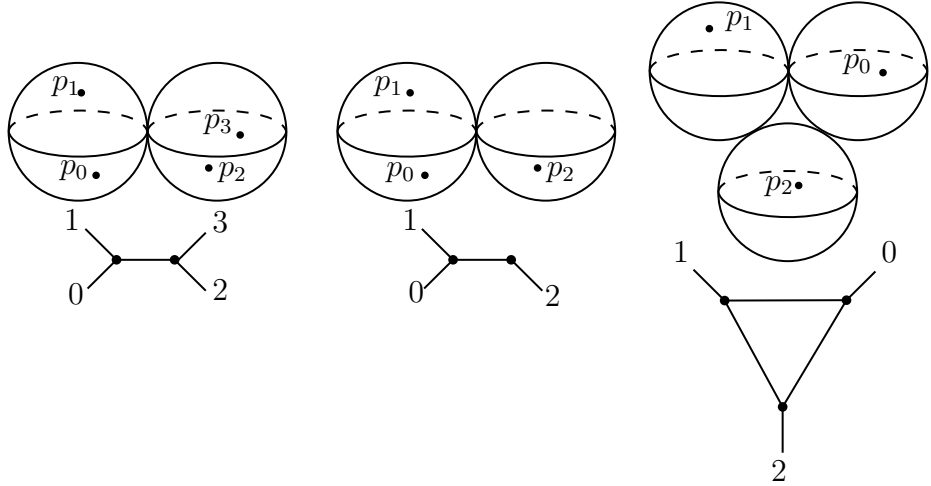
\begin{figure}
		\centering

		\tikzset{every picture/.style={line width=0.75pt}} 
		
		\begin{tikzpicture}[x=0.75pt,y=0.75pt,yscale=-1,xscale=1]
			
			\draw   (31.83,121.25) .. controls (31.83,102.11) and (47.35,86.6) .. (66.49,86.6) .. controls (85.63,86.6) and (101.14,102.11) .. (101.14,121.25) .. controls (101.14,140.39) and (85.63,155.9) .. (66.49,155.9) .. controls (47.35,155.9) and (31.83,140.39) .. (31.83,121.25) -- cycle ;
			\draw    (31.83,121.25) .. controls (36.29,137.46) and (96.23,138.39) .. (101.14,121.25) ;
			\draw  [dash pattern={on 4.5pt off 4.5pt}]  (31.83,121.25) .. controls (32.6,107.96) and (97.15,106.11) .. (101.14,121.25) ;
			\draw   (101.14,121.25) .. controls (101.14,102.11) and (116.66,86.6) .. (135.8,86.6) .. controls (154.94,86.6) and (170.45,102.11) .. (170.45,121.25) .. controls (170.45,140.39) and (154.94,155.9) .. (135.8,155.9) .. controls (116.66,155.9) and (101.14,140.39) .. (101.14,121.25) -- cycle ;
			\draw    (101.14,121.25) .. controls (105.6,137.46) and (165.53,138.39) .. (170.45,121.25) ;
			\draw  [dash pattern={on 4.5pt off 4.5pt}]  (101.14,121.25) .. controls (101.55,114.19) and (119.95,110.36) .. (137.71,110.59) .. controls (153.39,110.79) and (168.58,114.15) .. (170.45,121.25) ;
			\draw   (351.79,90.82) .. controls (351.79,71.68) and (367.31,56.17) .. (386.45,56.17) .. controls (405.59,56.17) and (421.1,71.68) .. (421.1,90.82) .. controls (421.1,109.96) and (405.59,125.48) .. (386.45,125.48) .. controls (367.31,125.48) and (351.79,109.96) .. (351.79,90.82) -- cycle ;
			\draw    (351.79,90.82) .. controls (356.25,107.03) and (416.19,107.96) .. (421.1,90.82) ;
			\draw  [dash pattern={on 4.5pt off 4.5pt}]  (351.79,90.82) .. controls (352.56,77.53) and (417.11,75.68) .. (421.1,90.82) ;
			\draw   (421.1,90.82) .. controls (421.1,71.68) and (436.62,56.17) .. (455.76,56.17) .. controls (474.9,56.17) and (490.41,71.68) .. (490.41,90.82) .. controls (490.41,109.96) and (474.9,125.48) .. (455.76,125.48) .. controls (436.62,125.48) and (421.1,109.96) .. (421.1,90.82) -- cycle ;
			\draw    (421.1,90.82) .. controls (425.56,107.03) and (485.5,107.96) .. (490.41,90.82) ;
			\draw  [dash pattern={on 4.5pt off 4.5pt}]  (421.1,90.82) .. controls (421.51,83.76) and (439.91,79.93) .. (457.67,80.16) .. controls (473.35,80.36) and (488.54,83.72) .. (490.41,90.82) ;
			\draw   (386.06,151.68) .. controls (386.06,132.54) and (401.58,117.02) .. (420.72,117.02) .. controls (439.86,117.02) and (455.37,132.54) .. (455.37,151.68) .. controls (455.37,170.82) and (439.86,186.33) .. (420.72,186.33) .. controls (401.58,186.33) and (386.06,170.82) .. (386.06,151.68) -- cycle ;
			\draw    (386.06,151.68) .. controls (390.52,167.89) and (450.46,168.81) .. (455.37,151.68) ;
			\draw  [dash pattern={on 4.5pt off 4.5pt}]  (386.06,151.68) .. controls (386.47,144.62) and (404.87,140.79) .. (422.63,141.02) .. controls (438.31,141.22) and (453.5,144.58) .. (455.37,151.68) ;
			\draw  [fill={rgb, 255:red, 0; green, 0; blue, 0 }  ,fill opacity=1 ] (66.72,101.66) .. controls (66.72,100.89) and (67.34,100.27) .. (68.1,100.27) .. controls (68.87,100.27) and (69.48,100.89) .. (69.48,101.66) .. controls (69.48,102.42) and (68.87,103.04) .. (68.1,103.04) .. controls (67.34,103.04) and (66.72,102.42) .. (66.72,101.66) -- cycle ;
			\draw  [fill={rgb, 255:red, 0; green, 0; blue, 0 }  ,fill opacity=1 ] (74.1,143.15) .. controls (74.1,142.39) and (74.71,141.77) .. (75.48,141.77) .. controls (76.24,141.77) and (76.86,142.39) .. (76.86,143.15) .. controls (76.86,143.91) and (76.24,144.53) .. (75.48,144.53) .. controls (74.71,144.53) and (74.1,143.91) .. (74.1,143.15) -- cycle ;
			\draw  [fill={rgb, 255:red, 0; green, 0; blue, 0 }  ,fill opacity=1 ] (146.02,122.86) .. controls (146.02,122.1) and (146.64,121.48) .. (147.4,121.48) .. controls (148.16,121.48) and (148.78,122.1) .. (148.78,122.86) .. controls (148.78,123.63) and (148.16,124.25) .. (147.4,124.25) .. controls (146.64,124.25) and (146.02,123.63) .. (146.02,122.86) -- cycle ;
			\draw  [fill={rgb, 255:red, 0; green, 0; blue, 0 }  ,fill opacity=1 ] (130.34,139.46) .. controls (130.34,138.7) and (130.96,138.08) .. (131.73,138.08) .. controls (132.49,138.08) and (133.11,138.7) .. (133.11,139.46) .. controls (133.11,140.22) and (132.49,140.84) .. (131.73,140.84) .. controls (130.96,140.84) and (130.34,140.22) .. (130.34,139.46) -- cycle ;
			\draw  [fill={rgb, 255:red, 0; green, 0; blue, 0 }  ,fill opacity=1 ] (380.23,69.38) .. controls (380.23,68.62) and (380.84,68) .. (381.61,68) .. controls (382.37,68) and (382.99,68.62) .. (382.99,69.38) .. controls (382.99,70.15) and (382.37,70.77) .. (381.61,70.77) .. controls (380.84,70.77) and (380.23,70.15) .. (380.23,69.38) -- cycle ;
			\draw  [fill={rgb, 255:red, 0; green, 0; blue, 0 }  ,fill opacity=1 ] (424.8,148.27) .. controls (424.8,147.5) and (425.42,146.88) .. (426.18,146.88) .. controls (426.94,146.88) and (427.56,147.5) .. (427.56,148.27) .. controls (427.56,149.03) and (426.94,149.65) .. (426.18,149.65) .. controls (425.42,149.65) and (424.8,149.03) .. (424.8,148.27) -- cycle ;
			\draw   (195.96,121.25) .. controls (195.96,102.11) and (211.48,86.6) .. (230.62,86.6) .. controls (249.76,86.6) and (265.27,102.11) .. (265.27,121.25) .. controls (265.27,140.39) and (249.76,155.9) .. (230.62,155.9) .. controls (211.48,155.9) and (195.96,140.39) .. (195.96,121.25) -- cycle ;
			\draw    (195.96,121.25) .. controls (200.42,137.46) and (260.35,138.39) .. (265.27,121.25) ;
			\draw  [dash pattern={on 4.5pt off 4.5pt}]  (195.96,121.25) .. controls (196.73,107.96) and (261.28,106.11) .. (265.27,121.25) ;
			\draw   (265.27,121.25) .. controls (265.27,102.11) and (280.79,86.6) .. (299.93,86.6) .. controls (319.07,86.6) and (334.58,102.11) .. (334.58,121.25) .. controls (334.58,140.39) and (319.07,155.9) .. (299.93,155.9) .. controls (280.79,155.9) and (265.27,140.39) .. (265.27,121.25) -- cycle ;
			\draw    (265.27,121.25) .. controls (269.73,137.46) and (329.66,138.39) .. (334.58,121.25) ;
			\draw  [dash pattern={on 4.5pt off 4.5pt}]  (265.27,121.25) .. controls (265.68,114.19) and (284.08,110.36) .. (301.84,110.59) .. controls (317.52,110.79) and (332.71,114.15) .. (334.58,121.25) ;
			\draw  [fill={rgb, 255:red, 0; green, 0; blue, 0 }  ,fill opacity=1 ] (230.85,101.66) .. controls (230.85,100.89) and (231.47,100.27) .. (232.23,100.27) .. controls (233,100.27) and (233.61,100.89) .. (233.61,101.66) .. controls (233.61,102.42) and (233,103.04) .. (232.23,103.04) .. controls (231.47,103.04) and (230.85,102.42) .. (230.85,101.66) -- cycle ;
			\draw  [fill={rgb, 255:red, 0; green, 0; blue, 0 }  ,fill opacity=1 ] (238.23,143.15) .. controls (238.23,142.39) and (238.84,141.77) .. (239.61,141.77) .. controls (240.37,141.77) and (240.99,142.39) .. (240.99,143.15) .. controls (240.99,143.91) and (240.37,144.53) .. (239.61,144.53) .. controls (238.84,144.53) and (238.23,143.91) .. (238.23,143.15) -- cycle ;
			\draw  [fill={rgb, 255:red, 0; green, 0; blue, 0 }  ,fill opacity=1 ] (294.47,139.46) .. controls (294.47,138.7) and (295.09,138.08) .. (295.85,138.08) .. controls (296.62,138.08) and (297.24,138.7) .. (297.24,139.46) .. controls (297.24,140.22) and (296.62,140.84) .. (295.85,140.84) .. controls (295.09,140.84) and (294.47,140.22) .. (294.47,139.46) -- cycle ;
			\draw  [fill={rgb, 255:red, 0; green, 0; blue, 0 }  ,fill opacity=1 ] (466.9,91.51) .. controls (466.9,90.75) and (467.52,90.13) .. (468.28,90.13) .. controls (469.05,90.13) and (469.67,90.75) .. (469.67,91.51) .. controls (469.67,92.28) and (469.05,92.9) .. (468.28,92.9) .. controls (467.52,92.9) and (466.9,92.28) .. (466.9,91.51) -- cycle ;
			\draw  [fill={rgb, 255:red, 0; green, 0; blue, 0 }  ,fill opacity=1 ] (83.76,185.66) .. controls (83.76,184.6) and (84.61,183.75) .. (85.67,183.75) .. controls (86.72,183.75) and (87.58,184.6) .. (87.58,185.66) .. controls (87.58,186.71) and (86.72,187.56) .. (85.67,187.56) .. controls (84.61,187.56) and (83.76,186.71) .. (83.76,185.66) -- cycle ;
			\draw    (85.67,185.66) -- (114.67,185.66) ;
			\draw  [fill={rgb, 255:red, 0; green, 0; blue, 0 }  ,fill opacity=1 ] (112.76,185.66) .. controls (112.76,184.6) and (113.61,183.75) .. (114.67,183.75) .. controls (115.72,183.75) and (116.58,184.6) .. (116.58,185.66) .. controls (116.58,186.71) and (115.72,187.56) .. (114.67,187.56) .. controls (113.61,187.56) and (112.76,186.71) .. (112.76,185.66) -- cycle ;
			\draw  [fill={rgb, 255:red, 0; green, 0; blue, 0 }  ,fill opacity=1 ] (251.76,185.66) .. controls (251.76,184.6) and (252.61,183.75) .. (253.67,183.75) .. controls (254.72,183.75) and (255.58,184.6) .. (255.58,185.66) .. controls (255.58,186.71) and (254.72,187.56) .. (253.67,187.56) .. controls (252.61,187.56) and (251.76,186.71) .. (251.76,185.66) -- cycle ;
			\draw    (253.67,185.66) -- (282.67,185.66) ;
			\draw  [fill={rgb, 255:red, 0; green, 0; blue, 0 }  ,fill opacity=1 ] (280.76,185.66) .. controls (280.76,184.6) and (281.61,183.75) .. (282.67,183.75) .. controls (283.72,183.75) and (284.58,184.6) .. (284.58,185.66) .. controls (284.58,186.71) and (283.72,187.56) .. (282.67,187.56) .. controls (281.61,187.56) and (280.76,186.71) .. (280.76,185.66) -- cycle ;
			\draw  [fill={rgb, 255:red, 0; green, 0; blue, 0 }  ,fill opacity=1 ] (448.14,205.96) .. controls (448.14,204.91) and (449,204.05) .. (450.05,204.05) .. controls (451.11,204.05) and (451.96,204.91) .. (451.96,205.96) .. controls (451.96,207.02) and (451.11,207.87) .. (450.05,207.87) .. controls (449,207.87) and (448.14,207.02) .. (448.14,205.96) -- cycle ;
			\draw   (389.23,206.15) -- (450.05,205.96) -- (418.18,259.64) -- cycle ;
			\draw  [fill={rgb, 255:red, 0; green, 0; blue, 0 }  ,fill opacity=1 ] (416.27,259.64) .. controls (416.27,258.59) and (417.13,257.73) .. (418.18,257.73) .. controls (419.24,257.73) and (420.09,258.59) .. (420.09,259.64) .. controls (420.09,260.69) and (419.24,261.55) .. (418.18,261.55) .. controls (417.13,261.55) and (416.27,260.69) .. (416.27,259.64) -- cycle ;
			\draw  [fill={rgb, 255:red, 0; green, 0; blue, 0 }  ,fill opacity=1 ] (387.32,206.15) .. controls (387.32,205.09) and (388.17,204.24) .. (389.23,204.24) .. controls (390.28,204.24) and (391.14,205.09) .. (391.14,206.15) .. controls (391.14,207.2) and (390.28,208.06) .. (389.23,208.06) .. controls (388.17,208.06) and (387.32,207.2) .. (387.32,206.15) -- cycle ;
			\draw    (114.67,185.66) -- (130.23,201.21) ;
			\draw    (116.58,185.66) -- (130.23,172.01) ;
			\draw    (71.23,200.1) -- (85.67,185.66) ;
			\draw    (70.11,170.1) -- (85.67,185.66) ;
			\draw    (239.23,200.1) -- (253.67,185.66) ;
			\draw    (238.11,170.1) -- (253.67,185.66) ;
			\draw    (282.67,185.66) -- (298.23,201.21) ;
			\draw    (373.67,190.59) -- (389.23,206.15) ;
			\draw    (418.18,259.64) -- (418.18,282.68) ;
			\draw    (450.05,205.96) -- (464.49,191.52) ;
			
			\draw (52,92) node [anchor=north west][inner sep=0.75pt]   [align=left] {$\displaystyle p_{1}$};
			\draw (213,92) node [anchor=north west][inner sep=0.75pt]   [align=left] {$\displaystyle p_{1}$};
			\draw (388.45,59.17) node [anchor=north west][inner sep=0.75pt]   [align=left] {$\displaystyle p_{1}$};
			\draw (56,134) node [anchor=north west][inner sep=0.75pt]   [align=left] {$\displaystyle p_{0}$};
			\draw (128,111) node [anchor=north west][inner sep=0.75pt]   [align=left] {$\displaystyle p_{3}$};
			\draw (219,135) node [anchor=north west][inner sep=0.75pt]   [align=left] {$\displaystyle p_{0}$};
			\draw (447,81) node [anchor=north west][inner sep=0.75pt]   [align=left] {$\displaystyle p_{0}$};
			\draw (135,135) node [anchor=north west][inner sep=0.75pt]   [align=left] {$\displaystyle p_{2}$};
			\draw (299,135) node [anchor=north west][inner sep=0.75pt]   [align=left] {$\displaystyle p_{2}$};
			\draw (408,142) node [anchor=north west][inner sep=0.75pt]   [align=left] {$\displaystyle p_{2}$};
			\draw (58,159) node [anchor=north west][inner sep=0.75pt]   [align=left] {$\displaystyle 1$};
			\draw (60,196) node [anchor=north west][inner sep=0.75pt]   [align=left] {$\displaystyle 0$};
			\draw (132,160) node [anchor=north west][inner sep=0.75pt]   [align=left] {$\displaystyle 3$};
			\draw (132,194) node [anchor=north west][inner sep=0.75pt]   [align=left] {$\displaystyle 2$};
			\draw (227,159) node [anchor=north west][inner sep=0.75pt]   [align=left] {$\displaystyle 1$};
			\draw (230,195) node [anchor=north west][inner sep=0.75pt]   [align=left] {$\displaystyle 0$};
			\draw (298,197) node [anchor=north west][inner sep=0.75pt]   [align=left] {$\displaystyle 2$};
			\draw (411,285) node [anchor=north west][inner sep=0.75pt]   [align=left] {$\displaystyle 2$};
			\draw (362,175) node [anchor=north west][inner sep=0.75pt]   [align=left] {$\displaystyle 1$};
			\draw (466,175) node [anchor=north west][inner sep=0.75pt]   [align=left] {$\displaystyle 0$};

		\end{tikzpicture}

		\caption{In this picture we see three nodal curves with marked points, and below their dual graphs: the curve on the left is stable, while the others are not. The middle one does not satisfy the stability condition, while the curve on the right is not stable since its dual graph is not a tree.}
		\label{fig:esempi di curve stabili}
	\end{figure}
	It turns out that $\overline{\M}_{0,n+1}$ is a smooth complex projective variety of dimension $n-2$ and contains $\M_{0,n+1}$ as an open dense subset. An explicit construction of $\overline{\M}_{0,n+1}$ by a sequence of complex blow ups can be found in \cite{Markl-Stasheff}. 
	
	\subsubsection{The Deligne-Mumford operad}
	
	For any $m,n\in\N$ and $i=1,\dots,n$ we can define \emph{gluing maps}
	\begin{align*}
		\circ_i:\overline{\M}_{0,n+1}\times \overline{\M}_{0,m+1}&\to \overline{\M}_{0,n+m}\\
		(C_1,p_0,\dots,p_n)\times(C_2,q_0,\dots,q_m)&\mapsto (C,p_0,\dots,p_{i-1},q_1,\dots,q_m,p_{i+1},\dots,p_n)
	\end{align*}
	where $C$ is the curve obtained from $C_1\sqcup C_2$ identifying $p_i$ and $q_0$ and introducing a nodal singularity.
	
	\begin{defn}
		Let us denote by $\overline{\M}(n)\coloneqq\overline{\M}_{0,n+1}$. $\Sigma_n$ acts on $\overline{\M}(n)$ by permuting the marked points $p_1,\dots,p_n$ of a stable curve $(C,p_0,\dots,p_n)\in\overline{\M}_{0,n+1}$. The gluing maps defined above define an operad structure on the collection $\overline{\M}\coloneqq\{\overline{\M}(n)\}_{n\geq 2}$. We call $\overline{\M}$ the \textbf{Deligne-Mumford operad}.
	\end{defn}
	\begin{oss}
		We have a natural action of $\Sigma_{n+1}$ on $\overline{\M}(n)\coloneqq\overline{\M}_{0,n+1}$ by permuting all the marked points, turning $\overline{\M}$ into a \emph{cyclic operad}.
	\end{oss}
	
	\subsubsection{The Hypercommutative operad:}
	since homology is a lax monoidal functor, the operadic structure of $\overline{\M}$ induce an operadic structure on $H_*(\overline{\M};\Z)$, giving an operad in graded abelian groups:
	\begin{defn}
		The \textbf{Hypercommutative operad} $Hycom$ is a (cyclic) operad in graded abelian groups whose arity $n$ operations are given by
		\[
		Hycom(n)\coloneqq H_*(\overline{\M}_{0,n+1};\Z)
		\]
		As an operad, it is generated by (graded) symmetric operations of degree $2(n-2)$
		\[
		(a_1,\dots,a_n)\in H_*(\overline{\M}_{0,n+1};\Z) \quad n\geq 2
		\]
		which satisfy the following \emph{generalized associativity relations}:
		\begin{equation}\label{eq: hypercommutative algebra equations}
			\sum_{S_1\sqcup S_2=\{1,\dots,n\}}\pm ((a,b,x_{S_1}),c,x_{S_2})=  \sum_{S_1\sqcup S_2=\{1,\dots,n\}}\pm (a,(b,c,x_{S_1}),x_{S_2})
		\end{equation}

		where if $S=\{s_1,\dots,s_k\}\subseteq\{1,\dots,n\}$, $x_S$ is an abbreviation for $x_{s_1},\dots,x_{s_k}$. The sign in front of each summand is determined by the Koszul sign rule. We report below some explicit examples of these relations:
		\begin{itemize}
			\item $n=0$: we get $((a,b),c)=(a,(b,c))$, i.e. the binary operation is associative (and commutative).
			\item $n=1$: $((a,b),c,d)+(-1)^{\abs{c}\abs{d}}((a,b,d),c)=(a,(b,c),d)+(a,(b,c,d))$.
		\end{itemize}
		
		Further details can be found in \cite{Getzler} and \cite{Kontsevich-Manin}.
	\end{defn}
	\begin{oss}
		Geometrically, the $n$-ary operation $(a_1,\dots,a_n)$ corresponds to the fundamental class $[\overline{\M}_{0,n+1}]$.
	\end{oss}
	\begin{defn}
		An \textbf{hypercommutative algebra}  (in the category of chain complexes) is just an algebra over the Hypercommutative operad. Explicitly, it is a chain complex $(A,d_A)$ equipped by (graded) symmetric products $(-,\cdots,-):A^{\otimes n}\to A$ of degree $2(n-2)$, such that Equation \ref{eq: hypercommutative algebra equations} is satisfied for any choice of variables $a,b,c,x_1,\dots,x_n\in A$.
	\end{defn}
	We end this paragraph by giving some examples of algebras over $Hycom$:
	\begin{itemize}
		\item A commutative algebra is a special case of Hypercommutative algebra, where all the higher products $(-,\dots,-):A^{\otimes n}\to A$ with $n\geq 3$ are zero.
		\item Barannikov and Kontsevich defined a canonical hypercommutative algebra structure on the cohomology of any compact Calabi-Yau manifold, see \cite{Barannikov-Kontsevich}.
		\item  Kontsevich and Manin have shown in \cite{Kontsevich-Manin} through Gromov-Witten invariants that the rational homology of a smooth projective algebraic variety is a hypercommutative algebra.
		
	\end{itemize}

	\subsection{The Gravity operad}\label{subsec: gravity operad}
	The Gravity operad was introduced by Getzler in \cite{Getzler94} and \cite{Getzler}. In this paragraph we recall the main facts about the Gravity operad and the algebras over it. There are also chain model versions of the Gravity operad, described in the paper \cite{Westerland} by Westerland and in \cite{Getzler-Kapranov} by Getzler-Kapranov. A comparison between these definitions has been written by Dupont and Horel in \cite{Dupont-Horel}. 
	In what follows all the homology groups are taken with integer coefficients, unless otherwise stated. To ease the notation we sometimes write $H_*(X)$ instead of $H_*(X;\Z)$.
	
	\begin{defn}
		
		Consider the graded abelian group $Grav(n)\coloneqq s H_*(\mathcal{M}_{0,n+1};\Z)$, where $s$ is the degree shift. The collection $Grav\coloneqq\{Grav(n)\}_{n\geq2}$ forms an operad of graded abelian groups, called the \textbf{Gravity operad}. The composition maps can be described as follows: first observe that the orbit space $F_n(\C)/S^1$ is homotopy equivalent to $\mathcal{M}_{0,n+1}$. The quotient map $p:F_n(\C)\to F_n(\C)/S^1$ has a section given by
		\begin{align*}
			j: F_n(\C)/S^1&\to F_n(\C)\\
			[z_1,\dots,z_n]&\mapsto \left( \frac{z_2-z_1}{\abs{z_2-z_1}}\right) ^{-1}\cdot(z_1,\dots,z_n)
		\end{align*}
		Therefore $p$ is a trivial $S^1$-principal bundle and the transfer map
		\[
		\tau: H_*(F_n(\C)/S^1)\to  H_*(F_n(\C))
		\]
		is injective. Now let $\circ_i:H_*(F_n(\C))\otimes H_*(F_m(\C))\to H_*(F_{n+m-1}(\C))$ be the map induced in homology by the $i$-th composition of the little two disk operad. Getzler observed that given two classes $a\in H_*(F_n(\C)/S^1)$, $b\in H_*(F_m(\C)/S^1)$, the composition $\tau(a)\circ_i\tau(b)$ is the transfer of a unique class of $H_*(F_{n+m-1}(\C)/S^1)$. Therefore we can define $a\circ_i b\coloneqq \tau^{-1}(\tau(a)\circ_i\tau(b))$ and we get the following commutative diagram:
		\[
		\begin{tikzcd}
			& H_*(F_n(\C))\otimes H_*(F_m(\C))\arrow[r,"\circ_i"]& H_*(F_{n+m-1}(\C))\\
			&  H_*(F_n(\C)/S^1)\otimes H_*(F_m(\C)/S^1)\arrow[u,"\tau\otimes \tau"]\arrow[r, dashed,"\circ_i"] & H_*(F_{n+m-1}(\C)/S^1) \arrow[u,"\tau"]
		\end{tikzcd}
		\]
		The $i$-th operadic composition of the gravity operad is then defined to be the dashed arrow of the above diagram. Note that this map raises the degree by $1$. If we shift the graded vector spaces $H_*(F_n(\C)/S^1)$ by one we get a map of degree zero, justifying the shifting term in the definition of the Gravity operad. 
	\end{defn}
	\begin{oss}
		The action of $\Sigma_{n+1}$ on $\M_{0,n+1}$ by relabelling the points induces an action in homology, making $Grav$ a \emph{cyclic operad}.
	\end{oss}
	Unlike many familiar operads, the Gravity operad is not generated by a finite number of operations. However, it has a nice presentation with infinitely many generators:
	\begin{thm}[Getzler \cite{Getzler94}]
		As an operad $Grav$ is generated by (graded) symmetric operations of degree one
		\[
		\{a_1,\dots,a_n\}\in Grav(n) \quad \text{ for } n\geq 2
		\]
		Geometrically, $\{a_1,\dots,a_n\}$ corresponds to the generator of $H_0(\mathcal{M}_{0,n+1},\Z)$. These operations (called brackets) satisfy the so called generalized Jacobi relations: for any $k\geq 2$ and $l\in \N$
		\begin{equation}\label{eq: generalized jacobi relations}
			\sum_{1\leq i<j\leq k}(-1)^{\epsilon(i,j)} \{\{a_i,a_j\},a_1,\dots,\hat{a}_i,\dots,\hat{a}_j,\dots,a_k,b_1,\dots,b_l\}=
			\{\{a_1,\dots,a_k\},b_1,\dots,b_l\} 
		\end{equation}
		where the right hand term is interpreted as zero if $l=0$ and $\epsilon(i,j)=(\abs{a_1}+\dots+ \abs{a_{i-1}})\abs{a_i}+(\abs{a_1}+\dots+\abs{a_{j-1}})\abs{a_j}+ \abs{a_i}\abs{a_j}$.
	\end{thm}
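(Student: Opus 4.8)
The plan is to split the statement into its two parts --- that the operations $\{a_1,\dots,a_n\}$ generate $Grav$, and that they satisfy the generalized Jacobi relations \eqref{eq: generalized jacobi relations} --- and, since this is how the theorem is normally used, to also indicate why those relations are a \emph{complete} set, so that $Grav$ is in fact presented by them.

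The preliminary observation is that $\tau$ is compatible with operadic composition essentially by construction: the defining commutative square is precisely the identity $\tau(a\circ_i b)=\tau(a)\circ_i\tau(b)$, and the degree shift in $Grav$ is exactly what makes $\tau$ degree-preserving, so $\tau$ assembles into an injective morphism of operads $Grav\to H_*(\mathcal{D}_2)$, the latter being the homology of the little $2$-disks operad with its degree-zero product $\mu$ and degree-one Gerstenhaber bracket $\lambda$. The first concrete step is to compute the image of a bracket. As $\{a_1,\dots,a_n\}$ is the transfer of the positive generator of $H_0(\M_{0,n+1})\cong H_0(F_n(\C)/S^1)$, and the fibre of $F_n(\C)\to F_n(\C)/S^1$ over a configuration is its orbit under the global $S^1$-rotation --- a loop that winds exactly once around every diagonal $\{z_i=z_j\}$ --- one obtains
\[
\tau\{a_1,\dots,a_n\}\;=\;\sum_{1\le i<j\le n}\pm\,\lambda(a_i,a_j)\!\!\prod_{l\ne i,j}\!\!a_l\;\in\;H_*(\mathcal{D}_2)(n),
\]
the signed sum over all ways of bracketing one pair and multiplying the remaining inputs. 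Plugging this formula into both sides of \eqref{eq: generalized jacobi relations}, using that $\tau$ is a morphism to expand the operadic composites that appear, and cancelling $\tau$ by injectivity, turns the generalized Jacobi relation into an identity in $H_*(\mathcal{D}_2)$; that identity is a direct consequence of the defining relations of a Gerstenhaber algebra (graded Jacobi for $\lambda$, the Leibniz rule, graded commutativity of $\mu$). The only genuine labour here is the Koszul-sign bookkeeping and its reconciliation with the sign $\epsilon(i,j)$ in the statement.

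For generation I would invoke the classical description of $H_*(\M_{0,n+1})$. Under the identification $\M_{0,n+1}\cong F_{n-2}(\C\setminus\{0,1\})$ this is the homology of a fibre-type hyperplane arrangement complement, whose cohomology is generated in degree one (Arnol'd, Brieskorn, Orlik--Solomon) and whose homology, through the iterated Fadell--Neuwirth fibrations $\M_{0,m+1}\to\M_{0,m}$ (forget a marked point, fibre a punctured plane, degenerating Serre spectral sequence), carries an additive basis built from ``collision loops''. Using that $\tau$ intertwines compositions, each collision loop is identified with an operadic composite, in $Grav$, of a lower-arity bracket expression with the binary bracket, so that inductively every class of $Grav(n)$ of homological degree $\ge 2$ lies in the suboperad generated by $\{a_1,\dots,a_k\}$ for $k<n$; together with the remaining, one-dimensional degree-one part $sH_0(\M_{0,n+1})=\{a_1,\dots,a_n\}$ --- itself one of the declared generators --- this yields generation. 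A degree count (a composite of $p$ brackets has homological degree $p$) also shows $\{a_1,\dots,a_n\}$ cannot be dropped, so infinitely many generators are genuinely needed.

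Finally, for the stronger ``presentation'' statement, let $\widetilde{Grav}$ be the operad freely generated by symmetric degree-one operations $\mu_n$ ($n\ge2$) modulo the generalized Jacobi relations; the two steps above give a surjection $\widetilde{Grav}\twoheadrightarrow Grav$. To prove it an isomorphism I would read the relations as a rewriting system on trees of brackets --- for $l\ge1$ relation \eqref{eq: generalized jacobi relations} rewrites a composite $\mu_{l+1}\circ_1\mu_k$ ($k\ge3$) in terms of composites involving only $\mu_2$, while for $l=0$ it is a higher Jacobi identity among these --- then establish confluence and count the normal forms in each arity $n$ and homological degree $d$. That count must match the coefficient of $t^{d-1}$ in $\prod_{i=2}^{n-1}(1+it)$, the Poincar\'e series of $Grav(n)=sH_*(\M_{0,n+1})$ read off from the same Fadell--Neuwirth tower; matching graded dimensions then forces $\widetilde{Grav}\twoheadrightarrow Grav$ to be bijective. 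I expect this confluence-and-count step --- the numerical shadow of Koszul duality between $Grav$ and $Hycom$ --- to be the main obstacle; by contrast, once the formula for $\tau\{a_1,\dots,a_n\}$ is in hand, checking the relations and generation is routine.
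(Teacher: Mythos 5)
The paper does not prove this statement; it is quoted as background and attributed to Getzler~\cite{Getzler94}, so there is no in-paper argument to compare against. Your outline reconstructs Getzler's argument faithfully: regard the (degree-normalised) transfer $\tau$ as an embedding of $Grav$ into the Gerstenhaber operad $H_*(\mathcal{D}_2)$, compute $\tau\{a_1,\dots,a_n\}=\sum_{i<j}\pm\,\lambda(a_i,a_j)\prod_{l\ne i,j}a_l$ from the fact that the $S^1$-orbit loop winds once around each diagonal, and then reduce~\eqref{eq: generalized jacobi relations}, by injectivity of $\tau$ and the intertwining square, to a Gerstenhaber identity --- the sign bookkeeping being, as you say, the only real labour. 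Two remarks. For the generation step, the geometric route you sketch (identifying Fadell--Neuwirth collision classes with operadic composites of brackets) is not wrong but hides the work: one must show each such class lies in the image of $\tau$ and that its $\tau$-preimage is the asserted composite, which is not automatic from the commutative square alone; the cleaner finish is the Poincar\'e-series comparison $\dim Grav(n)_d=[t^{d-1}]\prod_{i=2}^{n-1}(1+it)$ against the span of bracket monomials, which you already set up for the presentation. And note that the theorem as stated claims only generation and the validity of the relations, not their completeness; your confluence-and-count plan for the full presentation is a legitimate, more elementary substitute for Getzler's Koszul-duality argument, but it proves more than the statement asks.
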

	Some explicit examples of this relations are:
	\begin{itemize}
		\item \textbf{Jacobi relation} ($k=3$, $l=0$):
		\[
		\{\{a_1,a_2\},a_3\}+(-1)^{\abs{a_2}\abs{a_3}}\{\{a_1,a_3\},a_2\}+(-1)^{\abs{a_1}(\abs{a_2}+\abs{a_3})}\{\{a_2,a_3\},a_1\}=0
		\]
		This shows that the binary bracket $\{-,-\}$ is a Lie bracket (of degree one).
		\item $k=3$, $l=1$:
		\[
		\{\{a_1,a_2\},a_3,b_1\}\pm\{\{a_1,a_3\},a_2,b_1\}\pm\{\{a_2,a_3\},a_1,b_1\}=\{\{a_1,a_2,a_3\},b_1\}
		\]
	\end{itemize}
	\begin{defn}
		A \textbf{Gravity algebra} (in the category of chain complexes) is an algebra over the Gravity operad. To be explicit, it is a chain complex $(A,d_A)$ together with graded symmetric chain maps $\{-,\dots,-\}:A^{\otimes k}\to A$ of degree one such that for $k\geq 3$, $l\geq 0$ and $a_1,\dots,a_k,b_1,\dots,b_l\in A$ Equation \ref{eq: generalized jacobi relations} is satisfied.
	\end{defn}
	\begin{oss}
		We can think of a Gravity algebra as a generalization of a (differential graded) Lie algebra. Indeed we have a binary Lie bracket (of degree one) and many other higher brackets which satisfy a higher version of the usual Jacobi relation.
	\end{oss}
	\begin{oss}
		The definition of Gravity operad we just gave is the one contained in \cite{Getzler94}. In the paper \cite{Getzler} E. Getzler calls \emph{Gravity operad} the operadic suspension $\Lambda Grav$, i.e. the operad which encode the algebraic structure on the suspension of a gravity algebra. More explicitly, its arity $n$ space is 
		\[
		(\Lambda Grav)(n)\coloneqq s^{n-2}sgn\otimes H_*(\M_{0,n+1};\Z)
		\]
		where $sgn$ denote the sign representation of $\Sigma_n$. Because of the sign representation, the generators of $\Lambda Grav$ become (graded) antisymmetric operations (of degree $n-2$), so the binary bracket is actually a Lie bracket. 
	\end{oss}
	\begin{oss}(Koszul duality)\label{oss: koszul duality}
		An old result by D. Quillen \cite{Quillen} estabilished a duality between commutative and Lie algebras. The main result of \cite{Getzler} is an analogue of Quillen's result, replacing commutative algebras by Hypercommutative algebras and Lie algebras by $\Lambda Grav$-algebras. This can be summarized by saying that		$Hycom$ and  $\Lambda Grav$ are Koszul dual to each other. Here Koszul duality must be interpreted in the sense of Ginzburg-Kapranov \cite{Ginzburg-Kapranov}.
	\end{oss}
	Nice examples of gravity algebras arise in string topology and cyclic cohomology. We refere to Westerland \cite{Westerland} and Ward \cite{Ward} for further details.
	
	\section{An open cell decomposition of $\mathcal{M}_{0,n+1}$}\label{sec: combinatorial models for the open moduli space}
	In this section  we review very quickly a combinatorial model for the moduli space $\mathcal{M}_{0,n+1}$ based on trees due to the second author. We will skip all the details, which can be found in \cite{Salvatore}. Before starting let us observe that $\M_{0,n+1}$ can be seen as a quotient of the ordered configuration space $F_n(\C)$:
	\begin{prop}
		Let $\C\rtimes \C^*$ act on $F_n(\C)$ by translations, dilations and rotations. Then $\M_{0,n+1}$ is homeomorphic to $F_n(\C)/\C\rtimes\C^*$.
	\end{prop}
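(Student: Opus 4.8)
The plan is to realize $\M_{0,n+1}$ as an orbit space in two stages and transport a known homeomorphism across. Recall that $F_n(\C)$ carries commuting actions of the translation group $\C$ (by $w\cdot(z_1,\dots,z_n)=(z_1+w,\dots,z_n+w)$) and of $\C^*$ (by $\lambda\cdot(z_1,\dots,z_n)=(\lambda z_1,\dots,\lambda z_n)$, i.e. simultaneous rotation and dilation); together these assemble into an action of the affine group $\C\rtimes\C^*=\operatorname{Aff}(\C)$, acting by $z\mapsto\lambda z+w$ on every coordinate. First I would observe that this action is free: if $\lambda z_i+w=z_i$ for two distinct indices $i\neq j$ (which exist since $n\geq 2$), then subtracting gives $\lambda(z_i-z_j)=z_i-z_j$ with $z_i\neq z_j$, so $\lambda=1$ and then $w=0$. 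Hence $F_n(\C)\to F_n(\C)/(\C\rtimes\C^*)$ is a principal bundle and the quotient is a manifold of the expected dimension $2n-4$.

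Next I would write down the homeomorphism explicitly rather than invoking abstract moduli theory. A point of $\M_{0,n+1}$ is an isomorphism class of $(\C P^1;q_0,q_1,\dots,q_n)$ with the $q_i$ distinct. Using the unique Möbius transformation sending $q_0\mapsto\infty$, one identifies $\M_{0,n+1}$ with the set of configurations of $n$ distinct points $(z_1,\dots,z_n)$ in $\C$ modulo those Möbius transformations fixing $\infty$, namely modulo $\operatorname{Aff}(\C)=\C\rtimes\C^*$. Concretely I would define
\[
\Psi\colon F_n(\C)/(\C\rtimes\C^*)\longrightarrow \M_{0,n+1},\qquad
[z_1,\dots,z_n]\longmapsto [\C P^1;\infty,z_1,\dots,z_n],
\]
and check it is well defined (an affine map extends to a Möbius transformation fixing $\infty$), bijective (any isomorphism of marked curves is a Möbius transformation, and one fixing $\infty$ is affine), and continuous with continuous inverse. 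Continuity both ways follows from the standard description of the topology on $\M_{0,n+1}$ via cross-ratios: a set of $n-3$ independent cross-ratios of the points $\{\infty,z_1,\dots,z_n\}$ gives local coordinates on $\M_{0,n+1}$, and these are manifestly continuous functions of $[z_1,\dots,z_n]$; conversely one recovers a canonical representative configuration (e.g. normalize $z_1=0$, $z_2=1$) continuously from the cross-ratios. A cleaner alternative I would present: $\M_{0,n+1}$ is by definition $F_{n+1}(\C P^1)/\mathrm{PGL}_2(\C)$, the section $q_0=\infty$ trivializes this against the residual affine action, and one gets $F_n(\C)/\operatorname{Aff}(\C)$ directly; this avoids cross-ratio bookkeeping entirely.

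I do not expect a serious obstacle here — the statement is essentially a repackaging of the definition of $\M_{0,n+1}$ — but the one point that needs genuine care is the topology: one must confirm that the quotient topology on $F_n(\C)/(\C\rtimes\C^*)$ agrees with the (complex-analytic, or GIT) topology on $\M_{0,n+1}$ rather than merely constructing a continuous bijection. Since the affine action is free and proper (properness: a sequence $g_k=(\lambda_k,w_k)$ with $g_k\cdot x_k\to y$ and $x_k\to x$ forces $\lambda_k\to\lambda\neq 0$ and $w_k\to w$ by looking at the differences $z_i-z_j$ of two coordinates, which are bounded away from $0$ and $\infty$), the quotient map is open and the quotient is Hausdorff; an open continuous bijection between such spaces is a homeomorphism, which closes the argument. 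This properness verification, together with matching it to the standard topology on $\M_{0,n+1}$, is the only step requiring more than formal manipulation.
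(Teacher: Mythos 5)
Your proposal is correct and follows essentially the same route as the paper: normalize $p_0$ to $\infty$ so that the residual automorphisms are exactly the affine maps $\C\rtimes\C^*$ acting on the remaining $n$ points of $\C$. The paper states this in three sentences and takes the topological match for granted, while you spell out freeness, properness, and the identification of topologies — additional care the paper omits but does not need for its purposes.
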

	\begin{proof}
		A point in $\M_{0,n+1}$ is just a configuration of points $(p_0,p_1,\dots,p_n)$ in the Riemann sphere up to biholomorphisms. If we rotate suitably the sphere we can suppose that $p_0$ is the point at the infinity $\infty\in \C\cup\{\infty\}$. Deleting this point and using the stereographic projection we obtain a configuration of $n$ points in the complex plane $\numberset{C}$ up to translation, dilatations and rotations, and this proves the claim.
	\end{proof}
	It will be useful to keep in mind this identification for the rest of this work. The idea to get a combinatorial model of $\mathcal{M}_{0,n+1}\cong F_n(\mathbb{C})/\numberset{C}\rtimes\C^*$ is the following: given a configuration of $n$ points $(z_1,\dots,z_n)\in F_n(\C)$, think of each $z_i$ as a negative electric charge of value $-a_i$, where each $a_i$ is a fixed real number strictly greater than $0$ and $\sum_{1=1}^n a_i=1$ (normalization condition). Each charge generates a radial electric field and by superposition we obtain a total electric field
	\[
	E(z)\coloneqq \sum_{i=1}^n-a_i\frac{z-z_i}{\abs{z-z_i}^2}
	\]
	The electric field is conservative, and it is not difficult to find an explicit potential for $E(z)$: indeed it turns out that 
	\[
	E(z)=-\nabla U(z)
	\]
	where $U(z)\coloneqq \log \abs{h(z)}$ and $h(z)\coloneqq\prod_{i=1}^n(z-z_i)^{a_i}$. Now look at the flow lines of $E(z)$, i.e. curves $\gamma(t)$ such that $\gamma'(t)=E(\gamma(t))$: most of them start at one point $z_i$ of the configuration $(z_1,\dots,z_n)$ and go to infinity; conversely, there are some flow lines which are of finite length, namely those that connects two zeros of $E(z)$ or one zero to a point $z_i$ of the configuration. Starting from these special flow lines we can obtain what we will call an \emph{admissible tree} $T$ with $n$ leaves:
	\begin{defn}
		An \textbf{admissible tree} $T$ with $n$-leaves is a tree (connected graph without closed paths) such that:
		\begin{itemize}
			\item Its vertices are colored with two colors, black and white. We will write $V(T)=W\sqcup B$, where $W$ is the set of white vertices, and $B$ is the set of black vertices. Moreover we require that there are exactly $n$ white vertices and that they are labelled by the numbers $\{1,\dots,n\}$. White vertices are also called \textbf{leaves}.
			\item The edges of $T$ are oriented, i.e. they are ordered couples $e=(v,w)\in V(T)\times V(T)$. $v$ is called the source of $e$, $w$ is called the target. We say that an edge $e$ is incident to a vertex $v$  if $v$ is either the source or the target of $e$.
			\item The set $E_v$ of edges incident to a fixed vertex $v\in V(T)$ is equipped with a cyclic ordering. In other words, $T$ is a \textbf{ribbon graph}.
		\end{itemize}
		Moreover we require the following conditions to be satisfied:
		\begin{enumerate}
			\item A white vertex is not allowed to be a source.
			\item If $(v,w)\in E(T)$ is an edge, then $(w,v)\notin E(T)$, i.e. an edge can not be inverted.
			\item Any black vertex $v\in B$ is the source of at least two distinct edges. Moreover it can not be the target of two edges that are next to each other in the cyclic ordering of $E_v$.
		\end{enumerate}
		We will denote by $T_n$ the set of isomorphism classes of admissible trees with $n$-leaves. See Figure \ref{fig:esempi di black and white trees} for some examples.
	\end{defn}
	
	In our case, the black vertices are the zeros of $E(z)$, the white vertices are $\{z_1,\dots,z_n\}$, and the oriented edges are given by the flow lines of finite length (which are naturally oriented). The cyclic order on the set of incident edges $E_v$ is induced by a fixed orientation of the plane (say counterclockwise). See Figure \ref{fig:associare un albero a una configurazione} for an example.
	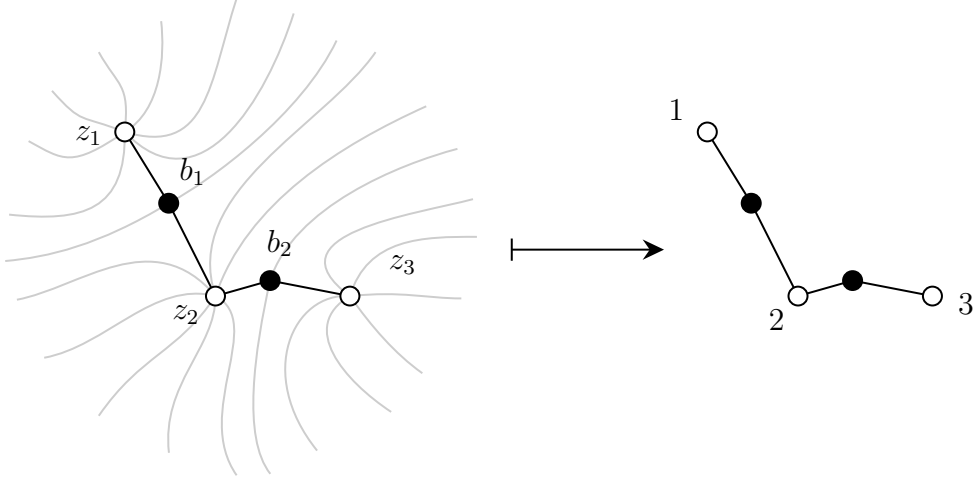
\begin{figure}
		\centering

		\tikzset{every picture/.style={line width=0.75pt}} 
		
		\begin{tikzpicture}[x=0.75pt,y=0.75pt,yscale=-1,xscale=1]
			
			\draw [color={rgb, 255:red, 155; green, 155; blue, 155 }  ,draw opacity=0.5 ]   (155.93,212.36) .. controls (177.33,200.68) and (222.21,167.49) .. (239.73,130.52) ;
			\draw [color={rgb, 255:red, 155; green, 155; blue, 155 }  ,draw opacity=0.5 ]   (206.65,348.47) .. controls (192.32,330.2) and (198.86,286.2) .. (206.52,251.28) ;
			\draw [color={rgb, 255:red, 155; green, 155; blue, 155 }  ,draw opacity=0.5 ]   (206.52,251.28) .. controls (218.32,214.2) and (270.86,192.79) .. (300.05,185.01) ;
			\draw    (206.52,251.28) -- (246.38,259.06) ;
			\draw    (179.28,259.06) -- (206.52,251.28) ;
			\draw    (179.28,259.06) -- (155.93,212.36) ;
			\draw    (134,176.51) -- (155.93,212.36) ;
			\draw [color={rgb, 255:red, 155; green, 155; blue, 155 }  ,draw opacity=0.5 ]   (74.32,241.44) .. controls (95.73,239.49) and (121.03,233.66) .. (155.93,212.36) ;
			\draw  [fill={rgb, 255:red, 0; green, 0; blue, 0 }  ,fill opacity=1 ] (151.19,212.36) .. controls (151.19,209.74) and (153.31,207.62) .. (155.93,207.62) .. controls (158.54,207.62) and (160.67,209.74) .. (160.67,212.36) .. controls (160.67,214.98) and (158.54,217.1) .. (155.93,217.1) .. controls (153.31,217.1) and (151.19,214.98) .. (151.19,212.36) -- cycle ;
			\draw  [fill={rgb, 255:red, 0; green, 0; blue, 0 }  ,fill opacity=1 ] (201.78,251.28) .. controls (201.78,248.66) and (203.9,246.54) .. (206.52,246.54) .. controls (209.14,246.54) and (211.26,248.66) .. (211.26,251.28) .. controls (211.26,253.89) and (209.14,256.01) .. (206.52,256.01) .. controls (203.9,256.01) and (201.78,253.89) .. (201.78,251.28) -- cycle ;
			\draw [color={rgb, 255:red, 155; green, 155; blue, 155 }  ,draw opacity=0.5 ]   (134,176.51) .. controls (171.62,216.14) and (204.7,159.71) .. (218.32,116.9) ;
			\draw [color={rgb, 255:red, 155; green, 155; blue, 155 }  ,draw opacity=0.5 ]   (134,176.51) .. controls (175.51,188.9) and (176.21,152.14) .. (189.84,109.33) ;
			\draw [color={rgb, 255:red, 155; green, 155; blue, 155 }  ,draw opacity=0.5 ]   (76.27,218.09) .. controls (113.24,221.98) and (134.65,216.14) .. (134,176.51) ;
			\draw [color={rgb, 255:red, 155; green, 155; blue, 155 }  ,draw opacity=0.5 ]   (86,181.12) .. controls (103.51,190.85) and (107.4,194.74) .. (134,176.51) ;
			\draw [color={rgb, 255:red, 155; green, 155; blue, 155 }  ,draw opacity=0.5 ]   (134,176.51) .. controls (152.16,165.55) and (154.11,138.31) .. (152.16,120.79) ;
			\draw [color={rgb, 255:red, 155; green, 155; blue, 155 }  ,draw opacity=0.5 ]   (134,176.51) .. controls (136.59,151.93) and (130.75,153.87) .. (121.03,136.36) ;
			\draw [color={rgb, 255:red, 155; green, 155; blue, 155 }  ,draw opacity=0.5 ]   (134,176.51) .. controls (111.3,167.49) and (113.24,173.33) .. (97.67,155.82) ;
			\draw [color={rgb, 255:red, 155; green, 155; blue, 155 }  ,draw opacity=0.5 ]   (179.28,259.06) .. controls (194.97,212.25) and (241.67,183.06) .. (284.48,163.6) ;
			\draw [color={rgb, 255:red, 155; green, 155; blue, 155 }  ,draw opacity=0.5 ]   (179.28,259.06) .. controls (161.89,206.41) and (233.89,173.33) .. (259.19,136.36) ;
			\draw [color={rgb, 255:red, 155; green, 155; blue, 155 }  ,draw opacity=0.5 ]   (80.16,260.9) .. controls (111.3,255.06) and (148.27,221.98) .. (179.28,259.06) ;
			\draw [color={rgb, 255:red, 155; green, 155; blue, 155 }  ,draw opacity=0.5 ]   (93.78,290.09) .. controls (124.92,284.25) and (148.27,251.17) .. (179.28,259.06) ;
			\draw [color={rgb, 255:red, 155; green, 155; blue, 155 }  ,draw opacity=0.5 ]   (189.84,349.22) .. controls (167.73,317.33) and (204.7,272.57) .. (179.28,259.06) ;
			\draw [color={rgb, 255:red, 155; green, 155; blue, 155 }  ,draw opacity=0.5 ]   (156.05,337.89) .. controls (152.16,302.86) and (179.4,285.35) .. (179.28,259.06) ;
			\draw [color={rgb, 255:red, 155; green, 155; blue, 155 }  ,draw opacity=0.5 ]   (121.03,319.28) .. controls (142.43,288.14) and (159.94,288.14) .. (179.28,259.06) ;
			\draw [color={rgb, 255:red, 155; green, 155; blue, 155 }  ,draw opacity=0.5 ]   (246.38,259.06) .. controls (204.7,227.82) and (278.64,218.09) .. (307.83,210.31) ;
			\draw [color={rgb, 255:red, 155; green, 155; blue, 155 }  ,draw opacity=0.5 ]   (230,336.79) .. controls (198.86,297.87) and (218.32,257.01) .. (246.38,259.06) ;
			\draw [color={rgb, 255:red, 155; green, 155; blue, 155 }  ,draw opacity=0.5 ]   (266.97,317.33) .. controls (243.62,301.76) and (220.27,274.52) .. (246.38,259.06) ;
			\draw [color={rgb, 255:red, 155; green, 155; blue, 155 }  ,draw opacity=0.5 ]   (246.38,259.06) .. controls (255.29,229.76) and (282.54,229.12) .. (309.78,229.28) ;
			\draw [color={rgb, 255:red, 155; green, 155; blue, 155 }  ,draw opacity=0.5 ]   (246.38,259.06) .. controls (272.81,256.36) and (284.48,260.25) .. (302,260.25) ;
			\draw [color={rgb, 255:red, 155; green, 155; blue, 155 }  ,draw opacity=0.5 ]   (246.38,259.06) .. controls (257.24,274.52) and (268.92,288.14) .. (282.54,297.87) ;
			\draw  [fill={rgb, 255:red, 255; green, 255; blue, 255 }  ,fill opacity=1 ] (241.64,259.06) .. controls (241.64,256.44) and (243.76,254.32) .. (246.38,254.32) .. controls (249,254.32) and (251.12,256.44) .. (251.12,259.06) .. controls (251.12,261.68) and (249,263.8) .. (246.38,263.8) .. controls (243.76,263.8) and (241.64,261.68) .. (241.64,259.06) -- cycle ;
			\draw  [fill={rgb, 255:red, 255; green, 255; blue, 255 }  ,fill opacity=1 ] (129.27,176.51) .. controls (129.27,173.9) and (131.39,171.78) .. (134,171.78) .. controls (136.62,171.78) and (138.74,173.9) .. (138.74,176.51) .. controls (138.74,179.13) and (136.62,181.25) .. (134,181.25) .. controls (131.39,181.25) and (129.27,179.13) .. (129.27,176.51) -- cycle ;
			\draw  [fill={rgb, 255:red, 255; green, 255; blue, 255 }  ,fill opacity=1 ] (174.54,259.06) .. controls (174.54,256.44) and (176.66,254.32) .. (179.28,254.32) .. controls (181.9,254.32) and (184.02,256.44) .. (184.02,259.06) .. controls (184.02,261.68) and (181.9,263.8) .. (179.28,263.8) .. controls (176.66,263.8) and (174.54,261.68) .. (174.54,259.06) -- cycle ;
			\draw    (497.3,251.28) -- (537.16,259.06) ;
			\draw    (470.06,259.06) -- (497.3,251.28) ;
			\draw    (470.06,259.06) -- (446.71,212.36) ;
			\draw    (424.78,176.51) -- (446.71,212.36) ;
			\draw  [fill={rgb, 255:red, 0; green, 0; blue, 0 }  ,fill opacity=1 ] (441.97,212.36) .. controls (441.97,209.74) and (444.09,207.62) .. (446.71,207.62) .. controls (449.32,207.62) and (451.44,209.74) .. (451.44,212.36) .. controls (451.44,214.98) and (449.32,217.1) .. (446.71,217.1) .. controls (444.09,217.1) and (441.97,214.98) .. (441.97,212.36) -- cycle ;
			\draw  [fill={rgb, 255:red, 0; green, 0; blue, 0 }  ,fill opacity=1 ] (492.56,251.28) .. controls (492.56,248.66) and (494.68,246.54) .. (497.3,246.54) .. controls (499.92,246.54) and (502.04,248.66) .. (502.04,251.28) .. controls (502.04,253.89) and (499.92,256.01) .. (497.3,256.01) .. controls (494.68,256.01) and (492.56,253.89) .. (492.56,251.28) -- cycle ;
			\draw  [fill={rgb, 255:red, 255; green, 255; blue, 255 }  ,fill opacity=1 ] (532.42,259.06) .. controls (532.42,256.44) and (534.54,254.32) .. (537.16,254.32) .. controls (539.77,254.32) and (541.9,256.44) .. (541.9,259.06) .. controls (541.9,261.68) and (539.77,263.8) .. (537.16,263.8) .. controls (534.54,263.8) and (532.42,261.68) .. (532.42,259.06) -- cycle ;
			\draw  [fill={rgb, 255:red, 255; green, 255; blue, 255 }  ,fill opacity=1 ] (420.04,176.51) .. controls (420.04,173.9) and (422.17,171.78) .. (424.78,171.78) .. controls (427.4,171.78) and (429.52,173.9) .. (429.52,176.51) .. controls (429.52,179.13) and (427.4,181.25) .. (424.78,181.25) .. controls (422.17,181.25) and (420.04,179.13) .. (420.04,176.51) -- cycle ;
			\draw  [fill={rgb, 255:red, 255; green, 255; blue, 255 }  ,fill opacity=1 ] (465.32,259.06) .. controls (465.32,256.44) and (467.44,254.32) .. (470.06,254.32) .. controls (472.67,254.32) and (474.8,256.44) .. (474.8,259.06) .. controls (474.8,261.68) and (472.67,263.8) .. (470.06,263.8) .. controls (467.44,263.8) and (465.32,261.68) .. (465.32,259.06) -- cycle ;
			\draw [line width=0.75]    (327.17,235.71) -- (400.18,235.71) ;
			\draw [shift={(403.18,235.71)}, rotate = 180] [fill={rgb, 255:red, 0; green, 0; blue, 0 }  ][line width=0.08]  [draw opacity=0] (10.72,-5.15) -- (0,0) -- (10.72,5.15) -- (7.12,0) -- cycle    ;
			\draw [shift={(327.17,235.71)}, rotate = 180] [color={rgb, 255:red, 0; green, 0; blue, 0 }  ][line width=0.75]    (0,5.59) -- (0,-5.59)   ;
			
			\draw (107.66,172.12) node [anchor=north west][inner sep=0.75pt]   [align=left] {$\displaystyle z_{1}$};
			\draw (156.26,262.25) node [anchor=north west][inner sep=0.75pt]   [align=left] {$\displaystyle z_{2}$};
			\draw (264.29,237.01) node [anchor=north west][inner sep=0.75pt]   [align=left] {$\displaystyle z_{3}$};
			\draw (403.96,158.62) node [anchor=north west][inner sep=0.75pt]   [align=left] {$\displaystyle 1$};
			\draw (453.98,264.2) node [anchor=north west][inner sep=0.75pt]   [align=left] {$\displaystyle 2$};
			\draw (548.47,256.47) node [anchor=north west][inner sep=0.75pt]   [align=left] {$\displaystyle 3$};
			\draw (159.79,188.13) node [anchor=north west][inner sep=0.75pt]   [align=left] {$\displaystyle b_{1}$};
			\draw (203.41,224.05) node [anchor=north west][inner sep=0.75pt]   [align=left] {$\displaystyle b_{2}$};

		\end{tikzpicture}
		
		\caption{This picture shows how to associate an admissible tree to a configuration of points $(z_1,\dots,z_n)\in \M_{0,n+1}$. The orientation of the grey flow lines is omitted in order to have a clearer picture: their source is the point $\infty\in\C\cup\{\infty\}$, their endpoint is one of the black/white vertices. }
		\label{fig:associare un albero a una configurazione}
	\end{figure}
	So, we have just assigned a combinatorial object (an admissible tree) to a configuration of points $(z_1,\dots,z_n)$. 
	\begin{oss}
		If we apply a rotation/dilatation/translation to the configuration $(z_1,\dots,z_n)$, all the flow lines are rotated/dilatated/translated but the resulting tree remains the same. So there is a well defined admissible tree associated to each point in $F_n(\mathbb{C})/\numberset{C}\rtimes\C^*$.
	\end{oss} 
	Now we would like to reconstruct the original configuration of charges from the associated tree. Since different points in $F_n(\mathbb{C})/\numberset{C}\rtimes\C^*$ can be associated to the same tree, we have to add some parameters to each admissible tree in order to reconstruct the original configuration. 
	\begin{defn}
		A \textbf{labelled tree} is an admissible tree $T$ equipped with the following parameters:
		\begin{enumerate}
			\item A function $f:B \to(0,1]$ such that:
			\begin{itemize}
				\item $max(f)=1$
				\item If there is an edge $e=(b,b')$ connecting two black vertices, then $f(b)\geq f(b')$.
			\end{itemize}
			\item For each white vertex $w$, a function $g:E_w\to[0,1]$ such that $\sum_{e\in E_w}g(e)=1$. We interpret $2\pi g(e)$ as the angle between the edge $e$ and the next edge in the cyclic ordering.
		\end{enumerate}
	\end{defn}
	\begin{oss}
		Given a configuration $(z_1,\dots,z_n)\in\M_{0,n+1}$ we associate to it a labelled tree as follows:
		\begin{itemize}
			\item The underlying admissible tree $T$ is the one constructed with the flow lines.
			\item Let $B$ be the set of zeroes of $E(z)$ and $M\coloneqq max_{b\in B}\abs{h(b)}$. Then we set 
			\begin{align*}
				f:B&\to (0,1]\\
				b&\mapsto \frac{\abs{h(b)}}{M}
			\end{align*}
			\item For any point $z_i$ in the configuration, let $E_{z_i}$ be the set of flow lines of finite length that are incident to $z_i$. Then we put
			\begin{align*}
				g:E_{z_i}&\to [0,1]\\
				e&\mapsto \frac{\theta(e)}{2\pi}
			\end{align*}
			where $\theta(e)$ is the angle between the flow line $e$ and the next one in the cyclic order.
		\end{itemize}
	\end{oss}
	It turns out that a configuration of $\M_{0,n+1}$ is uniquely determined by the correspoding labelled tree. To make this statement a little more precise we need a few definitions:
	\begin{figure}
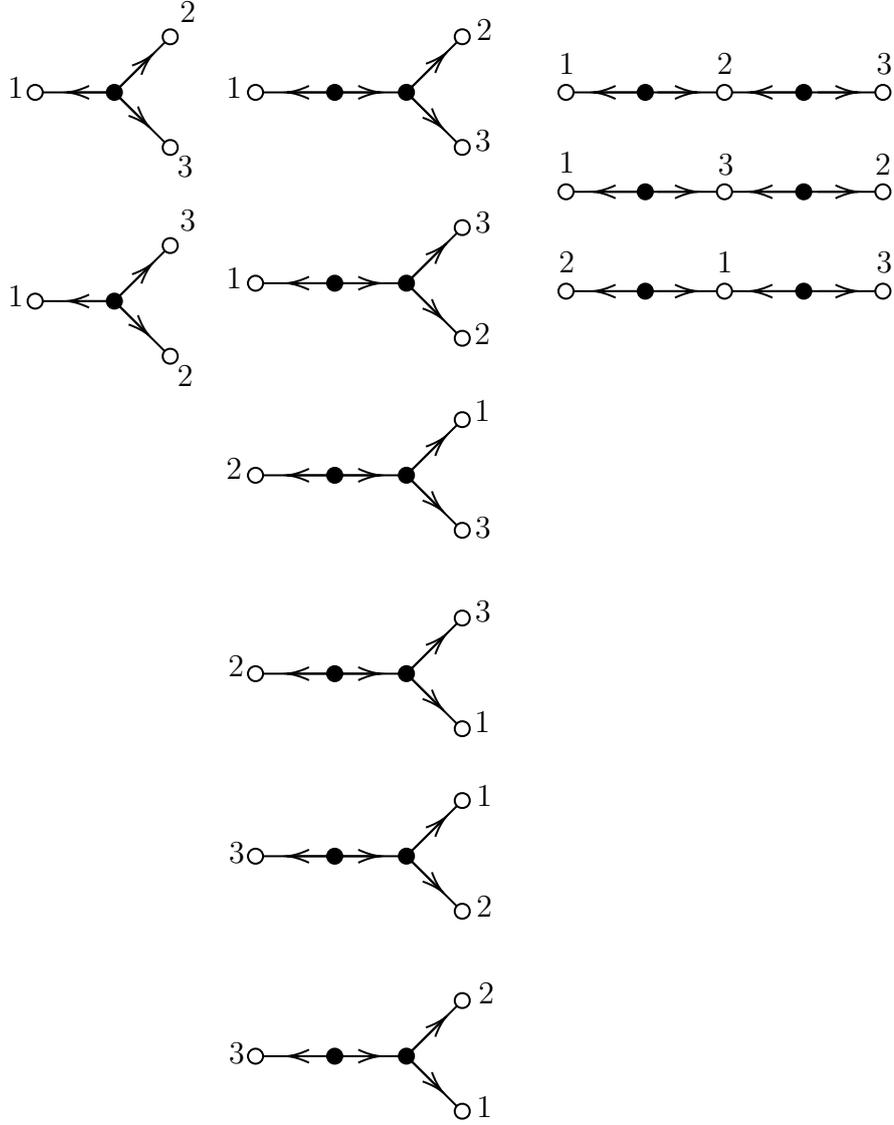

		\centering
		
		\tikzset{every picture/.style={line width=0.75pt}} 

		\tikzset{every picture/.style={line width=0.75pt}} 
		


		\caption{This picture shows all admissible trees with $3$ leaves.}
		\label{fig:esempi di black and white trees}
	\end{figure}
	
	\begin{defn}\label{defn: space of labelled trees}
		For each tree $T\in T_n$ with set of black vertices $B$, let $\sigma_B$ be the topological space of functions $f:B\to(0,1]$ such that:
		\begin{itemize}
			
			\item $max(f)=1$
			\item For any edge  $e=(b,b')$ of $T$ we have $f(b)\geq f(b')$
		\end{itemize}
	\end{defn}
	\begin{defn}
		For any white vertex $w\in W$ we denote by $\Delta_w$ the space of functions $g:E_w\to[0,1]$ such that $\sum_{e\in E_w}g(e)=1$. This space is a simplex of dimension $\abs{E_w}-1$.
	\end{defn}
	\begin{defn}
		The \textbf{stratum} associated to a tree $T\in T_n$ is the space 
		\[
		St_T=\sigma_B\times\prod_{w\in W}\Delta_w
		\]
		If we number the white vertices $w_i$ $i=1,\dots,m$, then we will denote by $g_i:E_{w_i}\to[0,1]$ an element of $\Delta_{w_i}$ and we will write $(f:B\to(0,1],(g_i)_i)_T$ for a generic element of $St_T$.
	\end{defn}
	\begin{defn}
		The space of \textbf{labelled trees with n leaves} is the quotient
		\[
		Tr_n\coloneqq (\bigcup_{T\in T_n}St_T)/\sim
		\]
		with respect to the following equivalence relation:
		\begin{enumerate}
			\item For $e=(b,b')$ and $f(b)=f(b')$ ($b$ and $b'$ are black vertices)
			\[
			(f:B\to(0,1],(g_i)_i)_T\sim (f':B/\{b,b'\}\to(0,1],(g_i)_i)_{T/e}
			\]
			where $T/e$ is the tree obtained from $T$ by collapsing the edge $e$ and $f'([x])\coloneqq f(x)$.
			\item If $g_i(e)=0$ for $e=(b,v_i)$ and the next edge in the cyclic ordering is $e'=(b',v_i)$, then
			\begin{itemize}
				\item For $f(b')=f(b)$ consider the tree $T'$ with black vertex set $B'=B/\{b,b'\}$ and set of edges $E'=E/\{e,e'\}$. Then we identify 
				\[
				(f:B\to(0,1],(g_i)_i)_T\sim (f':B'\to(0,1],(g'_i)_i)_{T'}
				\]
				with $g'_i([x])=g_i(x)$ for $x\in E-\{e\}$ and $f'$ induced by $f$.
				\item For $f(b')<f(b)$ consider the tree $T^+$ with the same vertices of $T$ but the edge $e=(b,v_i)$ replaced by $(b,b')$. Then
				\[
				(f:B\to(0,1],(g_i)_i)_T\sim (f:B\to(0,1], (g_i)_i)_{T^+}
				\]
				\item For $f(b')>f(b)$ consider the tree $T^-$ with the same vertices of $T$ but the edge $e'=(b',v_i)$ replaced by $(b',b)$. Then
				\[
				(f:B\to(0,1],(g_i)_i)_T\sim (f:B\to(0,1], (g'_i)_i)_{T^-}
				\]
				where $g'_i(e)=g_i(e')$ and $g'_i$ coincides with $g_i$ on the other edges incident to $v_i$.
			\end{itemize}
		\end{enumerate}
		See Figure \ref{relazione di equivalenza} for a concrete example of this equivalence relation.
		\begin{figure}
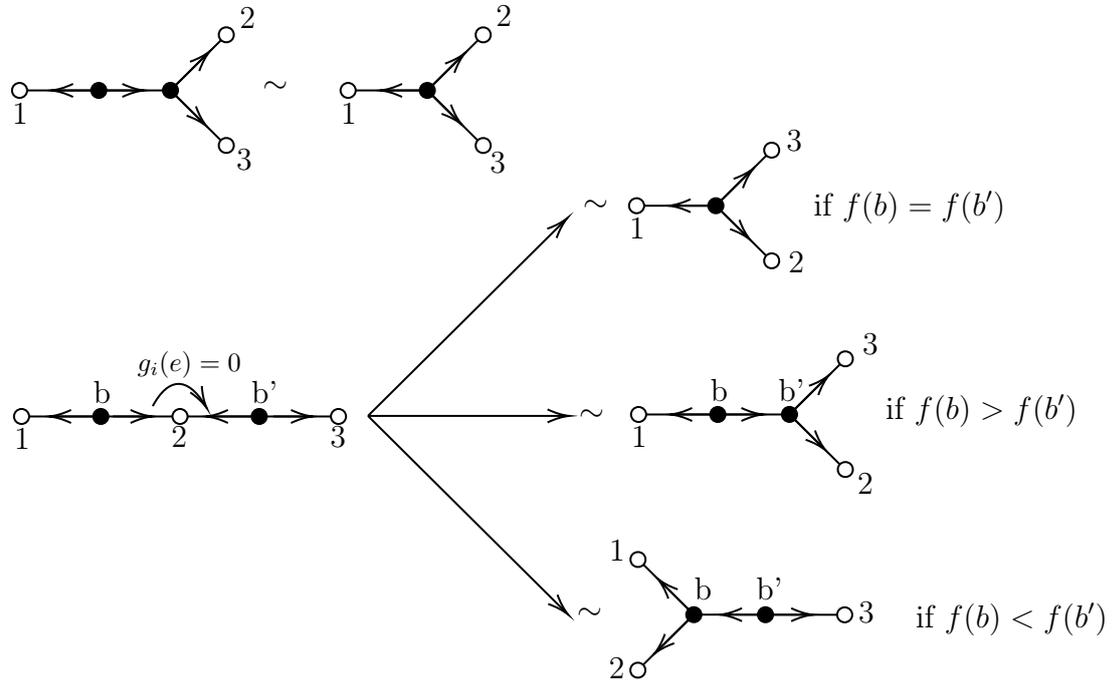

			\centering
			
			\tikzset{every picture/.style={line width=0.75pt}} 
			

			
			\caption{This picture shows some identifications that we need to do in the definition of $Tr_n$. }
			\label{relazione di equivalenza}
		\end{figure}
	\end{defn}
	\begin{es}
		The patient reader could easily verify that $Tr_3$ is homeomorphic to a sphere with three points deleted. The two trees of the first column of Figure \ref{fig:esempi di black and white trees} are $0$-dimensional cells. The six trees of the second column are $1$-dimensional (open) cells and the trees of the last column are  $2$-dimensional (open) cells. See Figure \ref{fig:decomposizione cellulare alberi con tre foglie} for a picture.
	\end{es}
	\begin{figure}
		\centering
		
		\tikzset{every picture/.style={line width=0.75pt}} 
		
		\begin{tikzpicture}[x=0.75pt,y=0.75pt,yscale=-1,xscale=1]
			
			\draw [color={rgb, 255:red, 155; green, 155; blue, 155 }  ,draw opacity=1 ] [dash pattern={on 4.5pt off 4.5pt}]  (67.4,234) .. controls (56.17,183.33) and (378.17,182.33) .. (381,234) ;
			\draw [color={rgb, 255:red, 155; green, 155; blue, 155 }  ,draw opacity=1 ]   (381,234) .. controls (370.17,297.33) and (70.17,285.33) .. (67.4,234) ;
			\draw [line width=1.5]    (118.52,265.99) .. controls (120.5,301.6) and (162.17,376) .. (224.2,390.8) ;
			\draw [line width=1.5]    (325.52,267.99) .. controls (319.17,320.56) and (289.17,381.56) .. (224.2,390.8) ;
			\draw  [color={rgb, 255:red, 155; green, 155; blue, 155 }  ,draw opacity=1 ] (67.4,234) .. controls (67.4,147.4) and (137.6,77.2) .. (224.2,77.2) .. controls (310.8,77.2) and (381,147.4) .. (381,234) .. controls (381,320.6) and (310.8,390.8) .. (224.2,390.8) .. controls (137.6,390.8) and (67.4,320.6) .. (67.4,234) -- cycle ;
			\draw  [fill={rgb, 255:red, 0; green, 0; blue, 0 }  ,fill opacity=1 ] (220.43,77.2) .. controls (220.43,75.12) and (222.12,73.43) .. (224.2,73.43) .. controls (226.28,73.43) and (227.97,75.12) .. (227.97,77.2) .. controls (227.97,79.28) and (226.28,80.97) .. (224.2,80.97) .. controls (222.12,80.97) and (220.43,79.28) .. (220.43,77.2) -- cycle ;
			\draw  [fill={rgb, 255:red, 0; green, 0; blue, 0 }  ,fill opacity=1 ] (220.43,390.8) .. controls (220.43,388.72) and (222.12,387.03) .. (224.2,387.03) .. controls (226.28,387.03) and (227.97,388.72) .. (227.97,390.8) .. controls (227.97,392.88) and (226.28,394.57) .. (224.2,394.57) .. controls (222.12,394.57) and (220.43,392.88) .. (220.43,390.8) -- cycle ;
			\draw [line width=1.5]  [dash pattern={on 5.63pt off 4.5pt}]  (224.2,80.97) .. controls (259.17,88.56) and (287.5,150.6) .. (286.52,197.99) ;
			\draw [line width=1.5]    (224.2,77.2) .. controls (137.17,88) and (113.5,212.6) .. (118.52,265.99) ;
			\draw [line width=1.5]    (224.2,77.2) .. controls (301.17,87) and (330.17,213) .. (325.52,267.99) ;
			\draw  [fill={rgb, 255:red, 255; green, 255; blue, 255 }  ,fill opacity=1 ][line width=1.5]  (322.94,270.74) .. controls (321.42,269.32) and (321.35,266.93) .. (322.77,265.41) .. controls (324.2,263.89) and (326.59,263.81) .. (328.11,265.24) .. controls (329.62,266.67) and (329.7,269.06) .. (328.27,270.57) .. controls (326.85,272.09) and (324.46,272.17) .. (322.94,270.74) -- cycle ;
			\draw [line width=1.5]  [dash pattern={on 5.63pt off 4.5pt}]  (286.52,197.99) .. controls (287.5,236.6) and (273.27,341.6) .. (227.97,390.8) ;
			\draw  [fill={rgb, 255:red, 255; green, 255; blue, 255 }  ,fill opacity=1 ][line width=1.5]  (283.94,200.74) .. controls (282.42,199.32) and (282.35,196.93) .. (283.77,195.41) .. controls (285.2,193.89) and (287.59,193.81) .. (289.11,195.24) .. controls (290.62,196.67) and (290.7,199.06) .. (289.27,200.57) .. controls (287.85,202.09) and (285.46,202.17) .. (283.94,200.74) -- cycle ;
			\draw  [fill={rgb, 255:red, 255; green, 255; blue, 255 }  ,fill opacity=1 ][line width=1.5]  (115.94,268.74) .. controls (114.42,267.32) and (114.35,264.93) .. (115.77,263.41) .. controls (117.2,261.89) and (119.59,261.81) .. (121.11,263.24) .. controls (122.62,264.67) and (122.7,267.06) .. (121.27,268.57) .. controls (119.85,270.09) and (117.46,270.17) .. (115.94,268.74) -- cycle ;

		\end{tikzpicture}

		\caption{The (open) cell decomposition of $Tr_3$. There are two $0$-cells (black vertices), six $1$-dimensional open cells (black edges) and three $2$-dimensional open cells.}
		\label{fig:decomposizione cellulare alberi con tre foglie}
	\end{figure}
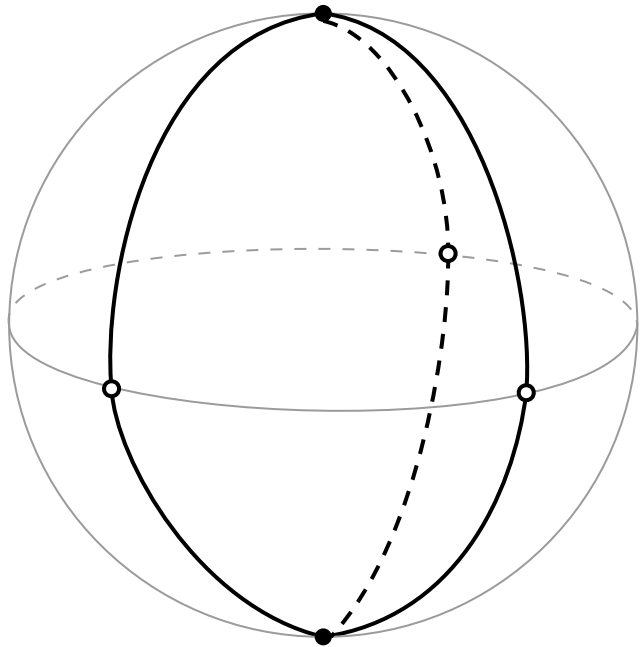
	The construction we just described, that assigns to a point in $\M_{0,n+1}$ a labelled tree with $n$-leaves, turns out to be a homeomorphism, as the second author proved in \cite{Salvatore}:
	\begin{thm}[ \cite{Salvatore}]\label{thm:omeo tra spazio di moduli e labelled trees}
		For any choice of $a_1,\dots,a_n>0$ such that $a_1+\dots+a_n=1$, there is a homeomorphism
		\[
		\phi_{a_1,\dots,a_n}: \mathcal{M}_{0,n+1}\to Tr_n
		\]
	\end{thm}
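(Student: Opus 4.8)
The plan is to produce the map $\phi_{a_1,\dots,a_n}$ explicitly, to write down an inverse by a gluing construction, and to check continuity of both maps directly; I deliberately avoid a compactness argument, since neither $\M_{0,n+1}$ nor $Tr_n$ is compact. First I would check that the labelled tree assignment descends to $\M_{0,n+1}=F_n(\C)/\C\rtimes\C^*$. Under a translation the points $z_i$ and every flow line of $E$ translate; under $z\mapsto\lambda z$ the function $h$ is replaced, up to reparametrising the domain, by $\lambda\cdot h$, so $U=\log\abs{h}$ changes only by the additive constant $\log\abs{\lambda}$ and $E=-\nabla U$ only by a rotation-dilation. In every case zeros of $E$ go to zeros of $E$, finite-length flow lines to finite-length flow lines, the orientation-induced cyclic order at each $z_i$ is preserved, the angles $\theta(e)$ are preserved, and the ratios $\abs{h(b)}/M$ with $M=\max_{b\in B}\abs{h(b)}$ are scale-invariant; hence the labelled tree $(T,f,g)$ is a well-defined invariant of a point of $\M_{0,n+1}$, giving the continuous map $\phi=\phi_{a_1,\dots,a_n}\colon\M_{0,n+1}\to Tr_n$.

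Next I would establish the qualitative picture. The zeros of $E$ are the roots of $\sum_i a_i\prod_{j\neq i}(z-z_j)$, a polynomial of degree $n-1$ whose coefficients are polynomial in the $z_i$; since $U$ is harmonic away from the $z_i$ there are no other critical points, each root is generically a nondegenerate saddle, and from the local models --- using $U\to-\infty$ at the $z_i$ and, crucially because $a_1+\dots+a_n=1$, $\abs{h(z)}\sim\abs{z}$ and hence $U\to+\infty$ at the single ``source'' $\infty$ --- the finite-length flow lines, together with the saddles and the $z_i$, form a connected ribbon graph, that is, an admissible tree of the expected combinatorial shape. The tree type $T$ is constant on each stratum of the resulting stratification of $\M_{0,n+1}$, and on each stratum the labels $f,g$ depend real-analytically on the configuration, so $\phi$ is continuous there. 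Global continuity then reduces to analysing the finitely many ways a stratum can degenerate --- two saddles colliding, and a finite flow line into some $z_i$ shrinking to a point while the two adjacent saddles lie at equal or at unequal potential --- and verifying that in each case the limiting labelled tree is \emph{precisely} the point prescribed by the equivalence relation defining $Tr_n$: relation (1) for a saddle collision, and the three sub-cases of relation (2) for the vanishing angle. I expect this stratum-boundary matching to be the main obstacle; the rest of the argument is comparatively formal.

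For bijectivity I would construct an inverse $\psi\colon Tr_n\to\M_{0,n+1}$ by reconstruction. Each region of $\C$ cut out by the finite-length flow lines is foliated by flow lines running between one fixed pair of critical values, so $\log h$ maps it conformally onto a standard vertical half-strip whose width is $2\pi$ times the relevant $g$-value and whose finite corners sit at heights $\log f(b)$, the global additive normalisation $\log M$ being exactly the free $\C^*$-parameter. Gluing these flat model pieces along their boundary flow lines according to $T$ yields a simply connected Riemann surface which is conformally $\C$ --- again because $a_1+\dots+a_n=1$ forces the single end to have logarithmic rather than hyperbolic type --- carrying a holomorphic $h$ with divisor $\sum_i a_i[z_i]$, hence a configuration well defined up to $\C\rtimes\C^*$. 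One then checks $\phi\circ\psi=\mathrm{id}$ and $\psi\circ\phi=\mathrm{id}$ directly from the construction, the latter being the uniqueness statement that the trajectory structure, the labels, and the fixed residues $a_i$ determine $h$ up to $h\mapsto ch$ with $c\in\C^*$. Finally $\phi$ is a bijection restricting to a homeomorphism between matching open strata and, by the boundary analysis above, open across strata; a continuous open bijection is a homeomorphism, which proves the theorem. (Alternatively one can conclude by verifying directly that the gluing map $\psi$ is continuous.)
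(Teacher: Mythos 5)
The paper itself contains no proof of this statement; it is quoted directly from \cite{Salvatore}, and the text preceding it in Section~\ref{sec: combinatorial models for the open moduli space} only recalls the construction of $\phi_{a_1,\dots,a_n}$. On its own terms your strategy is the right one and is, as far as can be judged, essentially the argument of \cite{Salvatore}: check well-definedness on the $\C\rtimes\C^*$-quotient, prove stratum-wise continuity plus a case analysis of degenerations against the identifications defining $Tr_n$, and produce the inverse by a welding construction in which $\log h$ cuts the plane into flat pieces that can be reassembled from $(T,f,g)$. You have correctly isolated both places where $\sum a_i=1$ enters: it makes $\infty$ the unique logarithmic source of $U$, so that the finite flow lines form a tree; and it forces the welded surface to be of parabolic type, so the reconstruction lands in $\M_{0,n+1}$ rather than in a disk quotient. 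You also correctly observe that compactness is unavailable here, and you flag the stratum-boundary matching (saddle collision; vanishing angle with equal or unequal saddle potentials) as the genuinely delicate step, which it is.

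Two details in the reconstruction step need repair, though neither affects the structure of the argument. First, the pieces to weld are not the complementary regions of the finite flow lines alone: already for $n=2$ that complement is the single region $\C\setminus[z_1,z_2]$, which no branch of $\log h$ maps onto a half-strip. You must also cut along the separatrices running to $\infty$, i.e.\ along the full stable and unstable manifolds of the saddles, to obtain the quadrilateral pieces on which $\log h$ is univalent. Second, the half-strip widths carry a factor of $a_i$: a sector at $z_i$ of Euclidean angle $2\pi g_i(e)$ is opened to angular width $2\pi a_i g_i(e)$ under $\log h$, because $h\sim(z-z_i)^{a_i}$ near $z_i$. That factor is exactly what guarantees that the welded surface carries a function with the prescribed local monodromy at each $z_i$, so it is needed to recover the original configuration up to $\C\rtimes\C^*$ and to make $\psi\circ\phi=\mathrm{id}$ come out correctly.
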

	\section{Operads and cacti}\label{sec: cactus models}
	In this section we recall some known facts about cacti spaces and operads. These notions will be useful in Section \ref{sec: combinatorial models for Deligne-Mumford} to construct regular CW-decompositions of $\overline{\M}_{0,n+1}$.
	\subsection{The space of cacti}
	A combinatorial construction of the space of cacti was introduced by McClure and Smith \cite{McClure-Smith2}, while geometric constructions are due to Voronov \cite{Voronov} and Kaufmann \cite{Kaufmann}. The second author compared the two approaches in \cite{Salvatore2}. Another good reference where the reader can find many connections with string topology is the book by Cohen, Hess, Voronov \cite{VoronovHess}. 
	\begin{defn}\label{def:cactus}[The space of cacti]
		Let $\mathcal{C}_n$ be the set of partitions $x$ of $S^1$ into $n$ closed $1$-manifolds $I_1(x),\dots,I_n(x)$ such that:
		\begin{itemize}
			\item They have equal measure.
			\item They have pairwise disjoint interiors.
			\item  There is no cyclically ordered sequence of points $(z_1,z_2,z_3,z_4)$ in $S^1$ such that $z_1,z_3\in I_j(x)$, $z_2,z_4\in I_k(x)$ and $j\neq k$.
		\end{itemize}
		We can equip this set with a topology by defining a metric on it: for any $x,y\in \mathcal{C}_n$ we set
		\[
		d(x,y)=\sum_{j=1}^n \mu(I_j(x)-\mathring{I}_j(y))
		\]
		where $\mu$ denotes the measure. We will call $\mathcal{C}_n$ (with this topology) the \textbf{space of based cacti with $n$-lobes}. See Figure \ref{fig:esempio di cactus} for an example. 
	\end{defn}
	\begin{oss}
		$\mathcal{C}_n$ is called the space of cacti for the following reason: given $x\in\mathcal{C}_n$, let us define a relation $\sim$ on $S^1$: two points $z_1,z_2\in S^1$ are  equivalent if there is an index $j\in\{0,\dots,n\}$ such that $z_1$ and $z_2$ are the boundary points of the same connected component of $S^1-\mathring{I}_j$. The quotient space $c(x)\coloneqq S^1/\sim$ by this relation is a pointed space (the base point is just the image of $1\in S^1$) called the \textbf{cactus} associated to $x$: topologically it is a configuration of $n$-circles in the plane, called lobes, whose dual graph is a tree. The dual graph is a graph with two kind of vertices: a white vertex for any lobe and a black vertex for any intersection point between two lobes. An edges connects a white vertex $w$ to a black vertex $b$ if $b$ represents the intersection point of the lobe corresponding to $w$ with some other lobe. See Figure \ref{fig:esempi di cactus} for some examples. Note that the lobes are the image of the $1$-manifolds $I_1(x),\dots,I_n(x)$ under the quotient map $S^1\to S^1/\sim$. In what follows we will freely identify a partition $x\in\mathcal{C}_n$ and its associated cactus $c(x)$.
	\end{oss}
	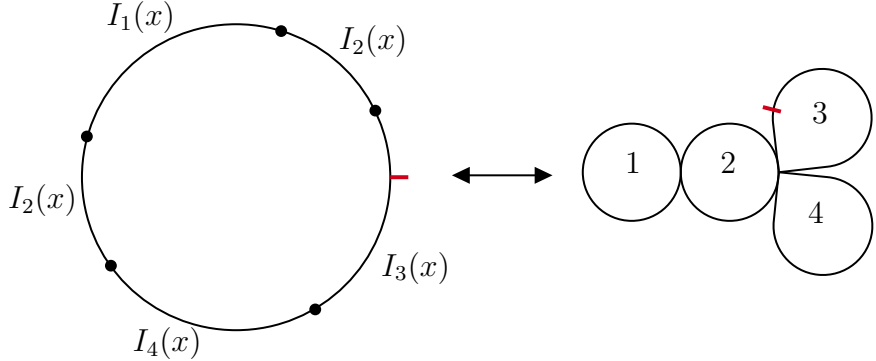
\begin{figure}
		\centering
		
		\tikzset{every picture/.style={line width=0.75pt}} 
		
		\begin{tikzpicture}[x=0.75pt,y=0.75pt,yscale=-1,xscale=1]
			
			\draw   (54.32,193.95) .. controls (54.32,151.45) and (88.78,117) .. (131.27,117) .. controls (173.77,117) and (208.23,151.45) .. (208.23,193.95) .. controls (208.23,236.45) and (173.77,270.9) .. (131.27,270.9) .. controls (88.78,270.9) and (54.32,236.45) .. (54.32,193.95) -- cycle ;
			\draw  [fill={rgb, 255:red, 0; green, 0; blue, 0 }  ,fill opacity=1 ] (66.32,238.45) .. controls (66.32,237.1) and (67.42,236) .. (68.77,236) .. controls (70.13,236) and (71.23,237.1) .. (71.23,238.45) .. controls (71.23,239.81) and (70.13,240.9) .. (68.77,240.9) .. controls (67.42,240.9) and (66.32,239.81) .. (66.32,238.45) -- cycle ;
			\draw  [fill={rgb, 255:red, 0; green, 0; blue, 0 }  ,fill opacity=1 ] (151.32,120.45) .. controls (151.32,119.1) and (152.42,118) .. (153.77,118) .. controls (155.13,118) and (156.23,119.1) .. (156.23,120.45) .. controls (156.23,121.81) and (155.13,122.9) .. (153.77,122.9) .. controls (152.42,122.9) and (151.32,121.81) .. (151.32,120.45) -- cycle ;
			\draw  [fill={rgb, 255:red, 0; green, 0; blue, 0 }  ,fill opacity=1 ] (198.32,160.45) .. controls (198.32,159.1) and (199.42,158) .. (200.77,158) .. controls (202.13,158) and (203.23,159.1) .. (203.23,160.45) .. controls (203.23,161.81) and (202.13,162.9) .. (200.77,162.9) .. controls (199.42,162.9) and (198.32,161.81) .. (198.32,160.45) -- cycle ;
			\draw  [fill={rgb, 255:red, 0; green, 0; blue, 0 }  ,fill opacity=1 ] (54.32,173.45) .. controls (54.32,172.1) and (55.42,171) .. (56.77,171) .. controls (58.13,171) and (59.23,172.1) .. (59.23,173.45) .. controls (59.23,174.81) and (58.13,175.9) .. (56.77,175.9) .. controls (55.42,175.9) and (54.32,174.81) .. (54.32,173.45) -- cycle ;
			\draw  [fill={rgb, 255:red, 0; green, 0; blue, 0 }  ,fill opacity=1 ] (168.32,260.45) .. controls (168.32,259.1) and (169.42,258) .. (170.77,258) .. controls (172.13,258) and (173.23,259.1) .. (173.23,260.45) .. controls (173.23,261.81) and (172.13,262.9) .. (170.77,262.9) .. controls (169.42,262.9) and (168.32,261.81) .. (168.32,260.45) -- cycle ;
			\draw [color={rgb, 255:red, 208; green, 2; blue, 27 }  ,draw opacity=1 ][line width=1.5]    (208.23,193.95) -- (217.23,193.95) ;
			\draw   (304.32,191.45) .. controls (304.32,177.95) and (315.27,167) .. (328.77,167) .. controls (342.28,167) and (353.23,177.95) .. (353.23,191.45) .. controls (353.23,204.96) and (342.28,215.9) .. (328.77,215.9) .. controls (315.27,215.9) and (304.32,204.96) .. (304.32,191.45) -- cycle ;
			\draw   (353.23,191.45) .. controls (353.23,177.95) and (364.17,167) .. (377.68,167) .. controls (391.18,167) and (402.13,177.95) .. (402.13,191.45) .. controls (402.13,204.96) and (391.18,215.9) .. (377.68,215.9) .. controls (364.17,215.9) and (353.23,204.96) .. (353.23,191.45) -- cycle ;
			\draw   (426.67,193.92) .. controls (440.22,195.29) and (450.1,207.38) .. (448.74,220.93) .. controls (447.37,234.49) and (435.28,244.37) .. (421.73,243) .. controls (408.17,241.64) and (398.29,229.54) .. (399.66,215.99) .. controls (401.31,199.63) and (402.13,191.45) .. (402.13,191.45) .. controls (402.13,191.45) and (410.31,192.28) .. (426.67,193.92) -- cycle ;
			\draw   (399.32,166.95) .. controls (397.77,153.41) and (407.49,141.19) .. (421.02,139.64) .. controls (434.56,138.09) and (446.78,147.81) .. (448.33,161.34) .. controls (449.88,174.87) and (440.16,187.1) .. (426.63,188.65) .. controls (410.3,190.52) and (402.13,191.45) .. (402.13,191.45) .. controls (402.13,191.45) and (401.19,183.28) .. (399.32,166.95) -- cycle ;
			\draw [color={rgb, 255:red, 208; green, 2; blue, 27 }  ,draw opacity=1 ][line width=1.5]    (394.23,158.68) -- (403.23,160.95) ;
			\draw    (242,193) -- (286.23,193) ;
			\draw [shift={(289.23,193)}, rotate = 180] [fill={rgb, 255:red, 0; green, 0; blue, 0 }  ][line width=0.08]  [draw opacity=0] (8.93,-4.29) -- (0,0) -- (8.93,4.29) -- cycle    ;
			\draw [shift={(239,193)}, rotate = 0] [fill={rgb, 255:red, 0; green, 0; blue, 0 }  ][line width=0.08]  [draw opacity=0] (8.93,-4.29) -- (0,0) -- (8.93,4.29) -- cycle    ;
			
			\draw (65,104) node [anchor=north west][inner sep=0.75pt]   [align=left] {$\displaystyle I_{1}( x)$};
			\draw (181,116) node [anchor=north west][inner sep=0.75pt]   [align=left] {$\displaystyle I_{2}( x)$};
			\draw (16,196) node [anchor=north west][inner sep=0.75pt]   [align=left] {$\displaystyle I_{2}( x)$};
			\draw (202.23,229.95) node [anchor=north west][inner sep=0.75pt]   [align=left] {$\displaystyle I_{3}( x)$};
			\draw (78.23,267.95) node [anchor=north west][inner sep=0.75pt]   [align=left] {$\displaystyle I_{4}( x)$};
			\draw (323.9,180.2) node [anchor=north west][inner sep=0.75pt]   [align=left] {$\displaystyle 1$};
			\draw (371.63,180.2) node [anchor=north west][inner sep=0.75pt]   [align=left] {$\displaystyle 2$};
			\draw (417.35,155.04) node [anchor=north west][inner sep=0.75pt]   [align=left] {$\displaystyle 3$};
			\draw (415.35,205.04) node [anchor=north west][inner sep=0.75pt]   [align=left] {$\displaystyle 4$};

		\end{tikzpicture}
		\caption{On the left there is an element $x\in\mathcal{C}_4$, on the right its associated cactus $c(x)$. The base point of the circle $S^1$ is depicted in red and corresponds to a base point on the cactus $c(x)$ (which we depict as a red spine).}
		\label{fig:esempio di cactus}
	\end{figure}
	\subsection{Cell decomposition} 
	To any point $x\in \mathcal{C}_n$ we can associate a sequence $(X_1,\dots,X_l)$ of numbers in $\{1,\dots,n\}$ by the following procedure: start from the point $1\in S^1$ and  move along the circle clockwise. The sequence $(X_1,\dots,X_l)$ is obtained by writing one after the other the indices of all the $1$-manifolds one encounters and has the following properties:
	\begin{itemize}
		\item All values between $1$ and $n$ appear.
		\item $X _i\neq X_{i+1}$ for every $i=1,\dots,l-1$.
		\item There is no subsequence of the form $(i,j,i,j)$ with $i\neq j$.
	\end{itemize}
	This sequence is just a combinatorial way to encode the shape of the cactus $c(x)$. By an abuse of notation we will also denote by $(X_1,\dots,X_l)$ the subspace of $\mathcal{C}_n$ consisting of all partitions whose associated sequence is $(X_1,\dots,X_l)$. This subspace turns out to be homeomorphic to a product of simplices. More precisely, if $m_i$ is the cardinality of $\{j\in\{1,\dots,l\}\mid X_j=i\}$, $i=1,\dots,n$, then 
	\[
	(X_1,\dots,X_l)\cong \prod_{i=1}^n\Delta^{m_i-1}
	\]
	For an example look at the cactus of Figure \ref{fig:esempio di cactus}: it belongs to the cell $(3,4,2,1,2,3)\cong\Delta^0\times \Delta^1\times\Delta^1\times\Delta^0$.
	Intuitively all the cacti $c(x)$ associated to partitions $x\in(X_1,\dots,X_l)$ have the same shape. So we will represent pictorially a cell by drawing a cactus, meaning that the cell contains all the partitions $x\in\mathcal{C}_n$ whose associated cactus $c(x)$ has that shape. From this point of view, the parameters
	of a cell $(X_1,\dots,X_l)$ can be thought as the lengths of the arcs between two consecutive \emph{marked points}, where a marked point is an intersection point of lobes or the base point. The boundary of a cell is obtained by collapsing some of these arcs. See Figure \ref{fig:esempi di cactus e loro bordi} for some examples. This gives a regular CW-decomposition of   $\mathcal{C}_n$.
	\begin{figure}
		\centering

		\tikzset{every picture/.style={line width=0.75pt}} 
		
		\begin{tikzpicture}[x=0.75pt,y=0.75pt,yscale=-1,xscale=1]
			
			\draw   (321.32,82.28) .. controls (321.32,80.19) and (323.01,78.51) .. (325.1,78.51) .. controls (327.18,78.51) and (328.87,80.19) .. (328.87,82.28) .. controls (328.87,84.36) and (327.18,86.05) .. (325.1,86.05) .. controls (323.01,86.05) and (321.32,84.36) .. (321.32,82.28) -- cycle ;
			\draw   (400.41,82.28) .. controls (400.41,80.19) and (402.1,78.51) .. (404.19,78.51) .. controls (406.27,78.51) and (407.96,80.19) .. (407.96,82.28) .. controls (407.96,84.36) and (406.27,86.05) .. (404.19,86.05) .. controls (402.1,86.05) and (400.41,84.36) .. (400.41,82.28) -- cycle ;
			\draw   (479.51,82.28) .. controls (479.51,80.19) and (481.19,78.51) .. (483.28,78.51) .. controls (485.36,78.51) and (487.05,80.19) .. (487.05,82.28) .. controls (487.05,84.36) and (485.36,86.05) .. (483.28,86.05) .. controls (481.19,86.05) and (479.51,84.36) .. (479.51,82.28) -- cycle ;
			\draw  [fill={rgb, 255:red, 0; green, 0; blue, 0 }  ,fill opacity=1 ] (360.87,82.28) .. controls (360.87,80.19) and (362.56,78.51) .. (364.64,78.51) .. controls (366.73,78.51) and (368.41,80.19) .. (368.41,82.28) .. controls (368.41,84.36) and (366.73,86.05) .. (364.64,86.05) .. controls (362.56,86.05) and (360.87,84.36) .. (360.87,82.28) -- cycle ;
			\draw  [fill={rgb, 255:red, 0; green, 0; blue, 0 }  ,fill opacity=1 ] (439.96,82.28) .. controls (439.96,80.19) and (441.65,78.51) .. (443.73,78.51) .. controls (445.82,78.51) and (447.51,80.19) .. (447.51,82.28) .. controls (447.51,84.36) and (445.82,86.05) .. (443.73,86.05) .. controls (441.65,86.05) and (439.96,84.36) .. (439.96,82.28) -- cycle ;
			\draw    (328.87,82.28) -- (360.87,82.28) ;
			\draw    (368.41,82.28) -- (400.41,82.28) ;
			\draw    (407.96,82.28) -- (439.96,82.28) ;
			\draw    (447.51,82.28) -- (479.51,82.28) ;
			\draw   (169.76,82.28) .. controls (169.76,73.12) and (177.19,65.69) .. (186.35,65.69) .. controls (195.51,65.69) and (202.93,73.12) .. (202.93,82.28) .. controls (202.93,91.44) and (195.51,98.86) .. (186.35,98.86) .. controls (177.19,98.86) and (169.76,91.44) .. (169.76,82.28) -- cycle ;
			\draw   (202.93,82.28) .. controls (202.93,73.12) and (210.35,65.69) .. (219.51,65.69) .. controls (228.67,65.69) and (236.1,73.12) .. (236.1,82.28) .. controls (236.1,91.44) and (228.67,98.86) .. (219.51,98.86) .. controls (210.35,98.86) and (202.93,91.44) .. (202.93,82.28) -- cycle ;
			\draw   (136.6,82.28) .. controls (136.6,73.12) and (144.02,65.69) .. (153.18,65.69) .. controls (162.34,65.69) and (169.76,73.12) .. (169.76,82.28) .. controls (169.76,91.44) and (162.34,98.86) .. (153.18,98.86) .. controls (144.02,98.86) and (136.6,91.44) .. (136.6,82.28) -- cycle ;
			\draw    (275,82) -- (307.17,82) ;
			\draw [shift={(309.17,82)}, rotate = 180] [color={rgb, 255:red, 0; green, 0; blue, 0 }  ][line width=0.75]    (10.93,-3.29) .. controls (6.95,-1.4) and (3.31,-0.3) .. (0,0) .. controls (3.31,0.3) and (6.95,1.4) .. (10.93,3.29)   ;
			\draw    (275,82) -- (248.17,82) ;
			\draw [shift={(246.17,82)}, rotate = 360] [color={rgb, 255:red, 0; green, 0; blue, 0 }  ][line width=0.75]    (10.93,-3.29) .. controls (6.95,-1.4) and (3.31,-0.3) .. (0,0) .. controls (3.31,0.3) and (6.95,1.4) .. (10.93,3.29)   ;
			\draw   (171.32,149.23) .. controls (164.23,150.66) and (157.28,145.89) .. (155.81,138.58) .. controls (154.33,131.28) and (158.89,124.19) .. (165.98,122.76) .. controls (173.07,121.33) and (180.02,126.09) .. (181.49,133.4) .. controls (183.27,142.22) and (184.16,146.63) .. (184.16,146.63) .. controls (184.16,146.63) and (179.88,147.5) .. (171.32,149.23) -- cycle ;
			\draw   (187.72,134.03) .. controls (187.72,134.03) and (187.72,134.03) .. (187.72,134.03) .. controls (189.68,127.06) and (197.09,123.06) .. (204.27,125.08) .. controls (211.44,127.11) and (215.67,134.39) .. (213.7,141.36) .. controls (211.74,148.32) and (204.33,152.32) .. (197.15,150.3) .. controls (188.49,147.86) and (184.16,146.63) .. (184.16,146.63) .. controls (184.16,146.63) and (185.35,142.43) .. (187.72,134.03) -- cycle ;
			\draw   (193.57,155.74) .. controls (193.57,155.74) and (193.57,155.74) .. (193.57,155.74) .. controls (193.57,155.74) and (193.57,155.74) .. (193.57,155.74) .. controls (198.77,160.78) and (198.79,169.2) .. (193.6,174.56) .. controls (188.42,179.91) and (180,180.18) .. (174.8,175.15) .. controls (169.6,170.12) and (169.59,161.69) .. (174.77,156.34) .. controls (181.04,149.87) and (184.17,146.64) .. (184.16,146.63) .. controls (184.17,146.64) and (187.3,149.67) .. (193.57,155.74) -- cycle ;
			\draw   (346.32,144.28) .. controls (346.32,142.19) and (348.01,140.51) .. (350.1,140.51) .. controls (352.18,140.51) and (353.87,142.19) .. (353.87,144.28) .. controls (353.87,146.36) and (352.18,148.05) .. (350.1,148.05) .. controls (348.01,148.05) and (346.32,146.36) .. (346.32,144.28) -- cycle ;
			\draw   (414.94,174.74) .. controls (413.42,173.32) and (413.35,170.93) .. (414.77,169.41) .. controls (416.2,167.89) and (418.59,167.81) .. (420.11,169.24) .. controls (421.62,170.67) and (421.7,173.06) .. (420.27,174.57) .. controls (418.85,176.09) and (416.46,176.17) .. (414.94,174.74) -- cycle ;
			\draw   (420.23,118.96) .. controls (418.77,120.45) and (416.38,120.48) .. (414.89,119.03) .. controls (413.4,117.57) and (413.37,115.19) .. (414.82,113.69) .. controls (416.27,112.2) and (418.66,112.17) .. (420.15,113.62) .. controls (421.65,115.08) and (421.68,117.46) .. (420.23,118.96) -- cycle ;
			\draw  [fill={rgb, 255:red, 0; green, 0; blue, 0 }  ,fill opacity=1 ] (385.87,144.28) .. controls (385.87,142.19) and (387.56,140.51) .. (389.64,140.51) .. controls (391.73,140.51) and (393.41,142.19) .. (393.41,144.28) .. controls (393.41,146.36) and (391.73,148.05) .. (389.64,148.05) .. controls (387.56,148.05) and (385.87,146.36) .. (385.87,144.28) -- cycle ;
			\draw    (353.87,144.28) -- (385.87,144.28) ;
			\draw    (414.77,169.41) -- (389.64,144.28) ;
			\draw    (389.64,144.28) -- (414.89,119.03) ;
			\draw    (275,143) -- (307.17,143) ;
			\draw [shift={(309.17,143)}, rotate = 180] [color={rgb, 255:red, 0; green, 0; blue, 0 }  ][line width=0.75]    (10.93,-3.29) .. controls (6.95,-1.4) and (3.31,-0.3) .. (0,0) .. controls (3.31,0.3) and (6.95,1.4) .. (10.93,3.29)   ;
			\draw    (275,143) -- (248.17,143) ;
			\draw [shift={(246.17,143)}, rotate = 360] [color={rgb, 255:red, 0; green, 0; blue, 0 }  ][line width=0.75]    (10.93,-3.29) .. controls (6.95,-1.4) and (3.31,-0.3) .. (0,0) .. controls (3.31,0.3) and (6.95,1.4) .. (10.93,3.29)   ;
			\draw (320,61) node [anchor=north west][inner sep=0.75pt]   [align=left] {$\displaystyle 1$};
			\draw (399,61) node [anchor=north west][inner sep=0.75pt]   [align=left] {$\displaystyle 2$};
			\draw (478.52,60.99) node [anchor=north west][inner sep=0.75pt]   [align=left] {$\displaystyle 3$};
			\draw (149,75) node [anchor=north west][inner sep=0.75pt]   [align=left] {$\displaystyle 1$};
			\draw (181,75) node [anchor=north west][inner sep=0.75pt]   [align=left] {$\displaystyle 2$};
			\draw (214,75) node [anchor=north west][inner sep=0.75pt]   [align=left] {$\displaystyle 3$};
			\draw (179,154.46) node [anchor=north west][inner sep=0.75pt]   [align=left] {$\displaystyle 3$};
			\draw (163,129.46) node [anchor=north west][inner sep=0.75pt]   [align=left] {$\displaystyle 1$};
			\draw (195,129.46) node [anchor=north west][inner sep=0.75pt]   [align=left] {$\displaystyle 2$};
			\draw (335,135) node [anchor=north west][inner sep=0.75pt]   [align=left] {$\displaystyle 1$};
			\draw (421,97) node [anchor=north west][inner sep=0.75pt]   [align=left] {$\displaystyle 2$};
			\draw (419.52,174.99) node [anchor=north west][inner sep=0.75pt]   [align=left] {$\displaystyle 3$};

		\end{tikzpicture}
		
		\caption{On the left there are some cacti (without base point), on the right the corresponding dual graphs.}
		\label{fig:esempi di cactus}
	\end{figure}
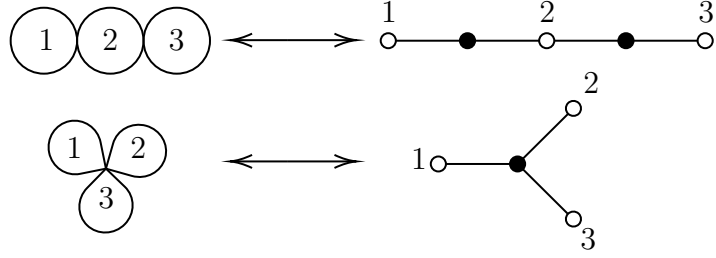
	
	\begin{figure}
		\centering
		
		\tikzset{every picture/.style={line width=0.75pt}} 
		
		\begin{tikzpicture}[x=0.75pt,y=0.75pt,yscale=-1,xscale=1]
			
			\draw   (64.76,278.28) .. controls (64.76,269.12) and (72.19,261.69) .. (81.35,261.69) .. controls (90.51,261.69) and (97.93,269.12) .. (97.93,278.28) .. controls (97.93,287.44) and (90.51,294.86) .. (81.35,294.86) .. controls (72.19,294.86) and (64.76,287.44) .. (64.76,278.28) -- cycle ;
			\draw   (97.93,278.28) .. controls (97.93,269.12) and (105.35,261.69) .. (114.51,261.69) .. controls (123.67,261.69) and (131.1,269.12) .. (131.1,278.28) .. controls (131.1,287.44) and (123.67,294.86) .. (114.51,294.86) .. controls (105.35,294.86) and (97.93,287.44) .. (97.93,278.28) -- cycle ;
			\draw   (31.6,278.28) .. controls (31.6,269.12) and (39.02,261.69) .. (48.18,261.69) .. controls (57.34,261.69) and (64.76,269.12) .. (64.76,278.28) .. controls (64.76,287.44) and (57.34,294.86) .. (48.18,294.86) .. controls (39.02,294.86) and (31.6,287.44) .. (31.6,278.28) -- cycle ;
			\draw   (235.32,330.23) .. controls (228.23,331.66) and (221.28,326.89) .. (219.81,319.58) .. controls (218.33,312.28) and (222.89,305.19) .. (229.98,303.76) .. controls (237.07,302.33) and (244.02,307.09) .. (245.49,314.4) .. controls (247.27,323.22) and (248.16,327.63) .. (248.16,327.63) .. controls (248.16,327.63) and (243.88,328.5) .. (235.32,330.23) -- cycle ;
			\draw   (251.72,315.03) .. controls (253.68,308.06) and (261.09,304.06) .. (268.27,306.08) .. controls (275.44,308.11) and (279.67,315.39) .. (277.7,322.36) .. controls (275.74,329.32) and (268.33,333.32) .. (261.15,331.3) .. controls (252.49,328.86) and (248.16,327.63) .. (248.16,327.63) .. controls (248.16,327.63) and (249.35,323.43) .. (251.72,315.03) -- cycle ;
			\draw   (257.57,336.74) .. controls (257.57,336.74) and (257.57,336.74) .. (257.57,336.74) .. controls (262.77,341.78) and (262.79,350.2) .. (257.6,355.56) .. controls (252.42,360.91) and (244,361.18) .. (238.8,356.15) .. controls (233.6,351.12) and (233.59,342.69) .. (238.77,337.34) .. controls (245.04,330.87) and (248.17,327.64) .. (248.16,327.63) .. controls (248.17,327.64) and (251.3,330.67) .. (257.57,336.74) -- cycle ;
			\draw  [color={rgb, 255:red, 0; green, 0; blue, 0 }  ,draw opacity=1 ] (120.8,92.51) .. controls (120.8,85.33) and (126.62,79.51) .. (133.8,79.51) .. controls (140.98,79.51) and (146.8,85.33) .. (146.8,92.51) .. controls (146.8,99.69) and (140.98,105.51) .. (133.8,105.51) .. controls (126.62,105.51) and (120.8,99.69) .. (120.8,92.51) -- cycle ;
			\draw  [color={rgb, 255:red, 0; green, 0; blue, 0 }  ,draw opacity=1 ] (94.8,92.51) .. controls (94.8,85.33) and (100.62,79.51) .. (107.8,79.51) .. controls (114.98,79.51) and (120.8,85.33) .. (120.8,92.51) .. controls (120.8,99.69) and (114.98,105.51) .. (107.8,105.51) .. controls (100.62,105.51) and (94.8,99.69) .. (94.8,92.51) -- cycle ;
			\draw  [color={rgb, 255:red, 0; green, 0; blue, 0 }  ,draw opacity=1 ] (374.8,92.51) .. controls (374.8,85.33) and (380.62,79.51) .. (387.8,79.51) .. controls (394.98,79.51) and (400.8,85.33) .. (400.8,92.51) .. controls (400.8,99.69) and (394.98,105.51) .. (387.8,105.51) .. controls (380.62,105.51) and (374.8,99.69) .. (374.8,92.51) -- cycle ;
			\draw  [color={rgb, 255:red, 0; green, 0; blue, 0 }  ,draw opacity=1 ] (348.8,92.51) .. controls (348.8,85.33) and (354.62,79.51) .. (361.8,79.51) .. controls (368.98,79.51) and (374.8,85.33) .. (374.8,92.51) .. controls (374.8,99.69) and (368.98,105.51) .. (361.8,105.51) .. controls (354.62,105.51) and (348.8,99.69) .. (348.8,92.51) -- cycle ;
			\draw  [color={rgb, 255:red, 0; green, 0; blue, 0 }  ,draw opacity=1 ] (250.8,163.51) .. controls (250.8,156.33) and (256.62,150.51) .. (263.8,150.51) .. controls (270.98,150.51) and (276.8,156.33) .. (276.8,163.51) .. controls (276.8,170.69) and (270.98,176.51) .. (263.8,176.51) .. controls (256.62,176.51) and (250.8,170.69) .. (250.8,163.51) -- cycle ;
			\draw  [color={rgb, 255:red, 0; green, 0; blue, 0 }  ,draw opacity=1 ] (224.8,163.51) .. controls (224.8,156.33) and (230.62,150.51) .. (237.8,150.51) .. controls (244.98,150.51) and (250.8,156.33) .. (250.8,163.51) .. controls (250.8,170.69) and (244.98,176.51) .. (237.8,176.51) .. controls (230.62,176.51) and (224.8,170.69) .. (224.8,163.51) -- cycle ;
			\draw  [fill={rgb, 255:red, 0; green, 0; blue, 0 }  ,fill opacity=1 ] (156.9,92.8) .. controls (156.9,91.25) and (158.15,90) .. (159.7,90) .. controls (161.25,90) and (162.5,91.25) .. (162.5,92.8) .. controls (162.5,94.35) and (161.25,95.6) .. (159.7,95.6) .. controls (158.15,95.6) and (156.9,94.35) .. (156.9,92.8) -- cycle ;
			\draw  [fill={rgb, 255:red, 0; green, 0; blue, 0 }  ,fill opacity=1 ] (335.5,92.8) .. controls (335.5,91.25) and (336.75,90) .. (338.3,90) .. controls (339.85,90) and (341.1,91.25) .. (341.1,92.8) .. controls (341.1,94.35) and (339.85,95.6) .. (338.3,95.6) .. controls (336.75,95.6) and (335.5,94.35) .. (335.5,92.8) -- cycle ;
			\draw   (159.7,92.8) .. controls (159.7,64.3) and (199.68,41.2) .. (249,41.2) .. controls (298.32,41.2) and (338.3,64.3) .. (338.3,92.8) .. controls (338.3,121.3) and (298.32,144.4) .. (249,144.4) .. controls (199.68,144.4) and (159.7,121.3) .. (159.7,92.8) -- cycle ;
			\draw  [color={rgb, 255:red, 0; green, 0; blue, 0 }  ,draw opacity=1 ] (250.8,23.51) .. controls (250.8,16.33) and (256.62,10.51) .. (263.8,10.51) .. controls (270.98,10.51) and (276.8,16.33) .. (276.8,23.51) .. controls (276.8,30.69) and (270.98,36.51) .. (263.8,36.51) .. controls (256.62,36.51) and (250.8,30.69) .. (250.8,23.51) -- cycle ;
			\draw  [color={rgb, 255:red, 0; green, 0; blue, 0 }  ,draw opacity=1 ] (224.8,23.51) .. controls (224.8,16.33) and (230.62,10.51) .. (237.8,10.51) .. controls (244.98,10.51) and (250.8,16.33) .. (250.8,23.51) .. controls (250.8,30.69) and (244.98,36.51) .. (237.8,36.51) .. controls (230.62,36.51) and (224.8,30.69) .. (224.8,23.51) -- cycle ;
			\draw  [color={rgb, 255:red, 208; green, 2; blue, 27 }  ,draw opacity=1 ][fill={rgb, 255:red, 208; green, 2; blue, 27 }  ,fill opacity=1 ] (119.3,87.01) .. controls (119.3,86.18) and (119.97,85.51) .. (120.8,85.51) .. controls (121.63,85.51) and (122.3,86.18) .. (122.3,87.01) .. controls (122.3,87.84) and (121.63,88.51) .. (120.8,88.51) .. controls (119.97,88.51) and (119.3,87.84) .. (119.3,87.01) -- cycle ;
			\draw  [color={rgb, 255:red, 208; green, 2; blue, 27 }  ,draw opacity=1 ][fill={rgb, 255:red, 208; green, 2; blue, 27 }  ,fill opacity=1 ] (223.3,163.51) .. controls (223.3,162.68) and (223.97,162.01) .. (224.8,162.01) .. controls (225.63,162.01) and (226.3,162.68) .. (226.3,163.51) .. controls (226.3,164.34) and (225.63,165.01) .. (224.8,165.01) .. controls (223.97,165.01) and (223.3,164.34) .. (223.3,163.51) -- cycle ;
			\draw  [color={rgb, 255:red, 208; green, 2; blue, 27 }  ,draw opacity=1 ][fill={rgb, 255:red, 208; green, 2; blue, 27 }  ,fill opacity=1 ] (373.3,97.01) .. controls (373.3,96.18) and (373.97,95.51) .. (374.8,95.51) .. controls (375.63,95.51) and (376.3,96.18) .. (376.3,97.01) .. controls (376.3,97.84) and (375.63,98.51) .. (374.8,98.51) .. controls (373.97,98.51) and (373.3,97.84) .. (373.3,97.01) -- cycle ;
			\draw  [color={rgb, 255:red, 208; green, 2; blue, 27 }  ,draw opacity=1 ][fill={rgb, 255:red, 208; green, 2; blue, 27 }  ,fill opacity=1 ] (275.3,23.51) .. controls (275.3,22.68) and (275.97,22.01) .. (276.8,22.01) .. controls (277.63,22.01) and (278.3,22.68) .. (278.3,23.51) .. controls (278.3,24.34) and (277.63,25.01) .. (276.8,25.01) .. controls (275.97,25.01) and (275.3,24.34) .. (275.3,23.51) -- cycle ;
			\draw  [color={rgb, 255:red, 208; green, 2; blue, 27 }  ,draw opacity=1 ][fill={rgb, 255:red, 208; green, 2; blue, 27 }  ,fill opacity=1 ] (79.85,261.69) .. controls (79.85,260.87) and (80.52,260.19) .. (81.35,260.19) .. controls (82.17,260.19) and (82.85,260.87) .. (82.85,261.69) .. controls (82.85,262.52) and (82.17,263.19) .. (81.35,263.19) .. controls (80.52,263.19) and (79.85,262.52) .. (79.85,261.69) -- cycle ;
			\draw   (237.76,224.28) .. controls (237.76,215.12) and (245.19,207.69) .. (254.35,207.69) .. controls (263.51,207.69) and (270.93,215.12) .. (270.93,224.28) .. controls (270.93,233.44) and (263.51,240.86) .. (254.35,240.86) .. controls (245.19,240.86) and (237.76,233.44) .. (237.76,224.28) -- cycle ;
			\draw   (270.93,224.28) .. controls (270.93,215.12) and (278.35,207.69) .. (287.51,207.69) .. controls (296.67,207.69) and (304.1,215.12) .. (304.1,224.28) .. controls (304.1,233.44) and (296.67,240.86) .. (287.51,240.86) .. controls (278.35,240.86) and (270.93,233.44) .. (270.93,224.28) -- cycle ;
			\draw   (204.6,224.28) .. controls (204.6,215.12) and (212.02,207.69) .. (221.18,207.69) .. controls (230.34,207.69) and (237.76,215.12) .. (237.76,224.28) .. controls (237.76,233.44) and (230.34,240.86) .. (221.18,240.86) .. controls (212.02,240.86) and (204.6,233.44) .. (204.6,224.28) -- cycle ;
			\draw  [color={rgb, 255:red, 208; green, 2; blue, 27 }  ,draw opacity=1 ][fill={rgb, 255:red, 208; green, 2; blue, 27 }  ,fill opacity=1 ] (236.26,218.78) .. controls (236.26,217.95) and (236.93,217.28) .. (237.76,217.28) .. controls (238.59,217.28) and (239.26,217.95) .. (239.26,218.78) .. controls (239.26,219.61) and (238.59,220.28) .. (237.76,220.28) .. controls (236.93,220.28) and (236.26,219.61) .. (236.26,218.78) -- cycle ;
			\draw   (237.76,274.28) .. controls (237.76,265.12) and (245.19,257.69) .. (254.35,257.69) .. controls (263.51,257.69) and (270.93,265.12) .. (270.93,274.28) .. controls (270.93,283.44) and (263.51,290.86) .. (254.35,290.86) .. controls (245.19,290.86) and (237.76,283.44) .. (237.76,274.28) -- cycle ;
			\draw   (270.93,274.28) .. controls (270.93,265.12) and (278.35,257.69) .. (287.51,257.69) .. controls (296.67,257.69) and (304.1,265.12) .. (304.1,274.28) .. controls (304.1,283.44) and (296.67,290.86) .. (287.51,290.86) .. controls (278.35,290.86) and (270.93,283.44) .. (270.93,274.28) -- cycle ;
			\draw   (204.6,274.28) .. controls (204.6,265.12) and (212.02,257.69) .. (221.18,257.69) .. controls (230.34,257.69) and (237.76,265.12) .. (237.76,274.28) .. controls (237.76,283.44) and (230.34,290.86) .. (221.18,290.86) .. controls (212.02,290.86) and (204.6,283.44) .. (204.6,274.28) -- cycle ;
			\draw  [color={rgb, 255:red, 208; green, 2; blue, 27 }  ,draw opacity=1 ][fill={rgb, 255:red, 208; green, 2; blue, 27 }  ,fill opacity=1 ] (269.43,269.78) .. controls (269.43,268.95) and (270.1,268.28) .. (270.93,268.28) .. controls (271.76,268.28) and (272.43,268.95) .. (272.43,269.78) .. controls (272.43,270.61) and (271.76,271.28) .. (270.93,271.28) .. controls (270.1,271.28) and (269.43,270.61) .. (269.43,269.78) -- cycle ;
			\draw  [color={rgb, 255:red, 208; green, 2; blue, 27 }  ,draw opacity=1 ][fill={rgb, 255:red, 208; green, 2; blue, 27 }  ,fill opacity=1 ] (275.43,312.78) .. controls (275.43,311.95) and (276.1,311.28) .. (276.93,311.28) .. controls (277.76,311.28) and (278.43,311.95) .. (278.43,312.78) .. controls (278.43,313.61) and (277.76,314.28) .. (276.93,314.28) .. controls (276.1,314.28) and (275.43,313.61) .. (275.43,312.78) -- cycle ;
			\draw    (139.1,278.28) -- (186.17,278.28) ;
			\draw [shift={(188.17,278.28)}, rotate = 180] [color={rgb, 255:red, 0; green, 0; blue, 0 }  ][line width=0.75]    (10.93,-3.29) .. controls (6.95,-1.4) and (3.31,-0.3) .. (0,0) .. controls (3.31,0.3) and (6.95,1.4) .. (10.93,3.29)   ;
			
			\draw (44,269) node [anchor=north west][inner sep=0.75pt]   [align=left] {$\displaystyle 1$};
			\draw (76,269) node [anchor=north west][inner sep=0.75pt]   [align=left] {$\displaystyle 2$};
			\draw (109,269) node [anchor=north west][inner sep=0.75pt]   [align=left] {$\displaystyle 3$};
			\draw (243,335.46) node [anchor=north west][inner sep=0.75pt]   [align=left] {$\displaystyle 3$};
			\draw (227,310.46) node [anchor=north west][inner sep=0.75pt]   [align=left] {$\displaystyle 1$};
			\draw (259,310.46) node [anchor=north west][inner sep=0.75pt]   [align=left] {$\displaystyle 2$};
			\draw (103,83) node [anchor=north west][inner sep=0.75pt]   [align=left] {$\displaystyle 1$};
			\draw (129,84) node [anchor=north west][inner sep=0.75pt]   [align=left] {$\displaystyle 2$};
			\draw (356,84) node [anchor=north west][inner sep=0.75pt]   [align=left] {$\displaystyle 1$};
			\draw (383,84) node [anchor=north west][inner sep=0.75pt]   [align=left] {$\displaystyle 2$};
			\draw (233,154) node [anchor=north west][inner sep=0.75pt]   [align=left] {$\displaystyle 1$};
			\draw (259,155) node [anchor=north west][inner sep=0.75pt]   [align=left] {$\displaystyle 2$};
			\draw (233,14) node [anchor=north west][inner sep=0.75pt]   [align=left] {$\displaystyle 1$};
			\draw (259,15) node [anchor=north west][inner sep=0.75pt]   [align=left] {$\displaystyle 2$};
			\draw (30,86.8) node [anchor=north west][inner sep=0.75pt]   [align=left] {$\displaystyle \mathcal{C}_{2} =$\\};
			\draw (217,215) node [anchor=north west][inner sep=0.75pt]   [align=left] {$\displaystyle 1$};
			\draw (249,215) node [anchor=north west][inner sep=0.75pt]   [align=left] {$\displaystyle 2$};
			\draw (282,215) node [anchor=north west][inner sep=0.75pt]   [align=left] {$\displaystyle 3$};
			\draw (217,265) node [anchor=north west][inner sep=0.75pt]   [align=left] {$\displaystyle 1$};
			\draw (249,265) node [anchor=north west][inner sep=0.75pt]   [align=left] {$\displaystyle 2$};
			\draw (282,265) node [anchor=north west][inner sep=0.75pt]   [align=left] {$\displaystyle 3$};
			\draw (156,258) node [anchor=north west][inner sep=0.75pt]   [align=left] {$\displaystyle \partial $};

		\end{tikzpicture}
		\caption{On top there is a full description of $\mathcal{C}_2\cong S^1$: there are two zero cells $(2,1)$ (on the left) and $(1,2)$ (on the right). The $1$-cells are $(2,1,2)$ (on the top) and $(1,2,1)$ (on the bottom). Below we see the cell $(2,3,2,1,2)\cong \Delta^0\times\Delta^2\times\Delta^0$ of $\mathcal{C}_3$ and the codimension one cells in its boundary.}
		\label{fig:esempi di cactus e loro bordi}
	\end{figure}
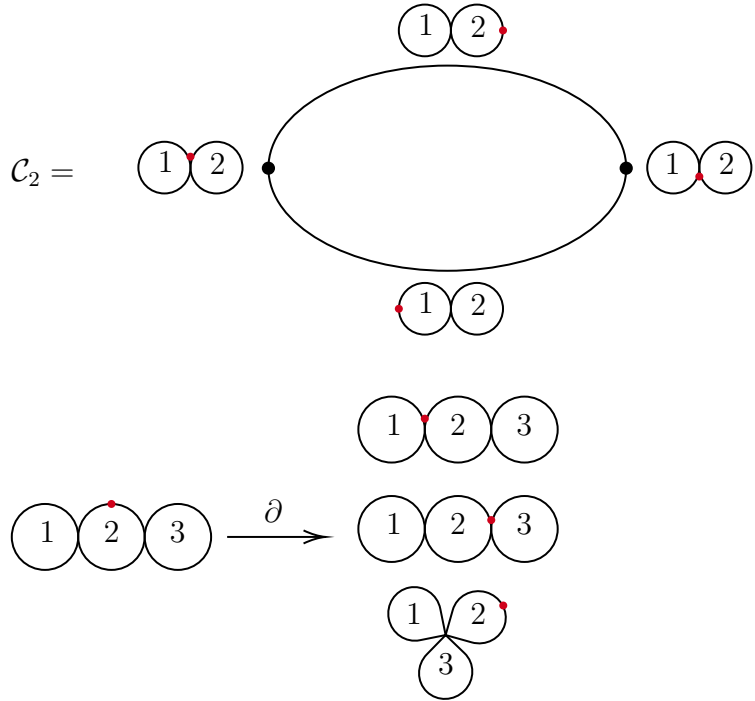
	\begin{oss}
		There are two relevant groups acting on $\mathcal{C}_n$: $S^1$ acts by rotating the base point of a cactus, $\Sigma_n$ acts by relabelling the lobes.
	\end{oss}
	\begin{prop}[\cite{Salvatore}]
		\label{prop: trivial circle bundle}
		The projection $p:\mathcal{C}_n\to \mathcal{C}_n/S^1$ is a trivial principal $S^1$-bundle.
	\end{prop}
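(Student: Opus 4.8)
The plan is to establish the two halves of the statement separately. \emph{First}, that the $S^1$-action on $\mathcal C_n$ is free, so that $p$ is a principal $S^1$-bundle. Suppose $\theta\in S^1$ fixes $x=(I_1(x),\dots,I_n(x))\in\mathcal C_n$, i.e.\ $\theta+I_j(x)=I_j(x)$ for all $j$. The stabilizer of $x$ is a closed subgroup of $S^1$; it cannot be all of $S^1$ since each $I_j(x)$ is a proper subset of $S^1$ of measure $2\pi/n<2\pi$, so if non-trivial it must be a cyclic group $\Z/m$ with $m\ge 2$, and then every $I_j(x)$ is invariant under rotation by $2\pi/m$. Since $n\ge 2$ and the lobes have positive measure we may pick interior points $p\in I_1(x)$ and $p'\in I_2(x)$ with $p'$ lying in the arc $(p,p+2\pi/m)$. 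Then the four distinct points $p,\ p',\ p+2\pi/m,\ p'+2\pi/m$ occur in this cyclic order on $S^1$, with the first and third in $I_1(x)$, the second and fourth in $I_2(x)$, and $1\ne 2$ --- contradicting the non-crossing condition of Definition \ref{def:cactus}. Hence the action is free; as $S^1$ is a compact Lie group acting freely on the finite CW-complex $\mathcal C_n$, the projection $p$ has local sections and is a principal $S^1$-bundle.

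\emph{Second}, I would trivialize this bundle by constructing an $S^1$-equivariant map $\nu\colon\mathcal C_n\to S^1$; then $x\mapsto(p(x),\nu(x))$ is an $S^1$-equivariant homeomorphism $\mathcal C_n\cong(\mathcal C_n/S^1)\times S^1$ (with $S^1$ acting by translation on the second factor), because it is continuous and $S^1$-equivariant, bijective by freeness, hence a homeomorphism as $\mathcal C_n$ is compact. To define $\nu$, recall that the dual graph of the cactus $c(x)$ is a tree; removing the vertex corresponding to lobe $1$ disconnects it, and I let $q(x)\in I_1(x)\subseteq S^1$ be the attaching point of lobe $1$ that belongs to the component containing lobe $2$ (equivalently, the first intersection point one meets along the tree-geodesic from lobe $1$ to lobe $2$). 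Set $\nu(x):=q(x)$. Equivariance is immediate, since rotating $x$ by $\theta$ rotates every $I_j(x)$, hence $q(x)$, by $\theta$. (An alternative would be to imitate the section $j$ for $F_n(\C)\to F_n(\C)/S^1$ from Section \ref{subsec: gravity operad} using a ``circular moment'' of the partition $x$, but this forces one to rule out vanishing of the moment, which already fails for the naive choices.)

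The hard part will be the continuity of $\nu$. On a top-dimensional cell the attaching points of lobe $1$ vary continuously and $q(x)$ is clearly continuous; the issue is to check that $q(x)$ does not jump when $x$ crosses a cell of positive codimension, i.e.\ when the combinatorial type of the tree degenerates --- when several lobes split off a common intersection point, or when lobe $2$ passes from one sub-cactus attached to lobe $1$ into another. The mechanism is that any such degeneration is realized in the limit by a configuration with a multiple intersection point at which the two relevant attaching points of lobe $1$ coincide, so $q(x)$ slides continuously from the one to the other. I would carry out this check cell by cell, using the description of the boundary of a cell $(X_1,\dots,X_l)$ as the collapse of arcs between consecutive marked points (cf.\ Figure \ref{fig:esempi di cactus e loro bordi}).
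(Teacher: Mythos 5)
Your freeness argument is correct: the non-crossing condition in Definition \ref{def:cactus} rules out any non-trivial $\Z/m$-symmetry of the partition exactly as you say, and once the action is seen to be free, Gleason's theorem for free actions of compact Lie groups on (metrizable, hence completely regular) spaces gives local sections, so $p$ is a principal bundle.

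The gap is in the trivialization. Your $q(x)$ is well-defined as a point of the cactus $c(x)$, but you then use it as a point of $S^1$, and that step is ambiguous: an intersection point of $c(x)$ lying on lobe $1$ has \emph{two} preimages in $I_1(x)$ under the quotient map $S^1\to c(x)$ (the two boundary points of $I_1(x)$ that get identified), and the clause ``belongs to the component containing lobe $2$'' does not single one out, since both are boundary points adjacent both to $I_1(x)$ and to the sub-cactus containing lobe $2$. Once one resolves the ambiguity in either of the two natural ways --- take the boundary point from which a counterclockwise step exits $I_1(x)$, or the one from which a clockwise step exits --- the map is discontinuous, and the cell-by-cell check you defer would expose this. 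Concretely, for $n=3$ take the one-cell of $\mathcal C_3$ labelled by the sequence $(1,2,1,3)$, parametrized by $a\in(0,\tfrac{2\pi}{3})$ with
\[
I_1=[0,a]\cup\bigl[a+\tfrac{2\pi}{3},\tfrac{4\pi}{3}\bigr],\qquad I_2=\bigl[a,a+\tfrac{2\pi}{3}\bigr],\qquad I_3=\bigl[\tfrac{4\pi}{3},2\pi\bigr].
\]
The intersection of lobes $1$ and $2$ has preimages $a$ and $a+\tfrac{2\pi}{3}$ in $I_1$. The counterclockwise choice $q=a$ tends to $0$ as $a\to 0^+$, but the limiting $0$-cell $(2,1,3)$ has $I_1=[\tfrac{2\pi}{3},\tfrac{4\pi}{3}]$, whose boundary points are $\tfrac{2\pi}{3}$ and $\tfrac{4\pi}{3}$, so $q$ jumps. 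The clockwise choice $q=a+\tfrac{2\pi}{3}$ is continuous at $a\to 0^+$ but tends to $\tfrac{4\pi}{3}$ as $a\to(\tfrac{2\pi}{3})^-$, while the limiting $0$-cell $(1,2,3)$ has $I_1=[0,\tfrac{2\pi}{3}]$, so $q$ jumps there instead. Neither convention is continuous across the whole one-cell, and the attaching point of lobe $1$ toward lobe $2$ does not by itself furnish the trivialization.

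The robust fix is a variant of the alternative you raise and set aside. Rather than a ``circular moment'' of $I_1(x)$ (which, as you observe, can vanish), compose an $(S^1\times\Sigma_n)$-equivariant map $\mathcal C_n\to F_n(\C)$ supplied by Theorem \ref{thm:cactus sono deformation retract dello spazio di configurazioni} with the map $F_n(\C)\to S^1$, $(z_1,\dots,z_n)\mapsto \frac{z_2-z_1}{\abs{z_2-z_1}}$, which trivializes $F_n(\C)\to F_n(\C)/S^1$. The composite is $S^1$-equivariant by construction and never degenerates. Alternatively, one can argue abstractly: an $S^1$-equivariant map between free $S^1$-spaces exhibits the source as a pullback of the target bundle, so an equivariant homotopy equivalence $\mathcal C_n\simeq F_n(\C)$ identifies the two principal bundles up to isomorphism over a homotopy equivalence of base spaces, and $F_n(\C)\to F_n(\C)/S^1$ is trivial by the section $j$ recalled in Section \ref{subsec: gravity operad}.
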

	The important thing about cacti is that they are a very small cellular model for the configuration space $F_n(\C)$:
	
	\begin{thm}[ \cite{Salvatore}]\label{thm:cactus sono deformation retract dello spazio di configurazioni}
		The space of cacti $\mathcal{C}_n$ is $(S^1\times\Sigma_n)$-equivariantly homotopy equivalent to $F_n(\C)$.
	\end{thm}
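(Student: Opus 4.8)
The plan is to convert the equivariant assertion into a homotopy-theoretic comparison by a Whitehead argument and to construct the comparison map from the electric-potential picture of Section~\ref{sec: combinatorial models for the open moduli space}. \emph{Step 1: equivariant reduction.} We may assume $n\geq 2$ (for $n=1$ both $\mathcal{C}_1$ and $F_1(\C)=\C$ are equivariantly contractible). For $n\geq 2$ the group $G=S^1\times\Sigma_n$ acts freely on $F_n(\C)$ --- a nontrivial rotation or permutation fixing a configuration would collide two points --- and freely on $\mathcal{C}_n$, since a symmetry of a based cactus fixes the base point and hence is a rotation of $S^1$ carrying the partition to a relabelling of itself, forcing two of the $1$-manifolds $I_j(x)$ to coincide. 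Both spaces are $G$-CW complexes ($\mathcal{C}_n$ via the regular cellular structure on $\mathcal{C}_n/S^1$, which is free for $\Sigma_n$, together with Proposition~\ref{prop: trivial circle bundle}; $F_n(\C)$ by an equivariant smooth triangulation). By the equivariant Whitehead theorem, a $G$-map between free $G$-CW complexes which is a non-equivariant weak equivalence is a $G$-homotopy equivalence, so it is enough to produce a $G$-map (or a short $G$-zig-zag) $\mathcal{C}_n\to F_n(\C)$ that is a weak equivalence. Using Proposition~\ref{prop: trivial circle bundle} and the trivialization of $F_n(\C)\to F_n(\C)/S^1$, together with the homotopy equivalence $F_n(\C)/S^1\simeq\M_{0,n+1}$ recalled above, such a map can in turn be built by lifting a $\Sigma_n$-equivariant comparison between the $S^1$-quotients, so the whole theorem reduces to a $\Sigma_n$-equivariant homotopy equivalence $\mathcal{C}_n/S^1\simeq_{\Sigma_n}\M_{0,n+1}$.

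\emph{Step 2: the comparison map.} To build this equivalence I would use the configurations-to-trees dictionary of Section~\ref{sec: combinatorial models for the open moduli space}. The critical graph of the potential $U$, formed by the finite-length flow lines of $E(z)$, is exactly the admissible tree $T\in T_n$ attached to a configuration, with white vertices matched to the points $z_i$ and black vertices matched to the zeros of $E(z)$. Thickening $T$ into a ribbon surface --- replace each white vertex by a circle (a lobe), attach the edges around it in the cyclic order given by the ribbon structure and at the angular positions recorded by the functions $g_w$, and normalize all lobes to equal measure --- yields an unbased cactus with $n$ lobes whose dual graph is $T$. This construction is invariant under $\C\rtimes\C^*$ and $\Sigma_n$-equivariant, so by Theorem~\ref{thm:omeo tra spazio di moduli e labelled trees} it defines a continuous $\Sigma_n$-map $\Psi\colon\M_{0,n+1}\cong Tr_n\to\mathcal{C}_n/S^1$; it is not a homeomorphism, as it discards the ``size'' data $f\colon B\to(0,1]$ carried by the black vertices. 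Equivalently one can read it backwards, sending a cactus to the configuration of the centres of its lobes. (This is the ribbon-graph, or Jenkins--Strebel, incarnation of the classical fact behind the name of the theorem, that cacti form a deformation retract of configuration space.)

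\emph{Step 3: $\Psi$ is a weak equivalence.} I would argue by induction on $n$. The cases $n=1,2$ are immediate (a point, respectively a circle as in Figure~\ref{fig:esempi di cactus e loro bordi}, on both sides). For the inductive step one compares the Fadell--Neuwirth fibration $\M_{0,n+1}\to\M_{0,n}$, with fibre $\C$ minus finitely many points, to the map ``forget the last lobe'' $\mathcal{C}_n/S^1\to\mathcal{C}_{n-1}/S^1$, whose homotopy fibre is identified with the same punctured plane by analysing how and where the $n$-th lobe may be reinserted into an $(n-1)$-cactus; since $\Psi$ intertwines the two maps up to homotopy, a ladder of long exact sequences and the five lemma propagate the equivalence from $n-1$ to $n$. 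A more robust alternative is to bypass $\Psi$ and compare both $\mathcal{C}_n/S^1$ and $\M_{0,n+1}$ to the nerve of the poset of combinatorial cactus types (the cells $(X_1,\dots,X_l)$ ordered by the face relation): $\mathcal{C}_n/S^1$ is a regular CW complex on that poset, while the same poset indexes a good cover of $\M_{0,n+1}$ by contractible pieces with contractible overlaps, so by the nerve lemma both are homotopy equivalent to its order complex.

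\emph{The main obstacle} is Step 2: checking that $\Psi$ is genuinely well defined and continuous across the strata where the combinatorial type degenerates --- that the choices involved (which flow lines survive, how arc lengths are distributed, how the base-point trivialization is made) vary continuously and are simultaneously compatible with the equivalence relation defining $Tr_n$ (collapsing edges between black vertices at equal potential and vanishing angles at white vertices) and with the collapsing-arc identifications implicit in the CW-structure of $\mathcal{C}_n$. Once such a continuous equivariant comparison map is available, Step 1 and the inductive (or nerve-theoretic) argument of Step 3 are essentially formal.
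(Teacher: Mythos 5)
The statement is cited from \cite{Salvatore} and the paper does not reprove it; the underlying proof there is an explicit $(S^1\times\Sigma_n)$-equivariant deformation retraction rather than a Whitehead-type comparison, and your proposal diverges from it in a way that introduces a genuine error.

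The concrete problem is your Step~1 claim that $G=S^1\times\Sigma_n$ acts freely on $F_n(\C)$ and on $\mathcal{C}_n$ for $n\geq 2$. It does not. Take the vertices of a regular $n$-gon centred at the origin; the element $(e^{2\pi i/n},\ (1\,2\,\cdots\,n))\in S^1\times\Sigma_n$ fixes that configuration. (Already for $n=2$, $(-1,(1\,2))$ fixes any antipodal pair.) Likewise a cactus with rotational symmetry, e.g.\ two equal lobes sharing the base point, is fixed by a rotation composed with the corresponding relabelling. So the equivariant Whitehead theorem in its free form does not apply, and to use it at all you would have to compare $H$-fixed-point spaces for every subgroup $H\leq G$ --- those fixed-point spaces are nontrivial subspaces on both sides and the comparison is far from formal. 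The same issue reappears when you pass to $S^1$-quotients: $\Sigma_n$ does not act freely on $\mathcal{C}_n/S^1$ or on $\M_{0,n+1}$.

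Beyond this, the reduction to a nonequivariant weak equivalence is exactly where the original argument in \cite{Salvatore} takes a shortcut you are trying to reconstruct the hard way. There, one first uses that $F_n(\C)\to F_n(\C)/(\C\rtimes\R^{>0})$ is a trivial principal bundle with contractible structure group, so $F_n(\C)$ deformation retracts $G$-equivariantly onto a normalized slice; then one produces an explicit $G$-equivariant deformation retraction of that slice onto a subspace identified with $\mathcal{C}_n$, driven by the same potential $U=\log\lvert h\rvert$ you invoke in Step~2 (Proposition~\ref{prop:def retraction } is the corresponding statement on $S^1$-quotients). Since the equivalence is realized by an actual equivariant homotopy, there is no need to check fixed-point spaces, no Whitehead argument, and no inductive Fadell--Neuwirth ladder. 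Your Step~3 also carries unaddressed risk: the ``forget the last lobe'' map on $\mathcal{C}_n/S^1$ is not obviously a (quasi-)fibration, and its fibre is not evidently a punctured plane; and the nerve-lemma alternative would require producing a $\Sigma_n$-equivariantly good cover of $\M_{0,n+1}$ indexed by cactus cells, which is itself a substantial claim. If you want to keep your framing, the minimal repair is to drop the freeness claim, build the explicit equivariant retraction in Step~2 directly (which makes it a $G$-map and a $G$-homotopy equivalence simultaneously), and discard Step~3 entirely.
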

	\subsection{Operads and cacti}\label{subsec:operads and cacti}
	As we have just seen the space of cacti is homotopy equivalent to the ordered configuration space (Theorem \ref{thm:cactus sono deformation retract dello spazio di configurazioni}). Actually there is more, since it is possible to define cellular maps
	\[
	\theta_{n_1,\dots,n_k}:\mathcal{C}_k\times\mathcal{C}_{n_1}\times\cdots\times \mathcal{C}_{n_k}\to \mathcal{C}_{n_1+\cdots +n_k}
	\]
	which give a structure of operad up to homotopy on the sequence of spaces $\{\mathcal{C}_n\}_{n\geq 1}$. As usual we can also define partial compositions $\circ_i:\mathcal{C}_n\times\mathcal{C}_m\to \mathcal{C}_{n+m-1}$ by the formula
	\[
	x\circ_iy\coloneqq \theta(1,\dots,1,y,1,\dots,1)
	\]
	where $1$ denotes the unique point of $\mathcal{C}_1$. In what follows we construct  $\theta_{n_1,\dots,n_k}$, following \cite[Sec. 4]{Salvatore}. First we need a preliminary definition:
	\begin{defn}
		Let $x\in\mathcal{C}_n$ be a cactus and let $(X_1,\dots,X_l)$ be its associated sequence. If we think $S^1$ as the quotient of $[0,n]$ by its endpoints we can pullback the partition $x$ along the quotient map $p:[0,n]\to [0,n]/\sim$, obtaining a decomposition of $[0,n]$ into $l$ closed intervals $[0,y_1], [y_1,y_2], \dots,[y_{l-1},n]$. The \textbf{cactus map } $c_x=(c_x^1,\dots,c_x^n):[0,n]\to [0,1]^n$ is defined as follows:
		\begin{itemize}
			\item $c_x(0)=(0,\dots,0)$
			\item If $y\in[y_i,y_{i+1}]$ then 
			$
			c_x^j(y)=\begin{cases}
				c_x^j(y_i) \text{ if } j\neq X_i\\
				c_x^{X_i}(y_i)+(y-y_i) \text{ if } j=X_i
			\end{cases}
			$
		\end{itemize}
		Intuitively the curve $c_x$ describes the motion of a point that moves along the cactus clockwise (starting at the base point): when the points passes through the $i$-th lobe the $i$-th coordinate of $c_x$ increases, while the other components remain constant. In particular $c_x(n)=(1,\dots,1)$. For an example see Figure \ref{fig: cactus map}. Note that the cactus map $c_x$ uniquely determines the cactus $x$.
		
	\end{defn}
	\begin{figure}
		\centering

		\tikzset{every picture/.style={line width=0.75pt}} 
		
		\begin{tikzpicture}[x=0.75pt,y=0.75pt,yscale=-1,xscale=1]
			
			\draw   (100,143.33) .. controls (100,133.76) and (107.76,126) .. (117.33,126) .. controls (126.91,126) and (134.67,133.76) .. (134.67,143.33) .. controls (134.67,152.91) and (126.91,160.67) .. (117.33,160.67) .. controls (107.76,160.67) and (100,152.91) .. (100,143.33) -- cycle ;
			\draw   (134.67,143.33) .. controls (134.67,133.76) and (142.43,126) .. (152,126) .. controls (161.57,126) and (169.33,133.76) .. (169.33,143.33) .. controls (169.33,152.91) and (161.57,160.67) .. (152,160.67) .. controls (142.43,160.67) and (134.67,152.91) .. (134.67,143.33) -- cycle ;
			\draw [color={rgb, 255:red, 208; green, 2; blue, 27 }  ,draw opacity=1 ][line width=1.5]    (169.33,143.33) -- (175.67,143.33) ;
			\draw  [fill={rgb, 255:red, 0; green, 0; blue, 0 }  ,fill opacity=1 ] (161,156.67) .. controls (161,155.65) and (161.82,154.83) .. (162.83,154.83) .. controls (163.85,154.83) and (164.67,155.65) .. (164.67,156.67) .. controls (164.67,157.68) and (163.85,158.5) .. (162.83,158.5) .. controls (161.82,158.5) and (161,157.68) .. (161,156.67) -- cycle ;
			\draw    (163.83,164.83) .. controls (152.4,169.79) and (145.53,167.44) .. (136.44,162.99) ;
			\draw [shift={(134.67,162.11)}, rotate = 26.57] [color={rgb, 255:red, 0; green, 0; blue, 0 }  ][line width=0.75]    (10.93,-3.29) .. controls (6.95,-1.4) and (3.31,-0.3) .. (0,0) .. controls (3.31,0.3) and (6.95,1.4) .. (10.93,3.29)   ;
			\draw  [dash pattern={on 0.84pt off 2.51pt}] (237,97.11) -- (316.89,97.11) -- (316.89,177) -- (237,177) -- cycle ;
			\draw [line width=1.5]    (237,135.89) -- (237,177) ;
			\draw [shift={(237,148.14)}, rotate = 90] [fill={rgb, 255:red, 0; green, 0; blue, 0 }  ][line width=0.08]  [draw opacity=0] (13.4,-6.43) -- (0,0) -- (13.4,6.44) -- (8.9,0) -- cycle    ;
			\draw [line width=1.5]    (316.89,97.11) -- (316.89,135.89) ;
			\draw [shift={(316.89,108.2)}, rotate = 90] [fill={rgb, 255:red, 0; green, 0; blue, 0 }  ][line width=0.08]  [draw opacity=0] (13.4,-6.43) -- (0,0) -- (13.4,6.44) -- (8.9,0) -- cycle    ;
			\draw [line width=1.5]    (237,135.89) -- (316.89,135.89) ;
			\draw [shift={(283.74,135.89)}, rotate = 180] [fill={rgb, 255:red, 0; green, 0; blue, 0 }  ][line width=0.08]  [draw opacity=0] (13.4,-6.43) -- (0,0) -- (13.4,6.44) -- (8.9,0) -- cycle    ;
			
			\draw (113,134) node [anchor=north west][inner sep=0.75pt]   [align=left] {$\displaystyle 1$};
			\draw (146,134) node [anchor=north west][inner sep=0.75pt]   [align=left] {$\displaystyle 2$};
			\draw (271,113) node [anchor=north west][inner sep=0.75pt]   [align=left] {$\displaystyle c_{x}$};

		\end{tikzpicture}

		\caption{On the left we see a cactus $x$, whose associated sequence is  $(2,1,2)$. On the right we see the image of the corresponding cactus map $c_x:[0,2]\to [0,1]^2$}
		\label{fig: cactus map}
	\end{figure}
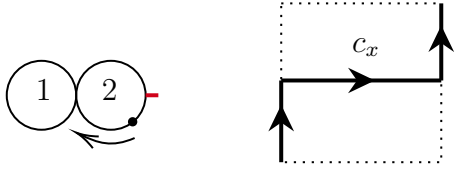
	Now we are ready to define $\theta_{n_1,\dots,n_k}:\mathcal{C}_k\times\mathcal{C}_{n_1}\times\cdots\times \mathcal{C}_{n_k}\to \mathcal{C}_{n_1+\cdots +n_k}$.
	\begin{defn}
		For $k,n_1,\dots,n_k\in N$ we define embeddings
		\[
		\theta_{n_1,\dots,n_k}:\mathcal{C}_k\times\mathcal{C}_{n_1}\times\cdots\times \mathcal{C}_{n_k}\to \mathcal{C}_{n_1+\cdots +n_k}
		\]
		as follows: fix $x\in\mathcal{C}_k$ and $x_j\in\mathcal{C}_{n_j}$ for $j=1,\dots,k$. Then $\theta_{n_1,\dots,n_k}(x,x_1,\dots,x_k)$ will be described by the corresponding cactus map: let $n\coloneqq n_1+\dots+n_k$ and consider the product of dilations
		\begin{align*}
			D:[0,1]^k&\to \prod_{j=1}^k[0,n_j]\\
			(t_1,\dots,t_k)&\mapsto (t_1n_1,\dots,t_kn_k)
		\end{align*} 
		There is a unique piecewise linear homomorphism $\alpha:[0,k]\to[0,n]$ and a unique piecewise oriented isometry onto its image $c:[0,n]\to \prod_{j=1}^k[0,n_j]$ such that the following square is commutative:
		\[
		\begin{tikzcd}
			& \text{$[0,k]$} \arrow[r,"c_x"] \arrow[d,"\alpha"]& \text{$[0,1]^k$} \arrow[d,"D"]\\
			& \text{$[0,n]$} \arrow[r,"c"] &\prod_{j=1}^k[0,n_j]
		\end{tikzcd}
		\]
		Then $\theta_{n_1,\dots,n_k}(x,x_1,\dots,x_k)$ is uniquely determined by the cactus map 
		\[
		\left(\prod_{j=1}^kc_{x_j}\right)\circ c:[0,n]\to [0,1]^n
		\]
		Intuitively the composition of cacti works as follows: fix $x\in\mathcal{C}_k$ and $x_j\in\mathcal{C}_{n_j}$ for $j=1,\dots,k$. Observe that each lobe $l$ of $x$ has a \emph{local base point}: if $l$ contains the base point then the local base point coincides with it. Otherwise the local base point is the intersection point of $l$ with the connected component of $x-l$ containing the base point. $\theta_{n_1,\dots,n_k}(x,x_1,\dots,x_k)$ is obtained by inserting each  cactus $x_i$ into the $i$-th lobe of $x$ so that the base point of $x_i$ coincides with the local base point of the $i$-th lobe of $x$. For a picture see Figure \ref{fig: composizione di cactus puntati}.
	\end{defn}
	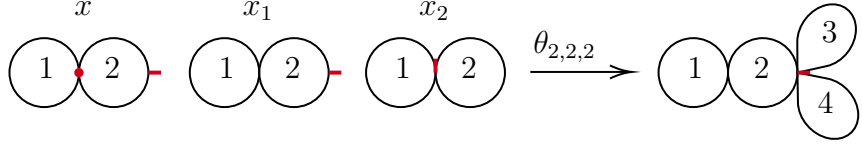
\begin{figure}
		\centering

		\tikzset{every picture/.style={line width=0.75pt}} 
		
		\begin{tikzpicture}[x=0.75pt,y=0.75pt,yscale=-1,xscale=1]
			
			\draw   (429.39,166.37) .. controls (429.39,166.37) and (429.39,166.37) .. (429.39,166.37) .. controls (437.7,168.05) and (444.55,176.13) .. (444.68,184.41) .. controls (444.82,192.69) and (438.19,198.05) .. (429.88,196.37) .. controls (421.56,194.69) and (414.72,186.61) .. (414.58,178.33) .. controls (414.58,178.33) and (414.58,178.33) .. (414.58,178.33) .. controls (414.41,168.33) and (414.33,163.33) .. (414.33,163.33) .. controls (414.33,163.33) and (419.35,164.35) .. (429.39,166.37) -- cycle ;
			\draw   (21,163.33) .. controls (21,153.76) and (28.76,146) .. (38.33,146) .. controls (47.91,146) and (55.67,153.76) .. (55.67,163.33) .. controls (55.67,172.91) and (47.91,180.67) .. (38.33,180.67) .. controls (28.76,180.67) and (21,172.91) .. (21,163.33) -- cycle ;
			\draw   (55.67,163.33) .. controls (55.67,153.76) and (63.43,146) .. (73,146) .. controls (82.57,146) and (90.33,153.76) .. (90.33,163.33) .. controls (90.33,172.91) and (82.57,180.67) .. (73,180.67) .. controls (63.43,180.67) and (55.67,172.91) .. (55.67,163.33) -- cycle ;
			\draw [color={rgb, 255:red, 208; green, 2; blue, 27 }  ,draw opacity=1 ][line width=1.5]    (90.33,163.33) -- (96.67,163.33) ;
			\draw   (111,163.33) .. controls (111,153.76) and (118.76,146) .. (128.33,146) .. controls (137.91,146) and (145.67,153.76) .. (145.67,163.33) .. controls (145.67,172.91) and (137.91,180.67) .. (128.33,180.67) .. controls (118.76,180.67) and (111,172.91) .. (111,163.33) -- cycle ;
			\draw   (145.67,163.33) .. controls (145.67,153.76) and (153.43,146) .. (163,146) .. controls (172.57,146) and (180.33,153.76) .. (180.33,163.33) .. controls (180.33,172.91) and (172.57,180.67) .. (163,180.67) .. controls (153.43,180.67) and (145.67,172.91) .. (145.67,163.33) -- cycle ;
			\draw [color={rgb, 255:red, 208; green, 2; blue, 27 }  ,draw opacity=1 ][line width=1.5]    (180.33,163.33) -- (186.67,163.33) ;
			\draw   (199,163.33) .. controls (199,153.76) and (206.76,146) .. (216.33,146) .. controls (225.91,146) and (233.67,153.76) .. (233.67,163.33) .. controls (233.67,172.91) and (225.91,180.67) .. (216.33,180.67) .. controls (206.76,180.67) and (199,172.91) .. (199,163.33) -- cycle ;
			\draw   (233.67,163.33) .. controls (233.67,153.76) and (241.43,146) .. (251,146) .. controls (260.57,146) and (268.33,153.76) .. (268.33,163.33) .. controls (268.33,172.91) and (260.57,180.67) .. (251,180.67) .. controls (241.43,180.67) and (233.67,172.91) .. (233.67,163.33) -- cycle ;
			\draw [color={rgb, 255:red, 208; green, 2; blue, 27 }  ,draw opacity=1 ][line width=1.5]    (233.67,163.33) -- (233.67,156.33) ;
			\draw    (281,163) -- (329.67,163) ;
			\draw [shift={(331.67,163)}, rotate = 180] [color={rgb, 255:red, 0; green, 0; blue, 0 }  ][line width=0.75]    (10.93,-3.29) .. controls (6.95,-1.4) and (3.31,-0.3) .. (0,0) .. controls (3.31,0.3) and (6.95,1.4) .. (10.93,3.29)   ;
			\draw   (345,163.33) .. controls (345,153.76) and (352.76,146) .. (362.33,146) .. controls (371.91,146) and (379.67,153.76) .. (379.67,163.33) .. controls (379.67,172.91) and (371.91,180.67) .. (362.33,180.67) .. controls (352.76,180.67) and (345,172.91) .. (345,163.33) -- cycle ;
			\draw   (379.67,163.33) .. controls (379.67,153.76) and (387.43,146) .. (397,146) .. controls (406.57,146) and (414.33,153.76) .. (414.33,163.33) .. controls (414.33,172.91) and (406.57,180.67) .. (397,180.67) .. controls (387.43,180.67) and (379.67,172.91) .. (379.67,163.33) -- cycle ;
			\draw   (414.25,147.98) .. controls (414.25,147.98) and (414.25,147.98) .. (414.25,147.98) .. controls (414.21,139.5) and (420.72,131.15) .. (428.81,129.33) .. controls (436.89,127.52) and (443.48,132.92) .. (443.52,141.4) .. controls (443.57,149.88) and (437.05,158.23) .. (428.97,160.05) .. controls (428.97,160.05) and (428.97,160.05) .. (428.97,160.05) .. controls (419.21,162.24) and (414.33,163.33) .. (414.33,163.33) .. controls (414.33,163.33) and (414.3,158.22) .. (414.25,147.98) -- cycle ;
			\draw [color={rgb, 255:red, 208; green, 2; blue, 27 }  ,draw opacity=1 ][line width=1.5]    (414.33,163.33) -- (420.67,163.33) ;
			\draw  [color={rgb, 255:red, 208; green, 2; blue, 27 }  ,draw opacity=1 ][fill={rgb, 255:red, 208; green, 2; blue, 27 }  ,fill opacity=1 ] (53.83,163.33) .. controls (53.83,162.32) and (54.65,161.5) .. (55.67,161.5) .. controls (56.68,161.5) and (57.5,162.32) .. (57.5,163.33) .. controls (57.5,164.35) and (56.68,165.17) .. (55.67,165.17) .. controls (54.65,165.17) and (53.83,164.35) .. (53.83,163.33) -- cycle ;
			
			\draw (34,154) node [anchor=north west][inner sep=0.75pt]   [align=left] {$\displaystyle 1$};
			\draw (67,154) node [anchor=north west][inner sep=0.75pt]   [align=left] {$\displaystyle 2$};
			\draw (124,154) node [anchor=north west][inner sep=0.75pt]   [align=left] {$\displaystyle 1$};
			\draw (157,154) node [anchor=north west][inner sep=0.75pt]   [align=left] {$\displaystyle 2$};
			\draw (212,154) node [anchor=north west][inner sep=0.75pt]   [align=left] {$\displaystyle 1$};
			\draw (245,154) node [anchor=north west][inner sep=0.75pt]   [align=left] {$\displaystyle 2$};
			\draw (281,141) node [anchor=north west][inner sep=0.75pt]   [align=left] {$\displaystyle \theta _{2,2,2}$};
			\draw (358,154) node [anchor=north west][inner sep=0.75pt]   [align=left] {$\displaystyle 1$};
			\draw (391,154) node [anchor=north west][inner sep=0.75pt]   [align=left] {$\displaystyle 2$};
			\draw (425,136) node [anchor=north west][inner sep=0.75pt]   [align=left] {$\displaystyle 3$};
			\draw (423,172) node [anchor=north west][inner sep=0.75pt]   [align=left] {$\displaystyle 4$};
			\draw (52,126) node [anchor=north west][inner sep=0.75pt]   [align=left] {$\displaystyle x$};
			\draw (136,126) node [anchor=north west][inner sep=0.75pt]   [align=left] {$\displaystyle x_{1}$};
			\draw (224,126) node [anchor=north west][inner sep=0.75pt]   [align=left] {$\displaystyle x_{2}$};

		\end{tikzpicture}
		
		\caption{On the left we see three cacti with two lobes. The red bullet on the leftmost cactus denotes the local base point of lobe $1$. On the right we see a picture of the composite $\theta_{2,2,2}(x,x_1,x_2)$.}
		\label{fig: composizione di cactus puntati}
	\end{figure}

	\begin{thm}[\cite{Salvatore}]
		The maps $\theta_{n_1,\dots,n_k}$ induce an operad structure on the sequence of chain complexes $Cact\coloneqq\{C_*^{cell}(\mathcal{C}_n)\}_{n\in\N}$, where $C_*^{cell}(\mathcal{C}_n)$ are the cellular chains. Therefore, $Cact$ is a chain model for the little two disk operad $\mathcal{D}_2$.
	\end{thm}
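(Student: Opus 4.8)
The plan is to split the statement into two largely independent parts: (a) that $Cact=\{C_*^{cell}(\mathcal{C}_n)\}_{n\in\N}$ is genuinely an operad of chain complexes, and (b) that this operad is weakly equivalent to the operad of singular chains on the little two disk operad. For (a) I would reduce everything to the one non-formal point that each $\theta_{n_1,\dots,n_k}$ is a \emph{cellular} map; for (b) I would combine Theorem~\ref{thm:cactus sono deformation retract dello spazio di configurazioni} with the standard comparison between cellular and singular chains.

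For part (a), recall that on CW complexes the functor $C_*^{cell}(-)$ is strictly symmetric monoidal: the product cell structure gives $C_*^{cell}(X\times Y)\cong C_*^{cell}(X)\otimes C_*^{cell}(Y)$, with differential governed by the Leibniz rule and the Koszul sign. Hence, granting that the embeddings $\theta_{n_1,\dots,n_k}$ are cellular, applying $C_*^{cell}(-)$ turns them into morphisms $C_*^{cell}(\mathcal{C}_k)\otimes C_*^{cell}(\mathcal{C}_{n_1})\otimes\cdots\otimes C_*^{cell}(\mathcal{C}_{n_k})\to C_*^{cell}(\mathcal{C}_{n_1+\cdots+n_k})$, and their associativity, unitality (with the unique point of $\mathcal{C}_1$ as unit) and $\Sigma$-equivariance follow from the corresponding identities for the $\theta$'s at the level of spaces, which are checked in \cite[Sec.~4]{Salvatore}. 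To establish cellularity of $\theta_{n_1,\dots,n_k}$ I would argue combinatorially from the description of the cells of $\mathcal{C}_n$ as subspaces $(X_1,\dots,X_l)$ recalled in Section~\ref{subsec:operads and cacti} together with the cactus-map formula for $\theta$: if $x\in\mathcal{C}_k$ has sequence $(X_1,\dots,X_l)$ and each $x_j\in\mathcal{C}_{n_j}$ has sequence $\mathbf{Y}^{j}$, then $\theta_{n_1,\dots,n_k}(x,x_1,\dots,x_k)$ has the sequence obtained from $(X_1,\dots,X_l)$ by replacing, at each position $i$ with $X_i=j$, an appropriate cyclic rotation of $\mathbf{Y}^{j}$ (the rotation dictated by which arc of the $j$-th lobe plays the role of the local base point), after relabelling the $\mathbf{Y}^{j}$ so that their blocks become disjoint. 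On interiors this is a homeomorphism of products of open simplices — whose coordinates are the lengths of arcs between consecutive marked points — onto the interior of the target cell, and collapsing an arc in one of the factors, i.e. passing to a face of the product cell, corresponds to collapsing an arc of the image, i.e. to passing to a face of the target cell. Thus $\theta_{n_1,\dots,n_k}$ carries each closed product cell homeomorphically onto a closed cell of the same dimension, so it respects skeleta and is cellular.

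For part (b), since each $\mathcal{C}_n$ carries a \emph{regular} CW-decomposition there is a quasi-isomorphism, or at worst a short zig-zag of quasi-isomorphisms, relating $C_*^{cell}(\mathcal{C}_n)$ to the singular chains $C_*(\mathcal{C}_n)$, and it is compatible with cross products, hence with the composition maps just constructed; this yields a weak equivalence of operads $Cact\simeq C_*(\mathcal{C})$. By Theorem~\ref{thm:cactus sono deformation retract dello spazio di configurazioni} each $\mathcal{C}_n$ is $(S^1\times\Sigma_n)$-equivariantly homotopy equivalent to $F_n(\C)$, and in particular $\Sigma_n$-equivariantly; since $\mathcal{C}$ is, by part (a), a genuine operad with these spaces, it realises the homotopy type of the little two disk operad (Voronov \cite{Voronov}, Kaufmann \cite{Kaufmann}, \cite{Salvatore}), so passing to singular chains gives a levelwise quasi-isomorphism of operads $C_*(\mathcal{C})\simeq C_*(\mathcal{D}_2)$. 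Composing the two equivalences shows that $Cact$ is a chain model for $\mathcal{D}_2$.

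The part I expect to require real work, as opposed to formal bookkeeping, is the cellularity of $\theta$: pinning down exactly which rotation of $\mathbf{Y}^{j}$ enters the substituted sequence, and verifying that the face relations match on both sides — in particular that collapsing an arc never takes one outside the closed target cell — is the only genuinely combinatorial step, and for its details I would defer to \cite[Sec.~4]{Salvatore}.
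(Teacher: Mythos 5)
You have the right overall architecture — cellularity of $\theta$ is indeed the crucial step, and a cellular–singular comparison is the right strategy for relating $Cact$ to $C_*(\mathcal{D}_2)$ — but there is a genuine and load-bearing gap that the paper itself flags in the sentence immediately preceding the theorem: the maps $\theta_{n_1,\dots,n_k}$ give only \emph{a structure of operad up to homotopy} on the topological sequence $\{\mathcal{C}_n\}$, not a strict topological operad. This undermines both halves of your argument.

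In part (a), you deduce associativity, unitality and $\Sigma$-equivariance of $Cact$ from ``the corresponding identities for the $\theta$'s at the level of spaces.'' Those identities do not hold: the reparameterisation built into the cactus-map definition of $\theta$ (the dilation $D$ and the piecewise-linear rescaling $\alpha$) destroys strict associativity on points. The correct statement is that although the two iterated composites may differ as maps of spaces, they agree \emph{cell by cell}: the cell of $\mathcal{C}_n$ that a composite lands in is governed by the combinatorial substitution of sequences $(X_1,\dots,X_l)$ with the appropriate rotations, and that substitution is associative as an operation on cells. You already have the substitution formula in your cellularity argument; what is missing is the explicit verification that this substitution rule gives an operad structure on the set of cells, together with the recognition that this combinatorial fact — not the (false) space-level associativity — is what produces the strict chain-level operad.

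The same oversight breaks part (b). You pass from $Cact$ to the singular-chain sequence $C_*(\mathcal{C})$ and then invoke $\mathcal{C}\simeq\mathcal{D}_2$ ``since $\mathcal{C}$ is, by part (a), a genuine operad with these spaces.'' But part (a) gives a chain operad, not a topological one; $\{\mathcal{C}_n\}$ is not a strict topological operad, so $C_*(\mathcal{C})$ is not a chain operad and there is no ``weak equivalence of operads $Cact\simeq C_*(\mathcal{C})$'' to compare against $C_*(\mathcal{D}_2)$. The route taken in \cite{Salvatore} — and sketched in this paper's Introduction — bridges the gap through a genuine topological operad: one identifies the cobar–bar resolution $\Omega B(Cact)$ with the cellular chains on a CW-decomposition of the Fulton–MacPherson operad $FM$, which \emph{is} a strict topological operad weakly equivalent to $\mathcal{D}_2$, and then uses the canonical quasi-isomorphism $\Omega B(Cact)\to Cact$. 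You would need some such intermediary; as written, the argument is circular.
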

	\begin{oss}
		The operad $Cact$ is isomorphic to $\mathcal{S}_2$, a natural $E_2$ suboperad of the $E_{\infty}$ surjection operad $\mathcal{S}$ by McClure and Smith \cite{McClure-Smith3}. The reader can find some details on the comparison between $Cact$ and $\mathcal{S}_2$ in \cite{Salvatore} and \cite{Salvatore2}.
	\end{oss}
	\begin{oss}
		There are many variants of the space of cacti:
		\begin{enumerate}
			\item One can do the same construction as before but with the additional data of a base point (also called a \emph{spine}) for each lobe. We call the resulting space $f\mathcal{C}_n$ the \textbf{space of cacti with spines} (or \emph{framed cacti}). From the point of view of configuration spaces this corresponds to assigning an element of $S^1$ to each point of the configuration. Therefore $f\mathcal{C}_n$ is homotopy equivalent to the space of framed configurations $fF_n(\C)\coloneqq F_n(\C)\times (S^1)^n$ and the family of cellular chains $fCact\coloneqq \{C_*^{cell}(f\mathcal{C}_n)\}_{n\in\N}$ is a chain model for the framed little two disks operad $f\mathcal{D}_2$.
			\item  The quotient $\mathcal{C}_n/S^1$ is still a regular CW-complex, and its cells are described by the same combinatorics of $\mathcal{C}_n$ except that we do not have a base point on our cacti (see Figure \ref{fig:esempio di cactus senza punto base} for an example). We call $\mathcal{C}_n/S^1$ the \textbf{space of unbased cacti}. By Theorem \ref{thm:cactus sono deformation retract dello spazio di configurazioni} $\mathcal{C}_n/S^1$ is homotopy equivalent to $F_n(\C)/S^1\simeq \M_{0,n+1}$. The operadic structure of $Cact$ induces an operad structure on the cellular chains $C_*^{cell}(\mathcal{C}_n/S^1)$, giving a chain model for the Gravity operad. More precisely, the collection of chain complexes $grav\coloneqq \{sC_*^{cell}(\mathcal{C}_n/S^1)\}$ is an operad, with partial compositions defined by the following diagram:
			\[
			\begin{tikzcd}
				&C_*^{cell}(\mathcal{C}_n)\otimes C_*^{cell}(\mathcal{C}_m)\arrow[r," \circ_i"] &C_*^{cell}(\mathcal{C}_{n+m-1})\\
				&sC_*^{cell}(\mathcal{C}_n/S^1)\otimes sC_*^{cell}(\mathcal{C}_m/S^1)\arrow[r," \circ_i",dashed]\arrow[u,"\tau\otimes \tau"] &sC_*^{cell}(\mathcal{C}_{n+m-1}/S^1)\arrow[u,"\tau"]
			\end{tikzcd}
			\]
			
			Here $\tau: sC_*^{cell}(\mathcal{C}_n/S^1)\to C_*^{cell}(\mathcal{C}_n)$ is a chain model for the transfer 
			\[
			H_*(F_n(\C)/S^1)\to H_{*+1}(F_n(\C))
			\]
			and takes an (unbased) cactus to the sum (with signs, depending on the orientations) of all cacti one can obtain by adjoining a base point in one of the lobes (see Figure \ref{fig:chain model for the transfer}). It is possible to see that if $C_1\in sC_*^{cell}(\mathcal{C}_n/S^1)$ and $C_2\in sC_*^{cell}(\mathcal{C}_m/S^1)$, then $\tau(C_1)\circ_i\tau(C_2)=\tau(C)$ for a unique $C\in sC_*^{cell}(\mathcal{C}_{n+m-1}/S^1)$, so we set $C_1\circ_i C_2\coloneqq C$. To be more explicit, the composition $C_1\circ_i C_2$ is the sum of all (unbased) cacti which one obtains by inserting $C_2$ into the $i$-th lobe of $C_1$ in all possible ways. For an explicit example see Figure \ref{composition of Grav }. This chain model for the gravity operad is described (with slightly different terms) in \cite[Paragraph 4.2.1]{Ward}, where it is denoted by $T_{\circlearrowright}$.
			
			\begin{figure}
				\centering
				
				\tikzset{every picture/.style={line width=0.75pt}} 
				
				\begin{tikzpicture}[x=0.75pt,y=0.75pt,yscale=-1,xscale=1]
					
					\draw   (19.25,111.01) .. controls (19.25,104.21) and (24.76,98.7) .. (31.56,98.7) .. controls (38.35,98.7) and (43.86,104.21) .. (43.86,111.01) .. controls (43.86,117.8) and (38.35,123.31) .. (31.56,123.31) .. controls (24.76,123.31) and (19.25,117.8) .. (19.25,111.01) -- cycle ;
					\draw   (43.86,111.01) .. controls (43.86,104.21) and (49.37,98.7) .. (56.17,98.7) .. controls (62.96,98.7) and (68.47,104.21) .. (68.47,111.01) .. controls (68.47,117.8) and (62.96,123.31) .. (56.17,123.31) .. controls (49.37,123.31) and (43.86,117.8) .. (43.86,111.01) -- cycle ;
					\draw   (68.47,111.01) .. controls (68.47,104.21) and (73.98,98.7) .. (80.78,98.7) .. controls (87.58,98.7) and (93.08,104.21) .. (93.08,111.01) .. controls (93.08,117.8) and (87.58,123.31) .. (80.78,123.31) .. controls (73.98,123.31) and (68.47,117.8) .. (68.47,111.01) -- cycle ;
					\draw    (107.08,111.01) -- (157.17,111.01) ;
					\draw [shift={(159.17,111.01)}, rotate = 180] [color={rgb, 255:red, 0; green, 0; blue, 0 }  ][line width=0.75]    (10.93,-3.29) .. controls (6.95,-1.4) and (3.31,-0.3) .. (0,0) .. controls (3.31,0.3) and (6.95,1.4) .. (10.93,3.29)   ;
					\draw   (170.25,111.01) .. controls (170.25,104.21) and (175.76,98.7) .. (182.56,98.7) .. controls (189.35,98.7) and (194.86,104.21) .. (194.86,111.01) .. controls (194.86,117.8) and (189.35,123.31) .. (182.56,123.31) .. controls (175.76,123.31) and (170.25,117.8) .. (170.25,111.01) -- cycle ;
					\draw   (194.86,111.01) .. controls (194.86,104.21) and (200.37,98.7) .. (207.17,98.7) .. controls (213.96,98.7) and (219.47,104.21) .. (219.47,111.01) .. controls (219.47,117.8) and (213.96,123.31) .. (207.17,123.31) .. controls (200.37,123.31) and (194.86,117.8) .. (194.86,111.01) -- cycle ;
					\draw   (219.47,111.01) .. controls (219.47,104.21) and (224.98,98.7) .. (231.78,98.7) .. controls (238.58,98.7) and (244.08,104.21) .. (244.08,111.01) .. controls (244.08,117.8) and (238.58,123.31) .. (231.78,123.31) .. controls (224.98,123.31) and (219.47,117.8) .. (219.47,111.01) -- cycle ;
					\draw   (267.25,111.01) .. controls (267.25,104.21) and (272.76,98.7) .. (279.56,98.7) .. controls (286.35,98.7) and (291.86,104.21) .. (291.86,111.01) .. controls (291.86,117.8) and (286.35,123.31) .. (279.56,123.31) .. controls (272.76,123.31) and (267.25,117.8) .. (267.25,111.01) -- cycle ;
					\draw   (291.86,111.01) .. controls (291.86,104.21) and (297.37,98.7) .. (304.17,98.7) .. controls (310.96,98.7) and (316.47,104.21) .. (316.47,111.01) .. controls (316.47,117.8) and (310.96,123.31) .. (304.17,123.31) .. controls (297.37,123.31) and (291.86,117.8) .. (291.86,111.01) -- cycle ;
					\draw   (316.47,111.01) .. controls (316.47,104.21) and (321.98,98.7) .. (328.78,98.7) .. controls (335.58,98.7) and (341.08,104.21) .. (341.08,111.01) .. controls (341.08,117.8) and (335.58,123.31) .. (328.78,123.31) .. controls (321.98,123.31) and (316.47,117.8) .. (316.47,111.01) -- cycle ;
					\draw   (365.25,111.01) .. controls (365.25,104.21) and (370.76,98.7) .. (377.56,98.7) .. controls (384.35,98.7) and (389.86,104.21) .. (389.86,111.01) .. controls (389.86,117.8) and (384.35,123.31) .. (377.56,123.31) .. controls (370.76,123.31) and (365.25,117.8) .. (365.25,111.01) -- cycle ;
					\draw   (389.86,111.01) .. controls (389.86,104.21) and (395.37,98.7) .. (402.17,98.7) .. controls (408.96,98.7) and (414.47,104.21) .. (414.47,111.01) .. controls (414.47,117.8) and (408.96,123.31) .. (402.17,123.31) .. controls (395.37,123.31) and (389.86,117.8) .. (389.86,111.01) -- cycle ;
					\draw   (414.47,111.01) .. controls (414.47,104.21) and (419.98,98.7) .. (426.78,98.7) .. controls (433.58,98.7) and (439.08,104.21) .. (439.08,111.01) .. controls (439.08,117.8) and (433.58,123.31) .. (426.78,123.31) .. controls (419.98,123.31) and (414.47,117.8) .. (414.47,111.01) -- cycle ;
					\draw   (461.25,111.01) .. controls (461.25,104.21) and (466.76,98.7) .. (473.56,98.7) .. controls (480.35,98.7) and (485.86,104.21) .. (485.86,111.01) .. controls (485.86,117.8) and (480.35,123.31) .. (473.56,123.31) .. controls (466.76,123.31) and (461.25,117.8) .. (461.25,111.01) -- cycle ;
					\draw   (485.86,111.01) .. controls (485.86,104.21) and (491.37,98.7) .. (498.17,98.7) .. controls (504.96,98.7) and (510.47,104.21) .. (510.47,111.01) .. controls (510.47,117.8) and (504.96,123.31) .. (498.17,123.31) .. controls (491.37,123.31) and (485.86,117.8) .. (485.86,111.01) -- cycle ;
					\draw   (510.47,111.01) .. controls (510.47,104.21) and (515.98,98.7) .. (522.78,98.7) .. controls (529.58,98.7) and (535.08,104.21) .. (535.08,111.01) .. controls (535.08,117.8) and (529.58,123.31) .. (522.78,123.31) .. controls (515.98,123.31) and (510.47,117.8) .. (510.47,111.01) -- cycle ;
					\draw  [color={rgb, 255:red, 208; green, 2; blue, 27 }  ,draw opacity=1 ][fill={rgb, 255:red, 208; green, 2; blue, 27 }  ,fill opacity=1 ] (180.97,98.7) .. controls (180.97,97.83) and (181.68,97.12) .. (182.56,97.12) .. controls (183.43,97.12) and (184.14,97.83) .. (184.14,98.7) .. controls (184.14,99.57) and (183.43,100.28) .. (182.56,100.28) .. controls (181.68,100.28) and (180.97,99.57) .. (180.97,98.7) -- cycle ;
					\draw  [color={rgb, 255:red, 208; green, 2; blue, 27 }  ,draw opacity=1 ][fill={rgb, 255:red, 208; green, 2; blue, 27 }  ,fill opacity=1 ] (302.58,98.7) .. controls (302.58,97.83) and (303.29,97.12) .. (304.17,97.12) .. controls (305.04,97.12) and (305.75,97.83) .. (305.75,98.7) .. controls (305.75,99.57) and (305.04,100.28) .. (304.17,100.28) .. controls (303.29,100.28) and (302.58,99.57) .. (302.58,98.7) -- cycle ;
					\draw  [color={rgb, 255:red, 208; green, 2; blue, 27 }  ,draw opacity=1 ][fill={rgb, 255:red, 208; green, 2; blue, 27 }  ,fill opacity=1 ] (425.2,98.7) .. controls (425.2,97.83) and (425.9,97.12) .. (426.78,97.12) .. controls (427.65,97.12) and (428.36,97.83) .. (428.36,98.7) .. controls (428.36,99.57) and (427.65,100.28) .. (426.78,100.28) .. controls (425.9,100.28) and (425.2,99.57) .. (425.2,98.7) -- cycle ;
					\draw  [color={rgb, 255:red, 208; green, 2; blue, 27 }  ,draw opacity=1 ][fill={rgb, 255:red, 208; green, 2; blue, 27 }  ,fill opacity=1 ] (496.58,123.31) .. controls (496.58,122.44) and (497.29,121.73) .. (498.17,121.73) .. controls (499.04,121.73) and (499.75,122.44) .. (499.75,123.31) .. controls (499.75,124.19) and (499.04,124.89) .. (498.17,124.89) .. controls (497.29,124.89) and (496.58,124.19) .. (496.58,123.31) -- cycle ;
					
					\draw (26.95,102.95) node [anchor=north west][inner sep=0.75pt]   [align=left] {$\displaystyle 1$};
					\draw (50.89,101.84) node [anchor=north west][inner sep=0.75pt]   [align=left] {$\displaystyle 2$};
					\draw (75.89,101.84) node [anchor=north west][inner sep=0.75pt]   [align=left] {$\displaystyle 3$};
					\draw (122,92) node [anchor=north west][inner sep=0.75pt]   [align=left] {$\displaystyle \tau $};
					\draw (177.95,102.95) node [anchor=north west][inner sep=0.75pt]   [align=left] {$\displaystyle 1$};
					\draw (201.89,101.84) node [anchor=north west][inner sep=0.75pt]   [align=left] {$\displaystyle 2$};
					\draw (226.89,101.84) node [anchor=north west][inner sep=0.75pt]   [align=left] {$\displaystyle 3$};
					\draw (274.95,102.95) node [anchor=north west][inner sep=0.75pt]   [align=left] {$\displaystyle 1$};
					\draw (298.89,101.84) node [anchor=north west][inner sep=0.75pt]   [align=left] {$\displaystyle 2$};
					\draw (323.89,101.84) node [anchor=north west][inner sep=0.75pt]   [align=left] {$\displaystyle 3$};
					\draw (372.95,102.95) node [anchor=north west][inner sep=0.75pt]   [align=left] {$\displaystyle 1$};
					\draw (396.89,101.84) node [anchor=north west][inner sep=0.75pt]   [align=left] {$\displaystyle 2$};
					\draw (421.89,101.84) node [anchor=north west][inner sep=0.75pt]   [align=left] {$\displaystyle 3$};
					\draw (468.95,102.95) node [anchor=north west][inner sep=0.75pt]   [align=left] {$\displaystyle 1$};
					\draw (492.89,101.84) node [anchor=north west][inner sep=0.75pt]   [align=left] {$\displaystyle 2$};
					\draw (517.89,101.84) node [anchor=north west][inner sep=0.75pt]   [align=left] {$\displaystyle 3$};
					\draw (249,101) node [anchor=north west][inner sep=0.75pt]   [align=left] {$\displaystyle +$};
					\draw (347,101) node [anchor=north west][inner sep=0.75pt]   [align=left] {$\displaystyle +$};
					\draw (445,101) node [anchor=north west][inner sep=0.75pt]   [align=left] {$\displaystyle +$};

				\end{tikzpicture}

				\caption{An example of how the transfer $\tau$ works (with $\F_2$ coefficients).}
				\label{fig:chain model for the transfer}
			\end{figure}
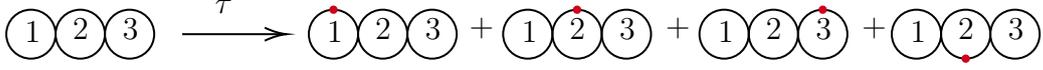
			
			\begin{figure}
				\centering

				\tikzset{every picture/.style={line width=0.75pt}} 
				
				\begin{tikzpicture}[x=0.75pt,y=0.75pt,yscale=-1,xscale=1]
					
					\draw   (16.8,110.01) .. controls (16.8,103.21) and (22.31,97.7) .. (29.11,97.7) .. controls (35.9,97.7) and (41.41,103.21) .. (41.41,110.01) .. controls (41.41,116.8) and (35.9,122.31) .. (29.11,122.31) .. controls (22.31,122.31) and (16.8,116.8) .. (16.8,110.01) -- cycle ;
					\draw   (41.41,110.01) .. controls (41.41,103.21) and (46.92,97.7) .. (53.72,97.7) .. controls (60.51,97.7) and (66.02,103.21) .. (66.02,110.01) .. controls (66.02,116.8) and (60.51,122.31) .. (53.72,122.31) .. controls (46.92,122.31) and (41.41,116.8) .. (41.41,110.01) -- cycle ;
					\draw   (100.25,110.01) .. controls (100.25,103.21) and (105.76,97.7) .. (112.56,97.7) .. controls (119.35,97.7) and (124.86,103.21) .. (124.86,110.01) .. controls (124.86,116.8) and (119.35,122.31) .. (112.56,122.31) .. controls (105.76,122.31) and (100.25,116.8) .. (100.25,110.01) -- cycle ;
					\draw   (124.86,110.01) .. controls (124.86,103.21) and (130.37,97.7) .. (137.17,97.7) .. controls (143.96,97.7) and (149.47,103.21) .. (149.47,110.01) .. controls (149.47,116.8) and (143.96,122.31) .. (137.17,122.31) .. controls (130.37,122.31) and (124.86,116.8) .. (124.86,110.01) -- cycle ;
					\draw   (149.47,110.01) .. controls (149.47,103.21) and (154.98,97.7) .. (161.78,97.7) .. controls (168.58,97.7) and (174.08,103.21) .. (174.08,110.01) .. controls (174.08,116.8) and (168.58,122.31) .. (161.78,122.31) .. controls (154.98,122.31) and (149.47,116.8) .. (149.47,110.01) -- cycle ;
					\draw   (193.88,110.79) .. controls (193.88,103.99) and (199.39,98.48) .. (206.19,98.48) .. controls (212.98,98.48) and (218.49,103.99) .. (218.49,110.79) .. controls (218.49,117.58) and (212.98,123.09) .. (206.19,123.09) .. controls (199.39,123.09) and (193.88,117.58) .. (193.88,110.79) -- cycle ;
					\draw   (218.49,110.79) .. controls (218.49,103.99) and (224,98.48) .. (230.8,98.48) .. controls (237.6,98.48) and (243.1,103.99) .. (243.1,110.79) .. controls (243.1,117.58) and (237.6,123.09) .. (230.8,123.09) .. controls (224,123.09) and (218.49,117.58) .. (218.49,110.79) -- cycle ;
					\draw   (243.1,110.79) .. controls (243.1,103.99) and (248.61,98.48) .. (255.41,98.48) .. controls (262.21,98.48) and (267.72,103.99) .. (267.72,110.79) .. controls (267.72,117.58) and (262.21,123.09) .. (255.41,123.09) .. controls (248.61,123.09) and (243.1,117.58) .. (243.1,110.79) -- cycle ;
					\draw   (267.72,110.79) .. controls (267.72,103.99) and (273.23,98.48) .. (280.02,98.48) .. controls (286.82,98.48) and (292.33,103.99) .. (292.33,110.79) .. controls (292.33,117.58) and (286.82,123.09) .. (280.02,123.09) .. controls (273.23,123.09) and (267.72,117.58) .. (267.72,110.79) -- cycle ;
					\draw   (310.3,110.79) .. controls (310.3,103.99) and (315.81,98.48) .. (322.6,98.48) .. controls (329.4,98.48) and (334.91,103.99) .. (334.91,110.79) .. controls (334.91,117.58) and (329.4,123.09) .. (322.6,123.09) .. controls (315.81,123.09) and (310.3,117.58) .. (310.3,110.79) -- cycle ;
					\draw   (334.91,110.79) .. controls (334.91,103.99) and (340.42,98.48) .. (347.21,98.48) .. controls (354.01,98.48) and (359.52,103.99) .. (359.52,110.79) .. controls (359.52,117.58) and (354.01,123.09) .. (347.21,123.09) .. controls (340.42,123.09) and (334.91,117.58) .. (334.91,110.79) -- cycle ;
					\draw   (359.52,110.79) .. controls (359.52,103.99) and (365.03,98.48) .. (371.83,98.48) .. controls (378.62,98.48) and (384.13,103.99) .. (384.13,110.79) .. controls (384.13,117.58) and (378.62,123.09) .. (371.83,123.09) .. controls (365.03,123.09) and (359.52,117.58) .. (359.52,110.79) -- cycle ;
					\draw   (384.13,110.79) .. controls (384.13,103.99) and (389.64,98.48) .. (396.44,98.48) .. controls (403.23,98.48) and (408.74,103.99) .. (408.74,110.79) .. controls (408.74,117.58) and (403.23,123.09) .. (396.44,123.09) .. controls (389.64,123.09) and (384.13,117.58) .. (384.13,110.79) -- cycle ;
					\draw   (194.27,176.42) .. controls (194.27,169.62) and (199.78,164.11) .. (206.58,164.11) .. controls (213.38,164.11) and (218.88,169.62) .. (218.88,176.42) .. controls (218.88,183.21) and (213.38,188.72) .. (206.58,188.72) .. controls (199.78,188.72) and (194.27,183.21) .. (194.27,176.42) -- cycle ;
					\draw   (218.88,176.42) .. controls (218.88,169.62) and (224.39,164.11) .. (231.19,164.11) .. controls (237.99,164.11) and (243.5,169.62) .. (243.5,176.42) .. controls (243.5,183.21) and (237.99,188.72) .. (231.19,188.72) .. controls (224.39,188.72) and (218.88,183.21) .. (218.88,176.42) -- cycle ;
					\draw   (243.5,176.42) .. controls (243.5,169.62) and (249,164.11) .. (255.8,164.11) .. controls (262.6,164.11) and (268.11,169.62) .. (268.11,176.42) .. controls (268.11,183.21) and (262.6,188.72) .. (255.8,188.72) .. controls (249,188.72) and (243.5,183.21) .. (243.5,176.42) -- cycle ;
					\draw   (218.88,151.81) .. controls (218.88,145.01) and (224.39,139.5) .. (231.19,139.5) .. controls (237.99,139.5) and (243.5,145.01) .. (243.5,151.81) .. controls (243.5,158.6) and (237.99,164.11) .. (231.19,164.11) .. controls (224.39,164.11) and (218.88,158.6) .. (218.88,151.81) -- cycle ;
					\draw   (289.59,176.42) .. controls (289.59,169.62) and (295.1,164.11) .. (301.9,164.11) .. controls (308.69,164.11) and (314.2,169.62) .. (314.2,176.42) .. controls (314.2,183.21) and (308.69,188.72) .. (301.9,188.72) .. controls (295.1,188.72) and (289.59,183.21) .. (289.59,176.42) -- cycle ;
					\draw   (314.2,176.42) .. controls (314.2,169.62) and (319.71,164.11) .. (326.51,164.11) .. controls (333.31,164.11) and (338.81,169.62) .. (338.81,176.42) .. controls (338.81,183.21) and (333.31,188.72) .. (326.51,188.72) .. controls (319.71,188.72) and (314.2,183.21) .. (314.2,176.42) -- cycle ;
					\draw   (338.81,176.42) .. controls (338.81,169.62) and (344.32,164.11) .. (351.12,164.11) .. controls (357.92,164.11) and (363.43,169.62) .. (363.43,176.42) .. controls (363.43,183.21) and (357.92,188.72) .. (351.12,188.72) .. controls (344.32,188.72) and (338.81,183.21) .. (338.81,176.42) -- cycle ;
					\draw   (314.2,201.03) .. controls (314.2,194.23) and (319.71,188.72) .. (326.51,188.72) .. controls (333.31,188.72) and (338.81,194.23) .. (338.81,201.03) .. controls (338.81,207.82) and (333.31,213.33) .. (326.51,213.33) .. controls (319.71,213.33) and (314.2,207.82) .. (314.2,201.03) -- cycle ;
					
					\draw (75,105) node [anchor=north west][inner sep=0.75pt]   [align=left] {$\displaystyle \circ _{2} \ $};
					\draw (24,101) node [anchor=north west][inner sep=0.75pt]   [align=left] {$\displaystyle 1$};
					\draw (48.44,101) node [anchor=north west][inner sep=0.75pt]   [align=left] {$\displaystyle 2$};
					\draw (107.95,101) node [anchor=north west][inner sep=0.75pt]   [align=left] {$\displaystyle 1$};
					\draw (131.89,101) node [anchor=north west][inner sep=0.75pt]   [align=left] {$\displaystyle 2$};
					\draw (156.89,101) node [anchor=north west][inner sep=0.75pt]   [align=left] {$\displaystyle 3$};
					\draw (201.58,101) node [anchor=north west][inner sep=0.75pt]   [align=left] {$\displaystyle 1$};
					\draw (225.52,101) node [anchor=north west][inner sep=0.75pt]   [align=left] {$\displaystyle 2$};
					\draw (250.52,101) node [anchor=north west][inner sep=0.75pt]   [align=left] {$\displaystyle 3$};
					\draw (274.74,101) node [anchor=north west][inner sep=0.75pt]   [align=left] {$\displaystyle 4$};
					\draw (317.99,101) node [anchor=north west][inner sep=0.75pt]   [align=left] {$\displaystyle 1$};
					\draw (341.94,101) node [anchor=north west][inner sep=0.75pt]   [align=left] {$\displaystyle 4$};
					\draw (366.94,101) node [anchor=north west][inner sep=0.75pt]   [align=left] {$\displaystyle 3$};
					\draw (391.16,101) node [anchor=north west][inner sep=0.75pt]   [align=left] {$\displaystyle 2$};
					\draw (201.3,167.25) node [anchor=north west][inner sep=0.75pt]   [align=left] {$\displaystyle 2$};
					\draw (226.3,167.25) node [anchor=north west][inner sep=0.75pt]   [align=left] {$\displaystyle 3$};
					\draw (250.52,168.03) node [anchor=north west][inner sep=0.75pt]   [align=left] {$\displaystyle 4$};
					\draw (226.58,143.36) node [anchor=north west][inner sep=0.75pt]   [align=left] {$\displaystyle 1$};
					\draw (296.62,167.25) node [anchor=north west][inner sep=0.75pt]   [align=left] {$\displaystyle 2$};
					\draw (321.62,167.25) node [anchor=north west][inner sep=0.75pt]   [align=left] {$\displaystyle 3$};
					\draw (345.84,168.03) node [anchor=north west][inner sep=0.75pt]   [align=left] {$\displaystyle 4$};
					\draw (321.9,191.8) node [anchor=north west][inner sep=0.75pt]   [align=left] {$\displaystyle 1$};
					\draw (175,105) node [anchor=north west][inner sep=0.75pt]   [align=left] {$\displaystyle =\ $};
					\draw (295,105) node [anchor=north west][inner sep=0.75pt]   [align=left] {$\displaystyle +\ $};
					\draw (177.46,167.5) node [anchor=north west][inner sep=0.75pt]   [align=left] {$\displaystyle +\ $};
					\draw (272,167.5) node [anchor=north west][inner sep=0.75pt]   [align=left] {$\displaystyle +\ $};

				\end{tikzpicture}

				\caption{An example of how the partial composition of grav works (with $\mathbb{F}_2$ coefficients). }
				\label{composition of Grav }
			\end{figure}
			
		\end{enumerate} 
		
		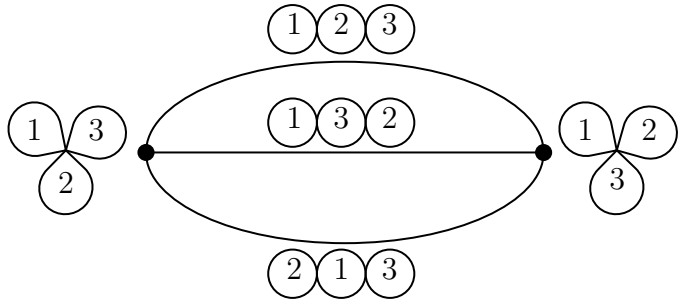
\begin{figure}
			\centering
			\tikzset{every picture/.style={line width=0.75pt}} 
			
			\begin{tikzpicture}[x=0.75pt,y=0.75pt,yscale=-1,xscale=1]
				
				\draw  [fill={rgb, 255:red, 0; green, 0; blue, 0 }  ,fill opacity=1 ] (295.87,96.73) .. controls (295.87,94.65) and (297.56,92.96) .. (299.64,92.96) .. controls (301.73,92.96) and (303.41,94.65) .. (303.41,96.73) .. controls (303.41,98.82) and (301.73,100.51) .. (299.64,100.51) .. controls (297.56,100.51) and (295.87,98.82) .. (295.87,96.73) -- cycle ;
				\draw   (48.32,98.23) .. controls (48.32,98.23) and (48.32,98.23) .. (48.32,98.23) .. controls (48.32,98.23) and (48.32,98.23) .. (48.32,98.23) .. controls (41.23,99.66) and (34.28,94.89) .. (32.81,87.58) .. controls (31.33,80.28) and (35.89,73.19) .. (42.98,71.76) .. controls (50.07,70.33) and (57.02,75.09) .. (58.49,82.4) .. controls (60.27,91.22) and (61.16,95.63) .. (61.16,95.63) .. controls (61.16,95.63) and (56.88,96.5) .. (48.32,98.23) -- cycle ;
				\draw   (64.72,83.03) .. controls (64.72,83.03) and (64.72,83.03) .. (64.72,83.03) .. controls (66.68,76.06) and (74.09,72.06) .. (81.27,74.08) .. controls (88.44,76.11) and (92.67,83.39) .. (90.7,90.36) .. controls (88.74,97.32) and (81.33,101.32) .. (74.15,99.3) .. controls (65.49,96.86) and (61.16,95.63) .. (61.16,95.63) .. controls (61.16,95.63) and (62.35,91.43) .. (64.72,83.03) -- cycle ;
				\draw   (70.57,104.74) .. controls (70.57,104.74) and (70.57,104.74) .. (70.57,104.74) .. controls (75.77,109.78) and (75.79,118.2) .. (70.6,123.56) .. controls (65.42,128.91) and (57,129.18) .. (51.8,124.15) .. controls (46.6,119.12) and (46.59,110.69) .. (51.77,105.34) .. controls (51.77,105.34) and (51.77,105.34) .. (51.77,105.34) .. controls (58.04,98.87) and (61.17,95.64) .. (61.16,95.63) .. controls (61.17,95.64) and (64.3,98.67) .. (70.57,104.74) -- cycle ;
				\draw  [fill={rgb, 255:red, 0; green, 0; blue, 0 }  ,fill opacity=1 ] (97.39,96.7) .. controls (97.39,94.62) and (99.08,92.93) .. (101.17,92.93) .. controls (103.25,92.93) and (104.94,94.62) .. (104.94,96.7) .. controls (104.94,98.78) and (103.25,100.47) .. (101.17,100.47) .. controls (99.08,100.47) and (97.39,98.78) .. (97.39,96.7) -- cycle ;
				\draw   (323.32,98.23) .. controls (323.32,98.23) and (323.32,98.23) .. (323.32,98.23) .. controls (323.32,98.23) and (323.32,98.23) .. (323.32,98.23) .. controls (316.23,99.66) and (309.28,94.89) .. (307.81,87.58) .. controls (306.33,80.28) and (310.89,73.19) .. (317.98,71.76) .. controls (325.07,70.33) and (332.02,75.09) .. (333.49,82.4) .. controls (335.27,91.22) and (336.16,95.63) .. (336.16,95.63) .. controls (336.16,95.63) and (331.88,96.5) .. (323.32,98.23) -- cycle ;
				\draw   (339.72,83.03) .. controls (339.72,83.03) and (339.72,83.03) .. (339.72,83.03) .. controls (341.68,76.06) and (349.09,72.06) .. (356.27,74.08) .. controls (363.44,76.11) and (367.67,83.39) .. (365.7,90.36) .. controls (363.74,97.32) and (356.33,101.32) .. (349.15,99.3) .. controls (340.49,96.86) and (336.16,95.63) .. (336.16,95.63) .. controls (336.16,95.63) and (337.35,91.43) .. (339.72,83.03) -- cycle ;
				\draw   (345.57,104.74) .. controls (345.57,104.74) and (345.57,104.74) .. (345.57,104.74) .. controls (350.77,109.78) and (350.79,118.2) .. (345.6,123.56) .. controls (340.42,128.91) and (332,129.18) .. (326.8,124.15) .. controls (321.6,119.12) and (321.59,110.69) .. (326.77,105.34) .. controls (333.04,98.87) and (336.17,95.64) .. (336.16,95.63) .. controls (336.17,95.64) and (339.3,98.67) .. (345.57,104.74) -- cycle ;
				\draw   (101.17,96.7) .. controls (101.17,71.5) and (145.6,51.07) .. (200.4,51.07) .. controls (255.21,51.07) and (299.64,71.5) .. (299.64,96.7) .. controls (299.64,121.9) and (255.21,142.33) .. (200.4,142.33) .. controls (145.6,142.33) and (101.17,121.9) .. (101.17,96.7) -- cycle ;
				\draw   (186.48,34.71) .. controls (186.48,27.99) and (191.92,22.55) .. (198.63,22.55) .. controls (205.34,22.55) and (210.79,27.99) .. (210.79,34.71) .. controls (210.79,41.42) and (205.34,46.86) .. (198.63,46.86) .. controls (191.92,46.86) and (186.48,41.42) .. (186.48,34.71) -- cycle ;
				\draw   (210.79,34.71) .. controls (210.79,27.99) and (216.23,22.55) .. (222.94,22.55) .. controls (229.65,22.55) and (235.1,27.99) .. (235.1,34.71) .. controls (235.1,41.42) and (229.65,46.86) .. (222.94,46.86) .. controls (216.23,46.86) and (210.79,41.42) .. (210.79,34.71) -- cycle ;
				\draw   (162.17,34.71) .. controls (162.17,27.99) and (167.61,22.55) .. (174.32,22.55) .. controls (181.03,22.55) and (186.48,27.99) .. (186.48,34.71) .. controls (186.48,41.42) and (181.03,46.86) .. (174.32,46.86) .. controls (167.61,46.86) and (162.17,41.42) .. (162.17,34.71) -- cycle ;
				\draw    (101.17,96.7) -- (299.64,96.7) ;
				\draw   (186.48,81.71) .. controls (186.48,74.99) and (191.92,69.55) .. (198.63,69.55) .. controls (205.34,69.55) and (210.79,74.99) .. (210.79,81.71) .. controls (210.79,88.42) and (205.34,93.86) .. (198.63,93.86) .. controls (191.92,93.86) and (186.48,88.42) .. (186.48,81.71) -- cycle ;
				\draw   (210.79,81.71) .. controls (210.79,74.99) and (216.23,69.55) .. (222.94,69.55) .. controls (229.65,69.55) and (235.1,74.99) .. (235.1,81.71) .. controls (235.1,88.42) and (229.65,93.86) .. (222.94,93.86) .. controls (216.23,93.86) and (210.79,88.42) .. (210.79,81.71) -- cycle ;
				\draw   (162.17,81.71) .. controls (162.17,74.99) and (167.61,69.55) .. (174.32,69.55) .. controls (181.03,69.55) and (186.48,74.99) .. (186.48,81.71) .. controls (186.48,88.42) and (181.03,93.86) .. (174.32,93.86) .. controls (167.61,93.86) and (162.17,88.42) .. (162.17,81.71) -- cycle ;
				\draw   (186.48,157.71) .. controls (186.48,150.99) and (191.92,145.55) .. (198.63,145.55) .. controls (205.34,145.55) and (210.79,150.99) .. (210.79,157.71) .. controls (210.79,164.42) and (205.34,169.86) .. (198.63,169.86) .. controls (191.92,169.86) and (186.48,164.42) .. (186.48,157.71) -- cycle ;
				\draw   (210.79,157.71) .. controls (210.79,150.99) and (216.23,145.55) .. (222.94,145.55) .. controls (229.65,145.55) and (235.1,150.99) .. (235.1,157.71) .. controls (235.1,164.42) and (229.65,169.86) .. (222.94,169.86) .. controls (216.23,169.86) and (210.79,164.42) .. (210.79,157.71) -- cycle ;
				\draw   (162.17,157.71) .. controls (162.17,150.99) and (167.61,145.55) .. (174.32,145.55) .. controls (181.03,145.55) and (186.48,150.99) .. (186.48,157.71) .. controls (186.48,164.42) and (181.03,169.86) .. (174.32,169.86) .. controls (167.61,169.86) and (162.17,164.42) .. (162.17,157.71) -- cycle ;
				
				\draw (71,78.46) node [anchor=north west][inner sep=0.75pt]   [align=left] {$\displaystyle 3$};
				\draw (40,78.46) node [anchor=north west][inner sep=0.75pt]   [align=left] {$\displaystyle 1$};
				\draw (55.77,105.34) node [anchor=north west][inner sep=0.75pt]   [align=left] {$\displaystyle 2$};
				\draw (331,103.46) node [anchor=north west][inner sep=0.75pt]   [align=left] {$\displaystyle 3$};
				\draw (315,78.46) node [anchor=north west][inner sep=0.75pt]   [align=left] {$\displaystyle 1$};
				\draw (347,78.46) node [anchor=north west][inner sep=0.75pt]   [align=left] {$\displaystyle 2$};
				\draw (169.79,25.5) node [anchor=north west][inner sep=0.75pt]   [align=left] {$\displaystyle 1$};
				\draw (193.24,25.5) node [anchor=north west][inner sep=0.75pt]   [align=left] {$\displaystyle 2$};
				\draw (217.43,25.5) node [anchor=north west][inner sep=0.75pt]   [align=left] {$\displaystyle 3$};
				\draw (169.79,72.5) node [anchor=north west][inner sep=0.75pt]   [align=left] {$\displaystyle 1$};
				\draw (193.24,72.5) node [anchor=north west][inner sep=0.75pt]   [align=left] {$\displaystyle 3$};
				\draw (217.43,72.5) node [anchor=north west][inner sep=0.75pt]   [align=left] {$\displaystyle 2$};
				\draw (169.79,148.5) node [anchor=north west][inner sep=0.75pt]   [align=left] {$\displaystyle 2$};
				\draw (193.24,148.5) node [anchor=north west][inner sep=0.75pt]   [align=left] {$\displaystyle 1$};
				\draw (217.43,148.5) node [anchor=north west][inner sep=0.75pt]   [align=left] {$\displaystyle 3$};

			\end{tikzpicture}

			\caption{This picture shows the CW-complex $\mathcal{C}_3/S^1\simeq \M_{0,4}$. There are two zero cells and three edges. }
			\label{fig:esempio di cactus senza punto base}
		\end{figure}

		We summarize these observations in the next table:
		\[
		\begin{tabularx}{\textwidth}{lXXX}
			
			\toprule
			\textbf{Space} & $F_n(\C)/S^1$  & $F_n(\C)$ & $fF_n(\C)$\\
			\midrule
			\textbf{Chain model } & $\mathcal{C}_n/S^1$   &  $\mathcal{C}_n$ & $f\mathcal{C}_n$\\
			\midrule
			\textbf{Operad in Top} &  & $\mathcal{D}_2$ & $ f\mathcal{D}_2$\\
			\midrule
			\textbf{Operad in $Ch(\Z)$} &  $grav$  & $Cact$ & $fCact$ \\
			\bottomrule
		\end{tabularx}
		\]
	\end{oss}

	\section{CW-decompositions for $\overline{\mathcal{M}}_{0,n+1}$ }\label{sec: combinatorial models for Deligne-Mumford}
	In \cite{Salvatore} the second author constructs a CW-decomposition of $FM(n)$, the Fulton-Pherson compactification of $F_n(\C)/\C\rtimes \R^{>0}$. The main result of this section is the construction of a similar cell decomposition for $\overline{\mathcal{M}}_{0,n+1}$ (Theorem \ref{thm:omeo tra la compattificazione e lo spazio dei nested cactus}).

	\subsection{Nested trees}\label{subsec:nested trees}
	In this paragraph we introduce some combinatorial notions that will be useful for the rest of the section. 
	\begin{defn}
		Let $R$ be a finite set. A \textbf{nested tree with leaves labelled in $R$} is a collection $\mathcal{S}$ of subsets of $R$ of cardinality at least $2$, called vertices, such that:
		\begin{itemize}
			
			\item $R\in\mathcal{S}$. This vertex is called the \textbf{root}.
			\item If $S_1,S_2\in\mathcal{S}$ then either $S_1\cap S_2=\emptyset$ or $S_1\subseteq S_2$ or $S_2\subseteq S_1$.
		\end{itemize}
		We denote by $N_R$ the set of all nested trees with leaves labelled in $R$. A pair $(S_1,S_2)\in \mathcal{S}\times\mathcal{S}$ is called an \textbf{internal edge} if  $S_1\subseteq S_2$ and there is no vertex $S_3\in\mathcal{S}$ such that $S_1\subseteq S_3\subseteq S_2$. We say that the internal edge $(S_1,S_2)$ goes out of $S_1$ and into $S_2$. For each $i\in R$ let $S_i$ be
		the minimal element of $\mathcal{S}$ containing $i$. We say that $i$ is an \textbf{open edge} going into $S_i$. The \textbf{valence} $\abs{S}$ of a vertex $S\in\mathcal{S}$ is the number of edges (either internal or open) going into it. For an example see Figure \ref{fig:esempio di nested tree}. Finally, observe that $N_R$ is partially ordered by inclusion. Geometrically, if $\mathcal{S}\subseteq \mathcal{T}$ then  $\mathcal{S}$ can be obtained from $\mathcal{T}$ by collapsing some edges.
	\end{defn}
	
	\begin{figure}
		
		\centering
		
		\tikzset{every picture/.style={line width=0.75pt}} 
		
		\begin{tikzpicture}[x=0.75pt,y=0.75pt,yscale=-1,xscale=1]
			
			\draw   (43,162.83) .. controls (43,131.63) and (87.92,106.33) .. (143.33,106.33) .. controls (198.75,106.33) and (243.67,131.63) .. (243.67,162.83) .. controls (243.67,194.04) and (198.75,219.33) .. (143.33,219.33) .. controls (87.92,219.33) and (43,194.04) .. (43,162.83) -- cycle ;
			\draw   (68,164.67) .. controls (68,145.52) and (90.76,130) .. (118.83,130) .. controls (146.91,130) and (169.67,145.52) .. (169.67,164.67) .. controls (169.67,183.81) and (146.91,199.33) .. (118.83,199.33) .. controls (90.76,199.33) and (68,183.81) .. (68,164.67) -- cycle ;
			\draw   (86,156.33) .. controls (86,145.84) and (97.12,137.33) .. (110.83,137.33) .. controls (124.55,137.33) and (135.67,145.84) .. (135.67,156.33) .. controls (135.67,166.83) and (124.55,175.33) .. (110.83,175.33) .. controls (97.12,175.33) and (86,166.83) .. (86,156.33) -- cycle ;
			\draw  [fill={rgb, 255:red, 0; green, 0; blue, 0 }  ,fill opacity=1 ] (330.83,146.33) .. controls (330.83,144.77) and (332.1,143.5) .. (333.67,143.5) .. controls (335.23,143.5) and (336.5,144.77) .. (336.5,146.33) .. controls (336.5,147.9) and (335.23,149.17) .. (333.67,149.17) .. controls (332.1,149.17) and (330.83,147.9) .. (330.83,146.33) -- cycle ;
			\draw    (297.67,110.33) -- (333.67,146.33) ;
			\draw    (365.67,114.33) -- (333.67,146.33) ;
			\draw    (333.67,146.33) -- (369.67,182.33) ;
			\draw    (369.67,182.33) -- (405.67,218.33) ;
			\draw    (401.67,150.33) -- (369.67,182.33) ;
			\draw    (437.67,186.33) -- (405.67,218.33) ;
			\draw  [fill={rgb, 255:red, 0; green, 0; blue, 0 }  ,fill opacity=1 ] (366.83,182.33) .. controls (366.83,180.77) and (368.1,179.5) .. (369.67,179.5) .. controls (371.23,179.5) and (372.5,180.77) .. (372.5,182.33) .. controls (372.5,183.9) and (371.23,185.17) .. (369.67,185.17) .. controls (368.1,185.17) and (366.83,183.9) .. (366.83,182.33) -- cycle ;
			\draw  [fill={rgb, 255:red, 0; green, 0; blue, 0 }  ,fill opacity=1 ] (402.83,218.33) .. controls (402.83,216.77) and (404.1,215.5) .. (405.67,215.5) .. controls (407.23,215.5) and (408.5,216.77) .. (408.5,218.33) .. controls (408.5,219.9) and (407.23,221.17) .. (405.67,221.17) .. controls (404.1,221.17) and (402.83,219.9) .. (402.83,218.33) -- cycle ;
			
			\draw (92,149) node [anchor=north west][inner sep=0.75pt]   [align=left] {$\displaystyle 1$};
			\draw (116,145) node [anchor=north west][inner sep=0.75pt]   [align=left] {$\displaystyle 2$};
			\draw (140,171) node [anchor=north west][inner sep=0.75pt]   [align=left] {$\displaystyle 3$};
			\draw (187,149) node [anchor=north west][inner sep=0.75pt]   [align=left] {$\displaystyle 4$};
			\draw (293,93) node [anchor=north west][inner sep=0.75pt]   [align=left] {$\displaystyle 1$};
			\draw (362,97) node [anchor=north west][inner sep=0.75pt]   [align=left] {$\displaystyle 2$};
			\draw (397,132) node [anchor=north west][inner sep=0.75pt]   [align=left] {$\displaystyle 3$};
			\draw (433,168) node [anchor=north west][inner sep=0.75pt]   [align=left] {$\displaystyle 4$};

		\end{tikzpicture}
		\caption{In this picture we see two graphical representations of the nested tree $\mathcal{S}=\{\{1,2,3,4\},\{1,2,3\},\{1,2\}\}$. In this example there are two internal edges, three vertices and each vertex has valence two.}
		\label{fig:esempio di nested tree}
	\end{figure}
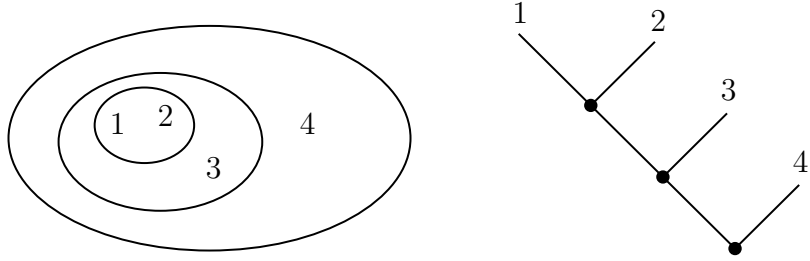
	\begin{oss}
		When $R=\{1,\dots,n\}$ we will use the notation $N_n$ instead of $N_{\{1,\dots,n\}}$ and we will call an element of $N_n$ a \textbf{nested tree with $n$-leaves}. If $R$ is a finite set with $n$ elements, the data of a total order on $R$ induces a bijection between $N_R$ and $N_n$.
	\end{oss}
	\begin{es}
		There are four nested trees on three leaves: the parenthesis indicates the elements of $\{1,2,3\}$ which are contained in the same $S\in\mathcal{S}$.
		
		\begin{tikzpicture}
			
			\draw[black, thick] (0,0) -- (-1,1) node[anchor=south]{$1$};
			\draw[black, thick] (0,0) -- (0,1) node[anchor=south]{$2$};
			\draw[black, thick] (0,0) -- (1,1) node[anchor=south]{$3$};
			\filldraw[color=black, fill=white, thick] (0,0) circle (2pt) node[anchor=north]{$(123)$};

			\draw[black, thick] (3.5,0) -- (3,1) node[anchor=south]{$1$};
			\draw[black, thick] (3.5,0) -- (4,1);
			\draw[black, thick] (4,1) -- (4.5,2) node[anchor=south]{$3$};
			\draw[black, thick] (4,1) -- (3.5,2) node[anchor=south]{$2$};
			\filldraw[color=black, fill=white, thick] (3.5,0) circle (2pt) node[anchor=north]{$(1(23))$};
			\filldraw[color=black, fill=white, thick] (4,1) circle (2pt);

			\draw[black, thick] (7,0) -- (6.5,1);
			\draw[black, thick] (7,0) -- (7.5,1) node[anchor=south]{$3$};
			\draw[black, thick] (6.5,1) -- (6,2) node[anchor=south]{$1$};
			\draw[black, thick] (6.5,1) -- (7,2) node[anchor=south]{$2$};
			\filldraw[color=black, fill=white, thick] (7,0) circle (2pt) node[anchor=north]{$((12)3)$};
			\filldraw[color=black, fill=white, thick] (6.5,1) circle (2pt);
			
			\draw[black, thick] (10.5,0) -- (10,1);
			\draw[black, thick] (10.5,0) -- (11,1) node[anchor=south]{$2$};
			\draw[black, thick] (10,1) -- (9.5,2) node[anchor=south]{$1$};
			\draw[black, thick] (10,1) -- (10.5,2) node[anchor=south]{$3$};
			\filldraw[color=black, fill=white, thick] (10.5,0) circle (2pt) node[anchor=north]{$((13)2)$};
			\filldraw[color=black, fill=white, thick] (10,1) circle (2pt);
			
		\end{tikzpicture}
	\end{es}
	
	\begin{defn}
		Let $\mathcal{S}$ be a nested tree with $n$-leaves. The \textbf{composition of cacti} associated to $\mathcal{S}$ is the  map
		\[
		\theta_{\mathcal{S}}:\prod_{S\in\mathcal{S}}\mathcal{C}_{\abs{S}}\to \mathcal{C}_n
		\]
		defined as follows: the collapse of any internal edge $(S,T)$ of $\mathcal{S}$ corresponds to a composition $\circ_i:\mathcal{C}_{\abs{T}}\times \mathcal{C}_{\abs{S}}\to\mathcal{C}_{\abs{T}+\abs{S}-1}$.  Then $\theta_{\mathcal{S}}$ is defined as the composition of such $\circ_i$, where we first collapse the internal edges which are closer to the leaves ad then proceed iteratively towards the root. 
		
	\end{defn}
	\begin{defn}
		For any $k\geq 2$ let $\sigma_k:\mathcal{C}_k/S^1\to \mathcal{C}_k$ be a fixed section of the trivial bundle $p:\mathcal{C}_k\to \mathcal{C}_k/S^1$ and $\mathcal{S}$ be a nested tree with $n$-leaves. We define a map 
		\[
		\gamma^{\sigma}_{\mathcal{S}}:\mathcal{C}_{\abs{R}}/S^1\times \prod_{S\in\mathcal{S}-R}\left(S^1\times \mathcal{C}_{\abs{S}}/S^1\right)\to \mathcal{C}_n
		\]
		iteratively on $n$ as follows: 
		\begin{description}
			\item[$n=2$] The only nested tree with two leaves is the corolla $\mathcal{S}=(12)$, and in this case we set $\gamma_{\mathcal{S}}^{\sigma}\coloneqq \sigma_2$.
			\item[$n\geq 3$] If $\mathcal{S}$ is the corolla with $n$-leaves, put $\gamma_{\mathcal{S}}^{\sigma}\coloneqq \sigma_n$. Now suppose that $\mathcal{S}$ is not the corolla, and denote by $(S_1,R),\dots,(S_m,R)$ the internal edges going into the root $R\coloneqq\{1,\dots,n\}$. If we delete the root $R$, we obtain a collection of nested trees  $\mathcal{S}_1\dots,\mathcal{S}_m$ with leaves labelled (respectively) by $S_1,\dots,S_m$. We will denote by $n_i$ the cardinality of $S_i$ and by $\mathcal{T}$ the nested tree given by $\{S_1,\dots,S_m,R\}$. Then $\gamma_{\mathcal{S}}^{\sigma}$ is defined to be the following composition:
			\begin{equation}\label{diag: diagramma che definisce le embeddings}
				\begin{tikzcd}
					& \mathcal{C}_{\abs{R}}/S^1\times \prod_{\substack{S\in\mathcal{S}\\ S\neq R}}\left(S^1\times \mathcal{C}_{\abs{S}}/S^1\right) \arrow[d] \arrow[r,dashed,"\gamma^{\sigma}_{\mathcal{S}}"] & \mathcal{C}_n\\
					& \mathcal{C}_{\abs{R}}/S^1\times\prod_{i=1}^m \prod_{S\in\mathcal{S}_i}\left(S^1\times \mathcal{C}_{\abs{S}}/S^1\right) \arrow[d,"\sigma_{\abs{R}}\times \gamma^{\sigma}_{\mathcal{S}_1}\times\cdots\times\gamma^{\sigma}_{\mathcal{S}_m}" ]& \\
					& \mathcal{C}_{\abs{R}}\times \prod_{i=1}^m S^1\times\mathcal{C}_{n_i}\arrow[r,"1\times \rho^m"] & \mathcal{C}_{\abs{R}}\times \prod_{i=1}^m \mathcal{C}_{n_i}\arrow[uu,"\theta_{\mathcal{T}}"]
				\end{tikzcd}
			\end{equation}
		\end{description}
		In this diagram the first vertical arrow is just a permutation of the factors in the cartesian product, $\rho: S^1\times \mathcal{C}_k\to \mathcal{C}_k$ is the circle action and $\theta_{\mathcal{T}}$ is the composition of cacti associated to the nested tree $\mathcal{T}$.
	\end{defn}
	\begin{es}
		Let $\mathcal{S}\in N_5$ given by $R=\{1,2,3,4,5\}$, $S=\{2,3,4,5\}$, $T=\{4,5\}$. Then 
		\begin{equation*}
			\gamma^{\sigma}_{\mathcal{S}}:\mathcal{C}_{\abs{R}}/S^1\times (S^1\times  \mathcal{C}_{\abs{S}}/S^1)\times (S^1\times  \mathcal{C}_{\abs{T}}/S^1)\to \mathcal{C}_5
		\end{equation*}
		sends $	(x_R,\alpha_S,x_S,\alpha_T,x_T)$ to $ \sigma_2(x_R)\circ_2(\alpha_S\cdot(\sigma_3(x_S)\circ_3(\alpha_T\cdot \sigma_2(x_T))))$. For a picture see Figure \ref{fig:esempio di applicazione della mappa strana}.
	\end{es}
	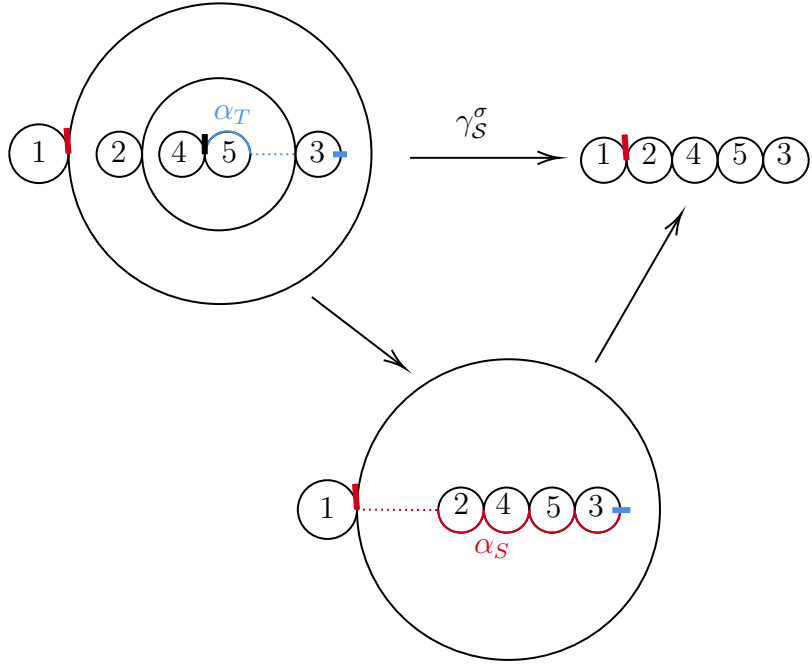
\begin{figure}
		\centering
		
		\tikzset{every picture/.style={line width=0.75pt}} 
		
		\begin{tikzpicture}[x=0.75pt,y=0.75pt,yscale=-1,xscale=1]
			
			\draw   (75.68,160.61) .. controls (75.68,152.45) and (82.29,145.84) .. (90.45,145.84) .. controls (98.61,145.84) and (105.23,152.45) .. (105.23,160.61) .. controls (105.23,168.77) and (98.61,175.39) .. (90.45,175.39) .. controls (82.29,175.39) and (75.68,168.77) .. (75.68,160.61) -- cycle ;
			\draw   (105.23,160.61) .. controls (105.23,118.85) and (139.08,85) .. (180.84,85) .. controls (222.6,85) and (256.45,118.85) .. (256.45,160.61) .. controls (256.45,202.37) and (222.6,236.23) .. (180.84,236.23) .. controls (139.08,236.23) and (105.23,202.37) .. (105.23,160.61) -- cycle ;
			\draw   (119.23,160.89) .. controls (119.23,154.61) and (124.31,149.52) .. (130.59,149.52) .. controls (136.86,149.52) and (141.95,154.61) .. (141.95,160.89) .. controls (141.95,167.16) and (136.86,172.25) .. (130.59,172.25) .. controls (124.31,172.25) and (119.23,167.16) .. (119.23,160.89) -- cycle ;
			\draw   (141.95,160.89) .. controls (141.95,139.75) and (159.09,122.61) .. (180.23,122.61) .. controls (201.36,122.61) and (218.5,139.75) .. (218.5,160.89) .. controls (218.5,182.03) and (201.36,199.16) .. (180.23,199.16) .. controls (159.09,199.16) and (141.95,182.03) .. (141.95,160.89) -- cycle ;
			\draw   (218.5,160.89) .. controls (218.5,154.61) and (223.59,149.52) .. (229.86,149.52) .. controls (236.14,149.52) and (241.23,154.61) .. (241.23,160.89) .. controls (241.23,167.16) and (236.14,172.25) .. (229.86,172.25) .. controls (223.59,172.25) and (218.5,167.16) .. (218.5,160.89) -- cycle ;
			\draw [color={rgb, 255:red, 208; green, 2; blue, 27 }  ,draw opacity=1 ][line width=2.25]    (104.23,147.68) -- (105.23,160.61) ;
			\draw [color={rgb, 255:red, 74; green, 144; blue, 226 }  ,draw opacity=1 ] [dash pattern={on 0.75pt off 1.5pt}]  (195.95,160.89) -- (218.5,160.89) ;
			\draw  [color={rgb, 255:red, 0; green, 0; blue, 0 }  ,draw opacity=1 ] (173.23,160.89) .. controls (173.23,154.61) and (178.31,149.52) .. (184.59,149.52) .. controls (190.86,149.52) and (195.95,154.61) .. (195.95,160.89) .. controls (195.95,167.16) and (190.86,172.25) .. (184.59,172.25) .. controls (178.31,172.25) and (173.23,167.16) .. (173.23,160.89) -- cycle ;
			\draw   (150.5,160.89) .. controls (150.5,154.61) and (155.59,149.52) .. (161.86,149.52) .. controls (168.14,149.52) and (173.23,154.61) .. (173.23,160.89) .. controls (173.23,167.16) and (168.14,172.25) .. (161.86,172.25) .. controls (155.59,172.25) and (150.5,167.16) .. (150.5,160.89) -- cycle ;
			\draw [color={rgb, 255:red, 74; green, 144; blue, 226 }  ,draw opacity=1 ][line width=2.25]    (237.23,160.89) -- (244.23,160.89) ;
			\draw    (275.68,162.68) -- (348.23,162.68) ;
			\draw [shift={(350.23,162.68)}, rotate = 180] [color={rgb, 255:red, 0; green, 0; blue, 0 }  ][line width=0.75]    (10.93,-3.29) .. controls (6.95,-1.4) and (3.31,-0.3) .. (0,0) .. controls (3.31,0.3) and (6.95,1.4) .. (10.93,3.29)   ;
			\draw   (383.77,163.89) .. controls (383.77,157.61) and (388.86,152.52) .. (395.14,152.52) .. controls (401.41,152.52) and (406.5,157.61) .. (406.5,163.89) .. controls (406.5,170.16) and (401.41,175.25) .. (395.14,175.25) .. controls (388.86,175.25) and (383.77,170.16) .. (383.77,163.89) -- cycle ;
			\draw   (451.95,163.89) .. controls (451.95,157.61) and (457.04,152.52) .. (463.31,152.52) .. controls (469.59,152.52) and (474.68,157.61) .. (474.68,163.89) .. controls (474.68,170.16) and (469.59,175.25) .. (463.31,175.25) .. controls (457.04,175.25) and (451.95,170.16) .. (451.95,163.89) -- cycle ;
			\draw   (429.23,163.89) .. controls (429.23,157.61) and (434.31,152.52) .. (440.59,152.52) .. controls (446.86,152.52) and (451.95,157.61) .. (451.95,163.89) .. controls (451.95,170.16) and (446.86,175.25) .. (440.59,175.25) .. controls (434.31,175.25) and (429.23,170.16) .. (429.23,163.89) -- cycle ;
			\draw   (406.5,163.89) .. controls (406.5,157.61) and (411.59,152.52) .. (417.86,152.52) .. controls (424.14,152.52) and (429.23,157.61) .. (429.23,163.89) .. controls (429.23,170.16) and (424.14,175.25) .. (417.86,175.25) .. controls (411.59,175.25) and (406.5,170.16) .. (406.5,163.89) -- cycle ;
			\draw   (361.05,163.89) .. controls (361.05,157.61) and (366.14,152.52) .. (372.41,152.52) .. controls (378.69,152.52) and (383.77,157.61) .. (383.77,163.89) .. controls (383.77,170.16) and (378.69,175.25) .. (372.41,175.25) .. controls (366.14,175.25) and (361.05,170.16) .. (361.05,163.89) -- cycle ;
			\draw [color={rgb, 255:red, 208; green, 2; blue, 27 }  ,draw opacity=1 ][line width=2.25]    (382.77,150.95) -- (383.77,163.89) ;
			\draw   (219.68,339.61) .. controls (219.68,331.45) and (226.29,324.84) .. (234.45,324.84) .. controls (242.61,324.84) and (249.23,331.45) .. (249.23,339.61) .. controls (249.23,347.77) and (242.61,354.39) .. (234.45,354.39) .. controls (226.29,354.39) and (219.68,347.77) .. (219.68,339.61) -- cycle ;
			\draw   (249.23,339.61) .. controls (249.23,297.85) and (283.08,264) .. (324.84,264) .. controls (366.6,264) and (400.45,297.85) .. (400.45,339.61) .. controls (400.45,381.37) and (366.6,415.23) .. (324.84,415.23) .. controls (283.08,415.23) and (249.23,381.37) .. (249.23,339.61) -- cycle ;
			\draw   (289.77,339.89) .. controls (289.77,333.61) and (294.86,328.52) .. (301.14,328.52) .. controls (307.41,328.52) and (312.5,333.61) .. (312.5,339.89) .. controls (312.5,346.16) and (307.41,351.25) .. (301.14,351.25) .. controls (294.86,351.25) and (289.77,346.16) .. (289.77,339.89) -- cycle ;
			\draw   (357.95,339.89) .. controls (357.95,333.61) and (363.04,328.52) .. (369.31,328.52) .. controls (375.59,328.52) and (380.68,333.61) .. (380.68,339.89) .. controls (380.68,346.16) and (375.59,351.25) .. (369.31,351.25) .. controls (363.04,351.25) and (357.95,346.16) .. (357.95,339.89) -- cycle ;
			\draw [color={rgb, 255:red, 208; green, 2; blue, 27 }  ,draw opacity=1 ][line width=2.25]    (248.23,326.68) -- (249.23,339.61) ;
			\draw [color={rgb, 255:red, 208; green, 2; blue, 27 }  ,draw opacity=1 ] [dash pattern={on 0.75pt off 1.5pt}]  (249.23,339.61) -- (289.77,339.89) ;
			\draw  [color={rgb, 255:red, 0; green, 0; blue, 0 }  ,draw opacity=1 ] (335.23,339.89) .. controls (335.23,333.61) and (340.31,328.52) .. (346.59,328.52) .. controls (352.86,328.52) and (357.95,333.61) .. (357.95,339.89) .. controls (357.95,346.16) and (352.86,351.25) .. (346.59,351.25) .. controls (340.31,351.25) and (335.23,346.16) .. (335.23,339.89) -- cycle ;
			\draw   (312.5,339.89) .. controls (312.5,333.61) and (317.59,328.52) .. (323.86,328.52) .. controls (330.14,328.52) and (335.23,333.61) .. (335.23,339.89) .. controls (335.23,346.16) and (330.14,351.25) .. (323.86,351.25) .. controls (317.59,351.25) and (312.5,346.16) .. (312.5,339.89) -- cycle ;
			\draw [color={rgb, 255:red, 208; green, 2; blue, 27 }  ,draw opacity=1 ]   (289.77,339.89) .. controls (289.77,354.32) and (311.3,355.51) .. (312.5,339.89) ;
			\draw [color={rgb, 255:red, 208; green, 2; blue, 27 }  ,draw opacity=1 ]   (312.5,339.89) .. controls (312.5,354.32) and (334.03,355.51) .. (335.23,339.89) ;
			\draw    (226.68,232.68) -- (271.64,267.34) ;
			\draw [shift={(273.23,268.57)}, rotate = 217.63] [color={rgb, 255:red, 0; green, 0; blue, 0 }  ][line width=0.75]    (10.93,-3.29) .. controls (6.95,-1.4) and (3.31,-0.3) .. (0,0) .. controls (3.31,0.3) and (6.95,1.4) .. (10.93,3.29)   ;
			\draw    (368.23,265.57) -- (410.23,192.3) ;
			\draw [shift={(411.23,190.57)}, rotate = 119.83] [color={rgb, 255:red, 0; green, 0; blue, 0 }  ][line width=0.75]    (10.93,-3.29) .. controls (6.95,-1.4) and (3.31,-0.3) .. (0,0) .. controls (3.31,0.3) and (6.95,1.4) .. (10.93,3.29)   ;
			\draw [color={rgb, 255:red, 74; green, 144; blue, 226 }  ,draw opacity=1 ]   (195.77,160.89) .. controls (196.23,148.68) and (176.23,142.68) .. (173.23,160.89) ;
			\draw [line width=2.25]    (173.23,150.68) -- (173.23,160.89) ;
			\draw [color={rgb, 255:red, 208; green, 2; blue, 27 }  ,draw opacity=1 ]   (335.23,339.89) .. controls (335.23,354.32) and (356.76,355.51) .. (357.95,339.89) ;
			\draw [color={rgb, 255:red, 208; green, 2; blue, 27 }  ,draw opacity=1 ]   (357.95,339.89) .. controls (357.95,354.32) and (379.48,355.51) .. (380.68,339.89) ;
			\draw [color={rgb, 255:red, 74; green, 144; blue, 226 }  ,draw opacity=1 ][line width=2.25]    (376.68,339.89) -- (385.68,339.89) ;
			
			\draw (85,153) node [anchor=north west][inner sep=0.75pt]   [align=left] {$\displaystyle 1$};
			\draw (124.82,153) node [anchor=north west][inner sep=0.75pt]   [align=left] {$\displaystyle 2$};
			\draw (224.09,153) node [anchor=north west][inner sep=0.75pt]   [align=left] {$\displaystyle 3$};
			\draw (154.9,153) node [anchor=north west][inner sep=0.75pt]   [align=left] {$\displaystyle 4$};
			\draw (180.01,153) node [anchor=north west][inner sep=0.75pt]   [align=left] {$\displaystyle 5$};
			\draw (176.23,135) node [anchor=north west][inner sep=0.75pt]  [color={rgb, 255:red, 74; green, 144; blue, 226 }  ,opacity=1 ] [align=left] {$\displaystyle \alpha _{T}$};
			\draw (367,154) node [anchor=north west][inner sep=0.75pt]   [align=left] {$\displaystyle 1$};
			\draw (389.82,154) node [anchor=north west][inner sep=0.75pt]   [align=left] {$\displaystyle 2$};
			\draw (457.09,154) node [anchor=north west][inner sep=0.75pt]   [align=left] {$\displaystyle 3$};
			\draw (411.9,154) node [anchor=north west][inner sep=0.75pt]   [align=left] {$\displaystyle 4$};
			\draw (435.01,154) node [anchor=north west][inner sep=0.75pt]   [align=left] {$\displaystyle 5$};
			\draw (297,136) node [anchor=north west][inner sep=0.75pt]   [align=left] {$\displaystyle \gamma _{\mathcal{S}}^{\sigma }$};
			\draw (229,332) node [anchor=north west][inner sep=0.75pt]   [align=left] {$\displaystyle 1$};
			\draw (295.82,330.06) node [anchor=north west][inner sep=0.75pt]   [align=left] {$\displaystyle 2$};
			\draw (363.09,331.06) node [anchor=north west][inner sep=0.75pt]   [align=left] {$\displaystyle 3$};
			\draw (316.9,330.25) node [anchor=north west][inner sep=0.75pt]   [align=left] {$\displaystyle 4$};
			\draw (342.01,331.45) node [anchor=north west][inner sep=0.75pt]   [align=left] {$\displaystyle 5$};
			\draw (306.14,353) node [anchor=north west][inner sep=0.75pt]  [color={rgb, 255:red, 208; green, 2; blue, 27 }  ,opacity=1 ] [align=left] {$\displaystyle \alpha _{S}$};

		\end{tikzpicture}

		\caption{In this picture the element $(x_R,\alpha_S,x_S,\alpha_T,x_T)$ is depicted on the left: $x_R$ is the bigger cactus, $x_S$ is the middle one and $x_T$ is the smaller. They are nested one into the other according to the nested tree $\mathcal{S}=(1(23(45)))$. The base points depicted on the cacti are those obtained using the sections $\sigma_k:\mathcal{C}_k/S^1\to \mathcal{C}_k$. To compute $\gamma^{\sigma}_{\mathcal{S}}$ we proceed as follows: start from the cactus $x_T$, use the section $\sigma_2:\mathcal{C}_2/S^1\to\mathcal{C}_2$ to get a base point (the black one) and then rotate by $\alpha_T$: the result is $\alpha_T\cdot\sigma_2(x_T)$. Then glue this cactus into the third lobe of $\sigma_3(x_S)$, obtaining the lower nested cactus. Now use $\alpha_S$ to rotate the base point and glue the resulting cactus in the second lobe of $\sigma_2(x_R)$. The dotted lines in this picture shows how to glue a smaller cactus into a bigger one.}
		\label{fig:esempio di applicazione della mappa strana}
	\end{figure}
	\begin{lem}
		For any nested tree $\mathcal{S}\in N_n$ the composite 
		\[
		\mathcal{C}_{\abs{R}}/S^1\times \prod_{S\in\mathcal{S}-R}\left(S^1\times \mathcal{C}_{\abs{S}}/S^1\right)\xrightarrow{\gamma^{\sigma}_{\mathcal{S}}}\mathcal{C}_n\xrightarrow{p} \mathcal{C}_n/S^1
		\]
		is an embedding.
	\end{lem}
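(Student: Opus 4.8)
The plan is to argue by induction on $n$, following the recursive definition of $\gamma^{\sigma}_{\mathcal{S}}$. The base case $n=2$ is immediate: the only nested tree is the corolla $(12)$, $\gamma^{\sigma}_{(12)}=\sigma_2$ is a section of $p:\mathcal{C}_2\to\mathcal{C}_2/S^1$, so $p\circ\sigma_2=\mathrm{id}$ is the identity, hence trivially an embedding. For $n\geq 3$, if $\mathcal{S}$ is the corolla then again $\gamma^{\sigma}_{\mathcal{S}}=\sigma_n$ and $p\circ\sigma_n=\mathrm{id}$. So the real content is the case where $\mathcal{S}$ is not a corolla, and there I would unwind Diagram \ref{diag: diagramma che definisce le embeddings}.

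First I would reduce the problem to a statement about the \emph{unbased} composition maps. Observe that the composite $p\circ\gamma^{\sigma}_{\mathcal{S}}$ does not actually depend on the chosen sections $\sigma_k$ at the internal (non-root) vertices, nor on the rotation parameters $\alpha_S$ applied there: changing $\sigma_k$ or rotating by $\alpha_S$ before gluing into a lobe of the parent cactus only changes the resulting big cactus by a rotation, hence does not change its image in $\mathcal{C}_n/S^1$. What \emph{does} survive is the combinatorial data of: the class $[x_R]\in\mathcal{C}_{\abs R}/S^1$, and for each non-root vertex $S$ the class $[x_S]\in\mathcal{C}_{\abs S}/S^1$ together with the position at which $x_S$ is inserted relative to the marked points of its parent — but the latter is \emph{precisely} recorded by the fact that gluing happens at a local base point, which is canonical. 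So morally $p\circ\gamma^{\sigma}_{\mathcal{S}}$ factors as the iterated unbased composition $\prod_{S}(\mathcal{C}_{\abs S}/S^1)\to\mathcal{C}_n/S^1$ associated to the nested tree $\mathcal{S}$, and the $S^1$-factors in the source are exactly the extra freedom that $p$ kills. The key step is to make this precise and then prove injectivity.

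For injectivity I would use the recursive structure. Suppose $\gamma^{\sigma}_{\mathcal{S}}(x_R,(\alpha_S,x_S)_S)$ and $\gamma^{\sigma}_{\mathcal{S}}(x_R',(\alpha_S',x_S')_S)$ lie in the same $S^1$-orbit, i.e.\ differ by a global rotation $\beta$. Looking at the largest lobes — those labelled by $R$ itself, i.e.\ the outermost cactus $\sigma_{\abs R}(x_R)$ — a rotation of the whole configuration by $\beta$ restricts on the outermost cactus to rotation by $\beta$, and since $\sigma_{\abs R}$ is a section of a \emph{trivial} bundle (Proposition \ref{prop: trivial circle bundle}), this forces $x_R=x_R'$ in $\mathcal{C}_{\abs R}/S^1$ and moreover pins down $\beta$ to be the unique rotation making the two chosen representatives agree. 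Now restrict attention to what has been glued into the $i$-th lobe of $x_R$: after removing the (now determined) global rotation, the sub-cactus glued into lobe $i$ of the first configuration equals $\alpha_{S_i}\cdot(\text{big cactus built from }\mathcal{S}_i)$ and likewise for the second, both inserted at the \emph{same} local base point of lobe $i$. Comparing the rotation of the local base point gives $\alpha_{S_i}=\alpha_{S_i}'$ (again using triviality of the bundle, since the local base point is a well-defined point of the lobe and rotating it by $\alpha_{S_i}$ is free), and then the remaining equality of the two sub-configurations is exactly the injectivity of $\gamma^{\sigma}_{\mathcal{S}_i}$ composed with $p$, which holds by the inductive hypothesis applied to the smaller nested tree $\mathcal{S}_i\in N_{n_i}$ with $n_i<n$. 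Iterating over all $i$ gives $x_S=x_S'$ and $\alpha_S=\alpha_S'$ for every $S\neq R$, so the map is injective.

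Finally, to upgrade "continuous injection" to "embedding" I would invoke compactness: all the spaces $\mathcal{C}_k$, $\mathcal{C}_k/S^1$, and $S^1$ are compact Hausdorff (the metric on $\mathcal{C}_k$ makes it a compact metric space, and $\mathcal{C}_k/S^1$ is a quotient of it), hence the source of $p\circ\gamma^{\sigma}_{\mathcal{S}}$ is compact and the target $\mathcal{C}_n/S^1$ is Hausdorff, so a continuous injection is automatically a homeomorphism onto its image. The main obstacle I anticipate is the bookkeeping in the induction step — carefully tracking how the global rotation $\beta$ interacts with the local base points and the sections $\sigma_k$ at several levels of nesting simultaneously, and verifying that "rotating a local base point by $\alpha_S$ before gluing" really is a free action whose orbit data is recovered by $p$. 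Everything else (continuity of $\gamma^{\sigma}_{\mathcal{S}}$, the compactness argument, the base case) is routine.
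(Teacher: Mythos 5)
Your inductive strategy is essentially the paper's: factor $\gamma^{\sigma}_{\mathcal{S}}$ through $\theta_{\mathcal{T}}$, use the section $\sigma_{\abs R}$ to recover $x_R=y_R$ and kill the ambient rotation, conclude that the based sub-cacti $a_i=b_i$ glued into each lobe agree, and then feed $p(a_i)=p(b_i)$ back into the inductive hypothesis on the smaller trees $\mathcal{S}_i$, finally using freeness of the $S^1$-action to pin down $\alpha_{S_i}$. The base case and the compactness-plus-Hausdorff upgrade from injection to embedding are also exactly as in the paper.

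However, your preliminary ``reduction'' paragraph contains a genuine error that would actually contradict the lemma if it were true. You assert that $p\circ\gamma^{\sigma}_{\mathcal{S}}$ ``does not actually depend on \ldots the rotation parameters $\alpha_S$'' because ``rotating by $\alpha_S$ before gluing into a lobe of the parent cactus only changes the resulting big cactus by a rotation.'' This is false: rotating the inner cactus $\sigma_{\abs S}(x_S)$ by $\alpha_S$ moves \emph{its own} base point, which is then identified with the (fixed) local base point of the parent lobe, so the sub-cells end up reordered \emph{within} that lobe. This is a local rearrangement, not a global rotation of the composite, and it changes the class in $\mathcal{C}_n/S^1$. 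For instance with $\mathcal{S}=(1(23))$, taking $\alpha_S=0$ versus $\alpha_S=1/2$ produces composite cacti with cyclic orders $1,2,3$ versus $1,3,2$ — distinct points of $\mathcal{C}_3/S^1$. Indeed, if your claim were correct, the map $p\circ\gamma^{\sigma}_{\mathcal{S}}$ would kill every inner $S^1$-factor and so could not possibly be injective, which is precisely what the lemma asserts it is. Fortunately, the inductive argument that follows this paragraph does not rely on the false reduction, so the proof survives once the paragraph is deleted. One further small point: in the induction step you extract $\alpha_{S_i}=\alpha_{S_i}'$ \emph{before} invoking the inductive hypothesis on $\mathcal{S}_i$, but to identify $\alpha_{S_i}$ you first need to know the ``natural'' base point of the inner cactus, which comes from knowing $x_{S_i}$ and hence from the inductive hypothesis; the paper does these two steps in the opposite (and correct) order, first applying $p$, then induction, then freeness.
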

	\begin{proof}
		Since the domain is compact and the target is Hausdorff it suffices to show the injectivity. Let us proceed by induction on the number of leaves $n$: 
		\begin{description}
			\item[$n=2$] the only nested tree with two leaves is the corolla $\mathcal{S}=(12)$. By definition $\gamma^{\sigma}_{\mathcal{S}}\coloneqq \sigma_2$, so $p\circ \gamma^{\sigma}_{\mathcal{S}}=Id$.
			\item[$n\geq 3$] if $\mathcal{S}$ is the corolla then by the same argument as before $p\circ \gamma^{\sigma}_{\mathcal{S}}=Id$, so it is injective. Now suppose $\mathcal{S}$ is not the corolla: if we have
			\[
			p\circ\gamma^{\sigma}_{\mathcal{S}}(x_R,(\alpha_S,x_S)_{S\in\mathcal{S}-R})=p\circ\gamma^{\sigma}_{\mathcal{S}}(y_R,(\beta_S,y_S)_{S\in\mathcal{S}-R})
			\]
			then $\gamma^{\sigma}_{\mathcal{S}}(x_R,(\alpha_S,x_S)_{S\in\mathcal{S}-R})$ and $\gamma^{\sigma}_{\mathcal{S}}(y_R,(\beta_S,y_S)_{S\in\mathcal{S}-R})$ differ by a rotation. Let us denote by $(S_1,R),\dots,(S_m,R)$ the internal edges going into the root $R\coloneqq\{1,\dots,n\}$. If we delete the root $R$, we obtain a collection of nested trees $\mathcal{S}_1,\dots,\mathcal{S}_m$ with leaves labelled (respectively) by $S_1,\dots,S_m$. We will denote by $n_i$ the cardinality of $S_i$ and by $\mathcal{T}$ the nested tree given by $\{S_1,\dots,S_m,R\}$. The map $\gamma_{\mathcal{S}}^{\sigma}$ is defined to be the following composition:
			\[
			\begin{tikzcd}
				& \mathcal{C}_{\abs{R}}/S^1\times \prod_{\substack{S\in\mathcal{S}\\ S\neq R}}\left(S^1\times \mathcal{C}_{\abs{S}}/S^1\right) \arrow[rr,"\gamma^{\sigma}_{\mathcal{S}}"] \arrow[rd,"f"]& & \mathcal{C}_n\\
				& & \mathcal{C}_{\abs{R}}\times \prod_{i=1}^m \mathcal{C}_{n_i}\arrow[ur,"\theta_{\mathcal{T}}"] &
			\end{tikzcd}
			\]
			where $f$ is the composite of the first three arrows of diagram \ref{diag: diagramma che definisce le embeddings} and $\theta_{\mathcal{T}}$ is the composition of cacti associated to the nested tree $\mathcal{T}$. By definition 
			\[
			\begin{cases}
				f(x_R,(\alpha_S,x_S)_{S\in\mathcal{S}-R})=(\sigma_{m}(x_R),a_1,\dots,a_m)\\
				f(y_R,(\beta_S,y_S)_{S\in\mathcal{S}-R})=(\sigma_m(y_R),b_1,\dots,b_m)
			\end{cases}
			\]
			for some elements $a_i,b_i\in\mathcal{C}_{n_i}$, $i=1,\dots,m$. Since $\theta_{\mathcal{T}}(\sigma_m(y_R),b_1,\dots,b_m)$ and $\theta_{\mathcal{T}}(\sigma_m(x_R),a_1,\dots,a_m)$ differ by a rotation, we have that
			\[
			x_R=p(\sigma_{m}(x_R))=p(\sigma_{m}(y_R))=y_R
			\]
			Therefore $\sigma_{m}(x_R)=\sigma_{m}(y_R)$ and we get that
			\[
			\theta_{\mathcal{T}}(\sigma_{m}(y_R),b_1,\dots,b_m)=\theta_{\mathcal{T}}(\sigma_{m}(x_R),a_1,\dots,a_m)
			\]
			But $\theta_{\mathcal{T}}$ is an embedding, so $a_i=b_i$ for any $i=1,\dots,m$. Now observe that
			\begin{align*}
				p\circ\gamma^{\sigma}_{\mathcal{S}_i}(x_{S_i},(\alpha_S,x_S)_{S\in\mathcal{S}_i-S_i})&=p(a_i)\\
				&=p(b_i)\\
				&=p\circ\gamma^{\sigma}_{\mathcal{S}_i}(y_{S_i},(\beta_S,y_S)_{S\in\mathcal{S}_i-S_i})
			\end{align*}
			and use the inductive hypothesis to conclude that for any $i=1,\dots,m$
			\[
			\begin{cases}
				x_S=y_S \text{ for all } S\in\mathcal{S}_i\\
				\alpha_S=\beta_S \text{ for all } S\in\mathcal{S}_i-S_i\\
				
			\end{cases}
			\]
			Finally, observe that 
			\[
			\alpha_{S_i}\cdot\gamma_{\mathcal{S}_i}^{\sigma}(x_{S_i}, (\alpha_S,x_S)_{S\in\mathcal{S}_i-S_i})=a_i=b_i=\beta_{S_i}\cdot\gamma_{\mathcal{S}_i}^{\sigma}(y_{S_i}, (\beta_S,y_S)_{S\in\mathcal{S}_i-S_i})
			\]
			and since the circle acts freely on the space of cacti we conclude that $\alpha_{S_i}=\beta_{S_i}$, concluding the proof.
		\end{description} 
	\end{proof}
	\subsection{The space of nested cacti}\label{subsec: space of nested cacti}
	In this paragraph we introduce a regular CW-complex $N_n^{\sigma}(\mathcal{C}/S^1)$, called the \emph{space of nested cacti}. This space turns out to be homeomorphic to $\overline{\M}_{0,n+1}$, giving a CW-decomposition of $\overline{\M}_{0,n+1}$.
	\begin{defn}
		Let $\mathcal{S}$ be a nested tree with $n$-leaves and root $R$. We define 
		\[
		St(\mathcal{S})\coloneqq \mathcal{C}_{\abs{R}}/S^1\times\prod_{S\in\mathcal{S}-R}(D_2\times\mathcal{C}_{\abs{S}}/S^1) 
		\]
		We will call $St(\mathcal{S})$ the \textbf{stratum} associated to $\mathcal{S}$. In what follows we will think of $D_2=\{z\in\C\mid \abs{z}\leq 1\}$ as the quotient of the cylinder $[0,1]\times S^1$ by collapsing to a point the base $\{0\}\times S^1$. So we will represent a point in $D_2$ by a pair $(t,\alpha)\in[0,1]\times S^1$. 
	\end{defn}
	\begin{defn}
		For any $k\geq 2$ fix a section $\sigma_k:\mathcal{C}_k/S^1\to \mathcal{C}_k$. The \textbf{space of nested cacti} (with $n$ leaves) is defined as the quotient 
		\[
		N^{\sigma}_{n}(\mathcal{C}/S^1)\coloneqq \frac{{\bigsqcup_{\mathcal{S}\in N_n}\mathcal{C}_{\abs{R}}/S^1\times\prod_{S\in\mathcal{S}-R}(D_2\times\mathcal{C}_{\abs{S}}/S^1 })}{\sim}
		\] 
		The equivalence relation $\sim$ is defined as follows: fix a point $(x_R,(t_S,\alpha_S,x_S)_{S\in\mathcal{S}-R})$, with $t_S\in[0,1]$, $\alpha_S\in S^1$, $x_S\in\mathcal{C}_{\abs{S}}$. Suppose $t_{S_0}=1$, and let $\mathcal{T}\subseteq \mathcal{S}$ be the maximal nested tree containing $S_0$ such that $t_S\neq 0$ for any $S\in\mathcal{T}-U$, where $U$ is the root of $\mathcal{T}$. Then $(x_R,(t_S,\alpha_S,x_S)_{S\in\mathcal{S}-R}))\sim (x'_R,(t'_S,\alpha'_S,x'_S)_{S\in\mathcal{S}-\{R,S_0\}})$ if and only if 
		\[
		\begin{cases}
			x_S=x'_S \text{ for all } S\in\mathcal{S}-\mathcal{T}\\
			t_S=t'_S \text{ for all } S\in\mathcal{S}-S_0\\
			\alpha_S=\alpha'_S \text{ for all } S\in\mathcal{S}-\mathcal{T}\\
			p\circ\gamma^{\sigma}_{\mathcal{T}}(x_U,(\alpha_S,x_S)_{S\in\mathcal{T}-U})=p\circ\gamma^{\sigma}_{\mathcal{T}-S_0} (x'_U,(\alpha'_S,x'_S)_{S\in\mathcal{T}-\{U,S_0\}})
		\end{cases}
		\]
		For an example of nested cacti that are identified under this relation see Figure \ref{fig:esempio di nested cactus che vengono identificati}. To sum up, a point in $N_n^{\sigma}(\mathcal{C}/S^1)$ is specified by the following data:
		\begin{itemize}
			\item A nested tree $\mathcal{S}$ with $n$-leaves.
			\item For any vertex $S\in\mathcal{S}$ an unbased cactus $x_S\in\mathcal{C}_{\abs{S}}/S^1$.
			\item For any vertex $S\in\mathcal{S}-R$ a parameter $t_S\in[0,1]$. We will call it the \textbf{radial parameter} and we think of it as a decoration of the unique edge going out the vertex $S$.
			\item For any vertex $S\in\mathcal{S}-R$ a parameter $\alpha_S\in S^1$. We will call it the \textbf{angular parameter}. Note that if $t_S=0$ the angular parameter $\alpha_S$ is superfluous, so we will omit it.
			
		\end{itemize}
		We will call a point in $N_n^{\sigma}(\mathcal{C}/S^1)$ a \textbf{nested cactus}.
	\end{defn}
	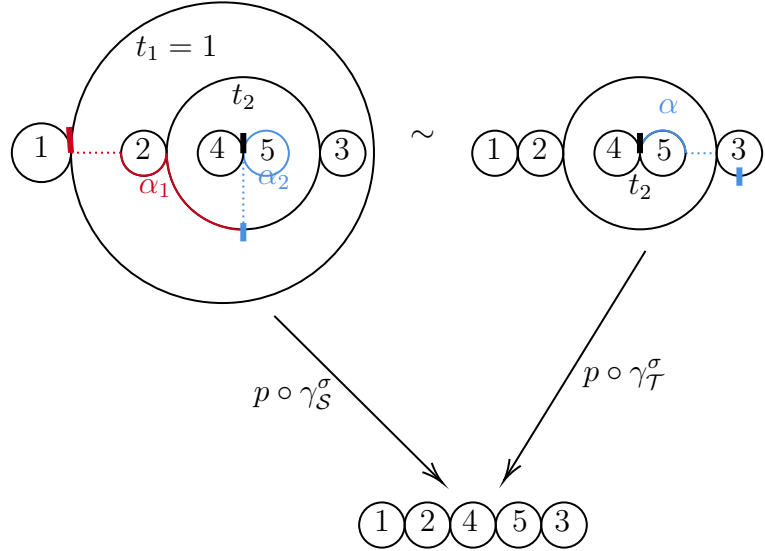
\begin{figure}
		\centering

		\tikzset{every picture/.style={line width=0.75pt}} 
		
		\begin{tikzpicture}[x=0.75pt,y=0.75pt,yscale=-1,xscale=1]
			
			\draw [color={rgb, 255:red, 74; green, 144; blue, 226 }  ,draw opacity=1 ] [dash pattern={on 0.75pt off 1.5pt}]  (391.95,140.89) -- (407.5,140.89) ;
			\draw   (55.68,140.61) .. controls (55.68,132.45) and (62.29,125.84) .. (70.45,125.84) .. controls (78.61,125.84) and (85.23,132.45) .. (85.23,140.61) .. controls (85.23,148.77) and (78.61,155.39) .. (70.45,155.39) .. controls (62.29,155.39) and (55.68,148.77) .. (55.68,140.61) -- cycle ;
			\draw   (85.23,140.61) .. controls (85.23,98.85) and (119.08,65) .. (160.84,65) .. controls (202.6,65) and (236.45,98.85) .. (236.45,140.61) .. controls (236.45,182.37) and (202.6,216.23) .. (160.84,216.23) .. controls (119.08,216.23) and (85.23,182.37) .. (85.23,140.61) -- cycle ;
			\draw   (110.23,140.89) .. controls (110.23,134.61) and (115.31,129.52) .. (121.59,129.52) .. controls (127.86,129.52) and (132.95,134.61) .. (132.95,140.89) .. controls (132.95,147.16) and (127.86,152.25) .. (121.59,152.25) .. controls (115.31,152.25) and (110.23,147.16) .. (110.23,140.89) -- cycle ;
			\draw   (132.95,140.89) .. controls (132.95,119.75) and (150.09,102.61) .. (171.23,102.61) .. controls (192.36,102.61) and (209.5,119.75) .. (209.5,140.89) .. controls (209.5,162.03) and (192.36,179.16) .. (171.23,179.16) .. controls (150.09,179.16) and (132.95,162.03) .. (132.95,140.89) -- cycle ;
			\draw   (209.5,140.89) .. controls (209.5,134.61) and (214.59,129.52) .. (220.86,129.52) .. controls (227.14,129.52) and (232.23,134.61) .. (232.23,140.89) .. controls (232.23,147.16) and (227.14,152.25) .. (220.86,152.25) .. controls (214.59,152.25) and (209.5,147.16) .. (209.5,140.89) -- cycle ;
			\draw [color={rgb, 255:red, 208; green, 2; blue, 27 }  ,draw opacity=1 ][line width=2.25]    (84.23,127.68) -- (85.23,140.61) ;
			\draw [color={rgb, 255:red, 208; green, 2; blue, 27 }  ,draw opacity=1 ] [dash pattern={on 0.75pt off 1.5pt}]  (85.23,140.61) -- (111.23,140.61) ;
			\draw [color={rgb, 255:red, 74; green, 144; blue, 226 }  ,draw opacity=1 ] [dash pattern={on 0.75pt off 1.5pt}]  (171.23,179.16) -- (171.23,140.89) ;
			\draw  [color={rgb, 255:red, 74; green, 144; blue, 226 }  ,draw opacity=1 ] (171.23,140.89) .. controls (171.23,134.61) and (176.31,129.52) .. (182.59,129.52) .. controls (188.86,129.52) and (193.95,134.61) .. (193.95,140.89) .. controls (193.95,147.16) and (188.86,152.25) .. (182.59,152.25) .. controls (176.31,152.25) and (171.23,147.16) .. (171.23,140.89) -- cycle ;
			\draw   (148.5,140.89) .. controls (148.5,134.61) and (153.59,129.52) .. (159.86,129.52) .. controls (166.14,129.52) and (171.23,134.61) .. (171.23,140.89) .. controls (171.23,147.16) and (166.14,152.25) .. (159.86,152.25) .. controls (153.59,152.25) and (148.5,147.16) .. (148.5,140.89) -- cycle ;
			\draw [color={rgb, 255:red, 208; green, 2; blue, 27 }  ,draw opacity=1 ]   (110.23,140.89) .. controls (110.23,155.32) and (131.76,156.51) .. (132.95,140.89) ;
			\draw [color={rgb, 255:red, 208; green, 2; blue, 27 }  ,draw opacity=1 ]   (132.95,140.89) .. controls (134.15,173.26) and (164.05,180.43) .. (171.23,179.16) ;
			\draw [color={rgb, 255:red, 74; green, 144; blue, 226 }  ,draw opacity=1 ][line width=2.25]    (171.23,175.96) -- (171.23,185.22) ;
			\draw [line width=2.25]    (171.23,130.68) -- (171.23,140.89) ;
			\draw   (308.23,140.89) .. controls (308.23,134.61) and (313.31,129.52) .. (319.59,129.52) .. controls (325.86,129.52) and (330.95,134.61) .. (330.95,140.89) .. controls (330.95,147.16) and (325.86,152.25) .. (319.59,152.25) .. controls (313.31,152.25) and (308.23,147.16) .. (308.23,140.89) -- cycle ;
			\draw   (330.95,140.89) .. controls (330.95,119.75) and (348.09,102.61) .. (369.23,102.61) .. controls (390.36,102.61) and (407.5,119.75) .. (407.5,140.89) .. controls (407.5,162.03) and (390.36,179.16) .. (369.23,179.16) .. controls (348.09,179.16) and (330.95,162.03) .. (330.95,140.89) -- cycle ;
			\draw   (407.5,140.89) .. controls (407.5,134.61) and (412.59,129.52) .. (418.86,129.52) .. controls (425.14,129.52) and (430.23,134.61) .. (430.23,140.89) .. controls (430.23,147.16) and (425.14,152.25) .. (418.86,152.25) .. controls (412.59,152.25) and (407.5,147.16) .. (407.5,140.89) -- cycle ;
			\draw   (369.23,140.89) .. controls (369.23,134.61) and (374.31,129.52) .. (380.59,129.52) .. controls (386.86,129.52) and (391.95,134.61) .. (391.95,140.89) .. controls (391.95,147.16) and (386.86,152.25) .. (380.59,152.25) .. controls (374.31,152.25) and (369.23,147.16) .. (369.23,140.89) -- cycle ;
			\draw   (346.5,140.89) .. controls (346.5,134.61) and (351.59,129.52) .. (357.86,129.52) .. controls (364.14,129.52) and (369.23,134.61) .. (369.23,140.89) .. controls (369.23,147.16) and (364.14,152.25) .. (357.86,152.25) .. controls (351.59,152.25) and (346.5,147.16) .. (346.5,140.89) -- cycle ;
			\draw [color={rgb, 255:red, 74; green, 144; blue, 226 }  ,draw opacity=1 ][line width=2.25]    (418.86,147.99) -- (418.86,157.25) ;
			\draw [color={rgb, 255:red, 74; green, 144; blue, 226 }  ,draw opacity=1 ][line width=0.75]    (391.95,140.89) .. controls (392.23,127.68) and (371.23,123.68) .. (369.23,140.89) ;
			\draw [line width=2.25]    (369.23,130.68) -- (369.23,140.89) ;
			\draw   (285.5,140.89) .. controls (285.5,134.61) and (290.59,129.52) .. (296.86,129.52) .. controls (303.14,129.52) and (308.23,134.61) .. (308.23,140.89) .. controls (308.23,147.16) and (303.14,152.25) .. (296.86,152.25) .. controls (290.59,152.25) and (285.5,147.16) .. (285.5,140.89) -- cycle ;
			\draw    (186.68,222.68) -- (269.26,305.26) ;
			\draw [shift={(270.68,306.68)}, rotate = 225] [color={rgb, 255:red, 0; green, 0; blue, 0 }  ][line width=0.75]    (10.93,-3.29) .. controls (6.95,-1.4) and (3.31,-0.3) .. (0,0) .. controls (3.31,0.3) and (6.95,1.4) .. (10.93,3.29)   ;
			\draw    (372,190) -- (302.27,303.97) ;
			\draw [shift={(301.23,305.68)}, rotate = 301.46] [color={rgb, 255:red, 0; green, 0; blue, 0 }  ][line width=0.75]    (10.93,-3.29) .. controls (6.95,-1.4) and (3.31,-0.3) .. (0,0) .. controls (3.31,0.3) and (6.95,1.4) .. (10.93,3.29)   ;
			\draw   (251.77,327.89) .. controls (251.77,321.61) and (256.86,316.52) .. (263.14,316.52) .. controls (269.41,316.52) and (274.5,321.61) .. (274.5,327.89) .. controls (274.5,334.16) and (269.41,339.25) .. (263.14,339.25) .. controls (256.86,339.25) and (251.77,334.16) .. (251.77,327.89) -- cycle ;
			\draw   (319.95,327.89) .. controls (319.95,321.61) and (325.04,316.52) .. (331.31,316.52) .. controls (337.59,316.52) and (342.68,321.61) .. (342.68,327.89) .. controls (342.68,334.16) and (337.59,339.25) .. (331.31,339.25) .. controls (325.04,339.25) and (319.95,334.16) .. (319.95,327.89) -- cycle ;
			\draw   (297.23,327.89) .. controls (297.23,321.61) and (302.31,316.52) .. (308.59,316.52) .. controls (314.86,316.52) and (319.95,321.61) .. (319.95,327.89) .. controls (319.95,334.16) and (314.86,339.25) .. (308.59,339.25) .. controls (302.31,339.25) and (297.23,334.16) .. (297.23,327.89) -- cycle ;
			\draw   (274.5,327.89) .. controls (274.5,321.61) and (279.59,316.52) .. (285.86,316.52) .. controls (292.14,316.52) and (297.23,321.61) .. (297.23,327.89) .. controls (297.23,334.16) and (292.14,339.25) .. (285.86,339.25) .. controls (279.59,339.25) and (274.5,334.16) .. (274.5,327.89) -- cycle ;
			\draw   (229.05,327.89) .. controls (229.05,321.61) and (234.14,316.52) .. (240.41,316.52) .. controls (246.69,316.52) and (251.77,321.61) .. (251.77,327.89) .. controls (251.77,334.16) and (246.69,339.25) .. (240.41,339.25) .. controls (234.14,339.25) and (229.05,334.16) .. (229.05,327.89) -- cycle ;
			
			\draw (65,131) node [anchor=north west][inner sep=0.75pt]   [align=left] {$\displaystyle 1$};
			\draw (115.82,132.06) node [anchor=north west][inner sep=0.75pt]   [align=left] {$\displaystyle 2$};
			\draw (215.09,132.06) node [anchor=north west][inner sep=0.75pt]   [align=left] {$\displaystyle 3$};
			\draw (152.9,131.25) node [anchor=north west][inner sep=0.75pt]   [align=left] {$\displaystyle 4$};
			\draw (178.01,132.45) node [anchor=north west][inner sep=0.75pt]   [align=left] {$\displaystyle 5$};
			\draw (116,80) node [anchor=north west][inner sep=0.75pt]   [align=left] {$\displaystyle t_{1} =1$};
			\draw (164,105) node [anchor=north west][inner sep=0.75pt]   [align=left] {$\displaystyle t_{2}{}$};
			\draw (177,148) node [anchor=north west][inner sep=0.75pt]  [color={rgb, 255:red, 74; green, 144; blue, 226 }  ,opacity=1 ] [align=left] {$\displaystyle \alpha _{2}{}$};
			\draw (117.59,152.25) node [anchor=north west][inner sep=0.75pt]  [color={rgb, 255:red, 208; green, 2; blue, 27 }  ,opacity=1 ] [align=left] {$\displaystyle \alpha _{1}{}$};
			\draw (291,132) node [anchor=north west][inner sep=0.75pt]   [align=left] {$\displaystyle 1$};
			\draw (313.82,132.06) node [anchor=north west][inner sep=0.75pt]   [align=left] {$\displaystyle 2$};
			\draw (413.09,132.06) node [anchor=north west][inner sep=0.75pt]   [align=left] {$\displaystyle 3$};
			\draw (350.9,131.25) node [anchor=north west][inner sep=0.75pt]   [align=left] {$\displaystyle 4$};
			\draw (376.01,132.45) node [anchor=north west][inner sep=0.75pt]   [align=left] {$\displaystyle 5$};
			\draw (362,150) node [anchor=north west][inner sep=0.75pt]   [align=left] {$\displaystyle t_{2}{}$};
			\draw (377,112) node [anchor=north west][inner sep=0.75pt]  [color={rgb, 255:red, 74; green, 144; blue, 226 }  ,opacity=1 ] [align=left] {$\displaystyle \alpha $};
			\draw (253,128) node [anchor=north west][inner sep=0.75pt]   [align=left] {$\displaystyle \sim $};
			\draw (235,319) node [anchor=north west][inner sep=0.75pt]   [align=left] {$\displaystyle 1$};
			\draw (257.82,319) node [anchor=north west][inner sep=0.75pt]   [align=left] {$\displaystyle 2$};
			\draw (325.09,319) node [anchor=north west][inner sep=0.75pt]   [align=left] {$\displaystyle 3$};
			\draw (278.9,319) node [anchor=north west][inner sep=0.75pt]   [align=left] {$\displaystyle 4$};
			\draw (304.01,319) node [anchor=north west][inner sep=0.75pt]   [align=left] {$\displaystyle 5$};
			\draw (175,253) node [anchor=north west][inner sep=0.75pt]   [align=left] {$\displaystyle p\circ \gamma _{\mathcal{S}}^{\sigma }$};
			\draw (339.61,242.84) node [anchor=north west][inner sep=0.75pt]   [align=left] {$\displaystyle p\circ \gamma _{\mathcal{T}}^{\sigma }$};

		\end{tikzpicture}
		
		\caption{In this picture we see two nested cactus that are identified in $N_5^{\sigma}(\mathcal{C}/S^1)$: the nested tree underlying the leftmost (resp. rightmost) nested cactus is $\mathcal{S}=(1(23(45)))$ (resp. $\mathcal{T}=(123(45))$). $t_1,t_2\in(0,1]$ are radial parameters, $\alpha,\alpha_1,\alpha_2\in S^1$ are angular parameters. The base points depicted on the cacti are those obtained using the sections $\sigma_k:\mathcal{C}_k/S^1\to\mathcal{C}_k$. In the leftmost cactus $t_1,\alpha_1$ and $t_2,\alpha_2$ are the parameters associated respectively to $\{2,3,4,5\}$ and $\{4,5\}$. Similarly, in the right cactus $t_2$ and $\alpha$ are the parameters associated to the vertex $\{4,5\}$. If we compute $p\circ\gamma^{\sigma}_{\mathcal{S}}$ on the left nested cactus and $p\circ\gamma^{\sigma}_{\mathcal{T}}$ on the right nested cactus we end up with the same unbased cactus, which is depicted below.  }
		\label{fig:esempio di nested cactus che vengono identificati}
	\end{figure}
	\subsubsection{Cell decomposition}\label{subsec: CW decomposition of nested cacti}
	The space of nested cacti $N_n^{\sigma}(\mathcal{C}/S^1)$ has a natural cell structure that we describe next. A cell $\tau$ is determined by:
	\begin{enumerate}
		\item A nested tree $\mathcal{S}$ with $n$-leaves.
		\item A cell $C_{S}\subseteq \mathcal{C}_{\abs{S}}/S^1$ for any vertex $S\in\mathcal{S}$.
	\end{enumerate}
	and contains all the nested cacti $(x_R,(t_S,\alpha_S,x_S)_{S\in\mathcal{S}-R}))$ such that $x_S\in C_S$ and $t_S<1$ for all $S\in\mathcal{S}$. This subspace of $N_n^{\sigma}(\mathcal{C}/S^1)$ is homeomorphic to
	\[
	C_{R}\times \prod_{S\in\mathcal{S}-R}(\mathring{D}_2\times C_S)
	\]
	The dimension of such a cell is
	\[
	dim(\tau)= 2E+ \sum_{{S\in \mathcal{S}}}dim(C_S)
	\]
	where $E$ is the number of internal edges of $\mathcal{S}$. The boundary of $\tau$ is the union of cells with a similar description that are  obtained by an iteration of the following moves (for a concrete example see Figure \ref{bordo di una cella }):
	\begin{figure}
		\centering
		\includegraphics[width=13 cm]{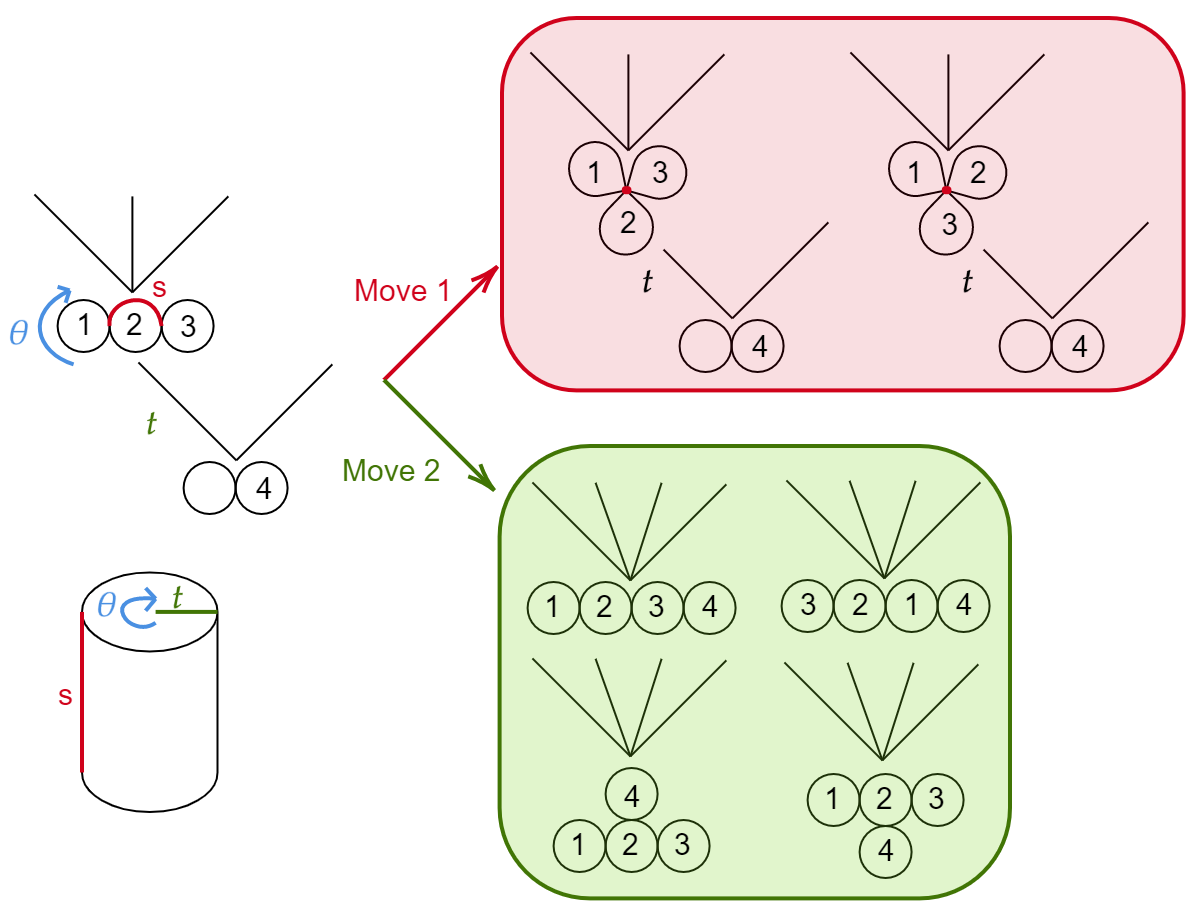}
		\caption{The cell in this picture has dimension three (it is a cylinder). If we compute the boundary of the cactus with three lobes (move $1$) we obtain the cells in the red rectangle, corresponding to the base and the top of the cylinder. If we contract the internal edge (move $2$) we obtain the cells in the green rectangle.  }
		\label{bordo di una cella }
	\end{figure}
	\begin{description}
		\item[Move 1 (cacti boundary)] for a fixed vertex $S\in\mathcal{S}$ replace the cell $C_S$ with a cell $C'_S\subseteq \partial C_S$.
		\item[Move 2 (edge contraction)] replace an edge $(S,T)$ of the nested tree $\mathcal{S}$ with a vertex (i.e. contract the edge) labelled by a cell of $\mathcal{C}_{\abs{S}+\abs{T}-1}/S^1$ which is obtained by inserting the cactus $C_S$ in the lobe of $C_T$ corresponding to the edge $(S,T)$.    
	\end{description}
	
	\
	When we want to highlight the combinatorial data defining a cell $\tau$ we will write $\tau=(\mathcal{S},(C_S)_{S\in\mathcal{S}})$.
	\begin{prop}
		The space $N_n^{\sigma}(\mathcal{C}/S^1)$ is a regular CW-complex.
	\end{prop}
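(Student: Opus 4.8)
The plan is to verify the two defining properties of a regular CW-complex: that $N_n^{\sigma}(\mathcal{C}/S^1)$ is a CW-complex whose cells are the subspaces $\tau = (\mathcal{S},(C_S)_{S\in\mathcal{S}})$ described above, and that the attaching map of each cell is a homeomorphism onto its closure (regularity). I would first fix notation: for a cell $\tau=(\mathcal{S},(C_S)_{S\in\mathcal{S}})$ of dimension $d = 2E + \sum_S \dim(C_S)$, I would produce an explicit characteristic map from the closed $d$-ball. The natural choice is to take the product of the closed characteristic maps $\overline{D}^{\dim C_S}\to \overline{C_S}$ of the cacti cells (which exist and are homeomorphisms onto closures because $\mathcal{C}_k/S^1$ is a regular CW-complex, being a quotient of the regular CW-complex $\mathcal{C}_k$ by a free cellular circle action — this is essentially Proposition \ref{prop: trivial circle bundle} together with the regular cell structure of $\mathcal{C}_n$), together with, for each internal edge, the obvious homeomorphism $D_2 \cong [0,1]\times S^1 / (\{0\}\times S^1)$ realizing $D_2$ itself as a regular $2$-cell with the CW-structure given by the point $0$, the open arc $t=1$, the half-open radial directions, etc. Composing this product with the quotient map into $N_n^{\sigma}(\mathcal{C}/S^1)$ gives a candidate characteristic map; I must check it is continuous, restricts to a homeomorphism on the open cell, and has image equal to $\overline{\tau}$.

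The key steps, in order, would be: (1) show the open cells as defined are disjoint and cover $N_n^{\sigma}(\mathcal{C}/S^1)$ — this is mostly bookkeeping, using that a nested cactus is specified by the data listed after the definition (nested tree, an unbased cactus in each vertex, a radial parameter $t_S\in[0,1)$ and angular parameter $\alpha_S$ for each non-root vertex), and that the normalization $t_S<1$ together with the open cacti cells exactly pins down which cell a given point lies in; (2) identify the boundary $\overline{\tau}\setminus\tau$ with the union of cells obtained by iterating Move 1 and Move 2, using the equivalence relation $\sim$ of Definition of $N_n^{\sigma}$ to see that setting some $t_{S_0}=1$ collapses the part of the nested tree above $S_0$ via $\gamma^{\sigma}_{\mathcal{T}}$ — and here the content is precisely that $\gamma^\sigma_{\mathcal{T}}$ followed by $p$ is an embedding (the Lemma just proved), so the collapsed configuration is again a well-defined point lying in a genuine cell of lower dimension; (3) check the characteristic map is injective on the open cell (immediate, since on $t_S<1$ the quotient relation is trivial and the cacti characteristic maps are injective on open cells) and that it is a closed map (domain compact, target Hausdorff), hence a homeomorphism onto its image; (4) conclude regularity, since by construction each characteristic map is already a homeomorphism onto $\overline{\tau}$, not merely a quotient; (5) verify the weak topology / closure-finiteness: each $\overline{\tau}$ meets only finitely many cells because $N_n$ is finite and each $\mathcal{C}_k/S^1$ has finitely many cells, and $N_n^{\sigma}(\mathcal{C}/S^1)$ carries the quotient topology from a finite disjoint union of compact spaces, so the CW conditions are automatic.

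The main obstacle I expect is step (2): correctly matching the combinatorial boundary description (Moves 1 and 2) with the topological boundary coming from the quotient relation $\sim$. The subtlety is that when a radial parameter $t_S\to 1$, one does not simply contract a single edge — one must contract the entire maximal subtree $\mathcal{T}$ with all radial parameters nonzero, apply $\gamma^\sigma_{\mathcal{T}}$ to glue all those cacti into one, and then re-read the result as living in the appropriate cell of the smaller nested tree; and one must check this is consistent, i.e. that different ways of approaching a boundary point give the same identification. This is exactly where the embedding Lemma does the heavy lifting, guaranteeing that the glued-up unbased cactus together with the remaining parameters determines the boundary point unambiguously. A secondary technical point is confirming that $\mathcal{C}_k/S^1$ is genuinely a regular CW-complex with the stated cell structure (so that the product characteristic maps really are homeomorphisms onto closures rather than just quotients) — this should follow from the structure of $\mathcal{C}_k$ as a product of simplices on each cell, together with the fact that $S^1$ acts freely and cellularly, but it deserves an explicit sentence. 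Everything else — disjointness, covering, compactness, closure-finiteness — is routine once the boundary bookkeeping is pinned down, so I would keep those parts brief and concentrate the write-up on the $\gamma^\sigma$-collapse analysis in step (2).
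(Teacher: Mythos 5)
Your proposal follows the same approach as the paper: the paper's proof is a one-line citation of exactly the two facts you rightly single out as carrying the weight, namely that each $\mathcal{C}_k/S^1$ is a regular CW-complex and that the maps $p\circ\gamma^{\sigma}_{\mathcal{S}}$ are embeddings (the lemma immediately preceding). Your step (2) --- matching Moves 1 and 2 to the quotient relation via the $\gamma^{\sigma}$-collapse --- is a correct and useful elaboration of what the paper leaves to the reader.
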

	\begin{proof}
		The cell decomposition just described is regular, because $\mathcal{C}_k/S^1$ is a regular CW-complex and the maps $p\circ\gamma^{\sigma}_{\mathcal{S}}$ are embeddings.
	\end{proof}
	
	\subsection{Nested cacti vs labelled trees}\label{subsec:nested cacti vs labelled trees}
	In this paragraph we show that the space of labelled treed $Tr_n$ is homeomorphic to a subspace of $N_n^{\sigma}(\mathcal{C}/S^1)$. First we remind a result from \cite{Salvatore}:
	\begin{prop}[ \cite{Salvatore}]\label{prop:def retraction }
		There is a deformation retraction $\beta_n:Tr_n\to \mathcal{C}_n/S^1$.
	\end{prop}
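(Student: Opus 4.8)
The plan is to recognize $\mathcal{C}_n/S^1$ as a sub-CW-complex of $Tr_n$ and to build the retraction by ``flattening'' the labels on the black vertices, in parallel with the based case treated in \cite{Salvatore}. First I would pin down the combinatorial identification. A point of $Tr_n$ is a class of labelled trees $(T,f,(g_i)_i)$, and under Theorem~\ref{thm:omeo tra spazio di moduli e labelled trees} the value $\log f(b)$ records the defect $U(b)-\max_{b'}U(b')$ of the potential $U=\log\abs{h}$ at the black vertex $b$, while each simplex coordinate $g_i$ records an angle at $z_i$. Imposing $f\equiv 1$ forces, through identification (1) in the definition of $Tr_n$, the collapse of every black--black edge, so the underlying tree becomes a bipartite planar tree with $n$ white leaves, black internal vertices, and a simplex $\Delta_w$ of arc lengths at every white vertex $w$. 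This is exactly the cellular description of the space of unbased cacti: a cactus has $n$ lobes (the white vertices) and its intersection points as black vertices, a cell is a product of simplices indexed by the arcs between consecutive marked points, and the normalization $\sum_{e\in E_w}g(e)=1$ matches the equal--measure condition on the lobes. Thus $\{f\equiv 1\}\subseteq Tr_n$ is a regular sub-CW-complex homeomorphic to $\mathcal{C}_n/S^1$.

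Next I would define
\[
\beta_n^t\bigl([T,f,(g_i)_i]\bigr)\coloneqq \bigl[\,T,\ (1-t)f+t,\ (g_i)_i\,\bigr],\qquad t\in[0,1],
\]
and check it is well defined on the quotient. The perturbed label $f_t=(1-t)f+t$ again satisfies $\max f_t=1$ and $f_t(b)\geq f_t(b')$ along every edge $(b,b')$, since differences of $f$-values scale by $1-t\geq 0$; moreover both equivalence relations defining $Tr_n$ are respected because $f_t$ preserves (weakly) every equality and every inequality among the $f$-values while leaving all the $g_i$ untouched, so that producing a new equality merely slides the point into a lower stratum. Continuity in $(t,x)$ is then immediate from the description of the topology of $\bigl(\bigsqcup_T St_T\bigr)/\!\sim$. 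At $t=0$ we recover the identity, at $t=1$ we land in $\{f\equiv 1\}$, and on $\{f\equiv 1\}$ one has $f_t\equiv 1$ for all $t$; hence $\beta_n\coloneqq\beta_n^1$ is a (strong) deformation retraction of $Tr_n$ onto $\mathcal{C}_n/S^1$.

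It is worth recording the conceptual reason: $\beta_n^t$ is the $S^1$-quotient of the electric-flow deformation retraction $F_n(\C)\to\mathcal{C}_n$ of \cite{Salvatore}, which moves a configuration so as to bring all saddle values of $U$ to their common maximum. That retraction is equivariant for the action of $\C\rtimes\C^*$ by translations, rotations and dilations, in particular for the residual rotation $S^1$, which acts freely both on $F_n(\C)/\C\rtimes\R^{>0}$ and on $\mathcal{C}_n$ (Proposition~\ref{prop: trivial circle bundle}); hence it descends to $Tr_n\cong\M_{0,n+1}=\bigl(F_n(\C)/\C\rtimes\R^{>0}\bigr)/S^1$ and to $\mathcal{C}_n/S^1$. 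The main obstacle is entirely in the first step: making the dictionary between the electric-field trees of \cite{Salvatore} and the unbased cacti of Section~\ref{sec: cactus models} precise, i.e. verifying that the two cell structures on $\{f\equiv 1\}$ and on $\mathcal{C}_n/S^1$, together with their parametrizations by products of simplices, coincide on the nose --- both complexes being compact, a continuous bijection between them suffices, but identifying it requires the careful comparison carried out in \cite{Salvatore}. Once that identification is in hand, the homotopy displayed above completes the argument.
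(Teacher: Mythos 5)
The paper cites \cite{Salvatore} for this proposition without reproducing the argument, so there is no in-paper proof to compare against; but your reconstruction is correct and is almost certainly the intended one. Identifying $\{f\equiv 1\}\subseteq Tr_n$ with $\mathcal{C}_n/S^1$ is the right first step: once relation (1) collapses every black--black edge, the surviving data is a bipartite ribbon tree with one simplex $\Delta_w$ per white vertex, which is precisely a cell of unbased cacti, and the strata/dimensions match ($\sum_w(\abs{E_w}-1)=\abs{B}-1\le n-2$). The straight-line homotopy $f_t=(1-t)f+t$ preserves $\max f_t=1$ and all edge inequalities, and your remark that ``producing a new equality merely slides the point into a lower stratum'' correctly handles the one delicate case: at $t=1$ the strict inequalities in subcases (b), (c) of relation (2) pass to equalities, so a point that was being identified via $T^{\pm}$ is instead identified via relation (1) together with the first subcase of relation (2), and one must check these yield the same class, which they do. The only step of real substance, that the angular-defect coordinates $g_i(e)=\theta(e)/2\pi$ of the electric-field picture coincide with the arc-length coordinates parametrizing the cactus cell, is exactly the careful comparison carried out in \cite{Salvatore}, and you flag it appropriately rather than waving it away. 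Modulo that citation the argument is complete.
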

	\begin{lem}\label{lem:omeo tra nested tree e alberi b/w}
		Consider the subspace of $N_n^{\sigma}(\mathcal{C}/S^1)$ defined as follows: 
		\[
		N_n^{\sigma}(\mathcal{C}/S^1)^{>0}\coloneqq \{(x_R,(t_S,\alpha_S,x_S)_{S\in\mathcal{S}-R})\in N_n^{\sigma}(\mathcal{C}/S^1)\mid t_S>0 \text{ for any } S\in\mathcal{S}-R\}
		\]
		Then the space of labelled trees $Tr_n$ is homeomorphic to $N_n^{\sigma}(\mathcal{C}/S^1)^{>0}$.
	\end{lem}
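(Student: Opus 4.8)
The plan is to build an explicit homeomorphism $\Phi\colon Tr_n\to N_n^{\sigma}(\mathcal{C}/S^1)^{>0}$ together with its inverse, following the strategy used by the second author in \cite{Salvatore} to identify the non-degenerate part of the nested-cacti model of $FM(n)$ with $F_n(\C)/\C\rtimes\R^{>0}$. The two arguments run in parallel: one replaces the based cactus spaces $\mathcal{C}_\bullet$ by the unbased ones $\mathcal{C}_\bullet/S^1$, the group $\R^{>0}$ of dilations by $\C^{*}$, and the deformation retraction $F_n(\C)\to\mathcal{C}_n$ of \cite{Salvatore} by the retraction $\beta_n\colon Tr_n\to\mathcal{C}_n/S^1$ of Proposition \ref{prop:def retraction}. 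Conceptually, a point of $Tr_n$ is a labelled tree $(T,f,(g_w))$, i.e.\ (via Theorem \ref{thm:omeo tra spazio di moduli e labelled trees}) a configuration in $\M_{0,n+1}$; the map $\beta_n$ collapses all the black–black edges and so remembers only the overall cactus shape, while the data it forgets — the hierarchy of the black vertices, the ratios of their heights, and the angles at which the incident flow lines arrive — is exactly a point of $N_n^{\sigma}(\mathcal{C}/S^1)^{>0}$.

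The heart of the construction is a repackaging of the data of a labelled tree. If the black-vertex poset of $T$ is not a single point, cut $T$ at the black vertex $b$ with $f(b)=1$: the subtrees hanging below the children of $b$ are smaller labelled trees, and $b$ together with those subtrees collapsed to white leaves is a labelled tree with one black vertex, to which $\beta$ assigns an unbased cactus. Iterating, a point of $Tr_n$ is equivalent to: a nested tree $\mathcal{S}\in N_n$ (one vertex $S_b$ for each black vertex $b$, with $S_b$ the set of leaves below $b$, so that the root $R$ corresponds to the top black vertex); for each vertex $S$ an unbased cactus $x_S\in\mathcal{C}_{\abs{S}}/S^1$ with $\abs{S}$ lobes encoding the cyclic arrangement of the children of $b$; for each non-root $S$ a radial parameter $t_S\in(0,1]$, namely the ratio of $f$ at the out-vertex to $f$ at the in-vertex of the edge at $S$; and for each non-root $S$ an angular parameter $\alpha_S\in S^1$, namely (relative to the section $\sigma_{\abs{S}}$) the rotation needed to glue the sub-cactus at $S$ into the corresponding lobe of its parent along the incident flow line. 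I would set this up by induction on $n$, exactly mirroring the proof of the Lemma above that $p\circ\gamma^{\sigma}_{\mathcal{S}}$ is an embedding, and then define $\Phi$ stratum by stratum by the evident formula; its inverse $\Psi$ on the stratum of $\mathcal{S}$ sends $(x_R,(t_S,\alpha_S,x_S)_{S\neq R})$ to the labelled tree assembled from $\gamma^{\sigma}_{\mathcal{S}}$ by reinstating the height function $f$ as the products of the $t_S$ along root-paths and the white-vertex angles from the cactus arc-lengths.

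There are then three things to verify. First, well-definedness: a vanishing radial parameter $t_{S_0}=1$ makes the assembled $f$ constant along a black–black edge, so relation (1) in the definition of $Tr_n$ (together with relation (2), when additionally some $g$-angle vanishes) collapses it, matching precisely the defining relation $\sim$ of $N_n^{\sigma}(\mathcal{C}/S^1)$ — and on the $>0$ part that relation is governed solely by the $t_S=1$ walls, while the remaining identifications of $Tr_n$ are already built into the CW-complexes $\mathcal{C}_{\abs{S}}/S^1$. Second, $\Phi$ is a continuous bijection: injectivity and surjectivity follow by induction on $n$ from the fact that $p\circ\gamma^{\sigma}_{\mathcal{S}}$ is an embedding and that the circle acts freely on cacti, in the same way as in that Lemma's proof. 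Third, $\Phi$ is open: since there are finitely many strata and $\Phi$ restricts on each closed stratum-with-its-$t_S=1$-walls to a homeomorphism onto its image, $\Phi$ is a homeomorphism; alternatively one realizes both sides as the open dense non-degenerate subspaces of the ambient regular CW-complex $N_n^{\sigma}(\mathcal{C}/S^1)$ and of the compact quotient assembled from the $St_T$ allowing $f$ to reach $0$, and checks $\Phi$ extends continuously.

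I expect the main obstacle to be the first check: reconciling the two very different-looking descriptions of ``collapsing an edge'', namely relations (1)–(2) of $Tr_n$, phrased via the height function and the white-vertex angles, versus the relation $\sim$ of $N_n^{\sigma}$, phrased via a radial parameter reaching $1$ and the maps $\gamma^{\sigma}_{\bullet}$; this is pure (but delicate) bookkeeping, of the same flavour as in \cite{Salvatore}. A secondary point is that the angular parameters $\alpha_S$ are only defined relative to the arbitrary sections $\sigma_k$, so one must check that a different choice of sections yields a canonically homeomorphic $N_n^{\sigma}(\mathcal{C}/S^1)$ in order for the statement — which refers to the section-free space $Tr_n$ — to be meaningful. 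Finally, continuity of $\beta_n$ and of the cutting procedure at the walls where black vertices collide or white-vertex angles degenerate is where the admissibility conditions on trees (a black vertex is a source of at least two edges and cannot be the target of two cyclically adjacent edges) are used.
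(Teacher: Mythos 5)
Your proposal follows essentially the same route as the paper's proof: cut the labelled tree iteratively at the black vertices of maximal label to produce a nested tree, read off the radial parameters as ratios of the height function $f$ along black-black edges, and recover the decorative cacti and angular parameters by applying the retraction $\beta_n$ and inverting the embedding $p\circ\gamma^{\sigma}_{\mathcal{S}}$, with an explicit inverse that reassembles $f$ as products of the $t_S$ along root-paths and builds black vertices from lobe-intersection points. One caveat worth flagging before you write this up carefully: your bookkeeping ``one vertex $S_b$ for each black vertex $b$'' is only the generic picture — when several black vertices share the maximal label in a given component they get absorbed into a single vertex of $\mathcal{S}$, whose decorative cactus $x_S$ then has several intersection points; the paper's formulation (remove \emph{all} black vertices of maximal label, one $\mathcal{S}$-vertex per connected component of what remains) handles this uniformly, and the inverse assigns a black vertex to each intersection point of each $x_S$, not to each $S\in\mathcal{S}$.
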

	\begin{proof}
		Fix a point $x\coloneqq(T,f:B\to(0,1],\{g_w:E_w\to(0,1)\}_{w\in W})$ in $Tr_n$. Here $T$ is an admissible tree with set of black (resp. white) vertices $B$ (resp. $W$), $f$ are the parameters of the black vertices, $g_w$ are the parameters associated to a white vertex $w\in W$ (for more details see Section \ref{sec: combinatorial models for the open moduli space}). We want to associate to $x$ a point $f(x)\in N_n^{\sigma}(\mathcal{C}/S^1)^{>0}$, so we need to specify a nested tree $\mathcal{S}$ with $n$-leaves, a cactus $x_S\in\mathcal{C}_{\abs{S}}/S^1$ for any $S\in\mathcal{S}$ and a point $(t_S,\alpha_S)\in D_2$ for any $S\in\mathcal{S}-R$. Let us do this step by step:
		\begin{description}
			\item[Nested tree] we obtain the nested tree $\mathcal{S}$ by the following procedure:
			\begin{enumerate}
				\item Remove from $T$ all black vertices with label $1$ together with the edges that are incident to them. In this way we obtain a forest. If $T_i$ is a connected component of this forest, let $S_i$ be the set of labels of the white vertices of $T_i$.
				\item For each connected component $T_i$ of this forest which contains at least one black vertex, let $\lambda_i$ the maximum label of a black vertex in that component. Now divide by $\lambda_i$ the labels of the black vertices in $T_i$ and iterate the procedure in each component. The procedure stops when there are no black vertices left.
			\end{enumerate}
			At the end of this procedure we get a bunch of subsets of $\{1,\dots,n\}$ (the $S_i$ defined at the first step of the algorithm) which form a nested tree $\mathcal{S}$ with $n$-leaves. See Figure \ref{fig: associo a un labelled tree un nested tree} for an example.
			\item [Radial parameters] any $S\in\mathcal{S}$ corresponds to one of the trees $T_i$ obtained from $T$ by the previous procedure. Then we define $t_S\in(0,1]$ to be $\lambda_i$, the maximum label of a black vertex of $T_i$. For an explicit example see Figure \ref{fig: associo a un labelled tree un nested tree}. 
			\item [Decorative cacti and angular parameters] consider the deformation retraction $\beta_n:Tr_n\to \mathcal{C}_n/S^1$ and the embedding
			\[    p\circ\gamma^{\sigma}_{\mathcal{S}}:\mathcal{C}_{\abs{R}}/S^1\times \prod_{S\in\mathcal{S}-R}\left(S^1\times \mathcal{C}_{\abs{S}}/S^1\right)\to \mathcal{C}_n/S^1
			\]
			We claim that $\beta_n(x)\in Im(p\circ \gamma^{\sigma}_{\mathcal{S}})$: this is true because for any $S\in\mathcal{S}$ the union of the lobes of $\beta_n(x)$ labelled by $S$ is again a cactus. Therefore we take $(p\circ\gamma^{\sigma}_{\mathcal{S}})^{-1}(\beta_n(x))$ and we get a decorative cactus $x_S\in\mathcal{C}_{\abs{S}}/S^1$ for any $S\in\mathcal{S}$, together with an angular parameter $\alpha_S\in S^1$ when $S$ is not the root.
		\end{description}   
		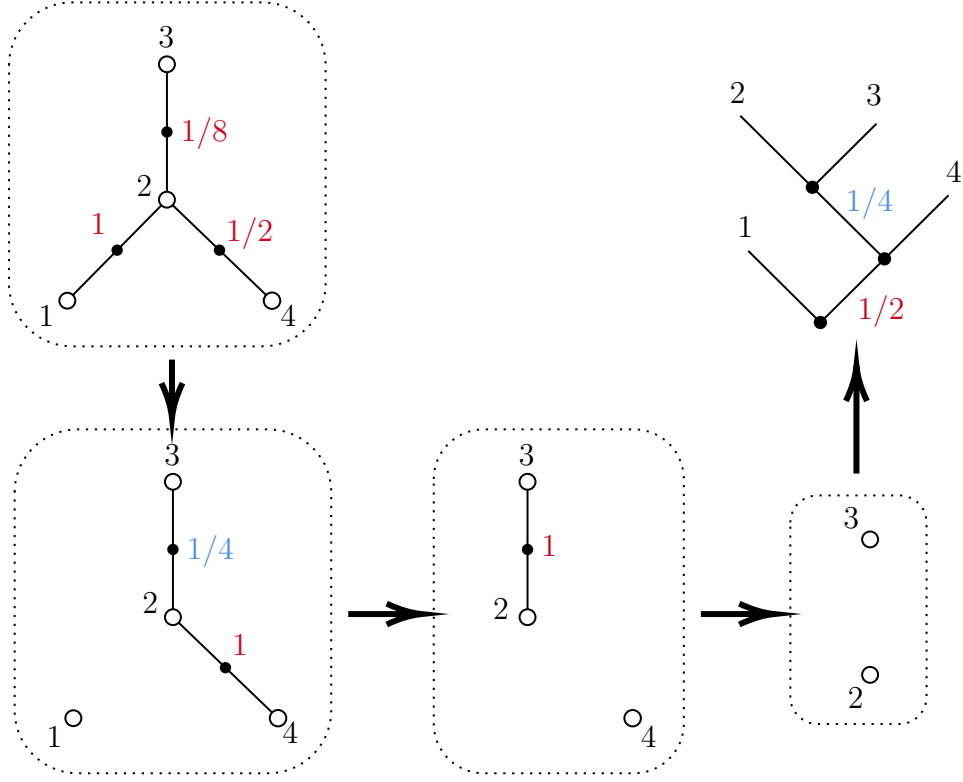
\begin{figure}
			\centering

		\tikzset{every picture/.style={line width=0.75pt}} 
		
		\begin{tikzpicture}[x=0.75pt,y=0.75pt,yscale=-1,xscale=1]
			
			\draw  [fill={rgb, 255:red, 255; green, 255; blue, 255 }  ,fill opacity=1 ] (122.5,58.46) .. controls (122.5,56.26) and (124.28,54.48) .. (126.48,54.48) .. controls (128.68,54.48) and (130.46,56.26) .. (130.46,58.46) .. controls (130.46,60.66) and (128.68,62.44) .. (126.48,62.44) .. controls (124.28,62.44) and (122.5,60.66) .. (122.5,58.46) -- cycle ;
			\draw  [fill={rgb, 255:red, 0; green, 0; blue, 0 }  ,fill opacity=1 ] (124.15,92.52) .. controls (124.15,91.23) and (125.19,90.18) .. (126.48,90.18) .. controls (127.77,90.18) and (128.82,91.23) .. (128.82,92.52) .. controls (128.82,93.81) and (127.77,94.85) .. (126.48,94.85) .. controls (125.19,94.85) and (124.15,93.81) .. (124.15,92.52) -- cycle ;
			\draw    (126.48,62.44) -- (126.48,90.18) ;
			\draw    (126.48,94.85) -- (126.48,122.59) ;
			\draw  [fill={rgb, 255:red, 0; green, 0; blue, 0 }  ,fill opacity=1 ] (150.38,151.98) .. controls (150.38,150.69) and (151.42,149.64) .. (152.71,149.64) .. controls (154,149.64) and (155.05,150.69) .. (155.05,151.98) .. controls (155.05,153.27) and (154,154.31) .. (152.71,154.31) .. controls (151.42,154.31) and (150.38,153.27) .. (150.38,151.98) -- cycle ;
			\draw    (126.48,126.57) -- (101.63,151.98) ;
			\draw    (126.48,126.57) -- (152.71,151.98) ;
			\draw  [fill={rgb, 255:red, 0; green, 0; blue, 0 }  ,fill opacity=1 ] (99.29,151.98) .. controls (99.29,150.69) and (100.34,149.64) .. (101.63,149.64) .. controls (102.92,149.64) and (103.96,150.69) .. (103.96,151.98) .. controls (103.96,153.27) and (102.92,154.31) .. (101.63,154.31) .. controls (100.34,154.31) and (99.29,153.27) .. (99.29,151.98) -- cycle ;
			\draw  [fill={rgb, 255:red, 255; green, 255; blue, 255 }  ,fill opacity=1 ] (122.5,126.57) .. controls (122.5,124.37) and (124.28,122.59) .. (126.48,122.59) .. controls (128.68,122.59) and (130.46,124.37) .. (130.46,126.57) .. controls (130.46,128.77) and (128.68,130.56) .. (126.48,130.56) .. controls (124.28,130.56) and (122.5,128.77) .. (122.5,126.57) -- cycle ;
			\draw    (152.71,151.98) -- (178.94,177.38) ;
			\draw    (101.63,151.98) -- (76.77,177.38) ;
			\draw  [fill={rgb, 255:red, 255; green, 255; blue, 255 }  ,fill opacity=1 ] (72.79,177.38) .. controls (72.79,175.18) and (74.57,173.4) .. (76.77,173.4) .. controls (78.97,173.4) and (80.75,175.18) .. (80.75,177.38) .. controls (80.75,179.58) and (78.97,181.37) .. (76.77,181.37) .. controls (74.57,181.37) and (72.79,179.58) .. (72.79,177.38) -- cycle ;
			\draw  [fill={rgb, 255:red, 255; green, 255; blue, 255 }  ,fill opacity=1 ] (174.82,177.38) .. controls (174.82,175.11) and (176.66,173.26) .. (178.94,173.26) .. controls (181.21,173.26) and (183.06,175.11) .. (183.06,177.38) .. controls (183.06,179.66) and (181.21,181.5) .. (178.94,181.5) .. controls (176.66,181.5) and (174.82,179.66) .. (174.82,177.38) -- cycle ;
			\draw  [fill={rgb, 255:red, 255; green, 255; blue, 255 }  ,fill opacity=1 ] (125.5,268.46) .. controls (125.5,266.26) and (127.28,264.48) .. (129.48,264.48) .. controls (131.68,264.48) and (133.46,266.26) .. (133.46,268.46) .. controls (133.46,270.66) and (131.68,272.44) .. (129.48,272.44) .. controls (127.28,272.44) and (125.5,270.66) .. (125.5,268.46) -- cycle ;
			\draw  [fill={rgb, 255:red, 0; green, 0; blue, 0 }  ,fill opacity=1 ] (127.15,302.52) .. controls (127.15,301.23) and (128.19,300.18) .. (129.48,300.18) .. controls (130.77,300.18) and (131.82,301.23) .. (131.82,302.52) .. controls (131.82,303.81) and (130.77,304.85) .. (129.48,304.85) .. controls (128.19,304.85) and (127.15,303.81) .. (127.15,302.52) -- cycle ;
			\draw    (129.48,272.44) -- (129.48,300.18) ;
			\draw    (129.48,304.85) -- (129.48,332.59) ;
			\draw  [fill={rgb, 255:red, 0; green, 0; blue, 0 }  ,fill opacity=1 ] (153.38,361.98) .. controls (153.38,360.69) and (154.42,359.64) .. (155.71,359.64) .. controls (157,359.64) and (158.05,360.69) .. (158.05,361.98) .. controls (158.05,363.27) and (157,364.31) .. (155.71,364.31) .. controls (154.42,364.31) and (153.38,363.27) .. (153.38,361.98) -- cycle ;
			\draw    (129.48,336.57) -- (155.71,361.98) ;
			\draw  [fill={rgb, 255:red, 255; green, 255; blue, 255 }  ,fill opacity=1 ] (125.5,336.57) .. controls (125.5,334.37) and (127.28,332.59) .. (129.48,332.59) .. controls (131.68,332.59) and (133.46,334.37) .. (133.46,336.57) .. controls (133.46,338.77) and (131.68,340.56) .. (129.48,340.56) .. controls (127.28,340.56) and (125.5,338.77) .. (125.5,336.57) -- cycle ;
			\draw    (155.71,361.98) -- (181.94,387.38) ;
			\draw  [fill={rgb, 255:red, 255; green, 255; blue, 255 }  ,fill opacity=1 ] (75.79,387.38) .. controls (75.79,385.18) and (77.57,383.4) .. (79.77,383.4) .. controls (81.97,383.4) and (83.75,385.18) .. (83.75,387.38) .. controls (83.75,389.58) and (81.97,391.37) .. (79.77,391.37) .. controls (77.57,391.37) and (75.79,389.58) .. (75.79,387.38) -- cycle ;
			\draw  [fill={rgb, 255:red, 255; green, 255; blue, 255 }  ,fill opacity=1 ] (177.82,387.38) .. controls (177.82,385.11) and (179.66,383.26) .. (181.94,383.26) .. controls (184.21,383.26) and (186.06,385.11) .. (186.06,387.38) .. controls (186.06,389.66) and (184.21,391.5) .. (181.94,391.5) .. controls (179.66,391.5) and (177.82,389.66) .. (177.82,387.38) -- cycle ;
			\draw  [fill={rgb, 255:red, 255; green, 255; blue, 255 }  ,fill opacity=1 ] (302.5,268.46) .. controls (302.5,266.26) and (304.28,264.48) .. (306.48,264.48) .. controls (308.68,264.48) and (310.46,266.26) .. (310.46,268.46) .. controls (310.46,270.66) and (308.68,272.44) .. (306.48,272.44) .. controls (304.28,272.44) and (302.5,270.66) .. (302.5,268.46) -- cycle ;
			\draw  [fill={rgb, 255:red, 0; green, 0; blue, 0 }  ,fill opacity=1 ] (304.15,302.52) .. controls (304.15,301.23) and (305.19,300.18) .. (306.48,300.18) .. controls (307.77,300.18) and (308.82,301.23) .. (308.82,302.52) .. controls (308.82,303.81) and (307.77,304.85) .. (306.48,304.85) .. controls (305.19,304.85) and (304.15,303.81) .. (304.15,302.52) -- cycle ;
			\draw    (306.48,272.44) -- (306.48,300.18) ;
			\draw    (306.48,304.85) -- (306.48,332.59) ;
			\draw  [fill={rgb, 255:red, 255; green, 255; blue, 255 }  ,fill opacity=1 ] (302.5,336.57) .. controls (302.5,334.37) and (304.28,332.59) .. (306.48,332.59) .. controls (308.68,332.59) and (310.46,334.37) .. (310.46,336.57) .. controls (310.46,338.77) and (308.68,340.56) .. (306.48,340.56) .. controls (304.28,340.56) and (302.5,338.77) .. (302.5,336.57) -- cycle ;
			\draw  [fill={rgb, 255:red, 255; green, 255; blue, 255 }  ,fill opacity=1 ] (354.82,387.38) .. controls (354.82,385.11) and (356.66,383.26) .. (358.94,383.26) .. controls (361.21,383.26) and (363.06,385.11) .. (363.06,387.38) .. controls (363.06,389.66) and (361.21,391.5) .. (358.94,391.5) .. controls (356.66,391.5) and (354.82,389.66) .. (354.82,387.38) -- cycle ;
			\draw  [fill={rgb, 255:red, 255; green, 255; blue, 255 }  ,fill opacity=1 ] (473.5,297.46) .. controls (473.5,295.26) and (475.28,293.48) .. (477.48,293.48) .. controls (479.68,293.48) and (481.46,295.26) .. (481.46,297.46) .. controls (481.46,299.66) and (479.68,301.44) .. (477.48,301.44) .. controls (475.28,301.44) and (473.5,299.66) .. (473.5,297.46) -- cycle ;
			\draw  [fill={rgb, 255:red, 255; green, 255; blue, 255 }  ,fill opacity=1 ] (473.5,365.57) .. controls (473.5,363.37) and (475.28,361.59) .. (477.48,361.59) .. controls (479.68,361.59) and (481.46,363.37) .. (481.46,365.57) .. controls (481.46,367.77) and (479.68,369.56) .. (477.48,369.56) .. controls (475.28,369.56) and (473.5,367.77) .. (473.5,365.57) -- cycle ;
			\draw  [dash pattern={on 0.84pt off 2.51pt}] (47.67,58.93) .. controls (47.67,41.48) and (61.81,27.33) .. (79.27,27.33) -- (174.07,27.33) .. controls (191.52,27.33) and (205.67,41.48) .. (205.67,58.93) -- (205.67,168.73) .. controls (205.67,186.19) and (191.52,200.33) .. (174.07,200.33) -- (79.27,200.33) .. controls (61.81,200.33) and (47.67,186.19) .. (47.67,168.73) -- cycle ;
			\draw  [dash pattern={on 0.84pt off 2.51pt}] (50.48,273.69) .. controls (50.48,256.24) and (64.63,242.09) .. (82.08,242.09) -- (176.88,242.09) .. controls (194.33,242.09) and (208.48,256.24) .. (208.48,273.69) -- (208.48,383.49) .. controls (208.48,400.94) and (194.33,415.09) .. (176.88,415.09) -- (82.08,415.09) .. controls (64.63,415.09) and (50.48,400.94) .. (50.48,383.49) -- cycle ;
			\draw  [dash pattern={on 0.84pt off 2.51pt}] (259.67,267.05) .. controls (259.67,253.27) and (270.84,242.09) .. (284.63,242.09) -- (359.52,242.09) .. controls (373.31,242.09) and (384.48,253.27) .. (384.48,267.05) -- (384.48,390.13) .. controls (384.48,403.91) and (373.31,415.09) .. (359.52,415.09) -- (284.63,415.09) .. controls (270.84,415.09) and (259.67,403.91) .. (259.67,390.13) -- cycle ;
			\draw  [dash pattern={on 0.84pt off 2.51pt}] (437.67,288.93) .. controls (437.67,281.42) and (443.76,275.33) .. (451.27,275.33) -- (492.07,275.33) .. controls (499.58,275.33) and (505.67,281.42) .. (505.67,288.93) -- (505.67,376.73) .. controls (505.67,384.24) and (499.58,390.33) .. (492.07,390.33) -- (451.27,390.33) .. controls (443.76,390.33) and (437.67,384.24) .. (437.67,376.73) -- cycle ;
			\draw  [fill={rgb, 255:red, 0; green, 0; blue, 0 }  ,fill opacity=1 ] (445.83,120.33) .. controls (445.83,118.77) and (447.1,117.5) .. (448.67,117.5) .. controls (450.23,117.5) and (451.5,118.77) .. (451.5,120.33) .. controls (451.5,121.9) and (450.23,123.17) .. (448.67,123.17) .. controls (447.1,123.17) and (445.83,121.9) .. (445.83,120.33) -- cycle ;
			\draw    (412.67,84.33) -- (448.67,120.33) ;
			\draw    (480.67,88.33) -- (448.67,120.33) ;
			\draw    (448.67,120.33) -- (484.67,156.33) ;
			\draw    (416.67,152.33) -- (452.67,188.33) ;
			\draw    (516.67,124.33) -- (484.67,156.33) ;
			\draw    (484.67,156.33) -- (452.67,188.33) ;
			\draw  [fill={rgb, 255:red, 0; green, 0; blue, 0 }  ,fill opacity=1 ] (481.83,156.33) .. controls (481.83,154.77) and (483.1,153.5) .. (484.67,153.5) .. controls (486.23,153.5) and (487.5,154.77) .. (487.5,156.33) .. controls (487.5,157.9) and (486.23,159.17) .. (484.67,159.17) .. controls (483.1,159.17) and (481.83,157.9) .. (481.83,156.33) -- cycle ;
			\draw  [fill={rgb, 255:red, 0; green, 0; blue, 0 }  ,fill opacity=1 ] (449.83,188.33) .. controls (449.83,186.77) and (451.1,185.5) .. (452.67,185.5) .. controls (454.23,185.5) and (455.5,186.77) .. (455.5,188.33) .. controls (455.5,189.9) and (454.23,191.17) .. (452.67,191.17) .. controls (451.1,191.17) and (449.83,189.9) .. (449.83,188.33) -- cycle ;
			\draw [line width=2.25]    (217,335) -- (249.67,335) ;
			\draw [shift={(253.67,335)}, rotate = 180] [color={rgb, 255:red, 0; green, 0; blue, 0 }  ][line width=2.25]    (17.49,-5.26) .. controls (11.12,-2.23) and (5.29,-0.48) .. (0,0) .. controls (5.29,0.48) and (11.12,2.23) .. (17.49,5.26)   ;
			\draw [line width=2.25]    (393,335) -- (425.67,335) ;
			\draw [shift={(429.67,335)}, rotate = 180] [color={rgb, 255:red, 0; green, 0; blue, 0 }  ][line width=2.25]    (17.49,-5.26) .. controls (11.12,-2.23) and (5.29,-0.48) .. (0,0) .. controls (5.29,0.48) and (11.12,2.23) .. (17.49,5.26)   ;
			\draw [line width=2.25]    (129,207) -- (129,232.33) ;
			\draw [shift={(129,236.33)}, rotate = 270] [color={rgb, 255:red, 0; green, 0; blue, 0 }  ][line width=2.25]    (17.49,-5.26) .. controls (11.12,-2.23) and (5.29,-0.48) .. (0,0) .. controls (5.29,0.48) and (11.12,2.23) .. (17.49,5.26)   ;
			\draw [line width=2.25]    (470.67,264.33) -- (470.67,214.33) ;
			\draw [shift={(470.67,210.33)}, rotate = 90] [color={rgb, 255:red, 0; green, 0; blue, 0 }  ][line width=2.25]    (17.49,-5.26) .. controls (11.12,-2.23) and (5.29,-0.48) .. (0,0) .. controls (5.29,0.48) and (11.12,2.23) .. (17.49,5.26)   ;
			
			\draw (60.86,178.31) node [anchor=north west][inner sep=0.75pt]   [align=left] {$\displaystyle 1$};
			\draw (109.82,112.69) node [anchor=north west][inner sep=0.75pt]   [align=left] {$\displaystyle 2$};
			\draw (120.71,38.06) node [anchor=north west][inner sep=0.75pt]   [align=left] {$\displaystyle 3$};
			\draw (181.62,178.27) node [anchor=north west][inner sep=0.75pt]   [align=left] {$\displaystyle 4$};
			\draw (86.79,132.27) node [anchor=north west][inner sep=0.75pt]   [align=left] {$\displaystyle \textcolor[rgb]{0.82,0.01,0.11}{1}$};
			\draw (131.95,83.76) node [anchor=north west][inner sep=0.75pt]   [align=left] {$\displaystyle \textcolor[rgb]{0.82,0.01,0.11}{1/8}$};
			\draw (154.47,133.62) node [anchor=north west][inner sep=0.75pt]   [align=left] {$\displaystyle \textcolor[rgb]{0.82,0.01,0.11}{1/2}$};
			\draw (64.86,390.31) node [anchor=north west][inner sep=0.75pt]   [align=left] {$\displaystyle 1$};
			\draw (112.82,322.69) node [anchor=north west][inner sep=0.75pt]   [align=left] {$\displaystyle 2$};
			\draw (123.71,248.06) node [anchor=north west][inner sep=0.75pt]   [align=left] {$\displaystyle 3$};
			\draw (182.62,388.27) node [anchor=north west][inner sep=0.75pt]   [align=left] {$\displaystyle 4$};
			\draw (134.95,293.76) node [anchor=north west][inner sep=0.75pt]   [align=left] {$\displaystyle \textcolor[rgb]{0.29,0.56,0.89}{1/4}$};
			\draw (157.47,343.62) node [anchor=north west][inner sep=0.75pt]   [align=left] {$\displaystyle \textcolor[rgb]{0.82,0.01,0.11}{1}$};
			\draw (287.82,325.69) node [anchor=north west][inner sep=0.75pt]   [align=left] {$\displaystyle 2$};
			\draw (300.71,248.06) node [anchor=north west][inner sep=0.75pt]   [align=left] {$\displaystyle 3$};
			\draw (361.62,390.27) node [anchor=north west][inner sep=0.75pt]   [align=left] {$\displaystyle 4$};
			\draw (311.95,293.76) node [anchor=north west][inner sep=0.75pt]   [align=left] {$\displaystyle \textcolor[rgb]{0.82,0.01,0.11}{1}$};
			\draw (464.82,370.69) node [anchor=north west][inner sep=0.75pt]   [align=left] {$\displaystyle 2$};
			\draw (462.71,280.06) node [anchor=north west][inner sep=0.75pt]   [align=left] {$\displaystyle 3$};
			\draw (406,66) node [anchor=north west][inner sep=0.75pt]   [align=left] {$\displaystyle 2$};
			\draw (474,67) node [anchor=north west][inner sep=0.75pt]   [align=left] {$\displaystyle 3$};
			\draw (514,106) node [anchor=north west][inner sep=0.75pt]   [align=left] {$\displaystyle 4$};
			\draw (409.86,133.31) node [anchor=north west][inner sep=0.75pt]   [align=left] {$\displaystyle 1$};
			\draw (469.67,173.33) node [anchor=north west][inner sep=0.75pt]   [align=left] {$\displaystyle \textcolor[rgb]{0.82,0.01,0.11}{1/2}$};
			\draw (463.95,118.76) node [anchor=north west][inner sep=0.75pt]   [align=left] {$\displaystyle \textcolor[rgb]{0.29,0.56,0.89}{1/4}$};

		\end{tikzpicture}
		
			\caption{An explicit example of the procedure that assigns a nested tree to a labelled tree.}
			\label{fig: associo a un labelled tree un nested tree}
		\end{figure}  
		To sum up, we have constructed a continuous map $f:Tr_n\to N_n^{\sigma}(\mathcal{C}/S^1)^{>0}$. We show that it is a homeomorphism by exhibiting an explicit continuous inverse. If $x\coloneqq(x_R,(t_S,\alpha_S,x_S)_{S\in\mathcal{S}-R})\in N_n^{\sigma}(\mathcal{C}/S^1)^{>0}$, we want to construct $f^{-1}(x)$, so we need to specify an admissible tree $T$, a function $f:B\to(0,1]$ (parameters of the black vertices) and for any white vertex $w\in W$ a map $g_w:E_w\to (0,1)$ (parameters of the white vertices): the admissible tree $T$ will have $n$ white vertices and a black vertex for each intersection point of the lobes of a cactus $x_S$, $S\in\mathcal{S}$. If $b$ is a black vertex corresponding to an intersection point of the lobes of some cactus $x_S$, we put
		\[
		f(b)\coloneqq \prod_{\substack{T\in \mathcal{S}-R\\T\supseteq S}} t_{T}
		\]
		Now we define the edges of $T$ and the parameters of the white vertices inductively: start form the vertices $S\in\mathcal{S}$ that are furthest from the root. Take the decorative cactus $x_S\in\mathcal{C}_{\abs{S}}/S^1$ and consider the pointed cactus $\alpha_S\sigma_{\abs{S}}(x_S)\in\mathcal{C}_{\abs{S}}$. Passing to the dual graph we obtain a labelled tree (with a base point). Now let us move towards the root: suppose $S\in\mathcal{S}$ is a fixed vertex and that all the labelled trees corresponding to edges into $S$ have been constructed. If $x_S$ is the decorative cactus associated to $S$, then consider $y_S\coloneqq\sigma_{\abs{S}}(x_S)\in\mathcal{C}_{\abs{S}}$. This is a pointed cactus, so each lobe of $y_S$ has a canonical base point. 
		Let $b$ be a black vertex associated to the intersection of some lobes of $y_S$. We will denote this lobes as $l_1,\dots,l_m$. Each lobe corresponds to an edge into $S$ (either internal or open). If $l_j$ is associated to an open edge (labelled by $k\in\{1,\dots,n\}$), then draw an edge from $b$ to a white vertex labelled by $k$. If $l_j$ corresponds to an internal edge $(S_j,S)$, then we will draw an edge connecting $b$ to the labelled tree (with base point) $T_i$ associated to $S_i$ as follows: using the deformation retraction of Proposition \ref{prop:def retraction } we can associate to $T_i$ a cactus (with base point) $C_i$. Now we can use the composition of based cacti to identify $C_i$ with the corresponding lobe $l_i$ of $y_S$. Therefore our black vertex $b$ will correspond to a point $p$ in the cactus $C_i$. If this point is an intersection of lobes, then connect $b$ to the black vertex of $T_i$ associated to $p$ which has the largest label. If $p$ is not an intersection of lobes, then connect $b$ to the corresponding white vertex of $T_i$. After we have done this for any black vertex associated to $x_S$ we get a labelled tree with a base point; then we rotate the base point by the angular parameter $\alpha_{S}$ and we get a labelled tree with base point $T_S$. Now we iterate this procedure until we get to the root. At the end we obtain a labelled tree with base point; forgetting this base point we get $f^{-1}(x)$. See Figure \ref{fig:corrispondenza labelled trees vs nested cacti} for an example.
	\end{proof}
	\begin{figure}
		\centering
	
	\tikzset{every picture/.style={line width=0.75pt}} 
	
	\begin{tikzpicture}[x=0.75pt,y=0.75pt,yscale=-1,xscale=1]
		
		\draw   (300.33,219.71) .. controls (300.33,205.85) and (311.56,194.62) .. (325.42,194.62) .. controls (339.27,194.62) and (350.5,205.85) .. (350.5,219.71) .. controls (350.5,233.56) and (339.27,244.79) .. (325.42,244.79) .. controls (311.56,244.79) and (300.33,233.56) .. (300.33,219.71) -- cycle ;
		\draw  [fill={rgb, 255:red, 255; green, 255; blue, 255 }  ,fill opacity=1 ] (134,85.83) .. controls (134,83.16) and (136.16,81) .. (138.83,81) .. controls (141.5,81) and (143.67,83.16) .. (143.67,85.83) .. controls (143.67,88.5) and (141.5,90.67) .. (138.83,90.67) .. controls (136.16,90.67) and (134,88.5) .. (134,85.83) -- cycle ;
		\draw  [fill={rgb, 255:red, 0; green, 0; blue, 0 }  ,fill opacity=1 ] (136,127.17) .. controls (136,125.6) and (137.27,124.33) .. (138.83,124.33) .. controls (140.4,124.33) and (141.67,125.6) .. (141.67,127.17) .. controls (141.67,128.73) and (140.4,130) .. (138.83,130) .. controls (137.27,130) and (136,128.73) .. (136,127.17) -- cycle ;
		\draw    (138.83,90.67) -- (138.83,124.33) ;
		\draw    (138.83,130) -- (138.83,163.67) ;
		\draw  [fill={rgb, 255:red, 0; green, 0; blue, 0 }  ,fill opacity=1 ] (167.83,199.33) .. controls (167.83,197.77) and (169.1,196.5) .. (170.67,196.5) .. controls (172.23,196.5) and (173.5,197.77) .. (173.5,199.33) .. controls (173.5,200.9) and (172.23,202.17) .. (170.67,202.17) .. controls (169.1,202.17) and (167.83,200.9) .. (167.83,199.33) -- cycle ;
		\draw    (138.83,168.5) -- (108.67,199.33) ;
		\draw    (138.83,168.5) -- (170.67,199.33) ;
		\draw  [fill={rgb, 255:red, 0; green, 0; blue, 0 }  ,fill opacity=1 ] (105.83,199.33) .. controls (105.83,197.77) and (107.1,196.5) .. (108.67,196.5) .. controls (110.23,196.5) and (111.5,197.77) .. (111.5,199.33) .. controls (111.5,200.9) and (110.23,202.17) .. (108.67,202.17) .. controls (107.1,202.17) and (105.83,200.9) .. (105.83,199.33) -- cycle ;
		\draw  [fill={rgb, 255:red, 255; green, 255; blue, 255 }  ,fill opacity=1 ] (134,168.5) .. controls (134,165.83) and (136.16,163.67) .. (138.83,163.67) .. controls (141.5,163.67) and (143.67,165.83) .. (143.67,168.5) .. controls (143.67,171.17) and (141.5,173.33) .. (138.83,173.33) .. controls (136.16,173.33) and (134,171.17) .. (134,168.5) -- cycle ;
		\draw    (170.67,199.33) -- (202.5,230.17) ;
		\draw    (108.67,199.33) -- (78.5,230.17) ;
		\draw  [fill={rgb, 255:red, 255; green, 255; blue, 255 }  ,fill opacity=1 ] (73.67,230.17) .. controls (73.67,227.5) and (75.83,225.33) .. (78.5,225.33) .. controls (81.17,225.33) and (83.33,227.5) .. (83.33,230.17) .. controls (83.33,232.84) and (81.17,235) .. (78.5,235) .. controls (75.83,235) and (73.67,232.84) .. (73.67,230.17) -- cycle ;
		\draw  [fill={rgb, 255:red, 255; green, 255; blue, 255 }  ,fill opacity=1 ] (197.5,230.17) .. controls (197.5,227.41) and (199.74,225.17) .. (202.5,225.17) .. controls (205.26,225.17) and (207.5,227.41) .. (207.5,230.17) .. controls (207.5,232.93) and (205.26,235.17) .. (202.5,235.17) .. controls (199.74,235.17) and (197.5,232.93) .. (197.5,230.17) -- cycle ;
		\draw  [dash pattern={on 0.84pt off 2.51pt}] (35.35,230.17) .. controls (35.35,206.34) and (54.67,187.02) .. (78.5,187.02) .. controls (102.33,187.02) and (121.65,206.34) .. (121.65,230.17) .. controls (121.65,254) and (102.33,273.31) .. (78.5,273.31) .. controls (54.67,273.31) and (35.35,254) .. (35.35,230.17) -- cycle ;
		\draw  [dash pattern={on 0.84pt off 2.51pt}] (94.17,54.78) .. controls (128.27,27.4) and (190.06,47.72) .. (232.17,100.17) .. controls (274.29,152.63) and (280.78,217.35) .. (246.67,244.73) .. controls (212.57,272.11) and (150.78,251.79) .. (108.67,199.33) .. controls (66.55,146.88) and (60.06,82.16) .. (94.17,54.78) -- cycle ;
		\draw  [dash pattern={on 0.84pt off 2.51pt}] (170.67,199.33) .. controls (180.29,189.11) and (199.28,191.35) .. (213.08,204.35) .. controls (226.88,217.34) and (230.26,236.16) .. (220.63,246.38) .. controls (211,256.61) and (192.01,254.36) .. (178.22,241.37) .. controls (164.42,228.38) and (161.04,209.56) .. (170.67,199.33) -- cycle ;
		\draw  [dash pattern={on 0.84pt off 2.51pt}] (112.42,56.32) .. controls (140.64,44.68) and (176.61,66.95) .. (192.76,106.08) .. controls (208.91,145.2) and (199.13,186.36) .. (170.91,198.01) .. controls (142.69,209.66) and (106.73,187.38) .. (90.58,148.26) .. controls (74.43,109.13) and (84.21,67.97) .. (112.42,56.32) -- cycle ;
		\draw  [dash pattern={on 0.84pt off 2.51pt}] (139.98,59.34) .. controls (153.29,59.56) and (163.82,74.93) .. (163.5,93.66) .. controls (163.19,112.39) and (152.14,127.39) .. (138.83,127.17) .. controls (125.53,126.94) and (115,111.57) .. (115.31,92.84) .. controls (115.63,74.11) and (126.67,59.11) .. (139.98,59.34) -- cycle ;
		\draw  [dash pattern={on 0.84pt off 2.51pt}] (138.83,127.17) .. controls (151.04,127.37) and (160.71,141.31) .. (160.42,158.29) .. controls (160.14,175.28) and (150,188.88) .. (137.79,188.67) .. controls (125.58,188.47) and (115.91,174.53) .. (116.2,157.55) .. controls (116.49,140.56) and (126.62,126.96) .. (138.83,127.17) -- cycle ;
		\draw   (392.94,174.29) .. controls (392.94,165.11) and (400.38,157.67) .. (409.56,157.67) .. controls (418.74,157.67) and (426.18,165.11) .. (426.18,174.29) .. controls (426.18,183.47) and (418.74,190.91) .. (409.56,190.91) .. controls (400.38,190.91) and (392.94,183.47) .. (392.94,174.29) -- cycle ;
		\draw  [color={rgb, 255:red, 74; green, 144; blue, 226 }  ,draw opacity=1 ] (392.94,141.05) .. controls (392.94,131.88) and (400.38,124.44) .. (409.56,124.44) .. controls (418.74,124.44) and (426.18,131.88) .. (426.18,141.05) .. controls (426.18,150.23) and (418.74,157.67) .. (409.56,157.67) .. controls (400.38,157.67) and (392.94,150.23) .. (392.94,141.05) -- cycle ;
		\draw   (341.15,176.09) .. controls (341.15,217.38) and (374.62,250.85) .. (415.91,250.85) .. controls (457.2,250.85) and (490.67,217.38) .. (490.67,176.09) .. controls (490.67,134.8) and (457.2,101.33) .. (415.91,101.33) .. controls (374.62,101.33) and (341.15,134.8) .. (341.15,176.09) -- cycle ;
		\draw   (437.07,212.08) .. controls (437.07,202.58) and (444.77,194.89) .. (454.26,194.89) .. controls (463.76,194.89) and (471.45,202.58) .. (471.45,212.08) .. controls (471.45,221.57) and (463.76,229.27) .. (454.26,229.27) .. controls (444.77,229.27) and (437.07,221.57) .. (437.07,212.08) -- cycle ;
		\draw   (364.49,162.61) .. controls (364.49,136.36) and (385.77,115.07) .. (412.03,115.07) .. controls (438.29,115.07) and (459.57,136.36) .. (459.57,162.61) .. controls (459.57,188.87) and (438.29,210.16) .. (412.03,210.16) .. controls (385.77,210.16) and (364.49,188.87) .. (364.49,162.61) -- cycle ;
		\draw [color={rgb, 255:red, 0; green, 0; blue, 0 }  ,draw opacity=1 ][line width=1.5]    (340.25,195.79) -- (346.97,205.67) ;
		\draw [color={rgb, 255:red, 208; green, 2; blue, 27 }  ,draw opacity=1 ][line width=1.5]    (440.63,200.92) -- (434.7,208.52) ;
		\draw [color={rgb, 255:red, 74; green, 144; blue, 226 }  ,draw opacity=1 ][line width=1.5]    (396.71,157.67) -- (407.19,157.67) ;
		\draw [color={rgb, 255:red, 74; green, 144; blue, 226 }  ,draw opacity=1 ]   (407.19,157.67) .. controls (429.96,157.58) and (428.77,180.11) .. (421.66,186.17) ;
		\draw [color={rgb, 255:red, 74; green, 144; blue, 226 }  ,draw opacity=1 ] [dash pattern={on 0.84pt off 2.51pt}]  (422.84,186.17) -- (440.63,200.92) ;
		\draw [color={rgb, 255:red, 208; green, 2; blue, 27 }  ,draw opacity=1 ] [dash pattern={on 0.84pt off 2.51pt}]  (374.24,191.97) -- (346.97,205.67) ;
		\draw [color={rgb, 255:red, 208; green, 2; blue, 27 }  ,draw opacity=1 ]   (439.79,200.77) .. controls (422.84,215.41) and (390.83,213.04) .. (374.24,191.97) ;
		
		\draw (64,232) node [anchor=north west][inner sep=0.75pt]   [align=left] {$\displaystyle 1$};
		\draw (122,157) node [anchor=north west][inner sep=0.75pt]   [align=left] {$\displaystyle 2$};
		\draw (133,63) node [anchor=north west][inner sep=0.75pt]   [align=left] {$\displaystyle 3$};
		\draw (204.5,233.17) node [anchor=north west][inner sep=0.75pt]   [align=left] {$\displaystyle 4$};
		\draw (319.93,210.5) node [anchor=north west][inner sep=0.75pt]   [align=left] {$\displaystyle 1$};
		\draw (402.98,166.2) node [anchor=north west][inner sep=0.75pt]   [align=left] {$\displaystyle 2$};
		\draw (449.75,203.58) node [anchor=north west][inner sep=0.75pt]   [align=left] {$\displaystyle 4$};
		\draw (404.01,134.2) node [anchor=north west][inner sep=0.75pt]   [align=left] {$\displaystyle 3$};
		\draw (397.57,212.61) node [anchor=north west][inner sep=0.75pt]   [align=left] {$\displaystyle \textcolor[rgb]{0.82,0.01,0.11}{\alpha }\textcolor[rgb]{0.82,0.01,0.11}{_{T}}$};
		\draw (426.18,145.05) node [anchor=north west][inner sep=0.75pt]  [color={rgb, 255:red, 208; green, 2; blue, 27 }  ,opacity=1 ] [align=left] {$\displaystyle \textcolor[rgb]{0.29,0.56,0.89}{\alpha }\textcolor[rgb]{0.29,0.56,0.89}{_{S}}$};

	\end{tikzpicture}
	
		\caption{In this picture we see the nested cactus associated to the labelled tree on the left. We assume that the label of the black vertices are the same as in Figure \ref{fig: associo a un labelled tree un nested tree}. The base point on the cacti are those obtained using the sections $\sigma_k:\mathcal{C}_k/S^1\to \mathcal{C}_k$. The angular parameters $\alpha_S,\alpha_T$ are represented as distances between the base point given by the section and the base point induced by the bigger cactus (dotted lines). The angular parameters are omitted.}
		\label{fig:corrispondenza labelled trees vs nested cacti}
	\end{figure}
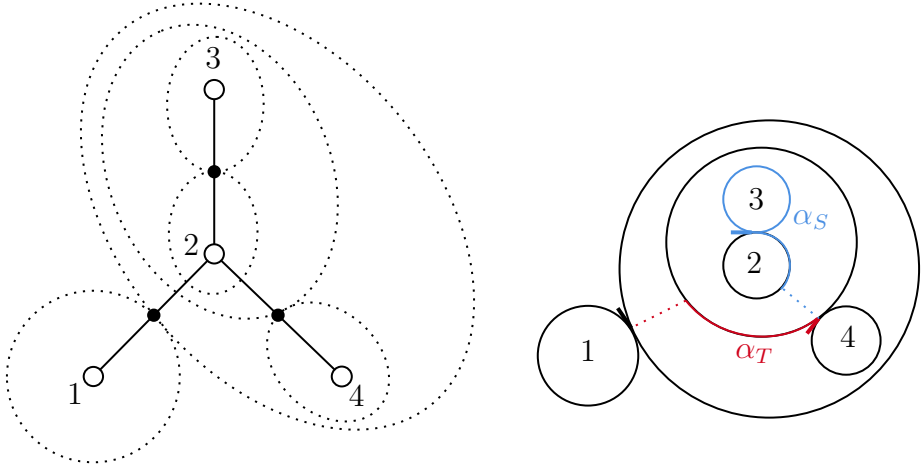
	
	\subsection{A CW-decomposition of $\overline{\M}_{0,n+1}$}\label{subsec: CW decomposition of Deligne mumford}
	Combining Theorem \ref{thm:omeo tra spazio di moduli e labelled trees} and Lemma \ref{lem:omeo tra nested tree e alberi b/w}, for each choice of weights $a_1,\dots,a_n$ we get an embedding
	\begin{equation}\label{embedding dello spazio dei moduli nello spazio dei nested trees decorati}
		\phi_{a_1\dots,a_n}:\mathcal{M}_{0,n+1}\to Tr_n\cong N_n^{\sigma}(\mathcal{C}/S^1)^{>0}\subseteq  N_n^{\sigma}(\mathcal{C}/S^1)    
	\end{equation}
	We will see that this embedding extends to a homeomorphism between the Deligne-Mumford compactification $\overline{\mathcal{M}}_{0,n+1}$ and $N_n^{\sigma}(\mathcal{C}/S^1)$, giving a regular CW decomposition of $\overline{\mathcal{M}}_{0,n+1}$. In what follows it is convenient to use a coordinate free approach: for any finite set $R$ with $r$ elements we define $\mathcal{M}_{0,R+1}$ to be the space of injective maps from $R$ to $\C$, modulo the action of $\C\rtimes \C^*$ on the target. A total order on $R$ induces a homeomorphism between $\mathcal{M}_{0,R+1}$ and $\mathcal{M}_{0,r+1}$. Similarly it is possible to define $N_R^{\sigma}(\mathcal{C}/S^1)\cong N_r^{\sigma}(\mathcal{C}/S^1)$, the space of nested cacti with leaves labelled by $R$. The choice of a function $a:R\to\R^{>0}$ gives an embedding $\phi_{a}:\M_{0,R+1}\to N^{\sigma}_{R}(\mathcal{C}/S^1)$ analogous to \ref{embedding dello spazio dei moduli nello spazio dei nested trees decorati}. When $R=\{y_1<y_2<\cdots<y_r\}$ is equipped with a total order $\phi_a$ corresponds to $\phi_{a(y_1)/A,\dots,a(y_r)/A}$, with $A=\sum_{i=1}^ra(y_i)$. 
	\begin{defn}\label{defn: relazione equivalenza vertici nested tree}
		Let $\mathcal{S}$ be a nested tree with $n$-leaves. For any $S\in\mathcal{S}$, $\hat{S}$ will denote the quotient of $S$ by the equivalence relation given by
		\[
		s_1\sim s_2 \iff \exists T\in\mathcal{S}, T\subseteq S \mid s_1,s_2\in T 
		\]
		We will denote by $\pi_S:S\to\hat{S}$ the quotient map.
	\end{defn}
	Recall that $\overline{\mathcal{M}}_{0,n+1}$ is a stratified space: for each nested tree $\mathcal{S}\in N_n$ we have $\overline{\mathcal{M}}_{0,n+1}=\bigcup_{\mathcal{S}\in N_n} \mathcal{M}(\mathcal{S})$, where $\mathcal{M}(\mathcal{S})$ contains all the stable curves whose dual graph is $\mathcal{S}$. For example, if $\mathcal{S}$ is the corolla, then $\M(\mathcal{S})=\mathcal{M}_{0,n+1}$. In general we have a homeomorphism
	\[
	\beta_{\mathcal{S}}: \prod_{S\in \mathcal{S}}\mathcal{M}_{0,\hat{S}+1}\to \mathcal{M}(\mathcal{S})
	\]
	\begin{defn}
		Let $a_1,\dots,a_n\in\R^{>0}$ such that $\sum_{i=1}^na_i=1$. We will define a function
		\[
		\overline{\phi}_{a_1,\dots,a_n}:\overline{\mathcal{M}}_{0,n+1}\to N_n(\mathcal{C}/S^1)
		\]
		stratum-wise as follows: given a nested tree $\mathcal{S}\in N_n$, take $(y_S)_{S\in\mathcal{S}}\in \prod_{S\in \mathcal{S}}\mathcal{M}_{0,\hat{S}+1}\cong\mathcal{M}(\mathcal{S})$. In order to define 
		\[
		\overline{\phi}_{a_1,\dots,a_n}(\beta_{\mathcal{S}}((y_S)_{S\in\mathcal{S}}))
		\]
		we have to specify a nested tree $\mathcal{T}$ with $n$-leaves, an unbased cactus for any vertex and radial/angular parameters for any vertex which is not the root. For any $S\in\mathcal{S}$, consider the labelled nested tree
		\[
		\phi_{a_{S}}(y_S)\in N_{\hat{S}}^{\sigma}(\mathcal{C}/S^1)
		\]
		where $ a_S:\hat{S}\to \R^{>0}$ is defined as $a_S(x)\coloneqq\sum_{i\in\pi^{-1}_S(x)}a_i$. Let $\mathcal{T}_S$ be the underlying nested tree. Then $\mathcal{T}\in N_n$ is defined to be the collection of subsets of the form $\pi_S^{-1}(U)\subseteq\{1,\dots,n\}$, with $U\in \mathcal{T}_S$ and $S\in\mathcal{S}$. Intuitively $\mathcal{T}$ is obtained by grafting together the nested trees $\{T_S\}_{S\in\mathcal{S}}$. Now we specify the unbased cacti and the radial/angular parameters decorating each vertex: given $\pi^{-1}_S(U)\in \mathcal{T}$, with $U\in\mathcal{T}_S$ and $S\in\mathcal{S}$ we put:
		\begin{enumerate}
			\item The unbased cactus associated to $\pi^{-1}_S(U)$ will be the one decorating $U\in \mathcal{T}_S$ in $\phi_{a_{S}}(y_S)$.
			\item If  $U\neq \hat{S}$ the radial parameter associated to $\pi^{-1}_S(U)$ will be the one associated to $U\in \mathcal{T}_S$ in $\phi_{a_{S}}(y_S)$. It will be zero if $U=\hat{S}$.
			\item If  $U\neq \hat{S}$ the angular parameter associated to $\pi^{-1}_S(U)$ will be the one associated to $U\in \mathcal{T}_S$ in $\phi_{a_{S}}(y_S)$ if $U\neq \hat{S}$. If $U=\hat{S}$ we do not have to specify it since the radial parameter is zero.
			
		\end{enumerate}
	\end{defn}
	\begin{es}
		Consider the nested tree $\mathcal{S}\coloneqq((1,2,3),4)$ and let us denote by $R$ the root and by $S$ the other vertex. Fix some weights $a_1,a_2,a_3,a_4$ and take a point $(y_R,y_S)\in\M_{0,\hat{R}+1}\times\M_{0,\hat{S}+1}\cong \M(\mathcal{S})$. Then $\overline{\phi}_{a_1,a_2,a_3,a_4}(\beta_{\mathcal{S}}(y_R,y_S))$ is obtained by grafting $\phi_{a_1,a_2,a_3}(y_S)$ to the first leaf of $\phi_{a_1+a_2+a_3,a_4}(y_R)$ and setting the radial parameter $t_S=0$.
	\end{es}
	\begin{thm}\label{thm:omeo tra la compattificazione e lo spazio dei nested cactus}
		For each choice of weights $a_1,\dots,a_n>0$, $\sum_{i=1}^na_i=1$, the map
		\[
		\overline{\phi}_{a_1\dots,a_n}:\overline{\mathcal{M}}_{0,n+1}\to N_n^{\sigma}(\mathcal{C}/S^1)
		\]
		is a homeomorphism.
	\end{thm}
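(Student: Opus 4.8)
The plan is to bootstrap the homeomorphism on the compactification from the open case (Theorem \ref{thm:omeo tra spazio di moduli e labelled trees}) together with Lemma \ref{lem:omeo tra nested tree e alberi b/w}, using the stratified structure of both sides. First I would check that $\overline{\phi}_{a_1,\dots,a_n}$ is well defined, i.e. that it does not depend on the representative $(y_S)_{S\in\mathcal S}$ of a point of $\M(\mathcal S)$: this is automatic since each $\phi_{a_S}$ is a genuine map on $\M_{0,\hat S+1}$. Next I would verify that the image lands in the correct cell of $N_n^\sigma(\mathcal C/S^1)$, namely that the grafting construction of the nested tree $\mathcal T$ is consistent: one has to see that the subsets $\pi_S^{-1}(U)$ for $U\in\mathcal T_S$, $S\in\mathcal S$, really do form a nested tree, and that the radial parameter being set to $0$ exactly at the vertices $\hat S$ corresponds, under the equivalence relation $\sim$ defining $N_n^\sigma(\mathcal C/S^1)$, to the nodal degeneration encoded by $\mathcal S$. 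Then continuity has to be established: on each open stratum $\M(\mathcal S)$ this is clear since $\overline{\phi}$ restricted to it is a product of the open embeddings $\phi_{a_S}$ composed with the homeomorphism $Tr_{\hat S}\cong N_{\hat S}^\sigma(\mathcal C/S^1)^{>0}$ and a continuous grafting map; the point is continuity across strata, where a curve degenerates.

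The key geometric input for continuity across strata is the behavior of the electric field picture under degeneration. When a family of configurations in $\M_{0,n+1}$ approaches a boundary stratum $\M(\mathcal S)$, a subset $S$ of the marked points collides (after rescaling they spread out on a separate component). In the potential-theoretic model, this means that near the colliding cluster the potential $U(z)=\log|h(z)|$ develops a well-separated region: the zeros of $E(z)$ split into those that ``see'' the cluster as a single charge of total weight $\sum_{i\in S}a_i$ and those that resolve the internal structure of the cluster at a finer scale. The maximal black-vertex labels on the internal sub-tree tend to $0$ relative to the outer scale — which is exactly why the radial parameter $t_S\to 0$ in the limit. I would make this precise by the same kind of estimate used in \cite{Salvatore} and \cite{Nakamura} for the original construction, patching the local and global scales; the nested-cacti identifications built into the definition of $N_n^\sigma(\mathcal C/S^1)$ are designed precisely so that the limit is the grafted nested cactus with $t_S=0$. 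So continuity reduces to a compatibility check between the stratification of $\overline{\M}_{0,n+1}$ (via the maps $\beta_{\mathcal S}$) and the cell structure of $N_n^\sigma(\mathcal C/S^1)$.

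Once $\overline{\phi}_{a_1,\dots,a_n}$ is shown to be a continuous map, I would argue it is a homeomorphism by a standard compactness argument: $\overline{\M}_{0,n+1}$ is compact and $N_n^\sigma(\mathcal C/S^1)$ is Hausdorff, so it suffices to prove bijectivity. For surjectivity, given a nested cactus one reads off its underlying nested tree $\mathcal T$, groups the vertices according to the maximal sub-trees with nonzero radial parameters (these correspond to the irreducible components of a stable curve, i.e.\ to a coarser nested tree $\mathcal S$), and on each such block inverts $\phi_{a_S}$ using Lemma \ref{lem:omeo tra nested tree e alberi b/w} and Theorem \ref{thm:omeo tra spazio di moduli e labelled trees}; then $\beta_{\mathcal S}$ of the resulting tuple maps to the given point. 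Injectivity follows because the block decomposition and the per-block inverses are canonical. The restriction of $\overline{\phi}_{a_1,\dots,a_n}$ to the open stratum is $\phi_{a_1,\dots,a_n}$, which is already a homeomorphism onto $N_n^\sigma(\mathcal C/S^1)^{>0}$, so only the boundary strata require new work, and there the argument is an inductive one on the number of vertices of $\mathcal S$, mirroring the induction in the proof of the embedding lemma.

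The main obstacle I expect is the continuity across strata, specifically controlling the angular parameters $\alpha_S$ in the limit. The radial parameters degenerating to $0$ is comparatively soft — it is a statement about separation of scales — but when $t_S\to 0$ the angular parameter $\alpha_S$ becomes genuinely indeterminate on the target (it is quotiented out in $N_n^\sigma(\mathcal C/S^1)$), so one must check that the map is still continuous, i.e.\ that no discontinuity is hidden in how $\alpha_S$ oscillates as the limit is approached. This is exactly why the quotient in the definition of $N_n^\sigma(\mathcal C/S^1)$ collapses the $S^1$-direction when the radial parameter vanishes, and the verification amounts to showing that the composite $p\circ\gamma^\sigma_{\mathcal T}$ applied to the degenerating family converges, in $\mathcal C_n/S^1$, to $p\circ\gamma^\sigma_{\mathcal T-S_0}$ of the limit — a statement about the continuity of the cactus-composition maps $\theta_{\mathcal S}$ that ultimately rests on the estimates of \cite{Salvatore}. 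I would isolate this as the technical heart of the proof and handle one edge-collapse at a time.
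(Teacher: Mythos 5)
Your plan follows essentially the same route as the paper: $\overline{\phi}_{a_1,\dots,a_n}$ is a map from a compact space to a Hausdorff space, so a homeomorphism follows once one has bijectivity and continuity, and the substance is continuity across strata, which both you and the paper reduce to the potential-theoretic scale-separation picture; the paper makes this concrete by an explicit computation of the critical points of $h(z)$ for a two-point collision $[z_0-\epsilon w,z_0+\epsilon w,z_3]$, showing $|h(w_1^\epsilon)|/|h(w_2^\epsilon)|\to 0$, and then asserts the general case works the same way. One correction, though: the angular-parameter worry you flag as ``the main obstacle'' is not actually an obstacle at all. The coordinate $(t_S,\alpha_S)$ lives in $D_2 = ([0,1]\times S^1)/(\{0\}\times S^1)$, whose topology already identifies all $(0,\alpha)$ to a single point; a sequence $(t_S^{(k)},\alpha_S^{(k)})$ with $t_S^{(k)}\to 0$ converges in $D_2$ to the apex regardless of how $\alpha_S^{(k)}$ behaves, so no oscillation can produce a discontinuity. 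The equivalence relation involving $p\circ\gamma^\sigma_{\mathcal T}$ that you invoke is the one at $t_{S_0}=1$, not $t_S=0$; it is not what is at stake in the limit you are considering. The genuine technical content is instead the convergence of the decorating cacti $x_S$ and the fact that the ratio of scales (which becomes $t_S$) tends to $0$, which is exactly what the paper's critical-point estimate establishes.
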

	\begin{proof}
		$\overline{\phi}_{a_1\dots,a_n}$ is bijective, the domain is compact and the target is Hausdorff. So it is enough to prove the continuity of $\overline{\phi}_{a_1\dots,a_n}$. By definition $\overline{\phi}_{a_1\dots,a_n}$ restricted to $\mathcal{M}_{0,n+1}\subseteq\overline{\M}_{0,n+1}$ is the map $\phi_{a_1,\dots,a_n}$ of \ref{embedding dello spazio dei moduli nello spazio dei nested trees decorati}, which is continuous. Thus we only need to understand what happens when two or more points collide. Let us consider for example the configuration $[z_0-\epsilon w,z_0+\epsilon w,z_3]\in\mathcal{M}_{0,4}$ for $\epsilon>0$ (see Figure \ref{limit of config}) and fix some weights $a_1,a_2,a_3$ (again the physical intuition is useful, so think the weights as the value of some charges placed in  $z_0-\epsilon w,z_0+\epsilon w,z_3$).
		\begin{figure}
			\centering

		\tikzset{every picture/.style={line width=0.75pt}} 
		
		\begin{tikzpicture}[x=0.75pt,y=0.75pt,yscale=-1,xscale=1]
			
			\draw    (168.48,62.46) -- (149.67,84.33) ;
			\draw    (149.67,84.33) -- (130.85,106.21) ;
			\draw  [fill={rgb, 255:red, 255; green, 255; blue, 255 }  ,fill opacity=1 ] (164.5,62.46) .. controls (164.5,60.26) and (166.28,58.48) .. (168.48,58.48) .. controls (170.68,58.48) and (172.46,60.26) .. (172.46,62.46) .. controls (172.46,64.66) and (170.68,66.44) .. (168.48,66.44) .. controls (166.28,66.44) and (164.5,64.66) .. (164.5,62.46) -- cycle ;
			\draw  [fill={rgb, 255:red, 0; green, 0; blue, 0 }  ,fill opacity=1 ] (147.33,84.33) .. controls (147.33,83.04) and (148.38,82) .. (149.67,82) .. controls (150.96,82) and (152,83.04) .. (152,84.33) .. controls (152,85.62) and (150.96,86.67) .. (149.67,86.67) .. controls (148.38,86.67) and (147.33,85.62) .. (147.33,84.33) -- cycle ;
			\draw  [fill={rgb, 255:red, 255; green, 255; blue, 255 }  ,fill opacity=1 ] (126.87,106.21) .. controls (126.87,104.01) and (128.65,102.22) .. (130.85,102.22) .. controls (133.05,102.22) and (134.83,104.01) .. (134.83,106.21) .. controls (134.83,108.4) and (133.05,110.19) .. (130.85,110.19) .. controls (128.65,110.19) and (126.87,108.4) .. (126.87,106.21) -- cycle ;
			\draw  [fill={rgb, 255:red, 255; green, 255; blue, 255 }  ,fill opacity=1 ] (49.79,135.38) .. controls (49.79,133.18) and (51.57,131.4) .. (53.77,131.4) .. controls (55.97,131.4) and (57.75,133.18) .. (57.75,135.38) .. controls (57.75,137.58) and (55.97,139.37) .. (53.77,139.37) .. controls (51.57,139.37) and (49.79,137.58) .. (49.79,135.38) -- cycle ;
			\draw  [dash pattern={on 0.84pt off 2.51pt}] (20.67,55.53) .. controls (20.67,39.96) and (33.29,27.33) .. (48.87,27.33) -- (177.47,27.33) .. controls (193.04,27.33) and (205.67,39.96) .. (205.67,55.53) -- (205.67,140.13) .. controls (205.67,155.71) and (193.04,168.33) .. (177.47,168.33) -- (48.87,168.33) .. controls (33.29,168.33) and (20.67,155.71) .. (20.67,140.13) -- cycle ;
			\draw [line width=2.25]    (225,101) -- (306.67,101) ;
			\draw [shift={(310.67,101)}, rotate = 180] [color={rgb, 255:red, 0; green, 0; blue, 0 }  ][line width=2.25]    (17.49,-5.26) .. controls (11.12,-2.23) and (5.29,-0.48) .. (0,0) .. controls (5.29,0.48) and (11.12,2.23) .. (17.49,5.26)   ;
			\draw    (461.3,81.67) -- (455.43,88.5) ;
			\draw    (455.43,88.5) -- (449.56,95.32) ;
			\draw  [fill={rgb, 255:red, 255; green, 255; blue, 255 }  ,fill opacity=1 ] (460.06,81.67) .. controls (460.06,80.99) and (460.62,80.43) .. (461.3,80.43) .. controls (461.99,80.43) and (462.54,80.99) .. (462.54,81.67) .. controls (462.54,82.36) and (461.99,82.92) .. (461.3,82.92) .. controls (460.62,82.92) and (460.06,82.36) .. (460.06,81.67) -- cycle ;
			\draw  [fill={rgb, 255:red, 0; green, 0; blue, 0 }  ,fill opacity=1 ] (454.71,88.5) .. controls (454.71,88.09) and (455.03,87.77) .. (455.43,87.77) .. controls (455.84,87.77) and (456.16,88.09) .. (456.16,88.5) .. controls (456.16,88.9) and (455.84,89.23) .. (455.43,89.23) .. controls (455.03,89.23) and (454.71,88.9) .. (454.71,88.5) -- cycle ;
			\draw  [fill={rgb, 255:red, 255; green, 255; blue, 255 }  ,fill opacity=1 ] (448.32,95.32) .. controls (448.32,94.63) and (448.88,94.08) .. (449.56,94.08) .. controls (450.25,94.08) and (450.81,94.63) .. (450.81,95.32) .. controls (450.81,96.01) and (450.25,96.56) .. (449.56,96.56) .. controls (448.88,96.56) and (448.32,96.01) .. (448.32,95.32) -- cycle ;
			\draw  [fill={rgb, 255:red, 255; green, 255; blue, 255 }  ,fill opacity=1 ] (355.79,135.38) .. controls (355.79,133.18) and (357.57,131.4) .. (359.77,131.4) .. controls (361.97,131.4) and (363.75,133.18) .. (363.75,135.38) .. controls (363.75,137.58) and (361.97,139.37) .. (359.77,139.37) .. controls (357.57,139.37) and (355.79,137.58) .. (355.79,135.38) -- cycle ;
			\draw  [dash pattern={on 0.84pt off 2.51pt}] (326.67,55.53) .. controls (326.67,39.96) and (339.29,27.33) .. (354.87,27.33) -- (483.47,27.33) .. controls (499.04,27.33) and (511.67,39.96) .. (511.67,55.53) -- (511.67,140.13) .. controls (511.67,155.71) and (499.04,168.33) .. (483.47,168.33) -- (354.87,168.33) .. controls (339.29,168.33) and (326.67,155.71) .. (326.67,140.13) -- cycle ;
			\draw  [dash pattern={on 0.84pt off 2.51pt}] (442.81,88.5) .. controls (442.81,81.93) and (448.13,76.6) .. (454.71,76.6) .. controls (461.28,76.6) and (466.6,81.93) .. (466.6,88.5) .. controls (466.6,95.07) and (461.28,100.39) .. (454.71,100.39) .. controls (448.13,100.39) and (442.81,95.07) .. (442.81,88.5) -- cycle ;
			
			\draw (37.86,139.31) node [anchor=north west][inner sep=0.75pt]   [align=left] {$\displaystyle z_{3}$};
			\draw (103.82,110.69) node [anchor=north west][inner sep=0.75pt]   [align=left] {$\displaystyle z_{0} +\epsilon w$};
			\draw (138.71,40.06) node [anchor=north west][inner sep=0.75pt]   [align=left] {$\displaystyle z_{0} -\epsilon w$};
			\draw (154.71,79.06) node [anchor=north west][inner sep=0.75pt]   [align=left] {$\displaystyle z_{0}$};
			\draw (243.71,76.06) node [anchor=north west][inner sep=0.75pt]   [align=left] {$\displaystyle \epsilon \rightarrow 0$};
			\draw (343.86,139.31) node [anchor=north west][inner sep=0.75pt]   [align=left] {$\displaystyle z_{3}$};
			\draw (428.16,64.66) node [anchor=north west][inner sep=0.75pt]   [align=left] {$\displaystyle z_{0}$};

		\end{tikzpicture}
		
			\caption{This picture represent the collision of two charges.}
			\label{limit of config}
		\end{figure}
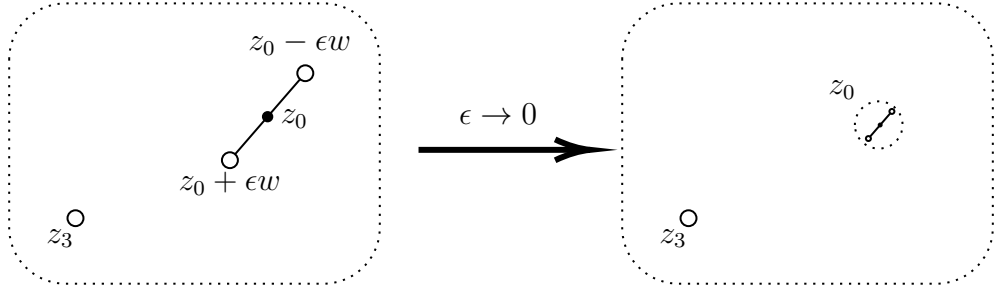
		When $\epsilon\to 0$ the configuration $[z_0-\epsilon w,z_0+\epsilon w,z_3]$ tends to the following infinitesimal configuration: the third charge remains in $z_3$, with value $a_3$. The other two charges collide and became infinitesimal at the point $z_0$. So, looking at this configuration from "far away" it seems that there are only two charges: one in $z_0$ with value $a_1+a_2$, and the other in $z_3$ with value $a_3$; but when we zoom in on $z_0$ we realize that in fact there are two charges of values $a_1$ and $a_2$ which are very close to each other. In other words, the limit of this configuration when $\epsilon\to 0$ is a stable curve in $\M(\mathcal{S})$, where $\mathcal{S}\coloneqq\{R,S\}$ is the nested tree given by  $R=\{1,2,3\}$ and $S=\{1,2\}$. More precisely, the limit is  $\beta_{\mathcal{S}}(y_R,y_S)$ where $y_R=[z_0,z_3]$ and $y_S=[-w,w]$. The black vertices of the labelled tree which represents $[z_0-\epsilon w,z_0+\epsilon w,z_3]$ are the two (possibly coinciding) critical points $w_1^{\epsilon}$, $w_2^{\epsilon}$ of
		\[
		h(z)=(z-z_0+\epsilon w)^{a_1}(z-z_0-\epsilon w)^{a_2}(z-z_3)^{a_3}
		\]
		that are the zeros of
		\[
		p_{\epsilon}(z)=a_1(z-z_0-\epsilon w)(z-z_3)+a_2(z-z_0+\epsilon w)(z-z_3)+a_3((z-z_0)^2-\epsilon^2w^2)
		\]
		These zeros depend continuously on $\epsilon$, and in the limit tend to the zeros of
		\[
		p_0(z)=(z-z_0)((a_1+a_2)(z-z_3)+a_3(z-z_0))
		\]
		that are $w_1^0=z_0$ and $w_2^0$, which is the critical point associated to the configuration $[z_0,z_3]$ with weights $a_1+a_2,a_3$. For $\epsilon\to 0$ we have $\abs{h(w_1^{\epsilon})}\to 0$ and $\abs{h(w_2^{\epsilon})}\to\abs{h(w_2^0)}\neq 0$, so the ratio of their values tends to $0$. This implies that the radial parameter associated to $S\in\mathcal{S}$ in the nested cactus $\phi_{a_1,a_2,a_3}([z_0-\epsilon w,z_0+\epsilon w,z_3])$ tends to zero. Moreover, the cactus labelling the root of $\phi_{a_1,a_2,a_3}([z_0-\epsilon w,z_0+\epsilon w,z_3])$ tends to the cactus $\phi_{a_1+a_2,a_3}([z_0,z_3])$ and the one labelling $S=\{1,2\}$ tends to $\phi_{a_1,a_2}([-w,w])$. In the general case we can do a similar computation to show the continuity of $\overline{\phi}_{a_1,\dots,a_n}$.
	\end{proof}
	\begin{oss}
		By definition the space $N_n^{\sigma}(\mathcal{C}/S^1)$ depends on the choice of sections $\sigma_k:\mathcal{C}/S^1\to \mathcal{C}$. Theorem \ref{thm:omeo tra la compattificazione e lo spazio dei nested cactus} tells us that this choice is not so important, since in any case we obtain a CW-complex which is homeomorphic to $\overline{\M}_{0,n+1}$.
	\end{oss}
	\begin{corollario}\label{cor: decomposizione cellulare spazio compattificato}
		Each choice of weights $a_1,\dots,a_n>0$, $\sum_{i=1}^na_i=1$ and sections $\sigma_k:\mathcal{C}_k/S^1\to\mathcal{C}_k$ determines a regular CW-decomposition of $\overline{\M}_{0,n+1}$. A cell is determined by 
		\begin{enumerate}
			\item A nested tree $\mathcal{S}$ with $n$-leaves.
			\item A cell $C_{S}\subseteq \mathcal{C}_{\abs{S}}/S^1$ for any vertex $S\in\mathcal{S}$.
		\end{enumerate}
		and its closure is homeomorphic to 
		\[
		\overline{C}_R\times\prod_{S\in\mathcal{S}-R}D_2\times\overline{C}_S
		\]
		
	\end{corollario}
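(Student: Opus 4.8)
The plan is to simply transport the regular CW-structure already constructed on $N_n^{\sigma}(\mathcal{C}/S^1)$ through the homeomorphism of Theorem \ref{thm:omeo tra la compattificazione e lo spazio dei nested cactus}. Fix weights $a_1,\dots,a_n>0$ with $\sum_i a_i=1$ and sections $\sigma_k\colon\mathcal{C}_k/S^1\to\mathcal{C}_k$. By Theorem \ref{thm:omeo tra la compattificazione e lo spazio dei nested cactus} the map $\overline{\phi}_{a_1,\dots,a_n}\colon\overline{\M}_{0,n+1}\to N_n^{\sigma}(\mathcal{C}/S^1)$ is a homeomorphism, and by the Proposition of Paragraph \ref{subsec: CW decomposition of nested cacti} the target is a regular CW-complex whose cells $\tau$ are indexed by pairs $(\mathcal{S},(C_S)_{S\in\mathcal{S}})$ consisting of a nested tree $\mathcal{S}\in N_n$ together with a cell $C_S\subseteq\mathcal{C}_{\abs{S}}/S^1$ for each vertex. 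Since a homeomorphism carries a regular CW-decomposition to a regular CW-decomposition, the collection $\{\overline{\phi}_{a_1,\dots,a_n}^{-1}(\tau)\}$ is a regular CW-decomposition of $\overline{\M}_{0,n+1}$ indexed by the same combinatorial data; this is exactly the content of the first assertion.

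It then remains to record the shape of the closed cells. First I would recall from Paragraph \ref{subsec: CW decomposition of nested cacti} that the open cell corresponding to $(\mathcal{S},(C_S)_{S})$ is the locus of nested cacti $(x_R,(t_S,\alpha_S,x_S)_{S\in\mathcal{S}-R})$ with $x_S\in C_S$ and $t_S<1$, and that this locus is homeomorphic to $C_R\times\prod_{S\in\mathcal{S}-R}(\mathring{D}_2\times C_S)$, hence to an open ball of dimension $2E+\sum_{S\in\mathcal{S}}\dim(C_S)$, where $E$ is the number of internal edges of $\mathcal{S}$. Passing to closures factor by factor, each $C_S$ is replaced by $\overline{C}_S$ (the ``cacti boundary'' move, legitimate because $\mathcal{C}_{\abs{S}}/S^1$ is itself a regular CW-complex) and each $\mathring{D}_2$ is replaced by $D_2$; here the boundary circle $\partial D_2$ of the $S$-th disk factor is precisely the locus $t_S=1$, which under the identifications defining $N_n^{\sigma}(\mathcal{C}/S^1)$ lands in the stratum obtained by contracting the edge out of $S$ (the ``edge contraction'' move). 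Therefore the characteristic map of $\tau$ extends to a continuous surjection
\[
\overline{C}_R\times\prod_{S\in\mathcal{S}-R}\bigl(D_2\times\overline{C}_S\bigr)\longrightarrow \overline{\tau},
\]
and transporting along $\overline{\phi}_{a_1,\dots,a_n}^{-1}$ gives the asserted model for the closure of the corresponding cell of $\overline{\M}_{0,n+1}$.

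The only point requiring a little care — and the step I expect to be the main, though still mild, obstacle — is checking that this extended map is actually a homeomorphism onto $\overline{\tau}$, i.e. that no further collapsing occurs on the boundary beyond the tautological collapse $\{0\}\times S^1\mapsto 0$ built into $D_2$ (the vanishing of a radial parameter making the associated angular parameter superfluous). This is where one invokes that the maps $p\circ\gamma^{\sigma}_{\mathcal{S}}$ are embeddings, exactly as in the proof that $N_n^{\sigma}(\mathcal{C}/S^1)$ is a regular CW-complex: the embedding property guarantees that distinct admissible boundary parameter tuples produce distinct nested cacti, so the attaching map restricted to the boundary sphere is injective, hence — by compactness of the source and Hausdorffness of the target — a homeomorphism onto its image. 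Granting this, the corollary follows at once.
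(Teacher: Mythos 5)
Your argument is the same one the paper gives: transport the regular CW-structure of $N_n^{\sigma}(\mathcal{C}/S^1)$ along the homeomorphism $\overline{\phi}_{a_1,\dots,a_n}$ of Theorem \ref{thm:omeo tra la compattificazione e lo spazio dei nested cactus}. You simply spell out in more detail what the paper's one-line proof leaves implicit, namely the shape of the closed cells and the appeal to the embedding property of $p\circ\gamma^{\sigma}_{\mathcal{S}}$, both of which are already established in Paragraph \ref{subsec: CW decomposition of nested cacti}.
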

	\begin{proof}
		Just use Theorem \ref{thm:omeo tra la compattificazione e lo spazio dei nested cactus} to transfer the regular cell structure of $N_n^{\sigma}(\mathcal{C}/S^1)$ to $\overline{\M}_{0,n+1}$.
	\end{proof}
	\begin{es}
		Let us describe explicitly the cell structure of $\overline{\M}_{0,4}$: there are two $0$-cells $C_1,C_2$, three $1$-cells $B_1,B_2,B_3$ and three $2$-cells $A_1,A_2,A_3$. The cells of dimension $0$ and $1$ are described by a corolla with three leaves decorated by an unbased cactus with three lobes. The two dimensional cells are associated to the three nested trees $((12)3))$, $((13)2)$ and $(1(23))$. Topologically $A_1, A_2$ and $A_3$ are homeomorphic to $D_2$. The center of each such a disk represents the stable curve whose dual graph is the nested tree representing the cell. See Figure \ref{fig:esempio decomposizione cellulare con 3 punti}. 
	\end{es}
	\begin{oss}
		As we know $Tr_n\cong \M_{0,n+1}$. If $n=3$ we get the (open) cell decomposition of $\M_{0,4}$ depicted in Figure \ref{fig:decomposizione cellulare alberi con tre foglie}. Adding the three missing points (i.e. the three stable curves of $\overline{\M}_{0,4}$) we get a CW-decomposition of $\overline{\M}_{0,4}$. However it is different from the one we obtained in this paragraph: in this decomposition the stable curves lie in the boundary of the $2$-cells (see Figure \ref{fig:decomposizione cellulare alberi con tre foglie}), while in the decomposition given by nested cacti they are the centers of the $2$-cells. 
	\end{oss}
	\begin{figure}
		\centering

		\tikzset{every picture/.style={line width=0.75pt}} 
		
		\begin{tikzpicture}[x=0.75pt,y=0.75pt,yscale=-1,xscale=1]
			
			\draw   (31,222.78) .. controls (31,217.38) and (35.38,213) .. (40.78,213) .. controls (46.18,213) and (50.56,217.38) .. (50.56,222.78) .. controls (50.56,228.18) and (46.18,232.56) .. (40.78,232.56) .. controls (35.38,232.56) and (31,228.18) .. (31,222.78) -- cycle ;
			\draw   (50.56,222.78) .. controls (50.56,217.38) and (54.93,213) .. (60.33,213) .. controls (65.73,213) and (70.11,217.38) .. (70.11,222.78) .. controls (70.11,228.18) and (65.73,232.56) .. (60.33,232.56) .. controls (54.93,232.56) and (50.56,228.18) .. (50.56,222.78) -- cycle ;
			\draw   (70.11,222.78) .. controls (70.11,217.38) and (74.49,213) .. (79.89,213) .. controls (85.29,213) and (89.67,217.38) .. (89.67,222.78) .. controls (89.67,228.18) and (85.29,232.56) .. (79.89,232.56) .. controls (74.49,232.56) and (70.11,228.18) .. (70.11,222.78) -- cycle ;
			\draw    (26.67,175.67) -- (60,209) ;
			\draw    (60,209) -- (93.67,175.33) ;
			\draw    (60,209) -- (60,174.33) ;
			\draw    (31.67,57.67) -- (65,91) ;
			\draw    (65,91) -- (98.67,57.33) ;
			\draw    (65,91) -- (65,56.33) ;
			\draw   (61.22,125.26) .. controls (61.22,125.26) and (61.22,125.26) .. (61.22,125.26) .. controls (59.96,130.21) and (54.93,133.2) .. (49.98,131.93) .. controls (45.04,130.67) and (42.05,125.64) .. (43.31,120.69) .. controls (44.58,115.75) and (49.61,112.76) .. (54.56,114.02) .. controls (60.52,115.55) and (63.51,116.31) .. (63.51,116.31) .. controls (63.51,116.31) and (62.75,119.3) .. (61.22,125.26) -- cycle ;
			\draw   (72.44,113.94) .. controls (72.44,113.94) and (72.44,113.94) .. (72.44,113.94) .. controls (72.44,113.94) and (72.44,113.94) .. (72.44,113.94) .. controls (77.38,112.63) and (82.44,115.57) .. (83.74,120.51) .. controls (85.05,125.44) and (82.11,130.5) .. (77.18,131.81) .. controls (72.24,133.12) and (67.18,130.18) .. (65.88,125.24) .. controls (64.3,119.29) and (63.51,116.31) .. (63.51,116.31) .. controls (63.51,116.31) and (66.48,115.52) .. (72.44,113.94) -- cycle ;
			\draw   (57.12,109.63) .. controls (57.12,109.63) and (57.12,109.63) .. (57.12,109.63) .. controls (53.59,105.95) and (53.71,100.1) .. (57.4,96.57) .. controls (61.09,93.04) and (66.94,93.16) .. (70.47,96.85) .. controls (74,100.54) and (73.87,106.39) .. (70.19,109.92) .. controls (65.74,114.18) and (63.51,116.31) .. (63.51,116.31) .. controls (63.51,116.31) and (61.38,114.09) .. (57.12,109.63) -- cycle ;
			\draw    (108.67,57.67) -- (142,91) ;
			\draw    (142,91) -- (175.67,57.33) ;
			\draw    (142,91) -- (142,56.33) ;
			\draw   (138.22,125.26) .. controls (138.22,125.26) and (138.22,125.26) .. (138.22,125.26) .. controls (136.96,130.21) and (131.93,133.2) .. (126.98,131.93) .. controls (122.04,130.67) and (119.05,125.64) .. (120.31,120.69) .. controls (121.58,115.75) and (126.61,112.76) .. (131.56,114.02) .. controls (137.52,115.55) and (140.51,116.31) .. (140.51,116.31) .. controls (140.51,116.31) and (139.75,119.3) .. (138.22,125.26) -- cycle ;
			\draw   (149.44,113.94) .. controls (149.44,113.94) and (149.44,113.94) .. (149.44,113.94) .. controls (149.44,113.94) and (149.44,113.94) .. (149.44,113.94) .. controls (154.38,112.63) and (159.44,115.57) .. (160.74,120.51) .. controls (162.05,125.44) and (159.11,130.5) .. (154.18,131.81) .. controls (149.24,133.12) and (144.18,130.18) .. (142.88,125.24) .. controls (141.3,119.29) and (140.51,116.31) .. (140.51,116.31) .. controls (140.51,116.31) and (143.48,115.52) .. (149.44,113.94) -- cycle ;
			\draw   (134.12,109.63) .. controls (134.12,109.63) and (134.12,109.63) .. (134.12,109.63) .. controls (130.59,105.95) and (130.71,100.1) .. (134.4,96.57) .. controls (138.09,93.04) and (143.94,93.16) .. (147.47,96.85) .. controls (151,100.54) and (150.87,106.39) .. (147.19,109.92) .. controls (147.19,109.92) and (147.19,109.92) .. (147.19,109.92) .. controls (142.74,114.18) and (140.51,116.31) .. (140.51,116.31) .. controls (140.51,116.31) and (138.38,114.09) .. (134.12,109.63) -- cycle ;
			\draw   (111,222.78) .. controls (111,217.38) and (115.38,213) .. (120.78,213) .. controls (126.18,213) and (130.56,217.38) .. (130.56,222.78) .. controls (130.56,228.18) and (126.18,232.56) .. (120.78,232.56) .. controls (115.38,232.56) and (111,228.18) .. (111,222.78) -- cycle ;
			\draw   (130.56,222.78) .. controls (130.56,217.38) and (134.93,213) .. (140.33,213) .. controls (145.73,213) and (150.11,217.38) .. (150.11,222.78) .. controls (150.11,228.18) and (145.73,232.56) .. (140.33,232.56) .. controls (134.93,232.56) and (130.56,228.18) .. (130.56,222.78) -- cycle ;
			\draw   (150.11,222.78) .. controls (150.11,217.38) and (154.49,213) .. (159.89,213) .. controls (165.29,213) and (169.67,217.38) .. (169.67,222.78) .. controls (169.67,228.18) and (165.29,232.56) .. (159.89,232.56) .. controls (154.49,232.56) and (150.11,228.18) .. (150.11,222.78) -- cycle ;
			\draw    (106.67,175.67) -- (140,209) ;
			\draw    (140,209) -- (173.67,175.33) ;
			\draw    (140,209) -- (140,174.33) ;
			\draw   (189,222.78) .. controls (189,217.38) and (193.38,213) .. (198.78,213) .. controls (204.18,213) and (208.56,217.38) .. (208.56,222.78) .. controls (208.56,228.18) and (204.18,232.56) .. (198.78,232.56) .. controls (193.38,232.56) and (189,228.18) .. (189,222.78) -- cycle ;
			\draw   (208.56,222.78) .. controls (208.56,217.38) and (212.93,213) .. (218.33,213) .. controls (223.73,213) and (228.11,217.38) .. (228.11,222.78) .. controls (228.11,228.18) and (223.73,232.56) .. (218.33,232.56) .. controls (212.93,232.56) and (208.56,228.18) .. (208.56,222.78) -- cycle ;
			\draw   (228.11,222.78) .. controls (228.11,217.38) and (232.49,213) .. (237.89,213) .. controls (243.29,213) and (247.67,217.38) .. (247.67,222.78) .. controls (247.67,228.18) and (243.29,232.56) .. (237.89,232.56) .. controls (232.49,232.56) and (228.11,228.18) .. (228.11,222.78) -- cycle ;
			\draw    (184.67,175.67) -- (218,209) ;
			\draw    (218,209) -- (251.67,175.33) ;
			\draw    (218,209) -- (218,174.33) ;
			\draw    (33.67,276.22) -- (51,309) ;
			\draw    (51,309) -- (67.67,277.22) ;
			\draw    (54.67,337) -- (69.67,373.56) ;
			\draw    (69.67,373.56) -- (109.67,281.56) ;
			\draw   (32,321.78) .. controls (32,316.38) and (36.38,312) .. (41.78,312) .. controls (47.18,312) and (51.56,316.38) .. (51.56,321.78) .. controls (51.56,327.18) and (47.18,331.56) .. (41.78,331.56) .. controls (36.38,331.56) and (32,327.18) .. (32,321.78) -- cycle ;
			\draw   (51.56,321.78) .. controls (51.56,316.38) and (55.93,312) .. (61.33,312) .. controls (66.73,312) and (71.11,316.38) .. (71.11,321.78) .. controls (71.11,327.18) and (66.73,331.56) .. (61.33,331.56) .. controls (55.93,331.56) and (51.56,327.18) .. (51.56,321.78) -- cycle ;
			\draw   (50,383.78) .. controls (50,378.38) and (54.38,374) .. (59.78,374) .. controls (65.18,374) and (69.56,378.38) .. (69.56,383.78) .. controls (69.56,389.18) and (65.18,393.56) .. (59.78,393.56) .. controls (54.38,393.56) and (50,389.18) .. (50,383.78) -- cycle ;
			\draw   (69.56,383.78) .. controls (69.56,378.38) and (73.93,374) .. (79.33,374) .. controls (84.73,374) and (89.11,378.38) .. (89.11,383.78) .. controls (89.11,389.18) and (84.73,393.56) .. (79.33,393.56) .. controls (73.93,393.56) and (69.56,389.18) .. (69.56,383.78) -- cycle ;
			\draw    (123.67,276.22) -- (141,309) ;
			\draw    (141,309) -- (157.67,277.22) ;
			\draw    (144.67,337) -- (159.67,373.56) ;
			\draw    (159.67,373.56) -- (199.67,281.56) ;
			\draw   (122,321.78) .. controls (122,316.38) and (126.38,312) .. (131.78,312) .. controls (137.18,312) and (141.56,316.38) .. (141.56,321.78) .. controls (141.56,327.18) and (137.18,331.56) .. (131.78,331.56) .. controls (126.38,331.56) and (122,327.18) .. (122,321.78) -- cycle ;
			\draw   (141.56,321.78) .. controls (141.56,316.38) and (145.93,312) .. (151.33,312) .. controls (156.73,312) and (161.11,316.38) .. (161.11,321.78) .. controls (161.11,327.18) and (156.73,331.56) .. (151.33,331.56) .. controls (145.93,331.56) and (141.56,327.18) .. (141.56,321.78) -- cycle ;
			\draw   (140,383.78) .. controls (140,378.38) and (144.38,374) .. (149.78,374) .. controls (155.18,374) and (159.56,378.38) .. (159.56,383.78) .. controls (159.56,389.18) and (155.18,393.56) .. (149.78,393.56) .. controls (144.38,393.56) and (140,389.18) .. (140,383.78) -- cycle ;
			\draw   (159.56,383.78) .. controls (159.56,378.38) and (163.93,374) .. (169.33,374) .. controls (174.73,374) and (179.11,378.38) .. (179.11,383.78) .. controls (179.11,389.18) and (174.73,393.56) .. (169.33,393.56) .. controls (163.93,393.56) and (159.56,389.18) .. (159.56,383.78) -- cycle ;
			\draw    (218.67,276.22) -- (236,309) ;
			\draw    (236,309) -- (252.67,277.22) ;
			\draw    (239.67,337) -- (254.67,373.56) ;
			\draw    (254.67,373.56) -- (294.67,281.56) ;
			\draw   (217,321.78) .. controls (217,316.38) and (221.38,312) .. (226.78,312) .. controls (232.18,312) and (236.56,316.38) .. (236.56,321.78) .. controls (236.56,327.18) and (232.18,331.56) .. (226.78,331.56) .. controls (221.38,331.56) and (217,327.18) .. (217,321.78) -- cycle ;
			\draw   (236.56,321.78) .. controls (236.56,316.38) and (240.93,312) .. (246.33,312) .. controls (251.73,312) and (256.11,316.38) .. (256.11,321.78) .. controls (256.11,327.18) and (251.73,331.56) .. (246.33,331.56) .. controls (240.93,331.56) and (236.56,327.18) .. (236.56,321.78) -- cycle ;
			\draw   (235,383.78) .. controls (235,378.38) and (239.38,374) .. (244.78,374) .. controls (250.18,374) and (254.56,378.38) .. (254.56,383.78) .. controls (254.56,389.18) and (250.18,393.56) .. (244.78,393.56) .. controls (239.38,393.56) and (235,389.18) .. (235,383.78) -- cycle ;
			\draw   (254.56,383.78) .. controls (254.56,378.38) and (258.93,374) .. (264.33,374) .. controls (269.73,374) and (274.11,378.38) .. (274.11,383.78) .. controls (274.11,389.18) and (269.73,393.56) .. (264.33,393.56) .. controls (258.93,393.56) and (254.56,389.18) .. (254.56,383.78) -- cycle ;
			\draw  [fill={rgb, 255:red, 248; green, 231; blue, 28 }  ,fill opacity=0.33 ] (315.67,180.67) .. controls (315.67,125.62) and (360.29,81) .. (415.33,81) .. controls (470.38,81) and (515,125.62) .. (515,180.67) .. controls (515,235.71) and (470.38,280.33) .. (415.33,280.33) .. controls (360.29,280.33) and (315.67,235.71) .. (315.67,180.67) -- cycle ;
			\draw  [color={rgb, 255:red, 0; green, 0; blue, 0 }  ,draw opacity=1 ][fill={rgb, 255:red, 0; green, 0; blue, 0 }  ,fill opacity=1 ] (412.67,81) .. controls (412.67,79.53) and (413.86,78.33) .. (415.33,78.33) .. controls (416.81,78.33) and (418,79.53) .. (418,81) .. controls (418,82.47) and (416.81,83.67) .. (415.33,83.67) .. controls (413.86,83.67) and (412.67,82.47) .. (412.67,81) -- cycle ;
			\draw  [color={rgb, 255:red, 0; green, 0; blue, 0 }  ,draw opacity=1 ][fill={rgb, 255:red, 0; green, 0; blue, 0 }  ,fill opacity=1 ] (412.67,280.33) .. controls (412.67,278.86) and (413.86,277.67) .. (415.33,277.67) .. controls (416.81,277.67) and (418,278.86) .. (418,280.33) .. controls (418,281.81) and (416.81,283) .. (415.33,283) .. controls (413.86,283) and (412.67,281.81) .. (412.67,280.33) -- cycle ;
			
			\draw (35.62,216) node [anchor=north west][inner sep=0.75pt]   [align=left] {$\displaystyle 1$};
			\draw (54.72,216) node [anchor=north west][inner sep=0.75pt]   [align=left] {$\displaystyle 2$};
			\draw (74.5,216) node [anchor=north west][inner sep=0.75pt]   [align=left] {$\displaystyle 3$};
			\draw (46.9,116) node [anchor=north west][inner sep=0.75pt]   [align=left] {$\displaystyle 1$};
			\draw (68.63,116) node [anchor=north west][inner sep=0.75pt]   [align=left] {$\displaystyle 3$};
			\draw (58.35,95.04) node [anchor=north west][inner sep=0.75pt]   [align=left] {$\displaystyle 2$};
			\draw (123.9,116) node [anchor=north west][inner sep=0.75pt]   [align=left] {$\displaystyle 1$};
			\draw (145.63,116) node [anchor=north west][inner sep=0.75pt]   [align=left] {$\displaystyle 2$};
			\draw (135.35,95.04) node [anchor=north west][inner sep=0.75pt]   [align=left] {$\displaystyle 3$};
			\draw (115.62,216) node [anchor=north west][inner sep=0.75pt]   [align=left] {$\displaystyle 1$};
			\draw (134.72,216) node [anchor=north west][inner sep=0.75pt]   [align=left] {$\displaystyle 3$};
			\draw (154.5,216) node [anchor=north west][inner sep=0.75pt]   [align=left] {$\displaystyle 2$};
			\draw (193.62,216) node [anchor=north west][inner sep=0.75pt]   [align=left] {$\displaystyle 2$};
			\draw (212.72,216) node [anchor=north west][inner sep=0.75pt]   [align=left] {$\displaystyle 1$};
			\draw (232.5,216) node [anchor=north west][inner sep=0.75pt]   [align=left] {$\displaystyle 3$};
			\draw (36.62,315.23) node [anchor=north west][inner sep=0.75pt]   [align=left] {$\displaystyle 1$};
			\draw (55.72,315.55) node [anchor=north west][inner sep=0.75pt]   [align=left] {$\displaystyle 2$};
			\draw (73.72,376.55) node [anchor=north west][inner sep=0.75pt]   [align=left] {$\displaystyle 3$};
			\draw (126.62,315.23) node [anchor=north west][inner sep=0.75pt]   [align=left] {$\displaystyle 1$};
			\draw (145.72,315.55) node [anchor=north west][inner sep=0.75pt]   [align=left] {$\displaystyle 3$};
			\draw (163.72,376.55) node [anchor=north west][inner sep=0.75pt]   [align=left] {$\displaystyle 2$};
			\draw (221.62,315.23) node [anchor=north west][inner sep=0.75pt]   [align=left] {$\displaystyle 2$};
			\draw (240.72,315.55) node [anchor=north west][inner sep=0.75pt]   [align=left] {$\displaystyle 3$};
			\draw (258.72,3746.55) node [anchor=north west][inner sep=0.75pt]   [align=left] {$\displaystyle 1$};
			\draw (406,166) node [anchor=north west][inner sep=0.75pt]   [align=left] {$\displaystyle A_{1}$};
			\draw (406,58) node [anchor=north west][inner sep=0.75pt]   [align=left] {$\displaystyle C_{1}$};
			\draw (408,289) node [anchor=north west][inner sep=0.75pt]   [align=left] {$\displaystyle C_{2}$};
			\draw (294,170) node [anchor=north west][inner sep=0.75pt]   [align=left] {$\displaystyle B_{1}$};
			\draw (519,167) node [anchor=north west][inner sep=0.75pt]   [align=left] {$\displaystyle B_{3}$};
			\draw (53,139) node [anchor=north west][inner sep=0.75pt]   [align=left] {$\displaystyle C_{1}$};
			\draw (130,139) node [anchor=north west][inner sep=0.75pt]   [align=left] {$\displaystyle C_{2}$};
			\draw (52,240) node [anchor=north west][inner sep=0.75pt]   [align=left] {$\displaystyle B_{1}$};
			\draw (129,240) node [anchor=north west][inner sep=0.75pt]   [align=left] {$\displaystyle B_{2}$};
			\draw (209,240) node [anchor=north west][inner sep=0.75pt]   [align=left] {$\displaystyle B_{3}$};
			\draw (62,402) node [anchor=north west][inner sep=0.75pt]   [align=left] {$\displaystyle A_{1}$};
			\draw (150,402) node [anchor=north west][inner sep=0.75pt]   [align=left] {$\displaystyle A_{2}$};
			\draw (245,402) node [anchor=north west][inner sep=0.75pt]   [align=left] {$\displaystyle A_{3}$};

		\end{tikzpicture}
		
		\caption{In this picture we see the cell decomposition of $\overline{\M}_{0,4}$. On the left there are the cells. On the right it is depicted the cell $A_1$, together with its boundary.}
		\label{fig:esempio decomposizione cellulare con 3 punti}
	\end{figure}
	\begin{oss}\label{oss:celle non sono compatibili con l'operad}
		The regular cell decomposition of $\overline{\M}_{0,n+1}$ obtained in this way is not compatible with the operad structure. Indeed the operadic composition maps
		\[
		\circ_i:\overline{\M}_{0,n+1}\times \overline{\M}_{0,m+1}\to \overline{\M}_{0,m+n}
		\]
		are not cellular maps (in the domain we put the product cell structure). For example consider $\circ_1:\overline{\M}_{0,3}\times \overline{\M}_{0,3}\to \overline{\M}_{0,4}$:  its image is the stable curve associated to the nested tree $((12)3)$ which is not a $0$-cell of $\overline{\M}_{0,4}$ (actually it is the center of the $2$-cell $A_1$ of Figure \ref{fig:esempio decomposizione cellulare con 3 punti}).
	\end{oss}

	
	\section{An operad built from the dual cell decomposition}\label{sec:operad of dual cells}
	In this Section we present an operad of chain complexes which will be our chain model for the Hypercommutative operad.  The idea is the following: in Paragraph \ref{subsec: relations with the bar construction} we will identify (up to a shift in degrees) the operadic bar construction $B(grav)$ with the collection of chain complexes
	\[
	C_*^{cell}(\overline{\M})\coloneqq\{C_*^{cell}(\overline{\M}_{0,n+1}) \}_{n\geq 2}
	\]
	where $C_*^{cell}(\overline{\M}_{0,n+1})$ is the complex of cellular chains associated to the CW decomposition of Paragraph \ref{subsec: CW decomposition of Deligne mumford}.
	By definition $B(grav)$ is a cooperad, so we get a cooperad structure on $C_*^{cell}(\overline{\M})$. Dualizing, i.e. replacing chains with cochains we get an operad in cochain complexes
	\[
	C^*_{cell}(\overline{\M})\coloneqq\{C^*_{cell}(\overline{\M}_{0,n+1})\}_{n\geq 2}
	\]
	Since the Gravity and Hypercommutative operad are Koszul dual to each other (\cite{Getzler}, \cite{Ginzburg-Kapranov}) this will be a good candidate as a (co)chain model for the Hypercommutative operad. In this section we will expand a bit this observation, giving a more down to earth explanation of what we just said using Poincaré duality instead of Koszul duality: in the eyes of Poincaré duality, passing to cochains is the same as taking the dual cell decomposition. So, if $C_*^{dual}(\overline{\M}_{0,n+1})$ denotes the cellular chains respect to the dual cell decomposition, we expect that
	\[
	C_*^{dual}(\overline{\M})\coloneqq\{C_*^{dual}(\overline{\M}_{0,n+1})\}_{n\geq 2}
	\]
	has an operad structure, giving a chain model for the Hypercommutative operad. The fact that $C_*^{dual}(\overline{\M})$ is the linear dual of $B(grav)$ (up to shifts) shows that the Hypercommutative and Gravity operad are Koszul dual to each other also at the level of chains, improving the classical result of Getzler \cite{Getzler}. 
	
	\subsection{Relation with the bar construction}\label{subsec: relations with the bar construction}
	In Section \ref{sec: combinatorial models for Deligne-Mumford} we described a cell decomposition of $\overline{\M}_{0,n+1}$ where the cells are described by the following combinatorics:
	\begin{itemize}
		\item A nested cactus $\mathcal{S}$ with $n$-leaves.
		\item For each vertex $S\in \mathcal{S}$ we have a cell $C_S\subseteq\mathcal{C}_{\abs{S}}/S^1$.
	\end{itemize}
	If we want to highlight this combinatorial data we will write $(\mathcal{S},(C_S)_{S\in\mathcal{S}})$. 
	\begin{oss}
		These cells resemble a bar construction: elements of $C_*^{cell}(\overline{\M}_{0,n+1})$ are linear combinations of nested trees with $n$-leaves decorated by unbased cacti, and this is exactly what we would obtain after performing a bar construction to an \emph{operad of unbased cacti}. But we already encountered such an operad: indeed, in Section \ref{sec: cactus models} we saw that the 
		\[
		grav\coloneqq \{sC_*^{cell}(\mathcal{C}_n/S^1)\}_{n\geq 2}
		\]
		is a chain model for the Gravity operad. 
	\end{oss}
	After this observation, the next Proposition seems to be more plausible:
	\begin{prop}
		\label{prop:bar construction vs chains}
		$B(grav)(n)=s^2C_*^{cell}(\overline{\M}_{0,n+1})$ for any $n\geq 2$, where $B$ is the operadic bar construction.
	\end{prop}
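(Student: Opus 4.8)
The plan is to compare the two sides vertex by vertex and edge by edge. Recall that the operadic bar construction $B(grav)$ is, in each arity $n$, a direct sum over rooted trees $\mathcal{S}$ with $n$ leaves of the tensor product $\bigotimes_{v\in V(\mathcal{S})} grav(\abs{v})$, suitably (de)suspended so that the differential has degree $-1$; since $grav(k)=sC_*^{cell}(\mathcal{C}_k/S^1)$, a generator of $B(grav)(n)$ is precisely a tree $\mathcal{S}$ decorated at each vertex $S$ by a cell $C_S\subseteq\mathcal{C}_{\abs{S}}/S^1$. First I would match the underlying graded vector spaces: by Corollary \ref{cor: decomposizione cellulare spazio compattificato} a cell of $\overline{\M}_{0,n+1}$ is exactly the same combinatorial datum $(\mathcal{S},(C_S)_{S\in\mathcal{S}})$, so there is a tautological bijection between the generators of $C_*^{cell}(\overline{\M}_{0,n+1})$ and those of $B(grav)(n)$. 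The degree bookkeeping is then the crucial sanity check: the cell $(\mathcal{S},(C_S))$ has dimension $2E+\sum_S\dim(C_S)$ where $E$ is the number of internal edges of $\mathcal{S}$, while in $B(grav)$ each of the $E+1$ vertices contributes a shift $s$ (from the definition of $grav$) and the bar construction contributes the usual internal shifts on edges; one computes that after the global $s^2$ these degrees agree, the factor $s^2$ accounting for the discrepancy between ``number of vertices'' and ``number of internal edges'' ($\abs{V(\mathcal{S})}=E+1$) together with the standard shift conventions in $B(-)$. I would present this degree count carefully as the first lemma, since sign and shift conventions are exactly where such statements go wrong.

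Next I would identify the two differentials. The cellular differential $\partial$ on $C_*^{cell}(\overline{\M}_{0,n+1})$ is, by the description of the boundary of a cell in Paragraph \ref{subsec: CW decomposition of nested cacti}, a sum of two kinds of terms: Move 1 (replace some $C_S$ by a codimension-one face $C_S'\subseteq\partial C_S$) and Move 2 (contract an internal edge $(S,T)$, inserting the cactus $C_S$ into the appropriate lobe of $C_T$ via the composition $\circ_i$ of $grav$). On the bar-construction side, the differential of $B(grav)$ is likewise the sum of the internal differential of each $grav(\abs{v})$ — which is precisely the cellular differential of $\mathcal{C}_{\abs{v}}/S^1$, i.e. Move 1 — and the term that contracts an edge of the tree using the operadic composition of $grav$ — i.e. Move 2. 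So the plan is to check that under the tautological bijection these two operations correspond termwise, including signs. The sign comparison is the place where I expect to spend real effort: I would fix orientations of cells of $\mathcal{C}_k/S^1$ once and for all (these give the signs in $grav$ and in $\tau$), note that the orientation of a product cell $\overline{C}_R\times\prod_{S\neq R}(D_2\times\overline{C}_S)$ is the product orientation with the $D_2$ factors in a fixed order, and then verify that the Koszul signs appearing in the bar differential match the incidence numbers of the CW-complex. The $D_2$-factors are what carry the ``suspension'' part of the bar differential: the boundary $\partial D_2=S^1$ contributes, via the equivalence relation defining $N_n^\sigma(\mathcal{C}/S^1)$ (the identification at $t_S=1$), exactly the edge-contraction map $\theta$, which is the operadic composition; this is the geometric incarnation of the ``$s$ on each edge'' in $B(grav)$.

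Concretely I would structure the proof as: (1) recall the explicit generators and differential of $B(grav)$; (2) write down the tautological bijection on generators using Corollary \ref{cor: decomposizione cellulare spazio compattificato}; (3) the degree lemma, giving the shift $s^2$; (4) match Move 1 with the internal differentials; (5) match Move 2 with the edge-contraction differential, using that the identification $t_{S_0}=1$ in the definition of $N_n^\sigma(\mathcal{C}/S^1)$ is implemented by $\gamma^\sigma_{\mathcal{S}}$, which is built out of the $\circ_i$ of $Cact$ and hence, after passing to $\mathcal{C}/S^1$, out of the $\circ_i$ of $grav$; (6) reconcile signs. The main obstacle is step (6): one must be scrupulous that the orientation conventions chosen for the cells of $\mathcal{C}_k/S^1$, the product orientation on the closure of a cell of $\overline{\M}_{0,n+1}$, and the Koszul sign rule used in defining $B(grav)$ are mutually consistent; a clean way to sidestep part of this is to first establish the isomorphism with $\mathbb{F}_2$-coefficients (where it is essentially the two bijections above plus the matching of the two sets of incident faces), and then observe that both sides are free and that the integral signs are determined by a coherent choice of orientations, which one fixes so as to make the two differentials agree — the point being that any such discrepancy is at worst a sign-change automorphism, which one absorbs into the isomorphism.
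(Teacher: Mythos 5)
Your proposal follows the same route as the paper's own proof: identify the underlying graded abelian groups via the tautological bijection $(\mathcal{S},(C_S))\leftrightarrow\bigotimes_S s^2C_S$ coming from Corollary \ref{cor: decomposizione cellulare spazio compattificato}, do the degree count $2V+\sum\dim C_S$ versus $2(V-1)+\sum\dim C_S$ to extract the global $s^2$, and then match the two summands of the bar differential (internal differential / edge contraction) with Moves 1 and 2 of the cellular boundary. The only genuine difference is emphasis: the paper dispatches the sign verification in one sentence, whereas you correctly flag it as the delicate step and sketch a sensible strategy (work over $\mathbb{F}_2$ first, then fix orientations coherently), which is a reasonable way to make the argument airtight but does not change the structure of the proof.
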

	\begin{proof}
		By definition $B(grav)=(T^c(sgrav),d)$, where $T^c$ denotes the free co-operad on the $\Sigma$-sequence $sgrav\coloneqq\{s^2C_*^{cell}(\mathcal{C}_n/S^1)\}_{n\geq 2}$. Forgetting the differential for a moment, 
		\[
		B(grav)(n)=\bigoplus_{\mathcal{S}\in N_n}\bigotimes_{S\in \mathcal{S}}sgrav(\abs{S})=\bigoplus_{\mathcal{S}\in N_n}\bigotimes_{S\in \mathcal{S}}s^2C_*^{cell}(\mathcal{C}_{\abs{S}}/S^1)
		\]
		so we can think of an element in the chain complex $B(grav)(n)$ as a sum of nested trees decorated by unbased cacti whose dimension is raised by two. Now consider the following isomorphism of graded abelian groups:
		\begin{align*}
			f:C_*^{cell}(\overline{\M}_{0,n+1})&\to B(grav)(n)\\
			(\mathcal{S},(C_S)_{S\in\mathcal{S}})&\mapsto \otimes_{S\in\mathcal{S}}s^2C_S
		\end{align*}
		If $\tau= (\mathcal{S},(C_S)_{S\in\mathcal{S}})$ is a cell in $C_*^{cell}(\overline{\M}_{0,n+1})$, then
		\[
		dim(\tau)=2(V-1)+ \sum_{{S\in \mathcal{S}}}dim(C_S)
		\]
		where $V$ is the number of vertices of $\mathcal{S}$. The corresponding element $f(\tau)$ has degree
		\[
		deg (f(\tau))=2V+\sum_{S\in \mathcal{S}}dim(C_S)
		\]
		therefore $f$ is a degree two map and induces an isomorphism 
		\[
		B(grav)(n)\cong s^2C_*^{cell}(\overline{\M}_{0,n+1})
		\]
		of graded abelian groups. Finally, let us prove that this identification is indeed an isomorphism of chain complexes: the differential of $B(grav)(n)$ splits into two parts $d=d_1+d_2$: $d_1$ acts on an element $f(\tau)\in B(grav)(n)$ by performing the differential of $grav$ one vertex at the time, and then summing all the results; $d_2$ acts on $f(\tau)$ by contracting edges and composing the corresponding cacti. But this is exactly how we compute the boundary of the cell $\tau$ (see Paragraph \ref{subsec: CW decomposition of nested cacti}), and this proves the claim.
	\end{proof}
	
	\subsection{The dual cell decomposition}\label{subsec: dual cells}
	Suppose from now on that we have fixed some weights $(a_1,\dots a_n)\in\mathring{\Delta}^{n-1}$. In this way we can identify $\overline{\M}_{0,n+1}$ with the space of nested cacti $N_n^{\sigma}(\mathcal{C}/S^1)$ (Theorem \ref{thm:omeo tra la compattificazione e lo spazio dei nested cactus}) and get a regular CW-decomposition of $\overline{\M}_{0,n+1}$. In this paragraph we give an explicit construction of the dual cell decomposition. We will work on $N_n^{\sigma}(\mathcal{C}/S^1)$, and then use Theorem \ref{thm:omeo tra la compattificazione e lo spazio dei nested cactus} to define the dual cells on $\overline{\M}_{0,n+1}$.
	\begin{costr}[Dual cell decomposition] Let $X$ be a regular cell decomposition of a compact oriented $n$-dimensional manifold $M$. The dual cell decomposition $X^*$ of $M$ is constructed as follows:
		\begin{enumerate}
			\item For each cell $\tau\in X$ choose a point $B\tau$ in its interior, which we will call the \textbf{barycenter}.
			\item For any strictly increasing chain $\tau_0\subset\tau_1\subset\cdots\subset \tau_k$ of cells, we define inductively $(\tau_0,\dots,\tau_k)\subseteq \tau_k$ as follows: $(\tau_0)\coloneqq B\tau_0$. Suppose that $(\tau_0,\dots,\tau_{k-1})\subseteq \tau_{k-1}$ is defined, then $(\tau_0,\dots,\tau_{k})$ is will be the cone on $(\tau_0,\dots,\tau_{k-1})$ with vertex $B\tau_k$. Observe that $(\tau_0,\dots,\tau_k)$ is homeomorphic to the $k$-simplex.
			\item Given a cell $\tau$, the dual cell $\tau^*$ is defined as the union of all simplices $(\tau_0,\dots,\tau_k)$ with $\tau_0=\tau$, $k\in\N$.
		\end{enumerate}
	\end{costr}
	In the case of $N_n^{\sigma}(\mathcal{C}/S^1)$ we have a natural choice for the barycenters:
	\begin{defn}
		Let $\tau=(\mathcal{S},(C_S)_{S\in\mathcal{S}})$ be a cell of $N_n^{\sigma}(\mathcal{C}/S^1)$. The \textbf{barycenter} $B\tau=(x_R,(t_S,\alpha_S,x_S)_{S\in\mathcal{S}-R})$ is the point specified by the following parameters:
		\begin{itemize}
			\item For any $S\in\mathcal{S}-R$ put $t_S=0$. Note that since the radial parameter is zero we do not have to specify any angular parameter.
			\item For any $S\in\mathcal{S}$,  $x_S$ will be the barycenter of $C_S$ ( $C_S$ is a product of simplices, so its barycenter is just the product of the barycenters of each simplex).
		\end{itemize}
	\end{defn}
	Now that we have chosen the barycenters we can construct the dual cells as just described. Let us prove some properties of the dual cells:
	\begin{defn}
		Fix a nested tree $\mathcal{S}$ with $n$-leaves. We will indicate with $\overline{N_n(\mathcal{S})}$ the following subspace of $N_n^{\sigma}(\mathcal{C}/S^1)$:
		\[
		\overline{N_n(\mathcal{S})}\coloneqq\{ (x_R,(t_T,\alpha_T,x_T)_{T\in\mathcal{T}-R})\in N_n^{\sigma}(\mathcal{C}/S^1))\mid \mathcal{S}\subseteq\mathcal{T} \text{ and } t_S=0 \text{ for all } S\in\mathcal{S}\}
		\]
	\end{defn}
	\begin{oss}
		Under the homeomorphism of Theorem \ref{thm:omeo tra la compattificazione e lo spazio dei nested cactus} the subspace $\overline{N_n(\mathcal{S})}$ corresponds to the closure of the stratum $\mathcal{M}(\mathcal{S})$.
	\end{oss}
	\begin{lem}\label{lem: celle duali vs strati1}
		Let $\tau=(\mathcal{S},(C_S)_{S\in\mathcal{S}})$ be a cell of $N_n^{\sigma}(\mathcal{C}/S^1)$. Then $\tau^*\subseteq \overline{N_n(\mathcal{S})}$.
	\end{lem}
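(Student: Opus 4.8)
The plan is to unwind the definition of the dual cell $\tau^*$ as a union of simplices $(\tau_0,\dots,\tau_k)$ with $\tau_0=\tau$, and to show that every such simplex is contained in $\overline{N_n(\mathcal{S})}$. Since $\overline{N_n(\mathcal{S})}$ is a closed subspace (it is cut out by the conditions $\mathcal{S}\subseteq\mathcal{T}$ and $t_S=0$ for $S\in\mathcal{S}$, which are closed conditions in $N_n^{\sigma}(\mathcal{C}/S^1)$), and each simplex $(\tau_0,\dots,\tau_k)$ is a cone on a lower-dimensional piece with apex the barycenter $B\tau_k$, it suffices to prove two things: first, that the barycenter $B\sigma$ of any cell $\sigma$ with $\tau\subseteq\sigma$ lies in $\overline{N_n(\mathcal{S})}$; and second, that $\overline{N_n(\mathcal{S})}$ is closed under the coning operation used to build the simplices, so that if all the vertices $B\tau_0,\dots,B\tau_k$ of a simplex lie in $\overline{N_n(\mathcal{S})}$ then the whole simplex does. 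I would set up the argument by induction on $k$, exactly mirroring the inductive construction of $(\tau_0,\dots,\tau_k)$.

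First I would analyze the vertices. Write $\sigma=(\mathcal{T},(C'_T)_{T\in\mathcal{T}})$ for a cell with $\tau\subseteq\sigma$ in the face poset. From the description of the cell boundary in Paragraph \ref{subsec: CW decomposition of nested cacti} (moves 1 and 2: cacti boundary and edge contraction), the relation $\tau\subseteq\sigma$ forces $\mathcal{T}\subseteq\mathcal{S}$ as nested trees — passing from $\sigma$ to a face only contracts edges (enlarging the nested tree) or shrinks the cactus cells — so in particular $\mathcal{S}\supseteq\mathcal{T}$, i.e. the larger cell has the smaller nested tree and $\tau$'s tree $\mathcal{S}$ refines $\sigma$'s tree $\mathcal{T}$. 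Now the barycenter $B\sigma$ has radial parameter $t_T=0$ for every $T\in\mathcal{T}-R$ by definition; in particular $t_S=0$ for every $S\in\mathcal{S}$ that happens to lie in $\mathcal{T}$. For the vertices $S\in\mathcal{S}-\mathcal{T}$ the radial parameter is not literally prescribed by $B\sigma$, but these are exactly the edges that get contracted in passing from the nested tree of $\sigma$ to that of a point of $\overline{N_n(\mathcal{S})}$; under the identification of $\overline{N_n(\mathcal{S})}$ as the set of nested cacti whose underlying tree $\mathcal{T}'$ contains $\mathcal{S}$ with $t_S=0$ on all of $\mathcal{S}$, the barycenter $B\sigma$ (which lives in the interior of the stratum indexed by $\mathcal{T}\subseteq\mathcal{S}$) qualifies precisely because all its radial parameters vanish, so there are no edges with $t_T\neq 0$ to obstruct membership. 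Thus $B\sigma\in\overline{N_n(\mathcal{S})}$ for every $\sigma\supseteq\tau$.

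Next I would verify closure under coning. A simplex $(\tau_0,\dots,\tau_k)$ is the cone with apex $B\tau_k$ over $(\tau_0,\dots,\tau_{k-1})$, which lives inside the cell $\tau_{k-1}\subseteq\tau_k$. The coning is performed inside $\tau_k$, which is a product $C_R\times\prod_{S\in\mathcal{S}_k-R}(\mathring{D}_2\times C_S)$ of convex-type pieces (simplices and open $2$-disks), and the subspace $\overline{N_n(\mathcal{S})}\cap\tau_k$ is cut out by the linear conditions $t_S=0$ on certain radial coordinates together with the combinatorial constraint that $\mathcal{S}$ refines the tree — which is automatic once we are inside $\tau_k$ with those coordinates zero. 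Since the hyperplane $\{t_S=0 : S\in\mathcal{S}\}$ is convex and the $2$-disk cone point $t_S=0$ is in it, the straight-line cone from $B\tau_k$ (which satisfies all these $t_S=0$) over any subset of $\overline{N_n(\mathcal{S})}$ stays in $\overline{N_n(\mathcal{S})}$; this is where I would be a little careful because the $D_2$ coordinates are cones on cylinders and "coning from the apex" interacts with the quotient $\sim$ defining $N_n^{\sigma}(\mathcal{C}/S^1)$ — but the apex $t_S=0$ is precisely the collapsed base of the cylinder, so the segment from it to any interior point is unambiguous and stays on $t_S=0$ only at the endpoint, hence I actually want the cone to keep $t_S$ bounded by its value along $(\tau_0,\dots,\tau_{k-1})$, which is $0$ at all levels by the inductive hypothesis. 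Assembling: by induction every vertex of every simplex building $\tau^*$ lies in $\overline{N_n(\mathcal{S})}$, and coning preserves membership, so $(\tau_0,\dots,\tau_k)\subseteq\overline{N_n(\mathcal{S})}$ for all chains with $\tau_0=\tau$, whence $\tau^*\subseteq\overline{N_n(\mathcal{S})}$.

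The main obstacle I anticipate is the bookkeeping around the $D_2$-factors and the quotient relation $\sim$: one must check that the "cone on the previous simplex with apex the barycenter" really does keep all the radial parameters $t_S$ ($S\in\mathcal{S}$) equal to zero, rather than merely starting and ending at zero, and that this is compatible with the identifications in $N_n^{\sigma}(\mathcal{C}/S^1)$ where $t_S=0$ makes the angular parameter disappear. The clean way to handle this is to observe that the barycenter of \emph{every} cell in the relevant chain already has \emph{all} radial parameters zero (not just those indexed by $\mathcal{S}$), so every vertex of the simplex sits on the common coordinate subspace $\{t_T=0 \ \forall T\}$, and the straight-line cone construction, being affine in the simplex coordinates, never leaves this subspace; membership in $\overline{N_n(\mathcal{S})}$ then reduces to the purely combinatorial statement $\mathcal{S}\subseteq\mathcal{T}'$ for the underlying tree $\mathcal{T}'$ of points of the simplex, which follows from $\mathcal{T}_k\subseteq\mathcal{S}$ together with the fact that moving toward the interior of $\tau_k$ only refines the tree.
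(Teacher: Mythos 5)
Your overall strategy — induct on the length $k$ of the chain $(\tau_0,\dots,\tau_k)$, show the barycenters lie in $\overline{N_n(\mathcal{S})}$, and show coning from $B\tau_k$ preserves $\{t_S=0 : S\in\mathcal{S}\}$ — is the same as the paper's, and your middle paragraph captures the correct argument. But two of your supporting claims are wrong. First, you have the inclusion of nested trees backwards: if $\tau\subseteq\sigma$ in the face poset, the \emph{face} has the \emph{smaller} nested tree, since Move~2 (edge contraction) deletes a vertex from the nested tree; so $\mathcal{S}=tree(\tau)\subseteq tree(\sigma)=\mathcal{T}$, not $\mathcal{T}\subseteq\mathcal{S}$. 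Your first paragraph then spends effort worrying about the nonexistent vertices of $\mathcal{S}-\mathcal{T}$, and in doing so never actually verifies the combinatorial requirement $\mathcal{S}\subseteq tree(B\sigma)$ for membership in $\overline{N_n(\mathcal{S})}$ — which is exactly what the correct inclusion hands you for free, since $B\sigma$ has underlying tree $\mathcal{T}\supseteq\mathcal{S}$ and all radial parameters zero.

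The more serious problem is your closing ``clean way'' paragraph, which is false and would, if believed, prove something much stronger than the lemma. It is not true that every vertex of $(\tau_0,\dots,\tau_k)$, read in the coordinates of the ambient closed cell $\overline{\tau}_k$, has all radial parameters zero. The barycenter $B\tau_{k-1}$ sits in $\partial\tau_k$, which is reached by setting $t_S=1$ (not $t_S=0$) on the edges that get contracted in passing from $\mathcal{T}_k$ to $\mathcal{T}_{k-1}$; $t_S=1$ is precisely the locus where the quotient relation of $N_n^{\sigma}(\mathcal{C}/S^1)$ kills the vertex $S$. So the simplex is generally not contained in $\{t_T=0\ \forall T\}$ (that set is a high-codimension singular stratum), and an argument that placed $\tau^*$ there would be wrong. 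The correct and sufficient observation — which your middle paragraph already states — is that only the coordinates $t_S$ for $S\in\mathcal{S}$ need to stay zero: they vanish at $B\tau_k$ because the barycenter of a cell has all radial parameters zero, they vanish on $(\tau_0,\dots,\tau_{k-1})$ by the inductive hypothesis, and they vanish on the connecting segments by linearity in the coordinates of $\tau_k$. Delete the last paragraph, fix the direction of the inclusion, and the proof matches the paper's.
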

	\begin{proof}
		By the construction  of $\tau^*$ it suffices to show that any simplex of the form $(\tau,\tau_1,\dots,\tau_k)$ is contained in $\overline{N_n(\mathcal{S})}$. We proceed by induction on $k$:
		\begin{description}
			\item[$k=0$] in this case we have the zero simplex $(\tau)\coloneqq B\tau$, which is a point of $\overline{N_n(\mathcal{S})}$ by construction.
			\item[$k\geq 1$] by definition $(\tau,\tau_1,\dots,\tau_k)$ is the cone on $(\tau,\tau_1,\dots,\tau_{k-1})$ with vertex $B\tau_k$. Since $\tau\subseteq \tau_k$, the cell $\tau_k$ will be of the form $\tau_k=(\mathcal{T}_k,(C'_T)_{T\in\mathcal{
					T}_k})$ for some nested tree $\mathcal{T}_k\supseteq\mathcal{S}$. The barycenter of $\tau_k$ is then $B\tau_k=(b_R,(0,b_T)_{T\in\mathcal{T}_k-R})$, where $b_T$ is the barycenter of $C'_T$.  Now take a point $p\in(\tau,\dots,\tau_{k-1})$, where $(\tau,\dots,\tau_{k-1})$ is seen as a subspace of $\partial\tau_k$. Then $p$ will be of the form 
			\[
			p=(x_R,(t_T,\alpha_T,x_T)_{T\in\mathcal{T}_k-R})
			\]
			By induction $(\tau,\tau_1,\dots,\tau_{k-1})\subseteq\overline{N_n(\mathcal{S})}$, so $t_S=0$ for all $S\in\mathcal{S}$. The segment connecting $B\tau_k$ and $p$ interpolates between the coordinates of $B\tau_k$ and $p$ which are different from each other. Since also $B\tau_k$ has $t_S=0$ for all $S\in\mathcal{S}$, any point on such a segment will have $t_S=0$ for any $S\in\mathcal{S}$. Finally we observe that $(\tau,\tau_1,\dots,\tau_k)$ is the union of all such segments, so we get the statement. 
		\end{description}
	\end{proof}
	A converse statement also holds:
	\begin{lem}\label{lem: celle duali vs strati2}
		Fix a nested tree $\mathcal{S}$ and let $x\in \overline{N_n(\mathcal{S})}$. If $\tau^*$ is the unique dual cell containing $x$, then $\mathcal{S}\subseteq tree(\tau)$, where $tree(\tau)$ is the nested tree underlying the cell $\tau$.
	\end{lem}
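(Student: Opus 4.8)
The strategy is to show the contrapositive of the failure mode: if $\mathcal{S}\not\subseteq tree(\tau)$, then every point of $\tau^*$ lies outside $\overline{N_n(\mathcal{S})}$, so in particular $x\notin\tau^*$. Equivalently, I will show directly that if $x\in\overline{N_n(\mathcal{S})}$ lies in the (open) dual cell $\tau^*$, then the nested tree $tree(\tau)$ must contain $\mathcal{S}$. The key tool is the characterization of $\overline{N_n(\mathcal{S})}$ from the definition just before the previous lemma: a nested cactus $y=(x_R,(t_T,\alpha_T,x_T)_{T\in\mathcal{T}-R})$ lies in $\overline{N_n(\mathcal{S})}$ iff $\mathcal{S}\subseteq\mathcal{T}$ and $t_S=0$ for every $S\in\mathcal{S}$. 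So I need to extract from the hypothesis ``$x$ lies in the dual cell $\tau^*$'' enough rigidity on the nested tree of $x$ and the radial parameters of $x$ to force $\mathcal{S}\subseteq tree(\tau)$.

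First I would recall how $\tau^*$ is built: it is the union of all simplices $(\tau=\tau_0\subset\tau_1\subset\cdots\subset\tau_k)$, where each $\tau_j$ is a cell with $tree(\tau_0)\subseteq tree(\tau_1)\subseteq\cdots\subseteq tree(\tau_k)$ (edge contraction only enlarges the nested tree, as noted in Paragraph \ref{subsec: CW decomposition of nested cacti}, Move 2). So if $x\in\tau^*$, then $x$ lies in the interior of a unique such simplex $(\tau_0,\dots,\tau_k)$; in particular $x$ lies in the open cell of $N_n^\sigma(\mathcal{C}/S^1)$ indexed by the smallest cell $\tau_j$ whose barycenter ``contributes'' — more precisely, the open cell containing $x$ is the top cell $\tau_k$ of that chain, so $tree(x)=tree(\tau_k)\supseteq tree(\tau_0)=tree(\tau)$. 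Thus it suffices to prove $\mathcal{S}\subseteq tree(\tau)=tree(\tau_0)$. Here I use the hypothesis $x\in\overline{N_n(\mathcal{S})}$: by the characterization above, $\mathcal{S}\subseteq tree(x)=tree(\tau_k)$, and the radial parameter $t_S(x)=0$ for every $S\in\mathcal{S}$. Now I would argue that a radial parameter $t_S$ being $0$ at a point of the simplex $(\tau_0,\dots,\tau_k)$ is only possible if $t_S$ is identically $0$ already on $\tau_0$ — i.e.\ if the edge of $\mathcal{S}$ out of $S$ is already ``contracted'' in $\tau_0$, which is exactly the condition $S\in tree(\tau_0)$. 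The mechanism: the barycenter $B\tau_0$ of a cell whose tree $\mathcal{T}_0=tree(\tau_0)$ has all $t_T=0$; as one moves up the chain $\tau_0\subset\tau_1\subset\dots$, each new barycenter $B\tau_j$ again has all its radial parameters equal to $0$ (by definition of the barycenter), so every vertex of the simplex $(\tau_0,\dots,\tau_k)$ is a nested cactus with all radial parameters zero. Consequently, along the segment/cone construction, a radial parameter $t_S(x)$ of a point $x$ in the simplex is a convex combination (in the cone coordinates) of values that are either $0$ (if $S$ is a contracted vertex in the corresponding cell) or positive (if $S$ is an honest edge of the corresponding cell, since $x$ lies in the open cell $\tau_k$ where all radial parameters are in $(0,1)$ — wait, careful: in the interior of $\tau_k$ one has $t<1$, not $t>0$). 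I must be more careful here, and this is the crux.

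\textbf{The main obstacle} is precisely controlling the radial parameters: in the open cell indexed by $\tau_k$ one only knows $t_S<1$, and $t_S$ can legitimately be $0$ there without $S$ being contracted — that is the whole point of $\overline{N_n(\mathcal{S})}$ sitting inside a bigger stratum. So the naive ``convex combination of positive and zero'' argument does not immediately work. The correct approach is instead to use the structure of the simplex $(\tau_0,\tau_1,\dots,\tau_k)$ more carefully: $x\in\tau^*$ means $x$ lies in $(\tau_0,\dots,\tau_k)$ with $\tau_0=\tau$, and the \emph{barycentric coordinate of} $B\tau_0$ in that simplex is strictly positive (otherwise $x$ would lie in a dual cell $\tau'^*$ with $\tau'\neq\tau$, contradicting uniqueness). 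Now the segment from $B\tau_0$ to the rest of the simplex interpolates coordinates that \emph{differ}; since $B\tau_0$ has $t_S=0$ for all $S\in\mathcal{S}\cap tree(\tau_0)$ but possibly $t_S$ undefined/variable for $S\in\mathcal{S}\setminus tree(\tau_0)$, the claim reduces to: if $S\in\mathcal{S}$ is \emph{not} in $tree(\tau_0)$, then $S$ is a genuine (non-contracted) vertex in $tree(\tau_j)$ for some $j$, hence the corresponding radial coordinate varies on part of the simplex and, because $B\tau_0$ pins it, the value at $x$ is forced to be a strict convex combination with a positive end — giving $t_S(x)>0$, contradicting $x\in\overline{N_n(\mathcal{S})}$. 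I would make this rigorous by induction on $k$ exactly as in the proof of Lemma \ref{lem: celle duali vs strati1}, but tracking the radial coordinates $t_S$ for $S\in\mathcal{S}$: at each stage, coning with vertex $B\tau_j$ (which has all radial parameters $0$) keeps $t_S=0$ for vertices $S$ already collapsed in $\tau_0$, while for a vertex $S\in\mathcal{S}\setminus tree(\tau_0)$ the coordinate $t_S$ is not pinned to $0$ by $B\tau_0$ and the cone on $B\tau_j$ forces it to take a value in a half-open interval not containing $0$ except on a lower-dimensional face — and $x$, lying in the \emph{interior} of the simplex, avoids that face. This is the technical heart; once it is in place, $t_S(x)=0$ for all $S\in\mathcal{S}$ forces $\mathcal{S}\subseteq tree(\tau_0)=tree(\tau)$, which is the assertion of the lemma.
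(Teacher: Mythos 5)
Your plan is a genuinely different route than the paper's. The paper inducts on $\dim(\tau_x)$, where $\tau_x$ is the (ordinary, not dual) open cell containing $x$: it picks a nonzero radial parameter $t_{T_0}$ with $T_0\notin\mathcal{S}$, pushes it to $1$ to produce a point $y$ in a lower-dimensional cell, applies the inductive hypothesis to $y$ to get a dual cell $\sigma^*$ with $\mathcal{S}\subseteq tree(\sigma)$, and then argues $x\in\sigma^*$ by extending the simplex of $y$ by $\tau_x$. Your plan instead works directly with the unique barycentric simplex $(\tau_0,\dots,\tau_k)$ (with $\tau_0=\tau$) whose interior contains $x$, and reads off the answer from the values of $t_S$ at the vertices $B\tau_i$. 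Both are valid strategies; yours is more computational, the paper's is a recursion on cells.

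That said, your write-up circles around the one fact that makes the argument close without ever stating it, and in the last paragraph your terminology flips the roles. The crux is: for $S\in\mathcal{S}$ and $i\in\{0,\dots,k\}$, the barycenter $B\tau_i$ has $t_S(B\tau_i)=0$ when $S\in tree(\tau_i)$ (barycenters sit at the centers of the $D_2$ factors) and $t_S(B\tau_i)=1$ when $S\notin tree(\tau_i)$ (the cell $\tau_i$ then lies in the boundary stratum of the ambient cell where that edge is contracted, i.e.\ $t_S=1$). It is not that $t_S$ is ``undefined/variable'' on $B\tau_0$; it is pinned to $1$. Since each stage of the coning is from the disk center, $t_S$ is affine on the simplex, giving $t_S(x)=\sum_{i:\,S\notin tree(\tau_i)}s_i(x)$ for barycentric coordinates $s_i$; because the chain of trees is increasing this is $s_0+\cdots+s_{j-1}$ where $\tau_j$ is the first cell with $S\in tree(\tau_j)$. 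If $S\notin tree(\tau_0)=tree(\tau)$ and $x$ lies in the open dual cell (so $s_0>0$), then $t_S(x)\geq s_0>0$, contradicting $x\in\overline{N_n(\mathcal{S})}$. Your closing suggestion to fall back on ``an induction exactly as in Lemma~\ref{lem: celle duali vs strati1}'' is a step backward: that proof propagates $t_S=0$, whereas here you must propagate $t_S>0$, and the affine formula above is the clean way to do it. Also fix the sentence ``keeps $t_S=0$ for vertices $S$ already collapsed in $\tau_0$'' --- at the barycenter $t_S=0$ precisely when $S$ is a \emph{present} vertex of $tree(\tau_0)$, not a collapsed one.
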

	\begin{proof}
		Since $x\in \overline{N_n(\mathcal{S})}$ it has the form $x=(x_R,(t_T,\alpha_T,x_T)_{T\in\mathcal{T}-R})$ for some $\mathcal{T}\supseteq\mathcal{S}$, and $t_S=0$ for any $S\in\mathcal{S}$. Without loss of generality we can suppose that all the radial parameters are different from $1$. Let $\tau_x$ be the unique (open) cell containing $x$. Then $\tau_x=(\mathcal{T},(C_T)_{T\in\mathcal{T}})$, with $x_T\in C_T$ for any $T\in\mathcal{T}$. We prove the statement by induction on $k\coloneqq \dim(\tau_x)$:
		\begin{description}
			\item[$k=0$] in this case $\tau_x$ is just the corolla with $n$-leaves decorated by a cactus $C_R$ of dimension zero. Since $\mathcal{S}\subseteq\mathcal{T}$, we have that also $\mathcal{S}$ is the corolla and the statement becomes trivial.
			\item[$k\geq 1$] If $x=B\tau_x$ then $x\in\tau_x^*$ and the statement follows. Now suppose $x\neq B\tau_x$: in this case there must be a radial parameter $t_{T_0}\neq 0$, for some $T_0\in\mathcal{T}$, $T_0\notin\mathcal{S}$. Consider the point $y\in\partial \tau_x$ which has the same coordinates as $x$ except for the radial parameter of $T_0$, which we set to be $1$. Let $\tau_y\subseteq \partial \tau_x$ be the unique (open) cell containing $y$ and $\sigma^*$ be the unique (open) dual cell containing $y$. By induction the nested tree $tree(\sigma)$ contains $\mathcal{S}$. Now let $(\sigma,\sigma_1,\dots,\sigma_m)$ be the unique simplex containing $y$ in its interior. Then $y\in\sigma_m$, but since there is a unique (open) cell containing $y$ we conclude that $\sigma_m=\tau_y\subseteq \partial \tau_x$. Therefore $x\in(\sigma,\sigma_1,\dots,\sigma_m,\tau_x)$, so $x\in\sigma^*$ as well concluding the proof.
		\end{description}
	\end{proof}
	\begin{prop}\label{prop:celle duali che formano uno strato}
		Fix a nested tree $\mathcal{S}$. Then  
		\[
		\overline{N_n(\mathcal{S})}=\bigcup_{\substack{\tau \text{ cell} \\ tree(\tau)\supseteq \mathcal{S}}}\tau^*
		\]
	\end{prop}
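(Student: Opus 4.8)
The plan is to prove the two inclusions separately, each being a short consequence of the two preceding lemmas together with the fact that the dual cell decomposition of the closed oriented manifold $\overline{\M}_{0,n+1}\cong N_n^{\sigma}(\mathcal{C}/S^1)$ (Theorem \ref{thm:omeo tra la compattificazione e lo spazio dei nested cactus}) is again a regular CW-decomposition.

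For the inclusion $\supseteq$, I would take a cell $\tau$ with underlying nested tree $tree(\tau)=\mathcal{T}\supseteq\mathcal{S}$ and apply Lemma \ref{lem: celle duali vs strati1} to $\mathcal{T}$, obtaining $\tau^*\subseteq\overline{N_n(\mathcal{T})}$. It then suffices to observe the monotonicity $\mathcal{T}\supseteq\mathcal{S}\Rightarrow\overline{N_n(\mathcal{T})}\subseteq\overline{N_n(\mathcal{S})}$, which is immediate from the definition of $\overline{N_n(-)}$: a point of $\overline{N_n(\mathcal{T})}$ has underlying nested tree containing $\mathcal{T}$, hence containing $\mathcal{S}$, and all of its radial parameters indexed by vertices of $\mathcal{T}$ vanish, in particular those indexed by vertices of $\mathcal{S}$. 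Combining, $\tau^*\subseteq\overline{N_n(\mathcal{S})}$ for every $\tau$ with $tree(\tau)\supseteq\mathcal{S}$, so the union on the right-hand side lies inside $\overline{N_n(\mathcal{S})}$.

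For the reverse inclusion $\subseteq$, I would use that the dual cells have interiors partitioning $N_n^{\sigma}(\mathcal{C}/S^1)$: given $x\in\overline{N_n(\mathcal{S})}$, let $\tau^*$ be the unique dual cell whose interior contains $x$. Lemma \ref{lem: celle duali vs strati2} then says precisely that $\mathcal{S}\subseteq tree(\tau)$, so $x$ lies in a dual cell indexed by a nested tree containing $\mathcal{S}$, i.e.\ $x$ belongs to the right-hand side. This closes the argument.

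There is no serious obstacle here; the statement is essentially a repackaging of Lemmas \ref{lem: celle duali vs strati1} and \ref{lem: celle duali vs strati2}. The one point I would be careful to state explicitly is the reconciliation of conventions: Lemma \ref{lem: celle duali vs strati2} is phrased in terms of the \emph{open} dual cell whose relative interior contains $x$, whereas the Proposition is about the union of \emph{closed} dual cells $\tau^*$; since $x$ always lies in the closure of its own open dual cell and passing to closures only enlarges the union, this causes no difficulty, but it deserves a sentence. Beyond that, the only verification needed is the elementary monotonicity of $\overline{N_n(-)}$ recorded in the second paragraph.
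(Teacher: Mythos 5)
Your proposal is correct and follows the paper's own argument essentially verbatim: both inclusions are deduced from Lemmas \ref{lem: celle duali vs strati1} and \ref{lem: celle duali vs strati2} together with the monotonicity of $\overline{N_n(-)}$. The extra remark about reconciling open versus closed dual cells is a useful clarification but not a genuinely different route.
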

	In other words this Proposition tells us that if we put the dual cell decomposition on $N_n^{\sigma}(\mathcal{C}/S^1)$, then $\overline{N_n(\mathcal{S})}$ becomes a subcomplex.
	\begin{proof}
		Fix a cell $\tau$ such that $\mathcal{T}\coloneqq tree(\tau)\supseteq \mathcal{S}$. By Lemma \ref{lem: celle duali vs strati1} we have that $\tau^*\subseteq \overline{N_n(\mathcal{T})}$. This space is contained in $\overline{N_n(\mathcal{S})}$ because $\mathcal{T}\supseteq \mathcal{S}$. Therefore
		\[
		\bigcup_{\substack{\tau \text{ cell} \\ tree(\tau)\supseteq \mathcal{S}}}\tau^*\subseteq  \overline{N_n(\mathcal{S})}
		\]
		The other inclusion is given by Lemma \ref{lem: celle duali vs strati2}.
	\end{proof}
	\begin{oss}
		$\overline{N_n(\mathcal{S})}$ is a subcomplex of dimension $2(n-2)-2E(\mathcal{S})$. 
	\end{oss}
	\begin{es}[Dual cell decomposition of $\overline{\M}_{0,4}$]
		In Paragraph \ref{subsec: CW decomposition of Deligne mumford} we described in detail the CW-decomposition of $\overline{\M}_{0,4}$ given by nested cacti: we had two $0$-cells $C_1,C_2$, three $1$-cells $B_1,B_2,B_3$ and three $2$-cells $A_1,A_2,A_3$ (see Figure \ref{fig:esempio decomposizione cellulare con 3 punti}). The dual cell decomposition will have three $0$-cells $A_1^*,A_2^*,A_3^*$, (corresponding to the three stable curves), three $1$-cells $B_1^*,B_2^*,B_3^*$ and two $2$-cells $C_1^*,C_2^*$. See Figure \ref{decomposizioni della sfera } for a picture.
		\begin{figure}
			\centering
			\includegraphics[width=6 cm]{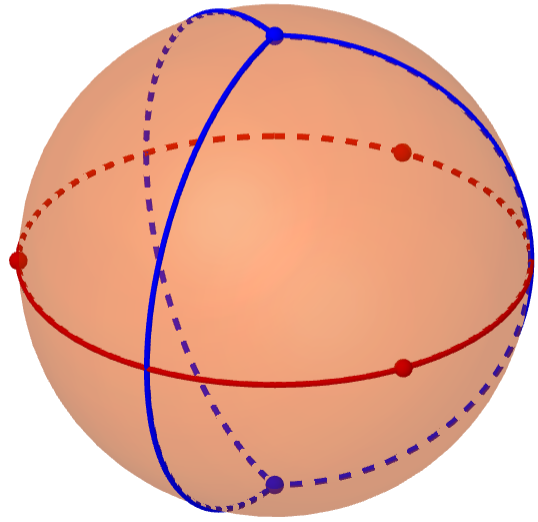}
			\caption{In blue we see the CW-decomposition of $\overline{\M}_{0,4}$ given by nested cacti. In red we see the dual cells. }
			\label{decomposizioni della sfera }
		\end{figure}
	\end{es}
	
	\subsection{Operad stucture on the dual cells}\label{subsec: operad structure on the dual cells}
	In the previous paragraph we constructed explicitly the dual cell decomposition of $N_n^{\sigma}(\mathcal{C}/S^1)\cong \overline{\M}_{0,n+1}$. We will denote by $C_*^{dual}(\overline{\M})$ the collection of chain complexes 
	\[
	C_*^{dual}(\overline{\M})\coloneqq\{C_*^{dual}(N_n(\mathcal{C}/S^1))\}_{n\geq 2}
	\]
	where $C_*^{dual}(N_n(\mathcal{C}/S^1))$ is the cellular chain complex of the dual cell decomposition. 	The purpose of this paragraph is to define an operad structure on  $C^{dual}_*(\overline{\M})$. This operad structure exists already at the topological level: indeed we are going to define operadic compositions $\circ_i:N_{n}^{\sigma}(\mathcal{C}/S^1)\times N_{m}^{\sigma}(\mathcal{C}/S^1)\to N_{n+m-1}^{\sigma}(\mathcal{C}/S^1)$ that are cellular maps if we put the dual cell decomposition on $N_{n}^{\sigma}(\mathcal{C}/S^1)$ for any $n\in\N$. If we take the cellular chain complexes we will obtain an operad structure on $C^{dual}_*(\overline{\M})$. 
	\begin{defn}
		Let $\mathcal{P}$ be a topological operad. If there are CW-decompositions of $\mathcal{P}(n)$, $n\in\N$ such that the operadic compositions 
		\[
		\circ_i:\mathcal{P}(n)\times \mathcal{P}(m)\to \mathcal{P}(n+m-1)
		\]
		are cellular maps, then we will call $\mathcal{P}$ a \textbf{cellular operad}.
	\end{defn}
	\begin{oss}
		Let us recall briefly the combinatorics of the dual cells on $N_n^{\sigma}(\mathcal{C}/S^1)$: a (dual) cell $\tau^*$ is determined by:
		\begin{enumerate}
			\item A nested tree $\mathcal{S}$ on $n$-leaves.
			\item A cell $C_S\subseteq \mathcal{C}_{\abs{S}}/S^1$ for any $S\in\mathcal{S}$.
		\end{enumerate}
		In the case we want to highlight this data we will write $\tau^*=(\mathcal{S},(C_S)_{S\in\mathcal{S}})$. Such a cell has dimension
		\[
		dim(\tau^*)=2(n-2)-2E(\mathcal{S})- \sum_{{S\in \mathcal{S}}}dim(C_S)
		\]
		where $E(\mathcal{S})$ is the number of internal edges of $\mathcal{S}$. The boundary of $\tau^*=(\mathcal{S},(C_S)_{S\in\mathcal{S}})$ is the sum (with signs depending on the orientations) of all the codimension one cells obtained from $\tau^*$ by one of the following moves:
		\begin{description}
			\item[Move 1 (cacti co-boundary)] for a fixed vertex $S\in\mathcal{S}$ replace $C_S$ with a cell $C'_S$ such that $C_S\subseteq \partial C'_S$.
			\item[Move 2 (vertex expansion)] for a fixed vertex $S_0\in\mathcal{S}$, chose a subset $S_1\subseteq S_0 $ with at least two elements such that $\mathcal{S}\cup S_1$ is still a nested tree. Then consider the following cell $\sigma^*=(\mathcal{T},(C'_T)_{T\in\mathcal{T}})$: 
			\begin{itemize}
				\item As underlying nested tree we take $\mathcal{T}\coloneqq\mathcal{S}\cup S_1$.
				\item As decorative cacti we put $C'_T=C_T$ for $T\neq S_0,S_1$. $C'_{S_0}$ and $C'_{S_1}$ will be two cacti such that $C_{S_0}$ is obtained from $C'_{S_0}$ gluing $C'_{S_1}$ in the lobe corresponding to the edge $(S_1,S_0)$. 
			\end{itemize}
		\end{description}
		
	\end{oss}
	
	\begin{defn}
		Let $\mathcal{S}\in N_n$ and $\mathcal{T}\in N_m$. For any $i=1,\dots,n$ we define $\mathcal{S}\circ_i\mathcal{T}\in N_{n+m-1}$ as follows:
		\begin{itemize}
			\item  For any $T\in\mathcal{T}$,  $\mathcal{S}\circ_i\mathcal{T}$ contains $T+i\coloneqq\{t+i-1\mid t\in T\}$.
			\item If $S\in\mathcal{S}$ does not contain $i$, then $\mathcal{S}\circ_i\mathcal{T}$ contains 
			\[
			\overline{S}\coloneqq\{s\in S\mid s<i\}\cup\{s+m-1\mid s\in S, s\geq i+1\}
			\]
			\item If $S\in\mathcal{S}$ is a vertex containing $i$, then $\mathcal{S}\circ_i\mathcal{T}$ contains 
			\[
			\overline{S}\coloneqq\{s\in S\mid s<i\}\cup\{i,\dots,i+m-1\}\cup\{s+m-1\mid s\in S, s\geq i+1\}
			\]
		\end{itemize}
		Intuitively $\mathcal{S}\circ_i\mathcal{T}$ is obtained by grafting $\mathcal{T}$ to the $i$-th leaf of $\mathcal{S}$. See Figure \ref{fig:grafting of trees} for an example.        
	\end{defn}
	Now we are ready to define an operad structure on the collection of topological spaces $N(\mathcal{C}/S^1)\coloneqq\{N_n^{\sigma}(\mathcal{C}/S^1)\}_{n\geq 2}$. 
	\begin{defn}
		Consider the map
		\[
		\circ_i:N_n^{\sigma}(\mathcal{C}/S^1)\times N_m^{\sigma}(\mathcal{C}/S^1)\to N_{n+m-1}^{\sigma}(\mathcal{C}/S^1)
		\]
		which sends a pair of nested cactus $x\times x'$ to the nested cactus obtained by grafting the root of $x'$ to the $i$-th leaf of $x$, setting the radial parameter of the new edge to be zero. More precisely, if $x=(x_R,(t_S,\alpha_S,x_S)_{S\in \mathcal{S}-R})$ and	$x'=(x'_R,(t'_T,\alpha'_T,x'_T)_{T\in \mathcal{T}-R})$ then $x\circ_ix'$ is specified by the following data: 
		\begin{enumerate}
			\item As underlying nested tree we put $\mathcal{S}\circ_i\mathcal{T}$.
			\item Given $U\in \mathcal{S}\circ_i\mathcal{T}$, the unbased cactus labelling this vertex is given by:
			\begin{itemize}
				\item $x'_T$ if $U=T+i$ for some $T\in\mathcal{T}$.
				\item $x_S$ if $U=\overline{S}$ for some $S\in\mathcal{S}$.
				
			\end{itemize}
			\item Given $U\in \mathcal{S}\circ_i\mathcal{T}$ which is not the root, the radial and angular parameters are given by:
			\begin{itemize}
				\item $t'_T$ and $\alpha'_T$ if $U=T+i$ for some $T\in\mathcal{T}$. In the case $U=R+i$ (where $R$ is the root of $\mathcal{T}$) we set the radial parameter to be $0$ (and therefore we do not have to specify an angular parameter).
				\item $t_S$ and $\alpha_S$ if $U=\overline{S}$ for some $S\in\mathcal{S}$.
			\end{itemize}
		\end{enumerate}
		These maps defines an operad structure on the collection of topological spaces $N(\mathcal{C}/S^1)\coloneqq \{N_n^{\sigma}(\mathcal{C}/S^1)\}_{n\geq 2}$. 
	\end{defn}
	Now we prove that if we put the dual cell structure on $N_n^{\sigma}(\mathcal{C}/S^1)$ these maps are cellular.
	\begin{figure}
		\centering
		
		\tikzset{every picture/.style={line width=0.75pt}} 
		
		\begin{tikzpicture}[x=0.75pt,y=0.75pt,yscale=-1,xscale=1]
			
			\draw    (53.23,139.68) -- (90.23,188.68) ;
			\draw    (119.23,139.68) -- (90.23,188.68) ;
			\draw    (142.23,108.68) -- (119.23,139.68) ;
			\draw    (119.23,104.68) -- (119.23,139.68) ;
			\draw    (99.23,107.68) -- (119.23,139.68) ;
			\draw    (67.23,108.68) -- (53.23,139.68) ;
			\draw    (37.23,108.68) -- (53.23,139.68) ;
			\draw    (293.23,140.68) -- (330.23,189.68) ;
			\draw    (359.23,140.68) -- (330.23,189.68) ;
			\draw    (382.23,109.68) -- (359.23,140.68) ;
			\draw    (359.23,105.68) -- (359.23,140.68) ;
			\draw    (339.23,108.68) -- (359.23,140.68) ;
			\draw    (307.23,109.68) -- (293.23,140.68) ;
			\draw    (277.23,109.68) -- (293.23,140.68) ;
			\draw    (201.23,157.68) -- (219.73,182.18) ;
			\draw    (236.23,148.68) -- (219.73,182.18) ;
			\draw    (215.23,126.68) -- (201.23,157.68) ;
			\draw    (185.23,126.68) -- (201.23,157.68) ;
			\draw    (320.73,84.18) -- (339.23,108.68) ;
			\draw    (355.73,75.18) -- (339.23,108.68) ;
			\draw    (334.73,53.18) -- (320.73,84.18) ;
			\draw    (304.73,53.18) -- (320.73,84.18) ;
			\draw  [fill={rgb, 255:red, 0; green, 0; blue, 0 }  ,fill opacity=1 ] (88.11,188.68) .. controls (88.11,187.51) and (89.06,186.56) .. (90.23,186.56) .. controls (91.39,186.56) and (92.34,187.51) .. (92.34,188.68) .. controls (92.34,189.84) and (91.39,190.79) .. (90.23,190.79) .. controls (89.06,190.79) and (88.11,189.84) .. (88.11,188.68) -- cycle ;
			\draw  [fill={rgb, 255:red, 0; green, 0; blue, 0 }  ,fill opacity=1 ] (51.11,139.68) .. controls (51.11,138.51) and (52.06,137.56) .. (53.23,137.56) .. controls (54.39,137.56) and (55.34,138.51) .. (55.34,139.68) .. controls (55.34,140.84) and (54.39,141.79) .. (53.23,141.79) .. controls (52.06,141.79) and (51.11,140.84) .. (51.11,139.68) -- cycle ;
			\draw  [fill={rgb, 255:red, 0; green, 0; blue, 0 }  ,fill opacity=1 ] (117.11,139.68) .. controls (117.11,138.51) and (118.06,137.56) .. (119.23,137.56) .. controls (120.39,137.56) and (121.34,138.51) .. (121.34,139.68) .. controls (121.34,140.84) and (120.39,141.79) .. (119.23,141.79) .. controls (118.06,141.79) and (117.11,140.84) .. (117.11,139.68) -- cycle ;
			\draw  [fill={rgb, 255:red, 0; green, 0; blue, 0 }  ,fill opacity=1 ] (199.11,157.68) .. controls (199.11,156.51) and (200.06,155.56) .. (201.23,155.56) .. controls (202.39,155.56) and (203.34,156.51) .. (203.34,157.68) .. controls (203.34,158.84) and (202.39,159.79) .. (201.23,159.79) .. controls (200.06,159.79) and (199.11,158.84) .. (199.11,157.68) -- cycle ;
			\draw  [fill={rgb, 255:red, 0; green, 0; blue, 0 }  ,fill opacity=1 ] (217.61,182.18) .. controls (217.61,181.01) and (218.56,180.06) .. (219.73,180.06) .. controls (220.89,180.06) and (221.84,181.01) .. (221.84,182.18) .. controls (221.84,183.34) and (220.89,184.29) .. (219.73,184.29) .. controls (218.56,184.29) and (217.61,183.34) .. (217.61,182.18) -- cycle ;
			\draw  [fill={rgb, 255:red, 0; green, 0; blue, 0 }  ,fill opacity=1 ] (291.11,140.68) .. controls (291.11,139.51) and (292.06,138.56) .. (293.23,138.56) .. controls (294.39,138.56) and (295.34,139.51) .. (295.34,140.68) .. controls (295.34,141.84) and (294.39,142.79) .. (293.23,142.79) .. controls (292.06,142.79) and (291.11,141.84) .. (291.11,140.68) -- cycle ;
			\draw  [fill={rgb, 255:red, 0; green, 0; blue, 0 }  ,fill opacity=1 ] (328.11,189.68) .. controls (328.11,188.51) and (329.06,187.56) .. (330.23,187.56) .. controls (331.39,187.56) and (332.34,188.51) .. (332.34,189.68) .. controls (332.34,190.84) and (331.39,191.79) .. (330.23,191.79) .. controls (329.06,191.79) and (328.11,190.84) .. (328.11,189.68) -- cycle ;
			\draw  [fill={rgb, 255:red, 0; green, 0; blue, 0 }  ,fill opacity=1 ] (357.11,140.68) .. controls (357.11,139.51) and (358.06,138.56) .. (359.23,138.56) .. controls (360.39,138.56) and (361.34,139.51) .. (361.34,140.68) .. controls (361.34,141.84) and (360.39,142.79) .. (359.23,142.79) .. controls (358.06,142.79) and (357.11,141.84) .. (357.11,140.68) -- cycle ;
			\draw  [fill={rgb, 255:red, 0; green, 0; blue, 0 }  ,fill opacity=1 ] (337.11,108.68) .. controls (337.11,107.51) and (338.06,106.56) .. (339.23,106.56) .. controls (340.39,106.56) and (341.34,107.51) .. (341.34,108.68) .. controls (341.34,109.84) and (340.39,110.79) .. (339.23,110.79) .. controls (338.06,110.79) and (337.11,109.84) .. (337.11,108.68) -- cycle ;
			\draw  [fill={rgb, 255:red, 0; green, 0; blue, 0 }  ,fill opacity=1 ] (318.61,84.18) .. controls (318.61,83.01) and (319.56,82.06) .. (320.73,82.06) .. controls (321.89,82.06) and (322.84,83.01) .. (322.84,84.18) .. controls (322.84,85.34) and (321.89,86.29) .. (320.73,86.29) .. controls (319.56,86.29) and (318.61,85.34) .. (318.61,84.18) -- cycle ;
			
			\draw (30,88) node [anchor=north west][inner sep=0.75pt]   [align=left] {$\displaystyle 1$};
			\draw (62,88) node [anchor=north west][inner sep=0.75pt]   [align=left] {$\displaystyle 2$};
			\draw (93,88) node [anchor=north west][inner sep=0.75pt]   [align=left] {$\displaystyle 3$};
			\draw (114,87) node [anchor=north west][inner sep=0.75pt]   [align=left] {$\displaystyle 4$};
			\draw (137,90) node [anchor=north west][inner sep=0.75pt]   [align=left] {$\displaystyle 5$};
			\draw (271,89) node [anchor=north west][inner sep=0.75pt]   [align=left] {$\displaystyle 1$};
			\draw (303,91) node [anchor=north west][inner sep=0.75pt]   [align=left] {$\displaystyle 2$};
			\draw (298,35) node [anchor=north west][inner sep=0.75pt]   [align=left] {$\displaystyle 3$};
			\draw (329,36) node [anchor=north west][inner sep=0.75pt]   [align=left] {$\displaystyle 4$};
			\draw (153,133) node [anchor=north west][inner sep=0.75pt]   [align=left] {$\displaystyle \circ _{3}$};
			\draw (256,133) node [anchor=north west][inner sep=0.75pt]   [align=left] {$\displaystyle =$};
			\draw (379.73,90.18) node [anchor=north west][inner sep=0.75pt]   [align=left] {$\displaystyle 7$};
			\draw (211,108) node [anchor=north west][inner sep=0.75pt]   [align=left] {$\displaystyle 2$};
			\draw (234,129) node [anchor=north west][inner sep=0.75pt]   [align=left] {$\displaystyle 3$};
			\draw (178,108) node [anchor=north west][inner sep=0.75pt]   [align=left] {$\displaystyle 1$};
			\draw (351,57) node [anchor=north west][inner sep=0.75pt]   [align=left] {$\displaystyle 5$};
			\draw (355,86) node [anchor=north west][inner sep=0.75pt]   [align=left] {$\displaystyle 6$};

		\end{tikzpicture}

		\caption{In this picture we see the grafting $\mathcal{S}\circ_3\mathcal{T}$, where $\mathcal{S}=((12)(345))$ and $\mathcal{T}=((12)3)$.}
		\label{fig:grafting of trees}
	\end{figure}
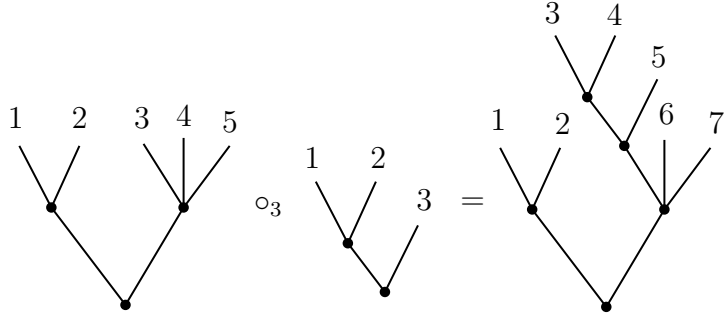
	\begin{thm}\label{thm: cellular operad of nested cacti}
		Consider the dual cell structure on $N_n^{\sigma}(\mathcal{C}/S^1)$ for any $n\in\N$. Then the maps
		\[
		\circ_i: N_n^{\sigma}(\mathcal{C}/S^1)\times N_m^{\sigma}(\mathcal{C}/S^1)\to N_{n+m-1}^{\sigma}(\mathcal{C}/S^1)
		\]
		are cellular embeddings. Moreover if $\tau^*\subseteq N_n^{\sigma}(\mathcal{C}/S^1)$ and $\sigma^*\subseteq N_m^{\sigma}(\mathcal{C}/S^1)$ are two dual cells, with $\tau^*=(\mathcal{S},(C_S)_{S\in\mathcal{S}})$ and $\sigma^*=(\mathcal{T},(C'_T)_{T\in\mathcal{T}})$, then $\tau^*\circ_i\sigma^*$ is specified by the following combinatorial data:
		\begin{enumerate}
			\item As nested tree we put $\mathcal{S}\circ_i\mathcal{T}$.
			\item Given $U\in \mathcal{S}\circ_i\mathcal{T}$, the decorative cactus of this vertex is given by:
			\begin{itemize}
				\item $C'_T$ if $U=T+i$ for some $T\in\mathcal{T}$.
				\item $C_S$ if $U=\overline{S}$ for some $S\in\mathcal{S}$.
			\end{itemize}
		\end{enumerate}
		
	\end{thm}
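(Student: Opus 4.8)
The plan is to realize each $\circ_i$ as the inclusion of a subcomplex of the dual cell decomposition, and then transport the combinatorics across the homeomorphism of Theorem~\ref{thm:omeo tra la compattificazione e lo spazio dei nested cactus}. Write $\nu\coloneqq\{i,i+1,\dots,i+m-1\}$ and let $\mathcal{S}_i\in N_{n+m-1}$ be the nested tree whose only non‑root vertex is $\nu$. First I would identify the image of $\circ_i$. From its definition — grafting, with the radial parameter of the new edge set to $0$ — the image is contained in $\overline{N_{n+m-1}(\mathcal{S}_i)}$. Conversely any nested cactus $y\in\overline{N_{n+m-1}(\mathcal{S}_i)}$ has $\nu$ among its vertices and $t_\nu=0$, so it can be \emph{cut along $\nu$}: the part of the underlying nested tree sitting inside $\nu$, reindexed together with its cacti and parameters, yields some $x'\in N_m^\sigma(\mathcal{C}/S^1)$, and collapsing $\nu$ to a single leaf in the remainder yields some $x\in N_n^\sigma(\mathcal{C}/S^1)$, with $x\circ_i x'=y$. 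One checks that this cut is well defined modulo the equivalence relation $\sim$ (which only identifies points with some radial parameter equal to $1$, whereas $t_\nu=0$), so $\circ_i\colon N_n^\sigma(\mathcal{C}/S^1)\times N_m^\sigma(\mathcal{C}/S^1)\to\overline{N_{n+m-1}(\mathcal{S}_i)}$ is a continuous bijection, hence — compact domain, Hausdorff target — a homeomorphism. By Proposition~\ref{prop:celle duali che formano uno strato}, $\overline{N_{n+m-1}(\mathcal{S}_i)}$ is a subcomplex of the dual cell decomposition whose cells are exactly the $\tau^*=(\mathcal{S}',(C_S))^*$ with $\nu\in\mathcal{S}'$; in particular the datum $(\mathcal{S}\circ_i\mathcal{T},(C_U)_U)$ of the statement really does describe a dual cell of $N_{n+m-1}^\sigma(\mathcal{C}/S^1)$, since $\nu=R_{\mathcal{T}}+i\in\mathcal{S}\circ_i\mathcal{T}$. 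It then remains to show $\circ_i(\tau^*\times\sigma^*)=(\mathcal{S}\circ_i\mathcal{T},(C_U))^*$.

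Next I would analyse the purely combinatorial grafting $\bigl((\mathcal{S},(C_S)),(\mathcal{T},(C_T))\bigr)\mapsto(\mathcal{S}\circ_i\mathcal{T},(C_U)_U)$. This is inverse to cutting at $\nu$, and it is an isomorphism of the (primal) cell posets of $N_n^\sigma(\mathcal{C}/S^1)$ and $N_m^\sigma(\mathcal{C}/S^1)$ onto the poset of primal cells of $N_{n+m-1}^\sigma(\mathcal{C}/S^1)$ whose tree contains $\nu$: concretely, the boundary Moves~1 and~2 applied to $\mathcal{S}\circ_i\mathcal{T}$ that do not destroy the vertex $\nu$ are exactly the pairs of such Moves applied to $\mathcal{S}$ and to $\mathcal{T}$ separately. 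Moreover this grafting is compatible with the chosen barycenters: the barycenter of a primal cell has \emph{all} radial parameters equal to $0$ and carries the barycenter of each $C_S$ at the vertex $S$, and $\circ_i$ merely juxtaposes the coordinates of the two factors and inserts $t_\nu=0$, so $B(\mathcal{S}\circ_i\mathcal{T},(C_U))=B(\mathcal{S},(C_S))\circ_i B(\mathcal{T},(C_T))$ and, more generally, $\circ_i$ carries barycenters to barycenters.

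Then I would conclude. Recall that $\tau^*$ is the union of the straight cone‑simplices on increasing chains of barycenters of primal cells starting at $B\tau$; in the coordinates $t_S\in[0,1]$, $\alpha_S\in S^1$ assembled into $D_2$, and the simplicial coordinates on $\mathcal{C}_{|S|}/S^1$, both the cone operation and $\circ_i$ are affine. Hence $\circ_i$ sends a product $(\text{cone-simplex})\times(\text{cone-simplex})$ covering a piece of $\tau^*\times\sigma^*$ onto the prism on the barycenters $\{B(\tau_a\circ_i\sigma_b)\}$, and — using the poset isomorphism above together with a routine cofinality argument (any chain in a product of cell posets is refined by a shuffle of chains, and a cone‑simplex is a face of the cone‑simplex on any refinement) — the union of all these prisms is precisely the union of cone‑simplices making up the dual cell $(\mathcal{S}\circ_i\mathcal{T},(C_U))^*$. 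This gives $\circ_i(\tau^*\times\sigma^*)=(\mathcal{S}\circ_i\mathcal{T},(C_U))^*$, with the cactus labels read off directly from the grafting formula, so the maps $\circ_i$ are cellular embeddings with the stated effect on cells; the operad axioms for the induced maps on $C_*^{dual}(\overline{\mathcal{M}})$ then follow from those of the grafting of nested trees already established for $N(\mathcal{C}/S^1)$, together with the sign conventions for the dual boundary.

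The hard part will be this last step — matching the barycentric/cone structures rather than just the cell posets. The subtlety is that $\circ_i$ is \emph{not} compatible with the \emph{primal} cell decompositions (Remark~\ref{oss:celle non sono compatibili con l'operad}): the stratum $\overline{N_{n+m-1}(\mathcal{S}_i)}$ is the locus $t_\nu=0$, which runs through the interiors of primal cells and meets each of them in a face of its dual subdivision, and this is exactly what makes the construction work at the dual level. One must therefore perform the cone construction in coordinates in which $\circ_i$ is affine, so that the combinatorial poset isomorphism genuinely upgrades to homeomorphisms of cone‑simplices, and keep careful track of orientations in the induced boundary maps.
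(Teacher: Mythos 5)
Your argument is correct and rests on the same two engines as the paper's proof: (i) $\circ_i$ sends barycenters of primal cells to barycenters ($B\tau\circ_i B\sigma = B(\tau\circ_i^c\sigma)$, because all radial parameters are $0$ at a barycenter and $\circ_i$ just juxtaposes coordinates and inserts one more $t_\nu=0$), and (ii) $\circ_i$ is affine in the coordinates used to form the cones, so cone simplices go to cone simplices. Where you differ is in the organization of the final step. You work directly at the level of cone-simplices inside $\tau^*\times\sigma^*$ and have to confront the fact that a product of two cone-simplices is a prism rather than a simplex, which is exactly the ``cofinality/shuffle'' issue you flag as the hard part. The paper sidesteps this entirely by an induction on the dimension of the skeleton of the product CW-complex: $\tau^*\times\sigma^*$ is the cone on $\partial(\tau^*\times\sigma^*)$ with vertex $B\tau\times B\sigma$, the inductive hypothesis handles the boundary, and barycenter-compatibility handles the vertex, so the cone goes to the cone. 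That reformulation makes the prism/shuffle bookkeeping unnecessary. On the other hand, your explicit identification $\mathrm{Im}(\circ_i)=\overline{N_{n+m-1}(\mathcal{S}_i)}$, together with Proposition~\ref{prop:celle duali che formano uno strato} (so the image is a subcomplex of the dual decomposition and you have a cutting inverse), is a genuinely useful piece of scaffolding that the paper's proof leaves implicit; it clarifies why the dual decomposition, and not the primal one, is the right one to use. You would do well to adopt the paper's skeleton induction for the last step, while keeping your stratum identification as a preliminary.
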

	\begin{proof}
		First of all observe that $\circ_i$ are embeddings because they are injective maps with compact domain and Haursdorff target. We prove the rest of the statement by induction on the dimension $k$ of the skeleton. In what follows we will denote by Greek letters the cells of the space of nested cacti, while the dual cells are denoted by Greek letters with an asterisk. Moreover if $\tau=(\mathcal{S},(C_S)_{S\in\mathcal{S}})$ and $\sigma=(\mathcal{T},(C'_T)_{T\in\mathcal{T}})$, are two cells (dual or not), we will denote by $\tau\circ_i^c\sigma$ the cell given by:
		\begin{enumerate}
			\item The nested tree $\mathcal{S}\circ_i\mathcal{T}$.
			\item Given $U\in \mathcal{S}\circ_i\mathcal{T}$, the decorative cactus of this vertex is given by:
			\begin{itemize}
				\item $C'_T$ if $U=T+i$ for some $T\in\mathcal{T}$.
				\item $C_S$ if $U=\overline{S}$ for some $S\in\mathcal{S}$.
			\end{itemize}
		\end{enumerate}
		\begin{description}
			\item[$k=0$] let $\tau^*\times \sigma^*$ be a zero dimensional cell in $N_n^{\sigma}(\mathcal{C}/S^1)\times N_m^{\sigma}(\mathcal{C}/S^1)$. In this case $\tau^*=B(\tau)$ and $\sigma^*=B(\sigma)$, where $\tau$ and $\sigma$ are top dimensional cells. Therefore the underlying nested trees of $\tau$ and $\sigma$ are binary trees. Now observe that the composition of barycenters is a barycenter, i.e. $B\tau\circ_iB\sigma$ is the barycenter of the cell $\gamma\coloneqq \tau\circ_i^c\sigma$. Now observe that $\mathcal{S}\circ_i\mathcal{T}$ is a binary tree with $n+m-1$ leaves, therefore $\gamma$ is a top dimensional cell inside $N_{n+m-1}^{\sigma}(\mathcal{C}/S^1)$. It follows the the dual cell  $\gamma^*$ is zero dimensional, thus it coincides with its barycenter. Therefore
			\begin{equation*}
				\tau^*\circ_i\sigma^*=B\tau\circ_iB\sigma=B\gamma=\gamma^*
			\end{equation*}
			is a zero dimensional cell, and this proves the claim for $k=0$.
			\item[$k\geq 1$] supposing that the statement is true up to the $k$-skeleton, let us prove it for the $k+1$-skeleton. Let  $\tau^*\times \sigma^*$ be a $k+1$ dimensional cell of $N_n^{\sigma}(\mathcal{C}/S^1)\times N_m^{\sigma}(\mathcal{C}/S^1)$. By construction this cell is the cone on $\partial (\tau^*\times \sigma^*)$ with vertex $B\tau\times B\sigma$. By induction we know that $\circ_i$ sends $\partial (\tau^*\times \sigma^*)$ to  $\partial((\tau\circ_i^c\sigma)^*)$. Moreover $B\tau\circ_iB\sigma$ is the barycenter of $\tau\circ_i^c\sigma$. It follows that $\tau^*\circ_i \sigma^*$ is the cone on $\partial((\tau\circ_i^c\sigma)^*)$ with vertex $B(\tau\circ_i^c\sigma)$. Therefore $\tau^*\circ_i\sigma^*=(\tau\circ_i^c\sigma)^*$.
		\end{description}
	\end{proof}
	
	\section{A chain model for the Hypercommutative operad}\label{sec: chain model for hycomm}
	
	In this section we prove that $C^{dual}_*(\overline{\M})$ is a chain model for the Hypercommutative operad, i.e. that there is a zig-zag of operadic quasi isomoprhisms between $C^{dual}_*(\overline{\M})$ and the operad of singular chains $C_*(\overline{\M})$. 
	
	The strategy of the proof will be the following: let $W$ be the Boardman-Vogt construction for topological operads \cite{Boardman-Vogt}. By Theorem \ref{thm: cellular operad of nested cacti} the operad $N(\mathcal{C}/S^1)$ is cellular. This implies that $WN(\mathcal{C}/S^1)$ is a cellular operad as well. We will prove that $WN(\mathcal{C}/S^1)$ is isomorphic (as operad) to $W\overline{\M}$, so we can give a cellular operad structure to $W\overline{\M}$. In this case the operad of singular chains $C_*(W\overline{\M})$ and that of cellular chains $C_*^{cell}(W\overline{\M})$ are quasi-isomorphic, therefore we can construct a zig-zag of weak equivalences
	\[
	\begin{tikzcd}[column sep=small]
		& C^{dual}_*(\overline{\M}) &C^{cell}_*(WN(\mathcal{C}/S^1))\cong C_*^{cell}(W\overline{\M}) \arrow[l] \arrow[r,"\sim"] & C_*(W\overline{\M}) \arrow[r,"p_*"] &C_*(\overline{\M})
	\end{tikzcd}
	\]
	where the first and last arrows are induced by the retractions $W\mathcal{P}\to \mathcal{P}$. 
	\subsection{A cellular decomposition of $W\overline{\M}$}
	In what follows we will think of an element of $W\overline{\M}$ as a nested tree with vertices labelled by elements of $\overline{\M}$, and internal edges with length in $[0,1]$, modulo the relation that collapses an internal edge of length $0$ and composes the labels of its vertices. We are going to define a cellular operad structure on $W\overline{\M}$ following this strategy: we will construct an isomorphism of operads 
	\[
	\phi:W\overline{\M}\to WN(\mathcal{C}/S^1)
	\]
	where $N(\mathcal{C}/S^1)$ is the operad of nested cacti defined in the Paragraph \ref{subsec: operad structure on the dual cells}. Since $N(\mathcal{C}/S^1)$ is a cellular topological operad we get that $WN(\mathcal{C}/S^1)$ is also a cellular topological operad. Using the isomorphism $\phi$ we will get that $W\overline{\M}$ is cellular as well. Before going on we need the following definition:
	\begin{defn}
		We define an operad map $c:WCom\to \mathring{\Delta}$, where $\mathring{\Delta}$ is the open-simplex operad of Arone and Kankaarinta \cite{Arone}. We consider an element of $WCom(n)$ as a nested tree with $n$-leaves and edges labelled by numbers between $0$ and $1$. We call these numbers the lengths of the edges. For each $n$ let $P_n$ be the corolla with $n$-leaves. We set $c(P_n)\coloneqq b_n$, where $b_n\coloneqq(1/n,\dots,1/n)$ is the baricenter of $\Delta^{n-1}$. Using the operadic composition $c$ is uniquely determined by its value on trees $\mathcal{T}\in WCom(n)$ such that any internal edge of $\mathcal{T}$ has length strictly less than $1$. We define $c$ on such a tree as follows: let $s$ the maximum of the lengths of the internal edges of $\mathcal{T}$ and let $T/s$ denote the tree obtained from $\mathcal{T}$ by dividing the lengths of its edges by $s$. We then define $c(\mathcal{T})$ inductively by the formula
		\[
		c(\mathcal{T})\coloneqq s\cdot c(T/s)+(1-s)b_n
		\]
	\end{defn}
	\begin{oss}
		The unique map of operads $\overline{\M}\to Com$ induces a map between the $W$-constructions
		\[
		u: W\overline{\M}\to WCom
		\]
		If we think an element of $W\overline{\M}$ as a nested tree with labelled vertices and internal edges of length in $[0,1]$, this map just forget the labels of the vertices. 
	\end{oss}
	\begin{defn}
		We define $\phi:W\overline{\M}\to WN(\mathcal{C}/S^1)$ stratum-wise as follows: let $\mathcal{S}$ be a nested tree with $n$ leaves. If $\mathcal{P}$ is an operad we denote by $W\mathcal{P}(\mathcal{S})$ the subset of $W\mathcal{P}(n)$ consisting of elements whose underlying nested tree is $\mathcal{S}$. We will denote such an element by $x\coloneqq(x_R,(l_S,x_S)_{S\in\mathcal{S}-R})$, where $R$ is the root of $\mathcal{S}$, $l_S\in[0,1]$ is the length of the unique internal edge of $\mathcal{S}$ going out from $S\in\mathcal{S}$, $x_S\in \mathcal{P}(\abs{\hat{S}})$ is the labeling of the vertex $S$ (remember that $\hat{S}$ is the quotient of $S$ by the equivalence relation described in Definition \ref{defn: relazione equivalenza vertici nested tree}). Take $x\coloneqq(x_R,(l_S,x_S)_{S\in\mathcal{S}-R})\in W\overline{\M}(\mathcal{S})$ and consider $c\circ u(x)\coloneqq(a_1(x),\dots,a_n(x))\in \mathring{\Delta}^{n-1}$. We define
		\[
		\phi_{\mathcal{S}}(x)\in WN(\mathcal{C}/S^1)(\mathcal{S})
		\]
		by the following data:
		\begin{enumerate}
			\item The underlying nested tree of $\phi_{\mathcal{S}}(x)$ is $\mathcal{S}$.
			\item For any $S\in\mathcal{S}-R$ the length of the unique internal edge going out from $S$ is $l_S$.
			\item The nested cactus labeling a vertex $S\in\mathcal{S}$ is given by $\overline{\phi}_{a_S}(x_S)$. Here $a_S:\hat{S}\to \R^{>0}$ is given by $a_S(\iota)=\sum_{i\in \pi_S^{-1}(\iota)}a_i(x)$, where $\pi_S:S\to \hat{S}$ is the projection, and $\overline{\phi}_{a_S}:\overline{\M}_{0,\hat{S}+1}\to N^{\sigma}_{\hat{S}}(\mathcal{C}/S^1)$ is the coordinate free version of the homeomorphism of Theorem \ref{thm:omeo tra la compattificazione e lo spazio dei nested cactus} associated to the weights $a_S$.   
		\end{enumerate} 
	\end{defn}
	We now prove that $\phi_{S}:W\overline{\M}(\mathcal{S})\to WN(\mathcal{C}/S^1)(\mathcal{S})$ are compatible respect to the intersection of strata and therefore we can glue them together obtaining a well defined function $\phi_n:W\overline{\M}(n)\to WN(\mathcal{C}/S^1)(n)$ for any $n\in\N$. Then we prove that the collection of all these $\phi_n$ give an isomorphism of operads $\phi:W\overline{\M}\to WN(\mathcal{C}/S^1)$, as claimed at the beginning.
	\begin{lem}
		$\phi_n:W\overline{\M}(n)\to WN(\mathcal{C}/S^1)(n)$ is well defined for any $n\in\N$.
	\end{lem}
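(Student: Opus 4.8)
The plan is to prove that $\phi_n$ is well defined by showing that the maps $\phi_{\mathcal{S}}$ agree on overlaps of strata. Recall that $W\overline{\M}(n)$ is obtained from $\bigsqcup_{\mathcal{S}\in N_n}W\overline{\M}(\mathcal{S})$ by identifying a configuration $(x_R,(l_S,x_S)_{S\in\mathcal{S}-R})$ having some $l_{S_0}=0$ with the configuration obtained by collapsing the edge going out of $S_0$ and composing the labels $x_{S_0}$ and $x_{T}$ (where $T$ is the vertex the edge goes into) via the operad structure of $\overline{\M}$. So it suffices to check that $\phi$ respects exactly this identification: if $\mathcal{S}'=\mathcal{S}/e$ is obtained by contracting an edge $e=(S_0,T)$ with $l_{S_0}=0$, and $x'\in W\overline{\M}(\mathcal{S}')$ is the collapsed element, then $\phi_{\mathcal{S}}(x)=\phi_{\mathcal{S}'}(x')$ in $WN(\mathcal{C}/S^1)(n)$.

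First I would unwind the definition of $WN(\mathcal{C}/S^1)$, which is built the same way: collapsing an edge of length zero and composing the nested cacti labels via the operadic composition $\circ_i$ of $N(\mathcal{C}/S^1)$ defined in Paragraph \ref{subsec: operad structure on the dual cells}. Thus the identification in the target amounts to grafting the nested cactus labeling $S_0$ onto the appropriate leaf of the nested cactus labeling $T$, with the new radial parameter set to zero. Comparing the two sides, the combinatorial data (the underlying nested tree $\mathcal{S}'$ and the edge lengths $l_S$ for $S\neq S_0$) match trivially. For the vertex labels, I would compare the nested cactus $\overline{\phi}_{a_{[T]}}(x_{S_0}\circ_j x_T)$ produced on the $\mathcal{S}'$-side at the merged vertex with the grafting of $\overline{\phi}_{a_{S_0}}(x_{S_0})$ into the $j$-th leaf of $\overline{\phi}_{a_T}(x_T)$ produced on the $\mathcal{S}$-side, where $[T]$ denotes the merged vertex in $\mathcal{S}'$.

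The key point — and I expect this to be the main obstacle — is that the weight functions must be consistent under this collapse. The weights $a_S$ are defined from $c\circ u(x)\in\mathring{\Delta}^{n-1}$, which depends only on the underlying tree $u(x)\in WCom(n)$ with its edge lengths; since collapsing an edge of length zero does not change the image in $WCom$ (the $W$-construction identification for $Com$ is precisely this), we get $c\circ u(x)=c\circ u(x')$, so the $a_i(x)$ are the same on both sides. Then one checks $a_{[T]}=a_{S_0}$ on the part of $\widehat{[T]}$ coming from $S_0$ and $a_{[T]}$ restricted to the rest is the weight function that $\overline{\phi}_{a_T}$ would use, which is exactly what is needed for the key compatibility property of $\overline{\phi}$: grafting nested cacti corresponds to the composition $\circ_j$ in $\overline{\M}$ under $\overline{\phi}$, which is built into the stratum-wise definition of $\overline{\phi}_{a_1,\dots,a_n}$ (see the Example and the discussion preceding Theorem \ref{thm:omeo tra la compattificazione e lo spazio dei nested cactus}, where $\overline{\phi}$ on a stratum $\mathcal{M}(\mathcal{S})$ is defined precisely by grafting the $\phi_{a_S}(y_S)$).

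Finally, I would note that continuity of $\phi_n$ follows from the continuity of each $\overline{\phi}_{a_S}$ (Theorem \ref{thm:omeo tra la compattificazione e lo spazio dei nested cactus}), the continuity of $c$ and $u$, and the fact that the gluing maps defining the $W$-constructions are quotient maps; since $W\overline{\M}(n)$ carries the quotient topology from $\bigsqcup_{\mathcal{S}}W\overline{\M}(\mathcal{S})$, a map out of it is continuous iff its restrictions to the strata are and they agree on overlaps, which is exactly what we have verified. This establishes that $\phi_n$ is well defined and continuous.
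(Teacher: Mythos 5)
Your argument is correct and follows essentially the same route as the paper: reduce well-definedness to the edge-collapse identification, note that $c\circ u$ factors through the $W$-construction so the weights agree on both sides, and then invoke the compatibility of $\overline{\phi}$ with the operadic compositions and with the composition $a\circ_i b$ in the open simplex operad. The paper simply packages the last step that you describe as ``built into the stratum-wise definition of $\overline{\phi}$'' as an explicit commutative square, but the content is the same.
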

	\begin{proof}
		Fix $x\coloneqq(x_R,(l_S,x_S)_{S\in\mathcal{S}-R})\in W\overline{\M}(\mathcal{S})$ and suppose that $l_{S_1}=0$ for a certain vertex $S_1\in\mathcal{S}$. Let us denote by $e=(S_1,S_2)$ the unique edge going out from $S_1$ and by $\mathcal{S}/e$ the nested tree obtained from $\mathcal{S}$ by collapsing the edge $e$. More precisely $\mathcal{S}/e=\mathcal{S}-\{S_1\}$. Since $l_{S_1}=0$ the point $x\in W\overline{\M}(\mathcal{S})$ is equivalent to the point $y\coloneqq (y_R,(l_S,y_S)_{S\in\mathcal{S}/e-R})\in W\overline{\M}(\mathcal{S}/e)$ given by:
		\begin{itemize}
			\item $y_S=x_S$ if $S\neq S_2$.
			\item $y_S=x_{S_2}\circ_i x_{S_1}$ if $S=S_2$, for some $i=0,\dots,\abs{\hat{S_2}}$.
		\end{itemize} 
		In order to prove that $\phi$ is well defined we need to check that
		\begin{equation}\label{eq: compatibilità strati}
			\phi_{\mathcal{S}}(x)=\phi_{\mathcal{S}/e}(y) \quad \text{ in }  WN(\mathcal{C}/S^1)(n)
		\end{equation}
		First of all observe that $c\circ u(x)=c\circ u(y)$ because $x$ and $y$ represent the same point of $W\overline{\M}(n)$. In particular if $S\neq S_2$ then the nested cactus labelling $S$ in both $\phi_{\mathcal{S}}(x)$ and $\phi_{\mathcal{S}/e}(y)$ is the same. The only potential nested cactus which is different is the one labelling the vertex $S_2$. However the following compatibility between the weights and the operadic compositions of $\overline{\M}$ and $N(\mathcal{C}/S^1)$ tells us that also in this case the nested cacti labelling $S_2$ in $\phi_{\mathcal{S}}(x)$ and $\phi_{\mathcal{S}/e}(y)$ are equal. More precisely, let $m$ (resp. $k$) be the cardinality of $\hat{S_1}$ (resp. $\hat{S_2}$). Equation \ref{eq: compatibilità strati} follows from the commutativity of this diagram:
		\[
		\begin{tikzcd}
			& \overline{\M}_{0,k+1}\times \overline{\M}_{0,m+1}\arrow[r,"\overline{\phi}_a\times \overline{\phi}_b"] \arrow[d,"\circ_i"] & N^{\sigma}_k(\mathcal{C}/S^1)\times N^{\sigma}_m(\mathcal{C}/S^1) \arrow[d,"\circ_i"]\\
			& \overline{\M}_{0,k+m}\arrow[r,"\overline{\phi}_{a\circ_i b}"] & N^{\sigma}_{k+m}(\mathcal{C}/S^1) 
		\end{tikzcd}
		\]
		where $a=(a_1,\dots,a_k)\in \mathring{\Delta}^{k-1}$,  $b=(b_1,\dots,b_m)\in \mathring{\Delta}^{m-1}$ and
		\[
		a\circ_i b=(a_1,\dots,a_ib_1,\dots,a_ib_m,\dots,a_k)
		\]
		is the composition of $a$ with $b$ in the open simplex operad $\mathring{\Delta}$.
	\end{proof}
	\begin{thm}\label{thm:iso di operad tra le W constructions}
		$\phi:W\overline{\M}\to W N(\mathcal{C}/S^1)$ is an isomorphism of operad.
	\end{thm}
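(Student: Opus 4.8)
The plan is to prove that $\phi$ is an isomorphism of operads by verifying three things: that each $\phi_n$ is a homeomorphism, that the collection $\{\phi_n\}$ is compatible with the operadic compositions, and that it is $\Sigma$-equivariant. Since the previous lemma already shows each $\phi_n$ is well defined, the first step is to check bijectivity. Working stratum by stratum, fix a nested tree $\mathcal{S}\in N_n$ and consider $\phi_{\mathcal{S}}:W\overline{\M}(\mathcal{S})\to WN(\mathcal{C}/S^1)(\mathcal{S})$. The lengths $l_S$ are preserved, and the nested cactus labelling a vertex $S$ is $\overline{\phi}_{a_S}(x_S)$, where the weights $a_S$ are determined entirely by $c\circ u(x)$, which in turn depends only on the tree shape $\mathcal{S}$ and the lengths $(l_S)$ — \emph{not} on the labels $x_S$. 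Therefore, given a target point, one first reads off $\mathcal{S}$ and the lengths, which reconstructs all the weights $a_S$, and then inverts each homeomorphism $\overline{\phi}_{a_S}$ (Theorem \ref{thm:omeo tra la compattificazione e lo spazio dei nested cactus}) to recover $x_S$. This gives a two-sided inverse on each stratum; continuity of $\phi_n$ and its inverse follows since $c$ and $u$ are continuous, the $\overline{\phi}_{a_S}$ vary continuously in the weights, and $W\overline{\M}(n)$ is obtained by gluing the strata $W\overline{\M}(\mathcal{S})$ along the face relations (which the previous lemma shows $\phi$ respects). Alternatively, since $W\overline{\M}(n)$ is compact and $WN(\mathcal{C}/S^1)(n)$ is Hausdorff, it suffices to check $\phi_n$ is a continuous bijection.

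Next I would verify operadic compatibility, i.e. that $\phi_{n+m-1}(x\circ_i y)=\phi_n(x)\circ_i\phi_m(y)$ for $x\in W\overline{\M}(n)$, $y\in W\overline{\M}(m)$. Recall that in $W\mathcal{P}$ the composition $x\circ_i y$ grafts the tree of $y$ onto the $i$-th leaf of the tree of $x$, giving the new internal edge length $1$. So the underlying nested tree of $x\circ_i y$ is $\mathcal{S}\circ_i\mathcal{T}$, and on both sides of the claimed equality the new edge has length $1$ while the old edges keep their lengths. For the cactus labels, the key point is again the behaviour of the weights: one must check that the weights $a_U$ assigned to a vertex $U$ of $\mathcal{S}\circ_i\mathcal{T}$ by $c\circ u(x\circ_i y)$ agree with those assigned by $c\circ u(x)$ (if $U=\overline{S}$) or $c\circ u(y)$ (if $U=T+i$). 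This is precisely the statement that $c:WCom\to\mathring{\Delta}$ is an operad map, combined with the fact that $u$ is an operad map. Granting that, the label of vertex $U$ in $\phi_{n+m-1}(x\circ_i y)$ is $\overline{\phi}_{a_U}$ applied to the corresponding label of $x$ or $y$, which is exactly the label in $\phi_n(x)\circ_i\phi_m(y)$ by the definition of $\circ_i$ on $N(\mathcal{C}/S^1)$. Equivariance under $\Sigma_n$ is similar and easier: relabelling the leaves permutes the weights correspondingly, and $\overline{\phi}$ and the cactus operad are equivariant.

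The main obstacle is the bookkeeping around the weights: one has to be careful that $c\circ u$ really does produce, for each stratum and each vertex, exactly the weight function $a_S:\hat{S}\to\R^{>0}$ that makes $\overline{\phi}_{a_S}$ defined, and that these are consistent both under the face relations (done in the previous lemma) and under grafting. The cleanest way to organize this is to note that $c\circ u:W\overline{\M}\to\mathring{\Delta}$ is a composite of operad maps, hence an operad map, so the diagram
\[
\begin{tikzcd}
& W\overline{\M}(n)\times W\overline{\M}(m)\arrow[r,"\circ_i"]\arrow[d,"c\circ u\times c\circ u"] & W\overline{\M}(n+m-1)\arrow[d,"c\circ u"]\\
& \mathring{\Delta}^{n-1}\times\mathring{\Delta}^{m-1}\arrow[r,"\circ_i"] & \mathring{\Delta}^{n+m-1}
\end{tikzcd}
\]
commutes; this is what reduces the vertex-label computation to the compatibility square for $\overline{\phi}$ already established in the proof of the previous lemma. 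Once the weights are under control, everything else is routine: $\phi$ is a bijective morphism of operads in $\mathbf{Top}$ between a compact space and a Hausdorff space, hence an isomorphism. Finally, since $N(\mathcal{C}/S^1)$ is a cellular operad by Theorem \ref{thm: cellular operad of nested cacti}, $WN(\mathcal{C}/S^1)$ inherits a cellular operad structure in the standard way, and transporting it along $\phi$ makes $W\overline{\M}$ a cellular operad — which is exactly what is needed for the zig-zag of quasi-isomorphisms in the next section.
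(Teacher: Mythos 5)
Your proof is correct and follows essentially the same approach as the paper: both reduce the homeomorphism claim to a compact-Hausdorff argument, and both derive operadic compatibility from the fact that $c\circ u:W\overline{\M}\to\mathring{\Delta}$ is a composite of operad maps, so that the weights behave correctly under grafting. You spell out the bijectivity and the weight bookkeeping in a bit more detail, but the underlying ideas coincide.
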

	\begin{proof}
		We prove the claim in two steps:
		\begin{itemize}
			\item First we show that for any $n\in\N$ the map $\phi_n:W\overline{\M}(n)\to WN(\mathcal{C}/S^1)(n)$ is a homeomorphism: it is a bijection by definition. The domain is compact and the target is Hausdorff, so it is enough to prove that $\phi_n$ is continuous. Observe that the restriction of $\phi_n$ to each stratum $W\overline{\M}(\mathcal{S})$ is continuous: indeed for each $S\in\mathcal{S}$ the homeomorphisms $\overline{\phi}_{a_S}:\overline{\M}_{0,\hat{S}+1}\to N^{\sigma}_{\hat{S}}(\mathcal{C}/S^1)$ depend continuously on $a_S:\hat{S}\to \R^{>0}$, and $a_S$ depends continuously on the point $x\in W\overline{\M}(\mathcal{S})$. Therefore $\phi_n$ is continuous because its restrictions to each closed strata is continuous. 
			\item $\phi:W\overline{\M}\to N(\mathcal{C}/S^1)$ is a map of operads: $\phi_n$ is obviously $\Sigma_n $-equivariant. The compatibility with the operadic compositions follows from the fact that both $c:WCom\to \mathring{\Delta}$ and $u:W\overline{\M}\to WCom$ are maps of operads.  
		\end{itemize}
	\end{proof}
	\begin{corollario}\label{cor: WM is a celular operad}
		$W\overline{\M}$ is a cellular operad. More precisely, each $W\overline{\M}(n)$ can be equipped with a CW-decomposition such that the operadic compositions
		\[
		\circ_i:W\overline{\M}(n)\times W\overline{\M}(n)\to W\overline{\M}(n+m-1)
		\]
		are cellular embeddings.
	\end{corollario}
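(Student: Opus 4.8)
The plan is to deduce the statement from Theorem \ref{thm: cellular operad of nested cacti} together with Theorem \ref{thm:iso di operad tra le W constructions}, via the general principle that the Boardman--Vogt construction of a cellular topological operad is again a cellular operad, and that cellularity (of the spaces together with the composition maps) is transported along an isomorphism of operads. So there is essentially nothing new to prove beyond organizing these two results.

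First I would describe an explicit regular CW-structure on $WN(\mathcal{C}/S^1)(n)$. Recall that a point of $WN(\mathcal{C}/S^1)(n)$ is a nested tree $\mathcal{S}\in N_n$ whose vertices $S$ are labelled by nested cacti in $N_{\abs{\hat{S}}}^{\sigma}(\mathcal{C}/S^1)$ and whose internal edges carry lengths in $[0,1]$, modulo collapsing length-zero edges and composing the adjacent labels. By Theorem \ref{thm: cellular operad of nested cacti} each $N_k^{\sigma}(\mathcal{C}/S^1)$ carries the dual cell decomposition. I would then index an open cell of $WN(\mathcal{C}/S^1)(n)$ by a triple $(\mathcal{S},(C_S)_{S\in\mathcal{S}},E')$, where $C_S$ is a dual cell of $N_{\abs{\hat{S}}}^{\sigma}(\mathcal{C}/S^1)$ for each vertex and $E'$ is a subset of the internal edges of $\mathcal{S}$; the corresponding open cell consists of the points whose underlying tree is $\mathcal{S}$, whose label at $S$ lies in the open cell $C_S$, whose edges in $E'$ have length exactly $1$, and whose remaining internal edges have length strictly between $0$ and $1$. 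Since $[0,1]$ is a regular CW-complex and a finite product of regular CW-complexes is regular, this yields a regular CW-decomposition of $WN(\mathcal{C}/S^1)(n)$: the closure of a cell is obtained by taking faces of the $C_S$, letting lengths of edges not in $E'$ grow to $1$, and letting lengths shrink to $0$; the last move collapses an internal edge and composes the two adjacent vertex labels using the cellular maps $\circ_i$ of Theorem \ref{thm: cellular operad of nested cacti}, hence again lands in a cell. The only slightly delicate point is that the attaching maps are compatible with the length-zero collapse relation, but this is exactly Theorem \ref{thm: cellular operad of nested cacti} applied vertex-wise, so no genuinely new argument is required.

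Next I would verify that the operadic compositions of $WN(\mathcal{C}/S^1)$ are cellular embeddings. In the $W$-construction $\circ_i$ grafts the root of the second tree onto the $i$-th leaf of the first and assigns length $1$ to the new internal edge. Thus $\circ_i$ carries the product of open cells $(\mathcal{S},(C_S)_{S},E')\times(\mathcal{T},(C'_T)_{T},E'')$ homeomorphically onto the open cell of $WN(\mathcal{C}/S^1)(n+m-1)$ with underlying tree $\mathcal{S}\circ_i\mathcal{T}$, vertex labels given by the $C_S$ and $C'_T$, and edges-of-length-one $E'\sqcup E''$ together with the new grafting edge. That $\circ_i$ is an embedding follows, as in the proofs above, from compactness of the domain and Hausdorffness of the target. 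Hence $WN(\mathcal{C}/S^1)$ is a cellular operad in the sense of the definition in Section \ref{sec:operad of dual cells}.

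Finally I would transport this structure along the isomorphism of operads $\phi\colon W\overline{\M}\to WN(\mathcal{C}/S^1)$ of Theorem \ref{thm:iso di operad tra le W constructions}. Pulling back the CW-decompositions makes each $W\overline{\M}(n)$ a regular CW-complex; since $\phi$ is compatible with the operad structures and each $\circ_i$ on the target is a cellular embedding, the corresponding $\circ_i$ on $W\overline{\M}$ is a cellular embedding as well. This proves the corollary. I expect the main (and only) obstacle to be the bookkeeping in the first step — making the claimed cells of $WN(\mathcal{C}/S^1)(n)$ into a bona fide regular CW-complex whose attaching maps respect the collapse relation — but, as noted, this reduces to applying Theorem \ref{thm: cellular operad of nested cacti} one vertex at a time.
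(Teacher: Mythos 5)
Your proof is correct and takes the same route as the paper: transport cellularity from $WN(\mathcal{C}/S^1)$ to $W\overline{\M}$ along the isomorphism $\phi$ of Theorem \ref{thm:iso di operad tra le W constructions}, using Theorem \ref{thm: cellular operad of nested cacti} as the input. The paper's proof is essentially two sentences and leaves the cellular structure of $WN(\mathcal{C}/S^1)$ implicit (appealing to the general fact, stated in a remark, that $W$ of a cellular operad is cellular); you spell out that structure explicitly, which is a welcome elaboration but not a different argument.
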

	\begin{proof}
		We use the isomorphism $phi: W\overline{\M}\to WN(\mathcal{C}/S^1)$ of Theorem \ref{thm:iso di operad tra le W constructions} to define the CW-decomposition of $W\overline{\M}(n)$ for any $n\in\N$. The statement then follows from Theorem \ref{thm: cellular operad of nested cacti}.
	\end{proof}
	\begin{oss}
		The Boardman-Vogt $W$ construction is compatible with the cellular chain functor, in the sense that if $\mathcal{P}$ is a cellular topological operad, then $W\mathcal{P}$ is a cellular topological operad as well. Moreover the operad of cellular chains $C_*^{cell}(W\mathcal{P})$ can be identified with the Bar-Cobar resolution $\Omega BC_*^{cell}(\mathcal{P})$. Recall that the operad $C_*^{dual}(\overline{\M})$ was the operad of cellular chains on $N(\mathcal{C}/S^1)$. Therefore the operad of cellular chains on $W\overline{\M}\cong WN(\mathcal{C}/S^1)$ is just $\Omega B C_*^{dual}(\overline{\M})$.  
	\end{oss}
	
	\subsection{A zig-zag of quasi-isomorphisms}
	In this paragraph we finally prove the main result of this paper:
	\begin{thm}\label{thm: chain model for hycomm}
		The operad $C_*^{dual}(\overline{\M})$ is a chain model for the Hypercommutative operad.
	\end{thm}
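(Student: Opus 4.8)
The plan is to assemble the zig-zag of operadic quasi-isomorphisms already sketched at the start of this section, all of whose geometric ingredients are now in place. By Corollary~\ref{cor: WM is a celular operad} the Boardman--Vogt construction $W\overline{\M}$ is a cellular operad, by Theorem~\ref{thm:iso di operad tra le W constructions} it is isomorphic as an operad to $WN(\mathcal{C}/S^1)$, and --- as observed above in this section --- the operad of cellular chains on $WN(\mathcal{C}/S^1)$ (for the dual cell decomposition on each $N_n^{\sigma}(\mathcal{C}/S^1)$, which is the one making $N(\mathcal{C}/S^1)$ a cellular operad by Theorem~\ref{thm: cellular operad of nested cacti}) is precisely $\Omega B\,C_*^{dual}(\overline{\M})$. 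Since the homology operad of $C_*(\overline{\M})$ is $H_*(\overline{\M})=Hycom$ by definition, it is enough to connect $C_*^{dual}(\overline{\M})$ to $C_*(\overline{\M})$ by a chain of operadic quasi-isomorphisms.

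First I would set up the four comparison maps. The Boardman--Vogt augmentation $p\colon WN(\mathcal{C}/S^1)\to N(\mathcal{C}/S^1)$, which collapses all edge-lengths to zero and composes the decorating nested cacti, is a map of topological operads; it is cellular for the dual cell structures because the composition of two dual cells is again a dual cell (Theorem~\ref{thm: cellular operad of nested cacti}), so it induces an operad map $p_*\colon C_*^{cell}(WN(\mathcal{C}/S^1))\to C_*^{dual}(\overline{\M})$. The operad isomorphism $\phi$ of Theorem~\ref{thm:iso di operad tra le W constructions} induces an isomorphism of operads of cellular chains, using that on CW complexes with the product cell structure the cellular chain functor is strong symmetric monoidal, so it carries the cellular operad $W\overline{\M}$ to a dg-operad and $\phi$ to an isomorphism $C_*^{cell}(W\overline{\M})\xrightarrow{\ \cong\ }C_*^{cell}(WN(\mathcal{C}/S^1))$. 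Next, since the structure maps of $W\overline{\M}$ are cellular (Corollary~\ref{cor: WM is a celular operad}), the natural comparison between cellular and singular chains of the CW complexes $W\overline{\M}(n)$ assembles into an operad map $C_*^{cell}(W\overline{\M})\to C_*(W\overline{\M})$. Finally, the augmentation $q\colon W\overline{\M}\to\overline{\M}$ is a map of topological operads, inducing $q_*\colon C_*(W\overline{\M})\to C_*(\overline{\M})$.

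Then I would check that each of these maps is an arity-wise quasi-isomorphism: $p_*$ and $q_*$ because the Boardman--Vogt augmentation $W\mathcal{P}\to\mathcal{P}$ is an arity-wise homotopy equivalence; the map $C_*^{cell}(W\overline{\M})\to C_*(W\overline{\M})$ because cellular and singular homology of a CW complex agree and this identification is induced by the comparison map; and the middle isomorphism because it is an isomorphism. Composing, one obtains a zig-zag of operadic quasi-isomorphisms
\[
\begin{tikzcd}[column sep=small]
C_*^{dual}(\overline{\M}) & C_*^{cell}(WN(\mathcal{C}/S^1)) \arrow[l,"p_*"'] & C_*^{cell}(W\overline{\M}) \arrow[l,"\cong"'] \arrow[r] & C_*(W\overline{\M}) \arrow[r,"q_*"] & C_*(\overline{\M}),
\end{tikzcd}
\]
which exhibits $C_*^{dual}(\overline{\M})$ as weakly equivalent to $C_*(\overline{\M})$, hence as a chain model for the Hypercommutative operad.

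The step I expect to require the most care is the third map: producing a genuine \emph{operad} map from cellular to singular chains of the cellular operad $W\overline{\M}$, i.e.\ a natural transformation of lax symmetric monoidal functors that is at the same time an arity-wise quasi-isomorphism. This forces one to fix how the cellular chain complexes are compared with the singular chains (for instance via the skeletal filtration, or via a barycentric subdivision of the regular pieces of the decompositions) and to verify compatibility of that comparison with the Eilenberg--Zilber maps for the product cell structures entering the operad composition on $C_*^{cell}(W\overline{\M})$. Everything else is either already proven in Sections~\ref{sec: combinatorial models for Deligne-Mumford}--\ref{sec:operad of dual cells} or is standard Boardman--Vogt theory.
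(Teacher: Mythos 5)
Your zig-zag
\[
C_*^{dual}(\overline{\M}) \leftarrow C_*^{cell}(WN(\mathcal{C}/S^1)) \cong C_*^{cell}(W\overline{\M}) \to C_*(W\overline{\M}) \to C_*(\overline{\M})
\]
is exactly the one used in the paper, with the same justifications for each arrow (cellularity of the Boardman--Vogt retraction via Theorem~\ref{thm: cellular operad of nested cacti}, the operad isomorphism of Theorem~\ref{thm:iso di operad tra le W constructions}, and the standard cellular-to-singular comparison for the cellular operad $W\overline{\M}$). Your concluding remark flagging the care needed to make the cellular-to-singular comparison into an honest \emph{operad} map (compatibility with Eilenberg--Zilber and product cell structures) is a reasonable caution about a point the paper treats somewhat lightly, but it does not alter the argument.
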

	\begin{proof}
		Consider the operad of singular chains $C_*(\overline{\M})$ over $\Z$. We claim that there is a zig-zag of quasi-isomorphisms connecting $C_*(\overline{\M})$ and $C_*^{dual}(\overline{\M})$. By definition $C_*^{dual}(\overline{\M})$ is the operad of cellular chains on $N(\mathcal{C}/S^1)$. The retraction $r:WN(\mathcal{C}/S^1)\to N(\mathcal{C}/S^1)$ is a cellular map, therefore it induces a quasi-isomorphism
		\[
		r_*:C_*^{cell}(WN(\mathcal{C}/S^1))\to C_*^{dual}(\overline{\M})
		\]
		Theorem \ref{thm:iso di operad tra le W constructions} tells us that we can identify the Boardman-Vogt construction $WN(\mathcal{C}/S^1)$ with $W\overline{\M}$, therefore 
		\[
		C_*^{cell}(WN(\mathcal{C}/S^1))\cong C_*^{cell}(W\overline{\M})
		\]
		where we consider $W\overline{\M}$ as a cellular operad as in Corollary \ref{cor: WM is a celular operad}. Since $W\overline{\M}$ is a cellular operad we have a quasi-isomorphism
		\[
		C_*^{cell}(W\overline{\M})\to C_*(W\overline{\M})
		\]
		given by the realization of each cell as a sum of simplices in the usual way. Finally observe that the retraction $p: W\overline{\M}\to \overline{\M}$ induces a quasi-isomorphism between the operads of singular chains $C_*(W\overline{\M})$ and $C_*(\overline{\M})$. If we put all these things together we get the desired zig-zag of weak equivalences:
		\[
		\begin{tikzcd}[column sep=small]
			&C_*^{dual}(\overline{\M}) & C_*^{cell}(WN(\mathcal{C}/S^1))\cong C_*^{cell}(W\overline{\M}) \arrow[l,"r_*"'] \arrow[r] &C_*(W\overline{\M})\arrow[r,"p_*"] &C_*(\overline{\M})
		\end{tikzcd}
		\]
	\end{proof}
	
	\subsection{An open problem}\label{subsec:an open problem}
	In \cite{Salvatore} the second author constructed a cellular decomposition of the two dimensional Fulton MacPherson operad $FM_2$. More precisely, he defined a CW-decomposition for each space $FM_2(n)$ and proved that the operad compositions $\circ_i:FM_2(n)\times FM_2(m)\to FM_2(m+n-1)$ are cellular maps. The original purpose of this project was to prove the same statement in the case of the Deligne-Mumford operad $\overline{\M}$:
	\medskip
	
	\textbf{Conjecture:} $\overline{\M}$ is a cellular operad.
	\medskip
	
	The first step is to construct a CW-decomposition of $\overline{\M}_{0,n+1}$ for any $n\geq 2$: for example we can choose the cell structures described in Corollary \ref{cor: decomposizione cellulare spazio compattificato}. However these cells are not good candidates to solve our problem, because they are not compatible with the operad structure (see Remark \ref{oss:celle non sono compatibili con l'operad}).  A better choice would be to put on $\overline{\M}_{0,n+1}$ the dual cell decomposition. Indeed, at least at the level of chains we get the right operad structure (Theorem \ref{thm: chain model for hycomm}). However even in this case we encounter some issues, basically because the dual cell decompositions depends on the choice of some weights $(a_1,\dots,a_n)$. Let us explain this last sentence: in order to solve our problem one could try to work inductively. $\overline{\M}_{0,3}$ is just a point, and we put on it the obvious cell structure. Now consider the operadic composition
	\[
	\circ_1:\overline{\M}_{0,3}\times \overline{\M}_{0,3}\to \overline{\M}_{0,4}
	\]
	It is an embedding whose image is the stable curve associated to the nested tree $((12)3)$. Similarly, the image of $\circ_2$ is the stable curve associated to $(1(23))$. If we put on $\overline{\M}_{0,4}$ the dual cell structure (in this case any choice of weights is fine) we get that $\circ_1,\circ_2$ are cellular maps (see Figure \ref{decomposizioni della sfera }). By induction suppose we have constructed CW-decompositions of $\overline{\M}_{0,n}$ for $n\leq k-1$ which are compatible with the operad compositions. We would like to define a CW-decomposition of $\overline{\M}_{0,k}$ compatible with the operad structure. Let $n,m\in\N$ such that $n+m=k$. Then for $i=1,\dots,n$ consider 
	\[
	\circ_i: \overline{\M}_{0,n+1}\times \overline{\M}_{0,m+1}\to \overline{\M}_{0,k}
	\]
	We would like that $\circ_i$ is a cellular map. Since $\circ_i$ is an embedding we can put on $Im(\circ_i)$ the product cell structure of $\overline{\M}_{0,n+1}\times \overline{\M}_{0,m+1}$. In this way we obtain a cell decomposition of the closed strata $\overline{\M}(\mathcal{S})\subseteq \overline{\M}_{0,k}$, with $\mathcal{S}$ a nested tree on $(k-1)$-leaves which is not the corolla. The challenge is to extend this CW-decomposition to the whole $\overline{\M}_{0,k}$. The problem is that if we take the dual cell decomposition of $\overline{\M}_{0,k}$ associated to some weights $a_1,\dots,a_k$, in general it does not match the cell decomposition of the strata described above.
	\begin{oss}
		In \cite{Salvatore} there was the same problem. In that case the solution was the following: firstly the author proved that $WFM_2$ is a cellular operad. Then he constructed an isomorphism of operads from $FM_2$ to $WFM_2$, proving the claim. In the case of the Deligne-Mumford operad we can not use the same trick, indeed $W\overline{\M}$ is not isomorphic to $\overline{\M}$ (it is only homotopy equivalent).
	\end{oss}
	\section{Chain level Koszul duality}\label{sec: Koszul duality}
	In \cite{Getzler} Getzler proved that $Hycom$ and $Grav$ are Koszul dual as operads of graded vector spaces (see also Remark \ref{oss: koszul duality}). In this final section we lift this result to the category of chain complexes. Before stating the result let us recall a few definitions:
	\begin{defn}[Operadic suspension]                                                  
		Let $\mathcal{P}$ be an operad of chain complexes over some ring $R$. The \textbf{operadic suspension} $\Lambda\mathcal{P}$ is defined by
		\[
		(\Lambda\mathcal{P})(n)\coloneqq s^{n-1}\mathcal{P}(n)\otimes sgn_n
		\] 
		where $sgn_n$ is the sign representation of $\Sigma_n$.
	\end{defn} 
	\begin{defn}
			Let $(C=\{C_n\}_{n\in\Z},d_C)$ be a chain complex over some ring $R$. The \textbf{linear dual} $C^*$ is the chain complex given by
			\[
			(C^*)_n\coloneqq Hom_R(C_{-n},R)
			\]
			with differential the adjoint map to $d_C:C_n\to C_{n-1}$. 
	\end{defn} 
	\begin{defn}[Koszul dual]
		Let $\mathcal{P}$ be an operad of chain complexes. The \textbf{Koszul dual operad} $\mathbf{D}(\mathcal{P})$ is the linear dual of the cooperad $B(\mathcal{P})$, where $B$ is the operadic bar construction. In particular the space of $n$-ary operations is given by
		\[
		\mathbf{D}(\mathcal{P})(n)\coloneqq B(\mathcal{P})(n)^*
		\]
	\end{defn}
	Now we can state the main Theorem of this paper.
	\begin{thm}[chain level Koszul duality]\label{thm: chain level Koszul duality} Let $grav$ be the chain model for the gravity operad described in Paragraph \ref{subsec:operads and cacti}. Then
		\[
		\mathbf{D}(grav)=\Lambda^{-2}C_*^{dual}(\overline{\M})
		\]
	\end{thm}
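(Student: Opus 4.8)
The plan is to unwind the definitions on both sides of the claimed equality and check that they literally agree, degree by degree, as $\Sigma$-sequences with differential. By Proposition \ref{prop:bar construction vs chains} we already have an isomorphism of chain complexes $B(grav)(n)\cong s^2C_*^{cell}(\overline{\M}_{0,n+1})$ for every $n\geq 2$, and this isomorphism is moreover compatible with the cooperad structure (both sides carry the cooperad structure coming from the operadic bar construction, transported along the map $f$ of that proof). Hence $\mathbf{D}(grav)(n)=B(grav)(n)^*\cong\bigl(s^2C_*^{cell}(\overline{\M}_{0,n+1})\bigr)^*$. The first step is therefore to identify $\bigl(s^2C_*^{cell}(\overline{\M}_{0,n+1})\bigr)^*$ with a shift of $C_*^{dual}(\overline{\M}_{0,n+1})$ using Poincar\'e duality: since $\overline{\M}_{0,n+1}$ is a closed oriented manifold of real dimension $2(n-2)$ with the regular CW-decomposition of Corollary \ref{cor: decomposizione cellulare spazio compattificato}, the dual cell decomposition constructed in Paragraph \ref{subsec: dual cells} gives a canonical identification $C^k_{cell}(\overline{\M}_{0,n+1})\cong C_{2(n-2)-k}^{dual}(\overline{\M}_{0,n+1})$, i.e. $\bigl(C_*^{cell}(\overline{\M}_{0,n+1})\bigr)^*\cong s^{-2(n-2)}C_*^{dual}(\overline{\M}_{0,n+1})$ as chain complexes (the Poincar\'e duality sign conventions must be fixed once and for all here). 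Combining, $\mathbf{D}(grav)(n)\cong s^{-2}s^{-2(n-2)}C_*^{dual}(\overline{\M}_{0,n+1})=s^{-2n+2}C_*^{dual}(\overline{\M}_{0,n+1})$.

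The second step is to match this with the right-hand side $\Lambda^{-2}C_*^{dual}(\overline{\M})$. By definition of the operadic suspension, $(\Lambda^{-2}\mathcal{P})(n)=s^{-2(n-1)}\mathcal{P}(n)\otimes sgn_n^{\otimes 2}=s^{-2n+2}\mathcal{P}(n)$, since $sgn_n^{\otimes 2}$ is the trivial representation. So on the level of underlying $\Sigma$-sequences the two sides agree exactly: both equal $s^{-2n+2}C_*^{dual}(\overline{\M}_{0,n+1})$, with the symmetric group acting through the geometric action on $\overline{\M}_{0,n+1}$ (the sign twists cancel). One then has to check that the differentials agree: the differential on $\mathbf{D}(grav)=B(grav)^*$ is the dual of the bar differential, which under $f$ corresponds to the cellular boundary on $C_*^{cell}(\overline{\M})$, hence under Poincar\'e duality to the cellular boundary of the dual cell complex, which is precisely the differential of $C_*^{dual}(\overline{\M})$; the shift and the $\Lambda^{-2}$ reindexing do not alter the differential beyond the usual Koszul signs, which must be seen to be consistent.

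The last and most delicate step is to verify that the identification respects the \emph{operad} (equivalently, cooperad) structure, not merely the chain-complex structure. On the left, $\mathbf{D}(grav)$ is an operad because it is the linear dual of the cooperad $B(grav)$; on the right, $C_*^{dual}(\overline{\M})$ carries the operad structure of Theorem \ref{thm: cellular operad of nested cacti}, coming from the grafting maps $\circ_i$ on nested cacti which are cellular for the dual decomposition. I would argue that the cooperad structure on $B(grav)(n)\cong s^2C_*^{cell}(\overline{\M}_{0,n+1})$ is, after dualizing and applying Poincar\'e duality, exactly the operad structure induced by those grafting maps: this is essentially the statement that cofactorwise cosplitting of a nested tree is Poincar\'e-dual to grafting of nested trees, which is what Theorem \ref{thm: cellular operad of nested cacti} already records combinatorially (the composite $\tau^*\circ_i\sigma^*=(\tau\circ_i^c\sigma)^*$ is the dual of the decomposition move). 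The operadic suspension $\Lambda^{-2}$ is needed precisely to absorb the degree shifts so that these structure maps have degree zero and the requisite signs match.

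The main obstacle I anticipate is bookkeeping of signs and shifts: making the Poincar\'e duality isomorphism $C^*_{cell}\cong C^{dual}_*$ natural enough that it intertwines the bar-cooperad cocomposition with the grafting operad composition requires pinning down orientations of all cells (the products of simplices $\prod_{S}\Delta^{m_i-1}$ and the disk factors $D_2$) and checking that the Koszul signs produced by the two shifts $s^2$ and $s^{n-1}$ (with the sign representation) cancel consistently across all arities and all grafting positions $i$. The rest — the underlying degree count $2(n-2)$, the cancellation $sgn^{\otimes 2}=1$, and the chain-level identification — is essentially immediate from Proposition \ref{prop:bar construction vs chains}, Corollary \ref{cor: decomposizione cellulare spazio compattificato} and Theorem \ref{thm: cellular operad of nested cacti}.
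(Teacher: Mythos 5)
Your proposal follows exactly the paper's route: Proposition \ref{prop:bar construction vs chains} to identify $B(grav)(n)\cong s^2C_*^{cell}(\overline{\M}_{0,n+1})$, Poincar\'e duality (via the dual cell decomposition, with $\dim_{\R}\overline{\M}_{0,n+1}=2(n-2)$) to turn cellular cochains into dual-cell chains, and the observation that $sgn^{\otimes 2}$ is trivial so that the total shift $s^{-2}s^{-2(n-2)}=s^{2-2n}$ is precisely $\Lambda^{-2}$. The only difference is that you spell out (correctly) the need to check that the dual-bar cooperad structure and the grafting operad structure of Theorem \ref{thm: cellular operad of nested cacti} correspond under Poincar\'e duality, and that signs and orientations must be tracked; the paper's proof is terser and leaves these points implicit, relying on the fact that the operad structure on $C_*^{dual}(\overline{\M})$ was \emph{defined} in Section \ref{sec:operad of dual cells} precisely so as to make this identification hold.
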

	\begin{proof}
	 By definition $\mathbf{D}(grav)$ is the linear dual of the bar construciton $B(grav)$. Proposition \ref{prop:bar construction vs chains} tells us that we can identify $B(grav)(n)$ with $s^2C_*^{cell}(\overline{\M}_{0,n+1})$. The space of $n$-ary operations of $\mathbf{D}(grav)=B(grav)^*$ can be identified with the linear dual of  $s^2C_*^{cell}(\overline{\M}_{0,n+1})$, i.e. with $s^{-2}C^*_{cell}(\overline{\M}_{0,n+1})$. As usual we can see any cochain complex $C$ as a chain complex through the rule $C_n\coloneqq C^{-n}$. Therefore $C^*_{cell}(\overline{\M}_{0,n+1})$ is nothing but $s^{4-2n}C_*^{dual}(\overline{\M}_{0,n+1})$ by Poincarè duality (observe that the real dimension of $\overline{\M}_{0,n+1})$ is $2n-4$). To sum up we have that 
	 \begin{align*}
	 B(grav)^*(n)&=s^{-2}C^*_{cell}(\overline{\M}_{0,n+1})\\
	 &=s^{-2}s^{4-2n}C_*^{dual}(\overline{\M}_{0,n+1})\\
	 &=s^{2-2n}C_*^{dual}(\overline{\M}_{0,n+1})\\
	 &=(\Lambda^{-2}C_*^{dual}(\overline{\M}))(n)
	 \end{align*} 
	 This proves the statement.
	\end{proof}

	\printbibliography
	
\end{document}